\documentclass[a4paper,12pt]{amsart}

\usepackage{amsmath, amsthm, amssymb, amsfonts}
\usepackage[normalem]{ulem}
\usepackage{hyperref}

\usepackage{xypic}
\usepackage{mathtools}
\usepackage{verbatim} 

\usepackage{longtable} 
\usepackage{caption}
\setlength{\belowcaptionskip}{\baselineskip}
\usepackage{diagbox}

\usepackage{tikz}
\usepackage{verbatim}
\usetikzlibrary{calc,3d}
\definecolor{tealgreen}{HTML}{1B9E77}
\definecolor{orange}{HTML}{D95F02}
\definecolor{purple}{HTML}{7570B3}
\definecolor{pink}{HTML}{E7298A}
\definecolor{grassgreen}{HTML}{66A61E}
\definecolor{goldyellow}{HTML}{E6AB02}
\definecolor{brown}{HTML}{A6761D}
\definecolor{devilgray}{HTML}{666666}

\theoremstyle{plain}
\newtheorem{thm}{Theorem}

\newtheorem{cor}{Corollary}

\newtheorem{lemma}{Lemma}
\newtheorem{prop}{Proposition}
\newtheorem{proposition}[prop]{Proposition}

\newtheorem{conjecture}{Conjecture}  

\newtheorem{corstar}[cor]{Corollary*}

\newcommand{\omeg}{{\mathsf{p}}}
\newcommand{\ppp}{{\mathsf{p}}}

\newcommand{\aaa}{\mathsf{a}}
\newcommand{\bbb}{\mathsf{b}}

\newcommand{\BC}{{\mathbb{C}}}

\newcommand{\BE}{{\mathbb{E}}}

\newcommand{\BH}{{\mathbb{H}}}

\newcommand{\BP}{{\mathbb{P}}}
\newcommand{\BQ}{{\mathbb{Q}}}

\newcommand{\BZ}{{\mathbb{Z}}}

\newcommand{\pt}{{\omega}}

\newcommand{\CC}{{\mathcal C}}

\newcommand{\CH}{{\mathcal H}}

\newcommand{\CO}{{\mathcal O}}

\newcommand{\ggamma}{{\widetilde{\gamma}}}

\newcommand{\Sf}{\mathsf{S}}

\newcommand{\ra}{{\ \rightarrow\ }}

\DeclareMathOperator{\Hilb}{Hilb}
\DeclareMathOperator{\Sym}{Sym}
\DeclareMathOperator{\id}{id}

\newcommand{\Aut}{\mathop{\rm Aut}\nolimits}
\newcommand{\End}{\mathop{\rm End}\nolimits}
\newcommand{\Hom}{\mathop{\rm Hom}\nolimits}
\newcommand{\Ker}{\mathop{\rm Ker}\nolimits}
\newcommand{\Mbar}{{\overline M}}
\newcommand{\M}{{\overline M}}
\newcommand{\Pic}{\mathop{\rm Pic}\nolimits}

\newcommand{\ev}{\mathop{\rm ev}\nolimits}
\newcommand{\p}{\mathbb{P}}

\newcommand{\DT}{\operatorname{\mathsf{DT}}}
\newcommand{\DThat}{\widehat{\DT}}

\newcommand{\ZZ} {{\mathbb Z}}		
\newcommand{\PP}{\mathbb{P}}
\renewcommand{\O}{\mathcal{O}}

\newcommand{\dtilde}{{d'}}

\newcommand{\Xhat}{\widehat{X}}

\newcommand{\fix}{\mathsf{fixed}}
\renewcommand{\Vert}{\mathsf{vert}}
\newcommand{\diag}{\mathsf{diag}}

\newcommand{\hodge}{\mathbb{E}}
\newcommand{\Km}{\operatorname{Km}}

\allowdisplaybreaks

\begin{document}

\baselineskip=13pt
\parskip=5pt

\title[Curve counting on abelian surfaces and threefolds]
{Curve counting on abelian surfaces and threefolds}

\author[J. Bryan]{Jim Bryan}
\address{University of British Columbia, Department of Mathematics}
\email{jbryan@math.ubc.ca}

\author[G. Oberdieck]{Georg Oberdieck}
\address{ETH Z\"urich, Department of Mathematics}
\email{georgo@math.ethz.ch}

\author[R. Pandharipande]{Rahul Pandharipande}
\address{ETH Z\"urich, Department of Mathematics}
\email{rahul@math.ethz.ch}

\author[Q. Yin]{Qizheng Yin}
\address{ETH Z\"urich, Department of Mathematics}
\email{yin@math.ethz.ch}
\date{December 2016}

\begin{abstract}\hspace{-3pt}We study the enumerative geometry of algebraic
curves on abelian surfaces and
threefolds.  In the abelian surface case, the theory is parallel
to the well-developed study of the reduced Gromov-Witten theory of
$K3$ surfaces. We prove
complete results in all genera for primitive classes.
The generating series are quasimodular forms of pure weight.
Conjectures for imprimitive classes are presented.
In genus 2, the counts in all classes are proven.
Special counts match the Euler characteristic calculations
of the moduli spaces of stable pairs on abelian surfaces by
G\"ottsche-Shende.  A formula for hyperelliptic curve counting in
terms of Jacobi forms is proven (modulo a transversality statement).

For abelian threefolds, complete conjectures in terms of Jacobi forms
for the generating series 
of  curve counts in
primitive classes are presented. 
The base cases make connections to classical lattice
counts of Debarre, G\"ottsche, and Lange-Sernesi.  
Further evidence
is provided by Donaldson-Thomas partition function computations for abelian threefolds. 
A multiple cover structure is presented. The abelian
threefold conjectures open a new direction in the subject.
\end{abstract}

\maketitle

\vspace{-30pt}

\setcounter{tocdepth}{1} 

\tableofcontents
\baselineskip=16pt

\setcounter{section}{-1}

\section{Introduction}
\subsection{Vanishings}
Let $A$ be a complex abelian variety of dimension $d$. 
The Gromov-Witten invariants
of $A$ in genus $g$ and class $\beta\in H_2(A,\BZ)$ are defined by integration
against the virtual class of the moduli space of stable maps $\Mbar_{g,n}(A,\beta)$,
\begin{equation*}
\Big \langle \tau_{a_1}(\gamma_1) \cdots \tau_{a_n}(\gamma_n) \Big \rangle_{g,\beta}^{A}
= \int_{ [\Mbar_{g,n}(A, \beta)]^{\text{vir}} }
\psi_1^{a_1} \ev_1^{\ast}(\gamma_1) \cdots \psi_n^{a_n} \ev_n^{\ast}(\gamma_n) \label{gw1}\, , \end{equation*}
see \cite{PT3} for an introduction.{\footnote{The domain of a {\em stable map}
is always taken here to be connected.}}
However, 
for abelian varieties of dimension $d \geq 2$,
the Gromov-Witten invariants  often vanish
for two independent reasons.
Fortunately, both can be controlled. The
result is a meaningful and non-trivial enumerative geometry of
curves in $A$.

The first source of vanishing is the obstruction theory
of stable maps.
For dimensions $d \geq 2$, the cohomology 
$$H^{2,0}(A,\BC) = H^{0}(A, \Omega_{A}^2)$$ does not vanish
and yields a trivial quotient of the obstruction sheaf. 
As a consequence, the virtual class vanishes \cite{KL} for non-zero 
classes $\beta$.

An alternative view of the first
vanishing can be obtained by deformation invariance. 
A homology class $\beta \in H_2(A,\BZ)$ is a
{\em curve class} if $\beta$ is represented by an
algebraic curve on $A$. The Gromov-Witten
invariants vanish if $\beta$ is {\em not} a curve class
since then the moduli space $\Mbar_{g,n}(A,\beta)$ is empty. 
After generic deformation of $A$, every non-zero
curve class $\beta$
acquires a part of $H^{2,0}(A,\BC)^{\vee}$ and is no longer the class of an algebraic
curve.
By deformation invariance, the Gromov-Witten invariants of $A$ then necessarily vanish for all non-zero $\beta$.

Second, an independent source of vanishing arises from the action
of the abelian variety $A$ on the moduli space
$\Mbar_{g,n}(A,\beta)$
by translation:
most stable maps $$f : C \ra A$$ appear in $d$-dimensional families.
Integrands which are translation invariant almost always
lead to vanishing Gromov-Witten invariants.
We must therefore impose a $d$-dimensional condition on the moduli space
which picks out a single or finite number of curves in each translation class.

Curve classes on $A$ are equivalent to divisor classes on the
dual abelian variety $\widehat{A}$. Every curve class
$\beta \in H_2(A,\BZ)$ has a
type{\footnote{If $A = E_1 \times \dots \times E_d$, the product of elliptic curves
$E_i$, the class $\beta = \sum_{i} d_i [E_i]$
has type $(d_1, \ldots,d_{\text{dim}\,A})$. See Section \ref{cct} for a full discussion.}}
$$(d_1, \ldots, d_{\text{dim}\, A})\, , \ \ \ \ d_i\geq 0 $$
obtained from the standard divisor theory of $\widehat{A}$.
A curve class $\beta$ is {\em non-degenerate} 
if $d_i>0$ for all $i$. Otherwise, $\beta$ is a 
{\em degenerate} curve class. The degenerate{\footnote{A
detailed discussion of the degenerate case
is given in Section \ref{dcosection}. We focus in the Introduction
on the non-degenerate case.}} case is studied by reducing
the dimension of $A$.

Various techniques have been developed in recent years
to address the first vanishing.
The result in the non-degenerate case is a
{\em reduced virtual class}
$[ \Mbar_{g,n}(A, \beta) ]^{\text{red}}$
with dimension increased by $h^{2,0}(A)$.
Integrals against the reduced class are invariant under
deformations of $A$ for which $\beta$ stays algebraic.
Up to translation,
we expect the family of genus $g$ curves in class $\beta$ to be of dimension
\begin{equation}
\begin{aligned}
& {\rm vdim}\, \Mbar_{g}(A, \beta) + h^{2,0}(A) - d \\ ={} & (d-3)(1-g) + \frac{d (d-1)}{2} - d \\ ={} & (d - 3)\left(\frac{d}{2} + 1 - g\right) \,. \label{vdim}
\end{aligned}
\end{equation}
Hence, for abelian varieties of dimension $1$, $2$, and $3$, we
expect families (modulo translation) of dimensions $2g-3$, $g-2$, and
$0$ respectively.

For abelian varieties of dimension
$d \geq 4$, the reduced virtual dimension \eqref{vdim} is non-negative only if
\[ g \, \leq \, \frac{d}{2} + 1 \, \leq \, d-1 \,. \]
In the non-degenerate case, 
a generic abelian variety $A$ admits
no proper abelian subvariety and thus
admits {\em no} map from a curve of genus less than $d$. Hence, all invariants vanish.

The Gromov-Witten theory of elliptic curves has been
completely solved by Okounkov and Pandharipande in \cite{OP1,OP3}.
Some special results are known about abelian surfaces \cite{BLA,D,G,Ros14}.
We put forth here several results and conjectures 
concerning the complete Gromov-Witten theory of abelian surfaces and threefolds.

\pagebreak

\subsection{Abelian surfaces}

\subsubsection{Basic curve counting} \label{bcc}
Let $A$ be an abelian surface and let
$$\beta \in H_2(A, \BZ)$$
be a curve class of type
$(d_1, d_2)$ with $d_1, d_2 > 0$.
The moduli space
\[ \Mbar_{g,n}(A,\beta)^{\text{FLS}} \subset \Mbar_{g,n}(A,\beta) \]
is the closed substack of $\Mbar_{g,n}(A,\beta)$
parameterizing maps with image in a fixed linear system (FLS) on $A$. Given a curve $C$ in class $\beta$, the FLS condition naturally picks out $(d_1 d_2)^2$
elements in the translation class of $C$. The FLS moduli space carries a reduced virtual fundamental class \cite{KT, MP, STV},
\[ \left[ \Mbar_{g}(A, \beta)^{\text{FLS}} \right]^{\text{red}}\, , \]
of virtual dimension $g - 2$.

Define $\lambda_k$
to be the Chern class
\[ \lambda_k = c_k(\mathbb{E}) \]
of the Hodge bundle $\mathbb{E} \rightarrow \Mbar_{g,n}(A,\beta)$ with
fiber $H^0(C,\omega_C)$ over the moduli point 
$$[f : C \ra A] \in \Mbar_{g,n}(A,\beta)\, .$$

There are no genus 0 or 1 curves on a general abelian surface $A$.
The most basic genus $g\geq 2$ Gromov-Witten invariants of $A$ are 
\begin{equation} \mathsf{N}^{\text{FLS}}_{g,\beta}
= \int_{[ \Mbar_{g}(A, \beta)^{\text{FLS}} ]^{\text{red}}} (-1)^{g-2} \lambda_{g-2} \,. \label{123} \end{equation}
The integrand $(-1)^{g-2} \lambda_{g-2}$ corresponds to the natural deformation
theory of curves in $A$ when considered inside a Calabi-Yau threefold.
The invariants \eqref{123} are therefore precisely the
analogs of the genus $g-2$ Gromov-Witten invariants
of $K3$ surface which appear in the Katz-Klemm-Vafa formula, see \cite{MP,MPT,PT2}.

By deformation invariance, $\mathsf{N}_{g,\beta}^{\text{FLS}}$ depends only
on the type $(d_1, d_2)$ of $\beta$.
We write
\[ \mathsf{N}^{\text{FLS}}_{g,\beta} = \mathsf{N}^{\text{FLS}}_{g,(d_1, d_2)} \,. \]
We have the following fully explicit conjecture for these counts.

\begin{conjecture} \label{conjA}
For all $g \geq 2$ and $d_1, d_2 > 0$,
\[ \mathsf{N}^{\textup{FLS}}_{g,(d_1, d_2)}
= (d_1 d_2)^2\,  \frac{2 (-1)^{g-2}}{(2g-2)!} \sum_{k | \gcd(d_1,d_2)} \, \sum_{ m | \frac{ d_1 d_2 }{k^2} } k^{2g-1} m^{2g-3} \,. \]
\end{conjecture}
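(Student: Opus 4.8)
The plan is to fix the genus $g$ and assemble the counts over all classes of a given divisibility into a single generating series in a variable $q$, with the expectation that this series is a quasimodular form of pure weight $2g-2$. Once quasimodularity is established, the finite-dimensionality of the relevant space of forms reduces the problem to the determination of a few leading coefficients, which can be computed by hand in low genus.

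First I would use deformation invariance of the reduced class to replace $\beta$ by a convenient representative. For a primitive class of type $(1,d)$ the invariant $\mathsf{N}^{\textup{FLS}}_{g,(1,d)}$ depends only on the self-intersection $\beta \cdot \beta = 2d$, so all such classes of fixed $d$ are interchangeable. Dividing out the universal factor $(d_1 d_2)^2 = d^2$ coming from the $(d_1 d_2)^2$ translates selected by the FLS condition, the target identity becomes
\[ \sum_{d \geq 1} \frac{\mathsf{N}^{\textup{FLS}}_{g,(1,d)}}{d^2}\, q^d = \frac{2(-1)^{g-2}}{(2g-2)!} \sum_{d \geq 1} \sigma_{2g-3}(d)\, q^d, \]
whose right-hand side is, up to a constant, the non-constant part of the Eisenstein series $E_{2g-2}$. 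Thus the heart of the primitive case is the claim that the left-hand series is quasimodular of weight $2g-2$ and equals this specific Eisenstein series (genuinely modular for $g \geq 3$, and the quasimodular $E_2$ when $g=2$).

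For the explicit evaluation I would pass from the $\lambda_{g-2}$ integral to a sheaf-theoretic count. The insertion $(-1)^{g-2}\lambda_{g-2}$ is precisely the one that converts the reduced Gromov-Witten integral into a Behrend-weighted BPS count, exactly as in the Katz-Klemm-Vafa story for $K3$ surfaces; via the reduced Gromov-Witten/stable-pairs correspondence this should identify $\mathsf{N}^{\textup{FLS}}_{g,(1,d)}$ with a signed Euler characteristic of the relative compactified Jacobian of the universal curve over the fixed linear system. These Euler characteristics are the quantities computed by G\"ottsche-Shende, and their closed form produces the divisor sums $\sigma_{2g-3}(d)$. The genus-$2$ case ($\lambda_0 = 1$) is the base of this analysis: there the integrand is the fundamental class and the count reduces to the classical lattice and theta-divisor enumerations of Debarre, G\"ottsche, and Lange-Sernesi, which I would use both as a consistency check and to fix the normalization of the generating series.

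The main obstacle is twofold. First, establishing pure-weight quasimodularity rigorously — rather than merely matching finitely many coefficients — requires ruling out any cusp-form correction; the cleanest route I foresee is to degenerate $A$ to a product $E_1 \times E_2$ of elliptic curves and invoke the complete solution of the Gromov-Witten theory of elliptic curves by Okounkov-Pandharipande, so that modularity is inherited from the elliptic factors through a degeneration formula. Second, the imprimitive classes, encoded by the outer sum over $k \mid \gcd(d_1,d_2)$ with its $k^{2g-1}$ weights, demand a multiple-cover structure that the primitive analysis does not supply directly; showing that the contribution of an imprimitive class organizes into the stated double divisor sum is where I expect the argument to be hardest, and it is precisely the part that remains conjectural beyond genus $2$.
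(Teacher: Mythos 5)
Your overall architecture --- deformation invariance to reduce to type $(1,d)$, a generating series identified with the Eisenstein series of weight $2g-2$, genus $2$ as the lattice-theoretic anchor, and the concession that the imprimitive multiple-cover structure stays conjectural --- correctly mirrors what is actually known (the conjecture is proven only for primitive $\beta$ in all genera, Theorem~\ref{YZ_intro}(i), and for $g=2$ in all classes, Theorem~\ref{YZ_intro}(ii)). But two steps of your primitive-case strategy have genuine gaps. First, ``pure-weight quasimodularity plus a few leading coefficients'' does not close: the space of quasimodular (or even modular) forms of weight $2g-2$ has dimension growing without bound in $g$, so for each genus you would need a number of coefficients growing with $g$, each itself a nontrivial reduced invariant with no uniform-in-$g$ computation supplied; ``computed by hand in low genus'' cannot cover all $g\geq 2$. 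Moreover, the paper's modularity statement (Theorem~\ref{modularity}, refined in Theorem~\ref{modularity_refined}) only places the descendent series in $\textup{QMod}$ of bounded weight --- it never identifies the Eisenstein series --- and its proof runs through exactly the degeneration machinery you propose to invoke, so modularity cannot serve as an input independent of the computation. The paper instead evaluates the series in closed form: the FLS condition is traded for two point insertions via the abelian vanishing relation (Lemma~\ref{abelvan} and Proposition~\ref{ev_odd_prop}), $A=E_1\times E_2$ is degenerated to the normal cone $A\cup_E(\p^1\times E)$, and the relative theory of $\p^1\times E$ is evaluated by Hodge-integral lemmas imported from \cite{MPT} (Lemmas~\ref{P1Eeval344} and \ref{Aeval344}), producing $q\frac{d}{dq}\,\Sf(z,\tau)$ directly with no appeal to finite-dimensionality.

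Second, your sheaf-theoretic step runs the implication backwards relative to the paper. The G\"ottsche--Shende Euler characteristics (Theorem~\ref{GS_thm}) are \emph{derived} in Section~\ref{prim7} as a corollary of the Gromov-Witten computation, by degenerating to $\p^1\times E$ and only then applying the proven GW/Pairs correspondence \cite{PaPix2}; a reduced GW/Pairs correspondence directly for abelian surfaces (or for the residue theory of $A\times\BC$), which your proposal needs as input, is not available off the shelf. Your route is the analogue of the proof of the primitive KKV conjecture in \cite{MPT} via Kawai--Yoshioka \cite{KY}, and it could plausibly be made rigorous by the same maneuver (degenerate first, then apply \cite{PaPix2}), but then you must still establish the FLS-to-insertion reduction and the degeneration formula for reduced classes --- that is, the actual content of the paper's argument. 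Finally, your genus-$2$ anchor understates what is required: the classical counts of \cite{D,G,LS} cover only primitive classes, while the paper's Theorem~\ref{YZA} for all $(d_1,d_2)$ rests on a new recursion $\nu(p^m,p^n)=\nu(1,p^{m+n})+p^3\,\nu(p^{m-1},p^{n-1})$, proved by enumerating cyclic subgroups and symmetric homomorphisms. Using genus $2$ merely ``to fix normalization'' never engages the $k^{2g-1}$ multiple-cover weights, which is precisely the part of the conjecture that remains open beyond the proven cases.
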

\vspace{1pt}

The right hand side incorporates
a multiple cover rule which expresses the invariants
in imprimitive classes in terms of primitive invariants.\footnote{The class is primitive if and only if $\gcd(d_1, d_2) = 1$ (or, equivalently, the class
can be deformed to type $(1, d)$), see Section \ref{cct}.}
The multiple cover structure is discussed in Section \ref{mcr}.

\begin{thm} \label{YZ_intro}
Conjecture \ref{conjA} is true in the following cases:
\begin{enumerate}
\item[(i)] for all $g$ in case $\beta$ is primitive, 
\item[(ii)] for all $\beta$ in case $g=2$.
\end{enumerate}
\end{thm}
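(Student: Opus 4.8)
The plan is to prove the two parts by genuinely different mechanisms, reflecting the fact that (i) is an all-genus statement for a single (primitive) class while (ii) fixes $g=2$ but must reach every, possibly imprimitive, type $(d_1,d_2)$.

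For part (i) I would first reduce to type $(1,d)$ with $d = d_1 d_2$ by the deformation invariance of the reduced theory, so that the linear system is $\mathbb{P}^{d-1}$ and the generic member has arithmetic genus $d+1$ (by adjunction, $2p_a - 2 = \beta^2 = 2d$). The integrand $(-1)^{g-2}\lambda_{g-2}$ is exactly the class governing the Calabi--Yau threefold deformation theory, so the natural move is to pass from Gromov--Witten theory to a sheaf-theoretic count via the reduced Gromov--Witten / stable-pairs correspondence: I would rewrite $\sum_{g\geq 2}\mathsf{N}^{\textup{FLS}}_{g,(1,d)}\,u^{2g-2}$ through its Gopakumar--Vafa / BPS expansion and match it with the reduced stable-pairs series, whose moduli space with support in the fixed linear system is a relative Hilbert scheme of points on the universal curve over $\mathbb{P}^{d-1}$. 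The key input is then the Euler-characteristic computation of these relative Hilbert schemes (the G\"ottsche--Shende calculations cited in the introduction); feeding the Behrend-weighted Euler characteristics into the BPS expansion should produce precisely the divisor sum $\sum_{m \mid d} m^{2g-3} = \sigma_{2g-3}(d)$, the factorials $2/(2g-2)!$, and the prefactor $(d_1 d_2)^2$ that counts the FLS representatives in the translation class.

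For part (ii) the situation simplifies because $\lambda_0 = 1$: the number $\mathsf{N}^{\textup{FLS}}_{2,\beta}$ is the honest reduced count of genus-$2$ curves in the fixed linear system, and here the dimension coincidence $\dim J(C) = \dim A = 2$ can be exploited, whereas for $g > 2$ the Jacobian $J(C)$ has positive-dimensional kernel over $A$ and this elementary route fails. A genus-$2$ curve $f\colon C \to A$ generating $A$ factors through its principally polarized Jacobian as an isogeny $J(C) \to A$, so I would recast the count as an enumeration of isogenies from principally polarized abelian surfaces compatible with the class $\beta$ and the FLS condition. This converts the problem into a lattice / sublattice count that can be carried out uniformly for every type $(d_1,d_2)$; organizing the sublattices by the index $k \mid \gcd(d_1,d_2)$ and the residual factor $m \mid d_1 d_2 / k^2$ is what reproduces the full double sum of Conjecture \ref{conjA}, including the imprimitive multiple-cover term.

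The main obstacle differs in the two parts. For (i) it is establishing the reduced correspondence in a form that applies to the FLS theory and, in particular, controlling the Behrend function weights (equivalently, understanding the reduced obstruction theory once the translation directions are removed) so that the sheaf counts genuinely match the Gromov--Witten side. For (ii) the crux is transversality: one must show that the reduced virtual count equals the naive geometric count of isogenies, ruling out excess contributions from degenerate or non-reduced configurations, so that the clean lattice enumeration is actually computing $\mathsf{N}^{\textup{FLS}}_{2,\beta}$. I expect this transversality, together with the accompanying bookkeeping of the imprimitive contributions, to be the hardest step.
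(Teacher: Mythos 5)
Your part (ii) is essentially the paper's own proof. The paper passes from FLS counts to quotient invariants ($\mathsf{N}^{\text{Q}}_{2,(d_1,d_2)} = \mathsf{N}^{\text{FLS}}_{2,(d_1,d_2)}/(d_1d_2)^2$), and Lemma \ref{G2lattice} carries out exactly the reduction you sketch: a genus $2$ map factors through its principally polarized Jacobian, so translation classes of curves biject with polarized isogenies $(\widehat{A},\widehat{\beta}) \to (J,\theta)$, counted by $\nu(d_1,d_2)$. Your ``hardest step'' — transversality, i.e.\ that the reduced virtual count is the naive isogeny count — is resolved in the paper by taking $A$ generic with $\End_{\BQ}(A)=\BQ$: simplicity forces every genus $2$ stable map in class $\beta$ to have nonsingular domain (no elliptic or rational components can map to $A$), and a separate argument shows $A$ acts freely on $\Mbar_2(A,\beta)$, so the quotient is a finite set of isolated reduced points. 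The only cosmetic difference is in the lattice enumeration: rather than stratifying directly by $k \mid \gcd(d_1,d_2)$ and $m \mid d_1d_2/k^2$, the paper reduces to prime powers and proves the recursion $\nu(p^m,p^n) = \nu(1,p^{m+n}) + p^3\,\nu(p^{m-1},p^{n-1})$ (Theorem \ref{YZA}), which is equivalent to your organization.

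Your part (i) takes a genuinely different route — in fact the \emph{reverse} of the paper's. You propose to feed the G\"ottsche--Shende Euler characteristics of relative Hilbert schemes through a reduced GW/Pairs correspondence, which is precisely the MPT strategy for the primitive KKV conjecture on $K3$ (with Kawai--Yoshioka as input). The paper instead evaluates the Gromov--Witten side \emph{directly}: it trades the FLS condition for two point insertions via an abelian vanishing relation (Lemma \ref{abelvan}, Proposition \ref{ev_odd_prop}), degenerates $A = E_1\times E_2$ to $A \cup_E (\p^1\times E)$ with a reduced-class degeneration formula, and closes the computation with explicit Hodge-integral evaluations on $(\p^1\times E)/E$ (Lemmas \ref{P1Eeval344} and \ref{Aeval344}); G\"ottsche--Shende then comes out as a \emph{corollary} (Section \ref{prim7}) rather than going in as input. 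The concrete gap in your route is the one you name but cannot supply: a reduced GW/Pairs correspondence applicable to the FLS theory of an abelian surface is not available off-the-shelf — the paper only obtains it \emph{after} degenerating to $\p^1\times E$, where \cite{PaPix2} applies, and once the reduced degeneration formula is in place one can compute the GW side outright, making the GS input unnecessary. (Your worry about Behrend weights, on the other hand, is benign here: for irreducible $\beta$ the moduli $P_n(A,\beta)$ is nonsingular, so signed Euler characteristics suffice.) Note also that the paper's direct method proves the stronger Theorem \ref{thm_point_insertion} with arbitrarily many point insertions, which is needed elsewhere (the equivalence of Conjecture \ref{conjA} with Conjecture $\mathrm{A}'$); your route as stated would only recover the $k=0$ case.
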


For part (i) concerning primitive $\beta$, our method relies on a degeneration formula
for Gromov-Witten invariants of abelian surfaces and calculations in
\cite{MPT}.
Via a version of the Gromov-Witten/Pairs
correspondence \cite{PT1}, the primitive case also yields an independent proof of the  Euler characteristic calculations
of relative Hilbert schemes of points
by G\"ottsche and Shende \cite{GS}.

For part (ii) concerning 
genus $2$,
the proof is reduced by a method of Debarre \cite{D}, 
G\"ottsche \cite{G}, and Lange-Sernesi \cite{LS}
to a lattice count in abelian groups. Our results reveal
a new and surprising multiple cover structure in these counts.

Conjecture \ref{conjA} is parallel to the full Katz-Klemm-Vafa conjecture
for $K3$ surfaces. Part (i) of Theorem \ref{YZ_intro} is 
parallel to the primitive KKV conjecture proven in \cite{MPT}.
Part (ii) is parallel to the full Yau-Zaslow conjecture
for rational curves on $K3$ surfaces proven in \cite{KMPS}.
While our proof of part (i) involves methods
parallel to those appearing in the proof of the primitive KKV conjecture, our proof
of part (ii) is completely unrelated to the (much more complicated)
geometry used in the proof of the full Yau-Zaslow conjecture. 

The full Katz-Klemm-Vafa conjecture is proven in \cite{PT2}. However,
most cases of Conjecture \ref{conjA} remain open.

\subsubsection{Point insertions for primitive classes} \label{ptptpt}
Let $\omeg \in H^4(A,\BZ)$ be the class of a point.
Define the $\lambda$-twisted Gromov-Witten invariants with $k$ point insertions by:
\[ \mathsf{N}_{g,k,(d_1,d_2)}^{\text{FLS}} =
\int_{[ \Mbar_{g,k}(A, \beta)^{\text{FLS}}]^{\text{red}} } (-1)^{g-2-k} \lambda_{g-2-k} \prod_{i=1}^{k} \ev_i^{\ast}(\omeg)\, ,
\]
where $\beta$ is a curve class of type $(d_1,d_2)$.
Define the function
\[ \Sf (z,\tau) = - \sum_{d \geq 1} \sum_{m | d} \frac{d}{m} \big( p^m - 2 + p^{-m} \big) q^d\,, \]
considered as a formal power series in the variables
\[ p = e^{2\pi i z} \quad \text{ and } \quad q= e^{2 \pi i \tau}\, . \]

\begin{thm} \label{thm_point_insertion} After setting $u=2\pi z$, we have
\[\sum_{g \geq 2} \sum_{d \geq 1} \mathsf{N}^{\textup{FLS}}_{g, k, (1, d)} u^{2g-2} q^d
 = q \frac{d}{dq}\, \left(  \frac{\Sf (z,\tau)^{k+1}}{k+1} \right)\, .
\]
\end{thm}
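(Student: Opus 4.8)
The plan is to reduce the statement to a multiplicative recursion together with a base case. Since
\[
q\frac{d}{dq}\left(\frac{\Sf(z,\tau)^{k+1}}{k+1}\right) = \Sf(z,\tau)^{k}\cdot q\frac{d}{dq}\Sf(z,\tau),
\]
the asserted identity is equivalent, on writing
\[
Z_k := \sum_{g\geq 2}\sum_{d\geq 1}\mathsf{N}^{\textup{FLS}}_{g,k,(1,d)}\,u^{2g-2}q^{d},
\]
to the two statements $Z_0 = q\frac{d}{dq}\Sf$ and $Z_k = \Sf\cdot Z_{k-1}$ for $k\geq 1$; that is, \emph{each point insertion multiplies the generating series by the single function} $\Sf$. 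First I would dispose of the base case. Under $u=2\pi z$ one has $p^{m}-2+p^{-m}=2\sum_{n\geq 1}(-1)^{n}(mu)^{2n}/(2n)!$, so extracting the coefficient of $u^{2g-2}q^{d}$ from $q\frac{d}{dq}\Sf$ (which replaces $d/m$ by $d^{2}/m$) produces exactly $d^{2}\,\tfrac{2(-1)^{g-2}}{(2g-2)!}\sum_{m\mid d}m^{2g-3}$, the primitive case $(d_1,d_2)=(1,d)$ of Conjecture \ref{conjA}. Hence $Z_0 = q\frac{d}{dq}\Sf$ is precisely Theorem \ref{YZ_intro}(i), and it remains to prove the recursion.

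To establish $Z_k=\Sf\cdot Z_{k-1}$ I would track one point insertion through the degeneration of the abelian surface already used for Theorem \ref{YZ_intro}(i). The class $\ev_k^{\ast}(\omeg)$ pins the $k$-th marked point to a fixed point of $A$; degenerating $A$ so that this fixed point is isolated on a component split off by the degeneration, the degeneration formula for the reduced virtual class expresses $\mathsf{N}^{\textup{FLS}}_{g,k,(1,d)}$ as a sum of products of a relative invariant carrying the remaining $k-1$ insertions against a relative invariant supported on the split-off piece and carrying the distinguished insertion. Because the $k$ marked points are interchangeable, the split-off factor is a universal series $L(z,\tau)$ independent of $k$, and summing the degeneration formula over $g$ and $d$ yields $Z_k = L\cdot Z_{k-1}$, hence $Z_k = L^{k}Z_0$.

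The decisive step is then the evaluation $L=\Sf$. I would compute the split-off factor using the $K3$/elliptic-curve calculations of \cite{MPT} that already underlie the primitive counts, now with one point insertion retained; the Lambert-series shape
\[
\Sf(z,\tau) = -\sum_{m\geq 1}\big(p^{m}-2+p^{-m}\big)\frac{q^{m}}{(1-q^{m})^{2}}
\]
is exactly what such a relative/local computation returns, with the factor $p^{m}-2+p^{-m}$ recording the genus through $u=2\pi z$ and $q^{m}/(1-q^{m})^{2}=\sum_{n\geq 1}nq^{mn}$ recording the degree. Equivalently, $L$ can be pinned down by computing $Z_1$ directly and reading off $L=Z_1/Z_0$.

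The main obstacle is the virtual-class bookkeeping in the degeneration. One must verify that the reduced class degenerates with no excess contributions, that the shift of the Hodge index $\lambda_{g-2-k}$ with $k$ is compatible with the factorization (so that the integrand distributes correctly as Hodge classes on the two sides together with the retained point insertion), and—most delicately—that the split-off factor matches $\Sf$ on the nose rather than merely up to a $q$- or $p$-dependent proportionality. This final matching is where the closed evaluation of the relevant elliptic-curve integrals and the input of \cite{MPT} are essential.
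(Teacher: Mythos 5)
Your reduction of the theorem to the base case $Z_0 = q\frac{d}{dq}\Sf$ plus the recursion $Z_k = \Sf\cdot Z_{k-1}$ is arithmetically sound, and your coefficient extraction verifying that $q\frac{d}{dq}\Sf$ matches the primitive case of Conjecture \ref{conjA} is correct. But both halves of the plan have genuine gaps. First, the base case is circular: within this paper, Theorem \ref{YZ_intro}(i) has no independent proof --- it is \emph{deduced} as the $k=0$ specialization of Theorem \ref{thm_point_insertion} itself (only the genus $2$ primitive counts were previously known; the all-genera primitive statement is new here). So you cannot quote it; you must prove $Z_0$ by the same degeneration machinery, at which point the induction on $k$ saves nothing --- and indeed the paper's argument handles all $k$ simultaneously.

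Second, and more seriously, the recursion step does not close as described. (a) The degeneration formula applies to (reduced) absolute or relative Gromov--Witten invariants, not to FLS invariants directly: the fixed-linear-system constraint does not localize under degeneration to the normal cone. The essential ingredient missing from your write-up is Proposition \ref{ev_odd_prop}, proved via the abelian vanishing of Lemma \ref{abelvan}, which trades the FLS condition for \emph{two} additional point insertions at the cost of the factor $d_1d_2/((k+1)(k+2))$; the factor $d = d_1 d_2$ is precisely the origin of the $q\frac{d}{dq}$ in the final formula, whereas in your setup the derivative is buried in the base case with no mechanism producing it. (b) Even granting the trade, a single application of the degeneration $A \rightsquigarrow A \cup_E (\p^1\times E)$ expresses $Z_k$ in terms of \emph{relative} invariants of $A/E$ and $(\p^1\times E)/E$, summed over splittings of genus, degree, and relative conditions $\xi$ --- not as $L\cdot Z_{k-1}$ with $Z_{k-1}$ the original absolute FLS series, so the recursion does not follow from one degeneration. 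The paper instead collapses the entire computation: Lemma \ref{Aeval344} shows the $A/E$ factor with integrand $\hodge^\vee(1)$ and relative point condition equals $\delta_{g,1}\delta_{d,0}$, so everything moves to $\p^1\times E$; then degenerating the base $\p^1$ into a chain of $k+3$ copies of $\p^1\times E$ and using the vanishing $\langle\,\omega\,|\,\hodge^\vee(1)\,\tau_0(\omeg)\,|\,\omega\,\rangle = 0$ of Lemma \ref{P1Eeval344} forces exactly one point insertion onto each of the first $k+2$ pieces, each contributing the factor $u^2\langle\,\mathsf{1}\,|\,\hodge^\vee(1)\,\tau_0(\omeg)\,|\,\omega\,\rangle = \Sf$. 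This is the rigorous form of the multiplicativity you posit, but it is obtained only after the FLS-to-insertion trade and the two vanishing evaluations --- exactly the items you flag as ``obstacles'' without supplying them.
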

For $k=0$, by definition
$$\mathsf{N}^{\textup{FLS}}_{g, 0, (1, d)}=
\mathsf{N}^{\textup{FLS}}_{g, (1, d)}\ .$$
Hence, by Theorem \ref{thm_point_insertion},  
\[\sum_{g \geq 2} \sum_{d \geq 1} \mathsf{N}^{\textup{FLS}}_{g, (1, d)} u^{2g-2} q^d
 = q \frac{d}{dq}\,   \Sf (z,\tau) \, ,
\]
which is a restatement of the formula
of Conjecture \ref{conjA} for the classes $(1,d)$.
Theorem \ref{thm_point_insertion}
specializes in the $k=0$ case to 
the primitive part
of  Theorem \ref{YZ_intro}.

\subsubsection{Quasi-modular forms}\label{qmfo}
Let
$\gamma_1, \dots, \gamma_n \in H^{\ast}(A, \BQ)$
be cohomology classes.
The primitive descendent potential of $A$ with insertions $\tau_{a_1}(\gamma_1) \dots \tau_{a_n}(\gamma_n)$ is defined\footnote{Unlike in the FLS setting, the degenerate type $(1, 0)$ is included here.} by
\[ \mathsf{F}_{g}^{A}( \tau_{a_1}(\gamma_1) \dots \tau_{a_n}(\gamma_n) )
= \sum_{d \geq 0} \Big\langle \tau_{a_1}(\gamma_1) \dots \tau_{a_n}(\gamma_n) \Big\rangle_{g,(1,d)}^{A, \text{red}} q^{d} \,,  \]
where the coefficients on the right hand side denote the reduced invariants of $A$,
\[ \Big\langle \tau_{a_1}(\gamma_1) \dots \tau_{a_n}(\gamma_n) \Big\rangle_{g,\beta}^{A, \text{red}} =
 \int_{ [ \Mbar_{g,n}(A,\beta) ]^{\text{red}} } \prod_{i=1}^n \ev_i^{\ast}(\gamma_i) \psi_i^{a_i} \,.
\]

The ring $\text{QMod}$ of holomorphic quasi-modular forms (of level 1)
is the free polynomial algebra
in the Eisenstein{\footnote{The Eisenstein series are defined by
$$E_{2k}(\tau) 
= 1 - \frac{4k}{B_{2k}} \sum_{m | d} m^{2k-1} q^d$$
where $B_{2k}$ is the Bernoulli number.}} series $E_2(\tau)$, $E_4(\tau)$ and $E_6(\tau)$,
\[ \text{QMod} = \BQ[ E_2, E_4, E_6 ] \,. \]
The ring $\text{QMod}$ carries a grading by weight,
\[ \text{QMod} = \bigoplus_{k \geq 0} {\text{QMod}}_{2k} \,, \]
where $E_{2k}$ has weight $2k$. Let 
$$\text{QMod}_{\leq 2k}\subset \text{QMod}$$
be the linear subspace of quasi-modular forms of weight $\leq 2k$.

The series vanish in $g=0$.
For $g \geq 1$ and arbitrary insertions, we have the following result.
\begin{thm} \label{modularity}
The series $\mathsf{F}_{g}^{A}\left( \tau_{a_1}(\gamma_1) \dots \tau_{a_n}(\gamma_n) \right)$
is the Fourier expansion in $q = e^{2\pi i \tau}$
of a quasi-modular form of weight $\leq 2(g-2) + 2n$,
$$\mathsf{F}_g^A\left(\tau_{a_1}(\gamma_1) \dots \tau_{a_n}(\gamma_n)\right) \in 
\textup{QMod}_{\leq 2(g-2)+2n}\, .$$
\end{thm}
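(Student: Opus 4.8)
The plan is to prove modularity by reducing, via deformation invariance of the reduced theory, to an explicit product geometry, and then to the Gromov-Witten theory of an elliptic curve, where quasi-modularity is the theorem of Okounkov and Pandharipande \cite{OP1,OP3}.

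First I would exploit that the reduced invariants $\langle \prod_i \tau_{a_i}(\gamma_i) \rangle_{g,(1,d)}^{A,\mathrm{red}}$ are deformation invariant over the moduli space of $(1,d)$-polarized abelian surfaces, which is irreducible. Since a product $A = E_1 \times E_2$ of elliptic curves carrying the class $\beta_d = [E_1] + d[E_2]$ is a member of this family (and the type $(1,d)$ is indeed primitive, as $\gcd(1,d)=1$), it suffices to establish the statement on the product. Arbitrary insertions $\gamma_i \in H^{\ast}(A,\BQ)$ are handled by the K\"unneth decomposition $H^{\ast}(A) = H^{\ast}(E_1) \otimes H^{\ast}(E_2)$, which I would use to split each $\gamma_i$ into factors along the two directions. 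On $A = E_1 \times E_2$, the class $\beta_d$ has degree $1$ over $E_1$ and degree $d$ over $E_2$; the projection $A \to E_1$ exhibits $A$ as an isotrivial elliptic fibration with fiber $E_2$ in which $\beta_d$ is a section plus $d$ times the fiber.

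Second, I would reduce the product geometry to the Gromov-Witten theory of the fiber $E_2$. Because $\beta_d$ is a degree-$1$ multisection of $A \to E_1$, a stable map of class $\beta_d$ carries a distinguished component mapping isomorphically to the base, while the full degree $d$ is carried by fiber-directed components. Degenerating the base $E_1$ (or arguing directly from the product structure together with the explicit reduced obstruction theory, whose one-dimensional cosection is cut out by the holomorphic form $dz_1 \wedge dz_2$), I would express the generating series $\mathsf{F}_g^A(\cdots)$ in $q$ in terms of the stationary and descendent Gromov-Witten series of $E_2$: the K\"unneth components of the $\gamma_i$ along $E_2$ become descendent insertions for the elliptic curve, while the components along $E_1$, the translation action, and the section are absorbed into the base. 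The degenerate term $d=0$, of type $(1,0)$, is precisely the pure $E_1$ contribution and serves as the base case.

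Third, I would invoke the quasi-modularity of the $n$-point stationary and descendent series of an elliptic curve \cite{OP1,OP3}, which are quasi-modular forms of pure weight determined by the descendent data. Propagating this weight grading through the reduction of the second step, together with the K\"unneth bookkeeping of the insertions, places $\mathsf{F}_g^A(\tau_{a_1}(\gamma_1)\cdots\tau_{a_n}(\gamma_n))$ in $\textup{QMod}$ with the weight bound $2(g-2)+2n$. The main obstacle is precisely this second step: establishing the reduction to the elliptic-curve theory at the level of virtual classes, matching the reduced virtual class on the product (and its behaviour under degeneration of the base) with the ordinary theory of $E_2$, while simultaneously accounting for arbitrary insertions, the inclusion of the degenerate type $(1,0)$, and the translation action. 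Once this correspondence is in place, quasi-modularity is immediate from the elliptic-curve input; verifying that the maximal weight is exactly $2(g-2)+2n$ — the origin of the contribution $2n$, two per insertion, and of the shift relative to the weight $2g$ of the $K3$ analog in \cite{MPT} — is a careful but routine weight count.
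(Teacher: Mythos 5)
Your Step 1 (deformation to a generic product $E_1 \times E_2$) and your final input (quasi-modularity of descendent series of an elliptic curve \cite{OP1,OP3}) agree with the paper, and your structural observation is correct as far as it goes: on a \emph{generic} product, genericity forbids irreducible components of bidegree $(1,k)$ with $k>0$, so stable maps in class $(1,d)$ are indeed a horizontal section plus fiber-directed pieces. The genuine gap is exactly where you yourself locate "the main obstacle": neither mechanism you offer for Step 2 works as stated, and no substitute is supplied. A direct fiberwise ("comb") computation in the style of Bryan--Leung \cite{BLA} would require integrating the cosection-reduced class over highly obstructed strata involving multiple covers of fibers and arbitrary descendent insertions, with no torus action on $E_1 \times E_2$ available to localize; this is not routine and has not been carried out. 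And "degenerating the base $E_1$" faces a dichotomy: degenerating $E_1$ completely (to a cycle of rational curves) leaves the class of abelian surfaces, so the holomorphic $2$-form defining the reduced class does not extend and no reduced degeneration formula exists there; while the degeneration that \emph{is} available --- to the normal cone of a fiber, $A \rightsquigarrow A \cup_E (\p^1 \times E)$, for which the paper adapts the degeneration formula so that the cosection survives only on the $A$-side --- does not eliminate the abelian surface: a reduced relative theory of $A/E$ with the \emph{same} $(g,n)$ reappears on one side of every such degeneration, so no direct reduction to the theory of $E_2$ results.

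This is why the paper's proof is an induction on $(g,n)$ rather than a reduction. Descendents are first traded for tautological classes $\alpha \in R^*(\Mbar_{g,n})$ (Proposition \ref{modularity_proposition}); when $\deg \alpha \geq g$, the strong Getzler--Ionel vanishing of \cite{FPM} pushes $\alpha$ into the boundary and strictly lowers $(g,n)$; otherwise the abelian vanishing relation (Lemma \ref{abelvan}) raises an insertion to the point class, which is then sent to the $\p^1 \times E$ bubble; the absolute and relative $A/E$ series are related by a unipotent matrix which is inverted (Lemma \ref{quasi_mod_relative_lemma}), with the cotangent-line comparison $L_{\text{rel}} \cong \pi^* L_1$ and rubber calculus handling the $\psi$-classes, and Looijenga's vanishing $R^{\geq g-1}(M_g) = 0$ (again in the strong form of \cite{FPM}) disposing of the remaining pure point-insertion case. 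Only the $(\p^1 \times E)/E$ factors are reduced --- via \cite{MP2}, localization, and the product formula --- to the elliptic curve, where \cite{OP1,OP3} and the refined weight statement of Lemma \ref{2uVMV}(ii) (sharpening \cite[Lemma 30]{MPT}) enter. Note finally that the correct weight is not "two per insertion": the sharp statement (Theorem \ref{modularity_refined}) gives weight exactly $2(g-2) + 2\ell$ with $\ell = \sum_k v_{\aaa_2}(\gamma_k) \leq n$, and this refinement is itself what the induction tracks; your heuristic count gives only the coarse bound and, more importantly, no mechanism to establish it.
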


A sharper formulation of Theorem \ref{modularity} specifying the weight
appears in Theorem \ref{modularity_refined} of Section \ref{section_modularity}.

\subsubsection{Hyperelliptic curves} \label{hyp_intro}
A nonsingular curve $C$ of genus $g\geq 2$ is {\em hyperelliptic} if $C$ admits a
degree 2 map to $\PP^1$,
$$C \rightarrow \PP^1\, .$$
A stable curve $C$ is {\em hyperelliptic} if $[C]\in \overline{M}_g$
is in the closure of the locus of nonsingular hyperelliptic curves.{\footnote{The
closure can be described precisely via the theory of admissible covers \cite{HM}.}}
An {\em irreducible hyperelliptic curve} of genus $g$ on an abelian surface $A$,
$$C \subset A\, ,$$
is the image of a stable map 
$$f:\widehat{C} \rightarrow C\subset A$$
satisfying the following two conditions: 
\begin{itemize}
\item $\widehat{C}$ is an irreducible
stable hyperelliptic curve of genus $g$, 
\item $f:\widehat{C} \rightarrow C$ is
birational.
\end{itemize}

By \cite{Pi}, for any abelian surface $A$ and curve class $\beta$, the number of irreducible
hyperelliptic curves of genus $g$ in a fixed linear system of class $\beta$ is {\em finite}.\footnote{On the other hand, generic abelian varieties of dimension $\geq 3$ contain {\em no} hyperelliptic curve at all, see \cite{Pi}.} We write $\mathsf{h}^{A, \text{FLS}}_{g, \beta}$ for this finite count. Unlike all other
invariants considered
in the paper, $\mathsf{h}^{A, \text{FLS}}_{g, \beta}$ is {\em defined} by
classical counting.

Since every genus $2$ curve is hyperelliptic, for generic $A$ and $\beta$ of type $(1, d)$ we have 
$$\mathsf{h}^{A, \text{FLS}}_{2, \beta} = d^2 \sum_{m|d}m\, $$
by the genus 2 part of Theorem \ref{YZ_intro}.
The following result calculates the genus $3$ hyperelliptic counts in generic primitive classes.

\begin{prop} \label{cor_hyp3} For a generic abelian surface $A$ with a curve class $\beta$ of type~$(1,d)$,
\[ \mathsf{h}^{A, \textup{FLS}}_{3, \beta} = d^2 \sum_{m|d} \frac{m ( 3m^2 + 1 - 4d) }{4}
\,. \]
\end{prop}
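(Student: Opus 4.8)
The plan is to express $\mathsf{h}^{A,\textup{FLS}}_{3,\beta}$ as an intersection number on the reduced virtual class of $\Mbar_3(A,\beta)^{\textup{FLS}}$ against the pullback of the hyperelliptic divisor class in $\overline{M}_3$, and then to evaluate the resulting terms using the curve counts already established. Recall that the hyperelliptic locus is a divisor $\overline{H}_3 \subset \overline{M}_3$ whose class in $\operatorname{Pic}(\overline{M}_3)\otimes \BQ$ is
\[ [\overline{H}_3] = 9\lambda_1 - \delta_0 - 3\delta_1 , \]
a classical computation in the theory of the moduli space of curves. Let $\pi : \Mbar_3(A,\beta)^{\textup{FLS}} \to \overline{M}_3$ send a stable map to the stabilization of its domain; the Hodge bundle pulls back to $\mathbb{E}$, so $\pi^\ast \lambda_1 = \lambda_1$. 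Since $[\Mbar_3(A,\beta)^{\textup{FLS}}]^{\textup{red}}$ has dimension $1$ and $\lambda_1,\delta_0,\delta_1$ all lie in degree $2$, the integral $\int \pi^\ast[\overline{H}_3]$ is a number, and the first step is to argue that — after discarding excess contributions from the boundary of the space of stable maps — it equals the honest count $\mathsf{h}^{A,\textup{FLS}}_{3,\beta}$ of irreducible genus $3$ hyperelliptic curves mapping birationally onto their image. Here the structural fact that a hyperelliptic curve on $A$ is anti-invariant under an involution $a \mapsto -a + t$ (so that the hyperelliptic involution on the normalization is induced from $A$) is what makes the count finite and governs the shape of the boundary contributions.

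Granting the identity, I would evaluate the three terms separately. Integration of $\lambda_1$ against the reduced class is $-\mathsf{N}^{\textup{FLS}}_{3,\beta}$ by the definition \eqref{123}, and Theorem \ref{YZ_intro}(i) together with the formula of Conjecture \ref{conjA} for the primitive type $(1,d)$ gives
\[ \int_{[\Mbar_3(A,\beta)^{\textup{FLS}}]^{\textup{red}}} \lambda_1 = -\mathsf{N}^{\textup{FLS}}_{3,(1,d)} = \frac{d^2}{12}\sum_{m \mid d} m^3 . \]
Hence the $9\lambda_1$ contribution is $d^2\cdot \tfrac34 \sum_{m\mid d} m^3$, which is exactly the leading $\tfrac34 m^3$ term of the claimed formula. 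This clean numerical match is the main heuristic confirming that $[\overline{H}_3]$ is the correct class to pull back.

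The remaining task is to compute the boundary intersections $\int \delta_0$ and $\int\delta_1$. These measure the degenerations of the one-dimensional family of genus $3$ maps: $\delta_0$ records transitions to irreducible domains with a non-separating node (the genus $3$ members of the fixed linear system degenerating to genus $2$ members with one additional node), while $\delta_1$ records splittings into genus $1$ and genus $2$ components each mapping nonconstantly to $A$. I would evaluate these by relating them to genus $2$ data — the count $\mathsf{h}^{A,\textup{FLS}}_{2,\beta} = d^2 \sum_{m\mid d} m$ recorded above together with the genus $2$ FLS invariants — and to the number of nodes (of order $d$) acquired along the family. The expected outcome is
\[ -\int \delta_0 - 3\int \delta_1 = d^2\Big(\tfrac14 - d\Big)\sum_{m\mid d} m , \]
which combines with the $\lambda_1$-term to give precisely $\mathsf{h}^{A,\textup{FLS}}_{3,\beta} = d^2 \sum_{m\mid d} \tfrac{m(3m^2 + 1 - 4d)}{4}$.

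The hard part is the first step: showing that the divisor-class computation returns the enumerative count. One must control the excess and boundary contributions in $\Mbar_3(A,\beta)^{\textup{FLS}}$ — contracted components, multiple covers, and reducible domains that lie in $\pi^{-1}(\overline{H}_3)$ for non-enumerative reasons — and verify that each genuine hyperelliptic curve is counted with multiplicity one. This is exactly the transversality issue flagged for the hyperelliptic results of the paper, and is where the real difficulty lies; the two boundary evaluations are then a finite, if delicate, bookkeeping of genus $2$ and genus $1$ contributions.
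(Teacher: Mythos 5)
Your strategy coincides with the paper's first step: integrate $\pi^\ast \mathcal{H}_3$ with $\mathcal{H}_3 = 9\lambda_1 - \delta_0 - 3\delta_1$ against the reduced FLS class, and your $\lambda_1$ evaluation $\int \lambda_1 = -\mathsf{N}^{\textup{FLS}}_{3,(1,d)} = \frac{d^2}{12}\sum_{m|d} m^3$ is exactly right. But your boundary bookkeeping contains a genuine error. The correct values are
\begin{equation*}
\Big\langle \delta_0 \Big\rangle_{3,(1,d)}^{A,\textup{FLS}}
= \frac{1}{2}\Big\langle \tau_0(\Delta) \Big\rangle_{2,(1,d)}^{A,\textup{FLS}}
= \frac{1}{2}\cdot 2d \cdot d^2\sum_{m|d} m = d^3 \sigma(d)\,,
\qquad
\Big\langle \delta_1 \Big\rangle_{3,(1,d)}^{A,\textup{FLS}} = 0\,,
\end{equation*}
where $\Delta$ is the diagonal of $A\times A$ (via the gluing/splitting formula for $\delta_0$), and the $\delta_1$ integral vanishes because a generic $A$ contains no genus $1$ curves: the genus $1$ component of any $\delta_1$-degeneration is forced to be contracted and $[\Mbar_{1,1}(A,0)]^{\textup{vir}} = 0$. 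In particular your picture of $\delta_1$ as recording genus $1$ plus genus $2$ components \emph{each mapping nonconstantly} describes an empty locus. Hence $-\int\delta_0 - 3\int\delta_1 = -d^3\sigma(d)$, not the $d^2\bigl(\tfrac14 - d\bigr)\sigma(d)$ you posit, and the divisor integral is the \emph{virtual} number $\mathsf{H}^{\textup{FLS}}_{3,(1,d)} = d^2\sum_{m|d}\frac{m(3m^2 - 4d)}{4}$, which does not equal $\mathsf{h}^{A,\textup{FLS}}_{3,\beta}$.

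The discrepancy $+\frac{d^2}{4}\sigma(d)$ is precisely the excess contribution you acknowledge verbally ("after discarding excess contributions") but never compute, and it is the real second half of the proof rather than something to be folded into $\int\delta_0$ and $\int\delta_1$. By the genus $2$ non-degeneracy statement — which the paper proves \emph{unconditionally} for generic $(A,\beta)$ via Poonen--Stoll's theorem that a generic Weierstrass-pointed hyperelliptic curve meets the torsion of its Jacobian only at Weierstrass points (Lemma \ref{nondeg2}), so no transversality hypothesis like $(\dag)$ is needed here, contrary to your closing remark, where $(\dag)$ enters only for the all-genus Theorem \ref{thm_hyp} — the only non-enumerative contributions to $\mathsf{H}^{\textup{FLS}}_{3,(1,d)}$ come from maps $f : B \cup C' \to A$ with $B$ a nonsingular genus $2$ curve in the FLS and $C'$ a contracted elliptic tail attached at one of the $6$ Weierstrass points of $B$ (such domains lie in the closure of the hyperelliptic locus by the admissible-cover description). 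An obstruction-bundle computation gives each genus $2$ curve the excess contribution $6\cdot\frac{1}{2}\int_{\Mbar_{1,1}} c_1(\mathrm{Ob}) = -\frac{1}{4}$, and since there are $d^2\sigma(d)$ genus $2$ curves in the FLS by the genus $2$ part of Theorem \ref{YZ_intro}, one obtains $\mathsf{h}^{A,\textup{FLS}}_{3,\beta} = \mathsf{H}^{\textup{FLS}}_{3,(1,d)} + \frac{d^2}{4}\sigma(d)$, which closes the computation. Without identifying and evaluating this tail contribution, your argument does not go through: the numbers you assign to the boundary divisors are reverse-engineered from the answer rather than derived.
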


Let $\mathcal{H}_g$ be the stack fundamental class
of the closure of nonsingular hyperelliptic curves inside $\Mbar_g$.
By \cite{FPM}, $\mathcal{H}_g$ is
a tautological{\footnote{See \cite{FP13}
for an introduction to tautological classes on the moduli spaces of curves.}} class of codimension $g-2$. 
While the restriction of $\mathcal{H}_g$ to $M_g$
is a known multiple of $\lambda_{g-2}$, a closed formula for $\mathcal{H}_g$
on $\overline{M}_g$
in terms of the standard generators of the tautological ring is not known.

For $\beta$ of type $(d_1,d_2)$, we {define} a {\em virtual} count of hyperelliptic curves in class $\beta$ by
\begin{equation}\label{xxgg} \mathsf{H}^{\text{FLS}}_{g,(d_1,d_2)} = \int_{[ \Mbar_{g}(A, \beta)^{\text{FLS}} ]^{\text{red}} } \pi^{\ast}( \mathcal{H}_g )\, , 
\end{equation}
where $\pi$ is the forgetful map
$$\pi:\overline{M}_g(A,\beta) \rightarrow \Mbar_g\, .$$
Because the integral \eqref{xxgg} is deformation invariant,
the left side depends only upon $g$ and $(d_1,d_2)$.


For irreducible curve classes $\beta$ of type $(1, d)$ on an abelian
surface $A$, consider the following property:
\begin{itemize}
 \item[($\dag$)] {\em Every irreducible curve in $\p^1 \times A$ of class $$(2, \beta) = 2[\p^1] + \beta \in H_2(\p^1 \times A, \BZ)$$ is nonsingular.}
\end{itemize}
We will prove property $(\dag)$ for curves of genus $2$ in case $A$ and $\beta$ are generic.
Together with the explicit expression \cite{HM} for 
$$\mathcal{H}_3 \in H^2(\Mbar_3, \BQ)\, ,$$  we can deduce Proposition \ref{cor_hyp3}.
The existence of classes $\beta$ of type $(1,d)$ satisfying ($\dag$) is not known for most $d$,
but is expected generically for dimension reasons.

Define the Jacobi theta function \cite{C}
\begin{equation}
K(z,\tau) = \frac{i\, \vartheta_1(z,\tau)}{\eta(\tau)^3}
= i u \, \exp \bigg( {\sum_{k \geq 1}} \frac{(-1)^k B_{2k}}{2k (2k)!} E_{2k}(\tau) u^{2k} \bigg)\, ,
\label{KKKKKdef}
\end{equation}
where $u = 2 \pi z$.

\begin{thm} \label{thm_hyp} Let $\beta$ be an irreducible class of type $(1, d)$ on an abelian surface $A$ satisfying $(\dag)$. Then we have:
\begin{enumerate}
\item[(i)] 
$\displaystyle\sum_{g \geq 2} \mathsf{h}^{A, \textup{FLS}}_{g, \beta} \left( 2 \sin( u/2)  \right)^{2g+2}
= \sum_{g \geq 2}\, \mathsf{H}^{\textup{FLS}}_{g,(1,d)} \, u^{2g+2} \,.$
\vspace{\jot}

\item[(ii)] After the change of variables $u = 2 \pi z$ and $q = e^{2 \pi i \tau}$,
\[
\sum_{g \geq 2} \, \mathsf{H}^{\textup{FLS}}_{g,(1,d)} \, u^{2g+2} = 
\textup{Coeff}_{q^d} \left[ \bigg( q \frac{d}{dq} \bigg)^2
\frac{ K(z,\tau)^4 }{4}
\right]\,, \]
where $\textup{Coeff}_{q^d}$ denotes the coefficient of $q^d$.
\end{enumerate}
\end{thm}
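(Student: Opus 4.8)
The plan is to prove the two parts by independent mechanisms: part (i) by a geometric comparison of the classical and virtual hyperelliptic counts using property $(\dag)$, and part (ii) by a direct Gromov-Witten evaluation of the hyperelliptic class against the reduced theory of $A$. Note that part (ii) concerns only the deformation-invariant virtual counts $\mathsf{H}^{\textup{FLS}}_{g,(1,d)}$, so its proof need not invoke $(\dag)$.

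For part (i), I would first use $(\dag)$ to control $\overline{M}_g(A,\beta)^{\textup{FLS}}$. The class $\pi^{\ast}\mathcal{H}_g$ is supported on stable maps $f:\Sigma\to A$ with hyperelliptic source; composing the $g^{1}_{2}$ of the main (non-contracted) component of $\Sigma$ with $f$ gives a map to $\PP^1\times A$ with image in class $(2,\beta)$, which by $(\dag)$ is nonsingular, so that component is a smooth irreducible hyperelliptic curve of some genus $g'\le g$ mapping to a class-$\beta$ curve in $A$. There are exactly $\mathsf{h}^{A,\textup{FLS}}_{g',\beta}$ of these in the fixed linear system. The virtual count $\mathsf{H}^{\textup{FLS}}_{g,(1,d)}$ therefore decomposes as a sum over $g'\le g$ of $\mathsf{h}^{A,\textup{FLS}}_{g',\beta}$ times a local contribution from genus-$g$ hyperelliptic stable maps birational onto a fixed smooth genus-$g'$ image, with the excess genus $g-g'$ carried by components bubbling off at the $2g'+2$ Weierstrass points.

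The heart of part (i) is the evaluation of this local contribution. Using the admissible-cover description of $\mathcal{H}_g$ along the boundary, the contribution factorizes over the $2g'+2$ Weierstrass points, and I expect the per-point generating function (summed over the excess genus absorbed there) to evaluate, via the Mumford relations and standard Hodge-integral identities, to $2\sin(u/2)/u$. Multiplying the $2g'+2$ factors against the leading weight $u^{2g'+2}$ of the undeformed genus-$g'$ curve produces exactly the weight $(2\sin(u/2))^{2g'+2}$ attached to $\mathsf{h}^{A,\textup{FLS}}_{g',\beta}$, which is the asserted identity. The main obstacle here is precisely this per-point Hodge integral: since the closed form of $\mathcal{H}_g$ on $\overline{M}_g$ is unknown, the factor $2\sin(u/2)$ must be extracted from the admissible-cover geometry and the behavior of the Hodge bundle at the branch points rather than from an explicit tautological formula.

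For part (ii), I would evaluate $\int\pi^{\ast}\mathcal{H}_g$ over $[\overline{M}_g(A,\beta)^{\textup{FLS}}]^{\textup{red}}$ as a combination of Hodge-class ($\lambda$) integrals and compute these against the reduced theory of $A$. Mumford's Grothendieck-Riemann-Roch formula for the Hodge bundle produces precisely the exponential shape $\exp\!\big(\sum_{k\ge1}\tfrac{(-1)^{k}B_{2k}}{2k(2k)!}E_{2k}u^{2k}\big)$ appearing in $K(z,\tau)$, with $\kappa$-classes replaced by Eisenstein series because tautological integrals against the reduced theory of $A$ evaluate to quasimodular forms (Theorem \ref{modularity}, with the explicit shape of Theorem \ref{thm_point_insertion}). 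Assembling the contributions yields a series $(q\tfrac{d}{dq})^{2}K(z,\tau)^{n}/n$; the genus-$2$ coefficient, where $\mathcal{H}_2$ is the full fundamental class and $\mathsf{H}^{\textup{FLS}}_{2,(1,d)}=\mathsf{N}^{\textup{FLS}}_{2,(1,d)}=d^{2}\sum_{m\mid d}m$, fixes the normalization. To pin down $n=4$ and conclude, I would argue by modularity: both sides are Fourier-Jacobi expansions of Jacobi forms of weight $0$ and index $2$ (the form $K$ has weight $-1$ and index $1/2$, so $K^{4}$ has weight $-4$ and index $2$, and each $q\tfrac{d}{dq}$ raises the weight by $2$), and the relevant space is finite dimensional, so matching finitely many low-genus coefficients—using the genus-$2$ value together with one further genus where $\mathcal{H}_g$ is explicit, e.g.\ $\mathcal{H}_3\in H^{2}(\overline{M}_3,\BQ)$—suffices. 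I expect the principal difficulty in part (ii) to be establishing the Jacobi transformation in the elliptic variable $z$: Theorem \ref{modularity} gives quasimodularity in $\tau$ for each fixed $g$, and organizing these into a single Jacobi form of fixed index—equivalently, recognizing the $\vartheta_1$-structure of $K$ in the $z$-dependence—is what requires the full strength of the reduced theory.
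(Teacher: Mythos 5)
Your part (i) follows the paper's actual route: under $(\dag)$ the only nodal contributions to $\mathsf{H}^{\textup{FLS}}_{g,(1,d)}$ come from a nonsingular hyperelliptic curve $B$ of genus $h\le g$ with contracted (hyperelliptic, Weierstrass-attached) components at its $2h+2$ Weierstrass points, giving $\sum_h \mathsf{h}^{A,\textup{FLS}}_{h,\beta}F(u)^{2h+2}=\sum_g \mathsf{H}^{\textup{FLS}}_{g,(1,d)}u^{2g+2}$ with the per-point series $F(u)=u+\sum_{g\ge1}u^{2g+1}\int_{\Mbar_{g,1}}\mathcal{H}_{g,1}\,c(\BE^\vee)^2/(1-\psi_1)$. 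But the step you merely ``expect'' is the entire content of part (i): the evaluation $F(u)=2\sin(u/2)$ does \emph{not} follow from Mumford's relations or standard Hodge-integral identities, precisely because no closed tautological formula for $\mathcal{H}_{g,1}$ on $\Mbar_{g,1}$ is known (the paper stresses this). The paper imports this evaluation as a theorem of Wise \cite{Wise}, proved by orbifold Gromov--Witten methods, so your proposal has a genuine unfilled hole exactly at the point you flag. (The paper's second proof avoids Wise by using the pure-weight statement of Theorem \ref{modularity_refined} to force $F(u)$'s effect, but that argument still needs the input discussed next.)

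Part (ii) is where your route diverges and cannot close as written. The source of the Jacobi form $K(z,\tau)^4$ in the paper is geometric, not modular bootstrapping: Proposition \ref{hyp_prop} translates hyperelliptic curves on $A$ with a Weierstrass point at $2$-torsion into rational curves on the Kummer $K3$ inside $\Hilb^2(A)$ (via Lemma \ref{lem_trans} and Graber's comparison), and the Yau--Zaslow formula plus theta identities \cite{GO} yield $\sum_g \mathsf{h}^{A,\textup{FLS}}_{g,\beta}(2\sin(u/2))^{2g+2}=\textup{Coeff}_{q^d}[(q\frac{d}{dq})^2K^4/4]$; part (ii) is then immediate from part (i). Your substitute --- direct GRR evaluation of $\pi^\ast\mathcal{H}_g$ plus rigidity in a finite-dimensional space of Jacobi forms --- founders twice: the GRR evaluation is blocked by the same unknown boundary behavior of $\mathcal{H}_g$, and the rigidity argument presupposes the index-$2$ elliptic transformation law in $z$, which nothing in the reduced theory of $A$ supplies (Theorem \ref{modularity} gives only quasimodularity in $\tau$ at each fixed $g$); you concede this, but it is not a technical difficulty to be deferred --- it is the theorem. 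A smaller inaccuracy: $(q\frac{d}{dq})^2K^4$ is only quasi-Jacobi, not a Jacobi form of weight $0$, since $q\frac{d}{dq}$ destroys the Jacobi transformation; the correct finite-dimensionality statement must be made for the underlying weight $-4$, index $2$ form $K^4$. The missing idea, in short, is the $\Hilb^2(A)$/Kummer correspondence of Proposition \ref{hyp_prop}, which simultaneously supplies the $\vartheta_1$-structure in $z$ and the closed evaluation that your modularity matching was meant to recover.
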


Enumerative results on hyperelliptic curves via Gromov-Witten theory
were first obtained for $\PP^2$ by T.~Graber \cite{Gra} using
the Hilbert scheme of points $\text{Hilb}^2(\PP^2)$.
The hyperelliptic curve counts on abelian surfaces were first
studied by S.~Rose \cite{Ros14} using 
the orbifold Gromov-Witten theory of $\text{Sym}^2(A)$ and
the geometry of the Kummer. Rose derives his results from
the crepant resolution conjecture (CRC) \cite{CRC, Ruan} and certain geometric genericity
assumptions. While our approach is similar,
the closed formula (ii) for $\mathsf{H}^{\textup{FLS}}_{g,(1,d)}$ via the theta function $K(z,\tau)$ 
is new.

Proposition \ref{cor_hyp3} for $\mathsf{h}^{A, \textup{FLS}}_{3, \beta}$ and the formula
of Theorem \ref{thm_hyp} for $\mathsf{h}^{A, \textup{FLS}}_{g, \beta}$
in higher genus (obtained by combining parts (i) and (ii))
do {\em not} match Rose's results. The errors in Rose's genus 3 counts can be repaired to agree
with Proposition \ref{cor_hyp3}. We hope the CRC approach will be able to
arrive exactly at the formula of Theorem \ref{thm_hyp} for
$\mathsf{h}^{A, \textup{FLS}}_{g, \beta}$.

The values of $\mathsf{h}^{A, \text{FLS}}_{g, \beta}$
in low genus and degree are presented in 
Table~\ref{hyptable} below.
The distribution of the non-zero values in Table \ref{hyptable} matches precisely the results of Knutsen, Lelli-Chiesa, and Mongardi in \cite[Theorem~1.6]{KLM15}:
$\mathsf{h}^{A, \text{FLS}}_{g, \beta}$ is non-zero if and only if
\begin{equation} \label{gmax}
(g - 1) + \bigg\lfloor \frac{g - 1}{4} \bigg\rfloor \bigg((g - 1) - 2 \bigg\lfloor \frac{g - 1}{4} \bigg\rfloor - 2\bigg) \leq d \,.
\end{equation}
The entry for $g = 4$ and $d=3$ has recently been confirmed in \cite{BS}.

\pagebreak

\begin{longtable}{| c | c | c | c | c | c | c | c | c | c | c |}
\hline
\diagbox[width=1.1cm,height=0.7cm]{$g$}{$d$} & $1$ & $2$ & $3$ & $4$ & $5$ & $6$ & $7$ & $8$ & $9$ & $10$ \\
\hline
$2$ & $1$& $12$& $36$& $112$& $150$& $432$& $392$& $960$& $1053$& $1800$ \\
$3$ & $0$& $6$& $90$& $456$& $1650$& $4320$& $9996$& $20640$& $36774$ & $67500$ \\
$4$ & $0$& $0$& $9$& $192$& $1425$& $6732$& $23814$& $68352$& $173907$ & $387900$ \\
$5$ & $0$& $0$& $0$& $4$& $150$& $1656$& $10486$& $48240$& $174474$ & $539200$\\
$6$ & $0$& $0$& $0$& $0$& $0$& $36$& $735$& $6720$& $41310$ & $191400$ \\
$7$ & $0$& $0$& $0$& $0$& $0$& $0$& $0$& $96$& $1620$& $14700$ \\
$8$ & $0$& $0$& $0$& $0$& $0$& $0$& $0$& $0$ & $0$ & $100$ \\
\hline
\caption{The first values for the counts $\mathsf{h}^{A, \text{FLS}}_{g, \beta}$
of hyperelliptic curves of genus $g$ and type $(1,d)$ in a FLS of a generic abelian surface $A$
as predicted by Theorem \ref{thm_hyp}.} \label{hyptable}
\end{longtable}

\subsection{Abelian threefolds}

\subsubsection{Donaldson-Thomas theory} \label{abcal}
Let $X$ be an abelian threefold and let
$\beta \in H_2(X,\BZ)$
be a curve class. 
The Hilbert scheme of curves
\[
\Hilb^{n}(X, \beta) = \{ Z\subset X \ | \ [Z]= \beta, \ \chi(\CO_{Z})=n \}
\]
parameterizes 1 dimensional subschemes of class $\beta$
with holomorphic Euler characteristic $n$. The group $X$ acts on
$\Hilb^{n}(X, \beta)$ by translation.

If $n \neq 0$, no assumption on $\beta$ is made. If $n = 0$, we assume that $\beta$ is not of type $(d, 0, 0)$ up to permutation. Then, the action of $X$ has finite stabilizers and the stack quotient
\begin{equation*} \label{asgdb}
\Hilb^{n}(X, \beta) / X 
\end{equation*}
is a Deligne-Mumford stack.  

We consider here two numerical invariants of $\Hilb^n(X,\beta)/X$,
the topological Euler characteristic
\[
\DThat_{n, \beta}^X = e \big(\Hilb^{n} (X, \beta)/X\big),
\]
and the \emph{reduced Donaldson-Thomas invariant} of $X$ defined as
the Behrend function weighted Euler characteristic
\begin{align*}
\DT_{n, \beta}^X & = e \big(\Hilb^{n} (X, \beta)/X, \nu \big)\\
&=\sum _{k\in \BZ }k\cdot e\big(\nu ^{-1} (k) \big)\, .
\end{align*}
While the Behrend function
\[
\nu :\Hilb^{n} (X, \beta)/X \to \BZ 
\]
is integer valued,
the topological Euler characteristic $e$ is taken
in the orbifold sense and so may be a rational number. 
Hence,
\[\DT_{n, \beta}^X \in \mathbb{Q}\,, \quad \quad \DThat_{n, \beta}^X \in \mathbb{Q} \,.\]

By results of M.~Gulbrandsen \cite{Gul}
$\DT _{n,\beta}^X$
is invariant under deformations of the pair $(X,\beta)$
if $n \neq 0$.
For $n = 0$ Gulbrandsen's method breaks down, but
deformation invariance is still expected. The numbers
$\DThat^{X}_{n,\beta }$ are not expected to be deformation invariant.

By deformation equivalence, we may compute $\DT ^{X}_{n,\beta }$ 
after specialization to the product geometry
$X=A\times E$.
We compute $\DThat _{n, \beta}^{X}$ for the product geometry,
and we conjecture a simple
relationship there between $\DT ^{X}_{n,\beta }$ and $\DThat ^{X}_{n,\beta}$.
We then obtain a formula for 
$\DT^{X}_{n,\beta}$.


Let $A$ be a generic abelian surface
with a curve class $\beta_{\dtilde}$ of type $(1, \dtilde > 0)$,
and let $E$ be a generic elliptic curve.
Consider the abelian threefold
\[ X = A \times E \,. \]
The curve class
\[ (\beta_{\dtilde}, d) = \beta_{\dtilde} + d [E] \in H_2(X, \BZ) \]
is of type $(1, \dtilde, d)$.

The following result determines the invariants
$\DThat_{n,(\beta_{\dtilde},d)}^X$ in the first
two nontrivial cases $\dtilde =1$ and $\dtilde =2$.


Let $K$ be the theta function which already
appeared in Section \ref{hyp_intro},
\begin{equation*} K(p,q)
= (p^{1/2} - p^{-1/2}) \prod_{m \geq 1} \frac{ (1-pq^m) (1-p^{-1}q^m)}{ (1-q^m)^2 } \,. 
\end{equation*}


\begin{thm} \label{dthatthm} \label{dtthm} \label{eulthm} For the topological Euler characteristic theory, we have
\leavevmode
\begin{enumerate}
 \item $\displaystyle\sum_{d \geq 0}\sum_{n \in \BZ}
\DThat_{n, (\beta_1, d)}^X\, p^n q^{d} = K(p,q)^2 \,,$ \vspace{\jot}
 \item $\displaystyle\sum_{d \geq 0}\sum_{n \in \BZ}
\DThat_{n, (\beta_2, d)}^X\, p^n q^{d}
= K(p,q)^4 \cdot \bigg( \frac{1}{2} + \frac{3 p}{(1-p)^2} $\vspace{-\abovedisplayskip}\vspace{\jot}
$$\phantom{K(p,q)^4 \cdot \Big\{ \frac{1}{2} + \frac{3 p}{(1-p)^2}} + \sum_{d \geq 1}\sum_{k | d} k \cdot \big( 3 (p^k + p^{-k}) q^d + 12 q^{2d} \big) \bigg) \,.$$
\end{enumerate}
\end{thm}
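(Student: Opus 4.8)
The plan is to compute the orbifold Euler characteristics $\DThat^X_{n,(\beta_\dtilde,d)} = e\big(\Hilb^n(X,(\beta_\dtilde,d))/X\big)$ directly, exploiting the product structure $X=A\times E$ together with the translation action of $X$. The starting point is the geometry of a subscheme $Z$ read through the two projections $\pr_A:X\to A$ and $\pr_E:X\to E$. Since the intersection of $(\beta_\dtilde,d)$ with the divisor $A\times\{\mathrm{pt}\}$ equals $d$, the map $\pr_E|_Z$ has degree $d$; hence $Z$ decomposes into a \emph{horizontal} curve supported over finitely many fibres of $\pr_E$ — whose generic member is a curve of the linear system $|\beta_\dtilde|$ on $A$, of arithmetic genus $\mathsf{g}=\dtilde+1$ by adjunction — together with \emph{vertical} $E$-components and punctual/embedded structure carrying the remaining $E$-degree. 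The first step is to stratify $\Hilb^n(X,(\beta_\dtilde,d))$ by the isomorphism type of the underlying curve in $|\beta_\dtilde|$ and by the configuration of the vertical data.

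The second step disposes of the quotient by $X$. Because $e(A)=e(E)=0$, naive multiplicativity is useless; instead I would follow Gulbrandsen and introduce the $X$-equivariant summation (Albanese) morphisms $\Hilb^n(X,(\beta_\dtilde,d))\to A$ and $\to E$ recording the total position of $Z$ in each factor. As $d>0$ and $\dtilde$ controls the $A$-degree, these maps are equivariant for the $A$- and $E$-actions through isogenies $\phi$, so the stack quotient by $X$ is identified with $[\Phi^{-1}(0)/\ker\phi]$ for a single fibre $\Phi^{-1}(0)$ and the finite torsion group $\ker\phi$. The orbifold Euler characteristic of $\Hilb/X$ then equals $e\big(\Phi^{-1}(0)\big)$ divided by the order of that torsion group, with care taken to track the finite stabilizers arising where vertical components collide or the horizontal curve degenerates.

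The heart of the matter is the evaluation of the fibrewise Euler characteristics as powers of $K(p,q)$. A smooth, isolated (up to translation) genus-$\mathsf{g}$ horizontal curve should contribute its standard genus-$\mathsf{g}$ BPS factor $K(p,q)^{2\mathsf{g}-2}$: the vertical spreading of points along $E$ produces the G\"ottsche-type series $\prod_{m\ge1}(1-q^m)^{-2}$, while the variation of $\chi=n$ over the compactified Jacobian of the curve produces $(p^{1/2}-p^{-1/2})\prod_{m\ge1}(1-pq^m)(1-p^{-1}q^m)$, and their product is exactly $K(p,q)$. For $\dtilde=1$ the system $|\beta_1|$ consists of the single smooth genus-$2$ theta divisor (up to translation), giving the clean answer $K(p,q)^2$ of part (1). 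For $\dtilde=2$ the generic member is a smooth genus-$3$ curve, giving the leading factor $K(p,q)^4$ of part (2).

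The main obstacle — and the origin of the correction factor in part (2) — is the positive-dimensional, singular locus of $|\beta_2|$. For $\dtilde=2$ the system is a pencil (a $\PP^1$ up to translation) containing reducible and nodal genus-$3$ members, and each degeneration type contributes its own Euler-characteristic weight: the term $\tfrac12$ reflects the generic finite automorphism surviving in the stack quotient, $\tfrac{3p}{(1-p)^2}=3\sum_{k\ge1}kp^k$ and the mixed terms $\sum_{d\ge1}\sum_{k\mid d}k\big(3(p^k+p^{-k})q^d+12q^{2d}\big)$ encode the strata of nodal/reducible curves whose lower-genus normalizations yield shifted $K$-powers, together with the vertical configurations meeting the node and the extra torsion stabilizers appearing after passing to $[\,\cdot\,/X]$. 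Correctly enumerating these singular strata, computing their \emph{fractional} orbifold Euler characteristics, and proving that their generating series sums to the stated rational expression is where the real work lies; rigorously establishing the ``$K^{2\mathsf{g}-2}$ per smooth curve'' local contribution in this abelian, non-toric setting — where $\mathbb{G}_m$-localization is unavailable — is the conceptual crux, with the transparent $\dtilde=1$ computation serving as the check on the smooth part of the method.
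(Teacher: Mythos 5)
Your skeleton is the right one, and matches the paper's strategy in outline: slice the translation action by fixing the linear system and a point $z_0 \in E$, divide by the torsion $\Ker(\phi) \cong (\BZ/\dtilde)^2$, stratify over $|L|$, and argue that a smooth member of genus $g$ contributes $K(p,q)^{2g-2}$ while the $12$ nodal members of the genus-$3$ pencil contribute corrections. But two essential pieces are missing or misattributed. First, your opening decomposition forces every one-dimensional component of $Z$ to lie either in a slice $A\times\{z\}$ or in a fiber $x\times E$, which silently excludes \emph{diagonal} curves, i.e.\ irreducible components of positive degree over both factors. For $\beta_1$ one must actually prove these do not exist (an irreducible diagonal curve would map isomorphically to the genus-$2$ curve $C_0$ and yield a nonconstant map $C_0 \to E$, hence a one-dimensional abelian subvariety of $A$, contradicting genericity); for $\beta_2$ they \emph{do} exist whenever $d$ is even: there are $12\,\sigma(d/2)$ isolated translation classes of nonsingular genus-$3$ diagonal curves, counted by the same polarized-isogeny lattice method used for the genus-$3$ curve counts, and these --- not ``vertical configurations meeting the node'' --- are the source of the $12 q^{2d}$ term. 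Moreover the $X$-action on the diagonal locus is free, so its contribution enters \emph{without} the factor $\frac{1}{4}$; a uniform division by $|\Ker(\phi)|$ as in your second step would get this wrong. (Also, for generic $A$ the pencil $|L|$ contains no reducible members, since $A$ is simple; the $12$ singular members are irreducible one-nodal curves, and the $\frac{1}{2}$ is just $\frac{1}{4}\cdot 2$ with $2 = e(\p^1)$ after the nodal corrections are split off, not an automorphism weight.)

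Second, the local mechanism you propose is not the right one: compactified Jacobians play no role here, because the reduced vertical curve is a \emph{fixed} rigid $C_0 \times \{z_0\}$ (the unique member of $|\beta_1|$, resp.\ a member of the pencil $|\beta_2|$), and the $p$-dependence comes entirely from embedded points on $C_0$ and from thickened horizontal fibers $x \times E$ attached to it. The rigorous tool is a constructible morphism $\rho_d : \Hilb_{\fix}^{n,\dtilde,d}(\Xhat_{C_0\times E}) \to \Sym^d(C_0)$ recording positions and multiplicities of horizontal components; its fiberwise Euler characteristics are local factors $F(a)$ at smooth points and $N(b)$ at the node, computed from the topological vertex applied \emph{formally locally} --- which is precisely how the non-toric obstacle you flag is overcome, no global torus action being needed --- with the nodal series evaluated in closed form via Bloch--Okounkov. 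A weighted-Euler-characteristic lemma for symmetric products then converts the per-point factor into the power $e(C_0)$, producing $K^{2g-2}$ for a smooth member and
\[
K(p,q)^4 \cdot \bigg( 1 + \frac{p}{(1-p)^2} + \sum_{d \geq 1}\sum_{k|d} k\,(p^k + p^{-k})\, q^d \bigg)
\]
for a nodal one; this correction is the vertex contribution at the node, not a contribution of ``lower-genus normalizations yielding shifted $K$-powers.'' Your outline names the answer, but without the diagonal-curve count and the formal-local vertex computation it does not contain a proof of either part.
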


Assuming Conjecture~\ref{conj_Behrend} in Section \ref{subsec: putting in the Behrend function} below,
we obtain the following result for the invariants $\DT_{n,(\beta_{\dtilde},d)}^X$
in the cases $\dtilde =1$ and $\dtilde =2$.

Consider the Weierstrass elliptic function
\[ \wp(p,q) = \frac{1}{12} + \frac{p}{(1-p)^2} + \sum_{d \geq 1}\sum_{m|d} m (p^m - 2 + p^{-m}) q^d \]
expanded in the region $|p|<1$.

\setcounter{cor}{4}
\begin{corstar} \label{dtcor} Assume Conjecture~\ref{conj_Behrend} holds. Then we have
\begin{enumerate}
\item[(i)] $\displaystyle\sum_{d \geq 0}\sum_{n \in \BZ}
\DT_{n, (\beta_1, d)}^X\, (-p)^n q^{d} = - K(p,q)^2\, ,$ \vspace{\jot}
\item [(ii)] $\displaystyle
\sum_{d \geq 0}\sum_{n \in \BZ} \DT_{n, (\beta_2,d)}^X \,(-p)^n q^{d}
= -\frac{3}{2} K(p,q)^4 \wp(p,q) - \frac{3}{8} K(p^{2}, q^{2})^2\, .$
\end{enumerate}
\end{corstar}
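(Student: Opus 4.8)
The plan is to derive the Corollary directly from Theorem \ref{dthatthm} by feeding the topological (unweighted) computations into the sign comparison of Conjecture~\ref{conj_Behrend}. Concretely, Conjecture~\ref{conj_Behrend} predicts the Behrend-weighted invariants in terms of the unweighted ones by a rule of the form $\DT_{n,\beta}^X = (-1)^{n-1}\DThat_{n,\beta}^X$, equivalently
\[
\sum_{d\geq 0}\sum_{n\in\BZ}\DT_{n,\beta}^X\,(-p)^n q^d \;=\; -\sum_{d\geq 0}\sum_{n\in\BZ}\DThat_{n,\beta}^X\, p^n q^d .
\]
Thus, once the conjecture is granted, the whole content of the Corollary is to substitute the generating series of Theorem \ref{dthatthm} into the right-hand side and recognize the result in the stated closed form.

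Part (i) is then immediate: applying the displayed substitution to Theorem \ref{dthatthm}(1) gives $\sum \DT_{n,(\beta_1,d)}^X(-p)^n q^d = -K(p,q)^2$, which is exactly the assertion. The only bookkeeping is the observation $K(p^{-1},q)=-K(p,q)$, so that $K(p,q)^2$ is symmetric under $p\leftrightarrow p^{-1}$ and carries only integral powers of $p$, making the change of variables $p\mapsto -p$ well defined on the series.

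For part (ii) I would first rewrite the bracket of Theorem \ref{dthatthm}(2) in modular terms. Writing $p^k+p^{-k}=(p^k-2+p^{-k})+2$, using $\sum_{k\mid d}k=\sigma_1(d)$ and $E_2=1-24\sum_{d\geq1}\sigma_1(d)q^d$, and comparing with the defining expansion of $\wp(p,q)$, the bracket collapses to $1+3\wp(p,q)-\tfrac14 E_2(q)-\tfrac12 E_2(q^2)$. Hence the conjecture gives
\[
\sum_{d\geq0}\sum_{n\in\BZ}\DT_{n,(\beta_2,d)}^X(-p)^n q^d
= -K(p,q)^4\Big(1+3\wp(p,q)-\tfrac14 E_2(q)-\tfrac12 E_2(q^2)\Big).
\]
Matching this against the claimed $-\tfrac32 K(p,q)^4\wp(p,q)-\tfrac38 K(p^2,q^2)^2$ reduces part (ii) to the single identity
\[
K(p,q)^4\Big(1+\tfrac32\wp(p,q)-\tfrac14 E_2(q)-\tfrac12 E_2(q^2)\Big)=\tfrac38\,K(p^2,q^2)^2 .
\]

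The main obstacle is precisely this last identity, which is where the genuinely new input lies: the doubled variables $K(p^2,q^2)$ on the right encode a degree-two phenomenon absent from the left. Writing $p=e^{2\pi i z}$, $q=e^{2\pi i\tau}$ so that $K(p^2,q^2)=K(2z,2\tau)$, both sides are (quasi-)Jacobi forms of index $2$, and I would prove the identity in one of two ways. The structural route uses the classical duplication formula expressing $\vartheta_1(2z,2\tau)$ through the $\vartheta_i(z,\tau)$, combined with $\wp(z,\tau)=-\partial_z^2\log\vartheta_1(z,\tau)+c(\tau)$ (the constant $c(\tau)$ supplying the $E_2$ terms) and the $\tau\mapsto 2\tau$ behaviour of $\eta$ hidden in the denominator of $K$. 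The cheaper route bounds the weight and index and then verifies the identity by matching finitely many Fourier coefficients; here the substitution $p/(1-p)^2=a^{-2}$ with $a^2=p-2+p^{-1}=(1-p)^2/p$ makes the $q^0$ and $q^1$ coefficients collapse to an equality by hand, and the remaining finite check completes the proof.
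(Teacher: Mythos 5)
Part (i) of your argument is correct and coincides with the paper's. The gap is in part (ii), and it is located at the very first step: Conjecture~\ref{conj_Behrend} is \emph{not} the uniform sign rule $\DT_{n,\beta}^X = (-1)^{n-1}\DThat_{n,\beta}^X$ that you assume. The conjecture assigns \emph{opposite} overall signs to the two types of components of the Hilbert scheme: $e\big(\Hilb^{n,2,d}_{\Vert,\fix}(X)\big) = -(-1)^n e\big(\Hilb^{n,2,d}_{\Vert,\fix}(X),\nu\big)$ on the vertical locus, but $e\big(\Hilb^{n,2,d}_{\diag,\fix}(X)\big) = +(-1)^n e\big(\Hilb^{n,2,d}_{\diag,\fix}(X),\nu\big)$ on the diagonal locus. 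For $\dtilde = 1$ the diagonal locus is empty (genericity of $A$ forbids diagonal curves in an irreducible class of type $(1,1)$), so your uniform substitution is valid there and (i) goes through. For $\dtilde = 2$, however, Lemma~\ref{graph_curves} produces $12\,\sigma(d/2)$ translation classes of diagonal curves, and these are precisely the source of the $12\sum_{k|d} k\, q^{2d}$ terms inside the bracket of Theorem~\ref{dtthm}(2). The correct weighted series is $\sum_{n,d}\big({-\tfrac14}\,e(\Hilb^{n,2,d}_{\Vert,\fix}) + e(\Hilb^{n,2,d}_{\diag,\fix})\big)p^n q^d$, which flips the relative sign of the diagonal contribution compared with your $-\DThat_2$; the discrepancy is $2 \cdot K(p,q)^4\cdot 12\sum_{d\geq1}\sigma(d)q^{2d} = K(p,q)^4\big(1 - E_2(q^2)\big) \neq 0$.

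This error propagates exactly into your final identity, which is false. Your rewriting of the bracket as $1 + 3\wp - \tfrac14 E_2(q) - \tfrac12 E_2(q^2)$ is itself correct, but using $\vartheta_{D_4}(q) = 2E_2(q^2) - E_2(q)$ the \emph{true} identity (the paper's Lemma~\ref{modular_identity}) reads
\[
K(p,q)^4\Big(\tfrac32\wp(p,q) - \tfrac14 E_2(q) + \tfrac12 E_2(q^2)\Big) = \tfrac38\, K(p^2,q^2)^2\,,
\]
whereas the one you reduced to has $1 - \tfrac12 E_2(q^2)$ in place of $+\tfrac12 E_2(q^2)$, exceeding the truth by $24\,K^4\sum_{d}\sigma(d)q^{2d}$. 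Note that your proposed sanity checks would not have caught this early: the $q^0$ and $q^1$ coefficients of your (false) identity genuinely agree, and the failure first appears at order $q^2$. Two further remarks: a finite Fourier-coefficient verification needs justification here since $\wp$ and $E_2$ are only quasi-modular (the paper instead proves its identity by a Liouville argument: $\varphi(z,\tau) = K(2z,2\tau)^2/K(z,\tau)^4 - 4\wp(z,\tau)$ is doubly periodic and entire in $z$, hence constant, and evaluation at $z = 1/2$ using $\wp(1/2,\tau) = -\tfrac16\vartheta_{D_4}(\tau)$ gives $\tfrac23\vartheta_{D_4}(\tau)$); and your structural route via the duplication formula for $\vartheta_1$ is viable, but only once you are aiming at the correct identity after treating the vertical and diagonal contributions with their separate signs.
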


Part (i) of Corollary* \ref{dtcor} verifies an earlier
prediction of BPS counts on abelian threefolds
by Maldacena, Moore, and Strominger \cite{MMS}.

Theorem \ref{dthatthm} concerns the Hilbert schemes of curves on $X$.
The Euler characteristics associated to the Hilbert scheme of points
of $X$ (via the generalized Kummer construction) have been calculated
recently by J.~Shen \cite{Shen} -- proving a conjecture of
Gulbrandsen \cite{Gul}.

\subsubsection{Gromov-Witten theory}
Let $X$ be an abelian threefold, and let $\beta$ be a curve class of type
$(d_1, d_2, d_3)$ with $d_1, d_2 > 0$.
We consider curves of genus $g \geq 2$.

The translation action of $X$ on $\Mbar_{g}(X, \beta)$ has finite stabilizer. Hence,
\[ \Mbar_{g}(X, \beta) / X \]
is a Deligne-Mumford stack. In Section \ref{GW3}, we use methods of Kiem and Li \cite{KL} to construct a reduced virtual class
\[ \left[ \Mbar_{g}(X, \beta) / X \right]^{\text{red}} \]
on $\Mbar_{g}(X, \beta) / X$ of dimension $0$. We define the \emph{reduced quotient Gromov-Witten invariants of} $X$ by
\[ \mathsf{N}_{g, \beta} = \int_{ [ \Mbar_{g}(X, \beta) / X ]^{\text{red}} } \mathsf{1} \,. \]
By construction, $\mathsf{N}_{g, \beta}$ is deformation invariant and hence only depends on $g$ and the type $(d_1, d_2, d_3)$ of $\beta$. We write
\[ \mathsf{N}_{g, \beta} = \mathsf{N}_{g, (d_1, d_2, d_3)} \,. \]
The number $\mathsf{N}_{g, \beta}$ is a virtual count of translation classes of genus $g$ curves in $X$ of class $\beta$ in~$X$.
In Section \ref{GW3}, we show that $\mathsf{N}_{g, \beta}$
determines the full reduced descendent
Gromov-Witten theory of $X$ in genus $g$ and class $\beta$.

The following conjecture relates the Gromov-Witten invariants $\mathsf{N}_{g, \beta}$
to the Donaldson-Thomas invariants $\DT_{n,\beta}^X$ defined above.
Define the generating series 
\[ Z^{\text{GW}}_{\beta}(u) = \sum_{g \geq 2} \mathsf{N}_{g, \beta} u^{2g-2}
\quad \text{ and } \quad
Z^{\text{DT}}_{\beta}(y) = \sum_{n \in \BZ} \DT_{n, \beta}^X y^n \,. \]

\begin{conjecture} \label{conjB}
The series $Z^{\textup{DT}}_{\beta}(y)$
is the Laurent expansion of a rational function in $y$ and
\[ Z^{\textup{DT}}_\beta(y) = Z^{\textup{GW}}_\beta(u) \,. \]
after the variable change $y = -e^{iu}$.
\end{conjecture}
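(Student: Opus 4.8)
The plan is to reduce to the product geometry and then match explicit generating series. By deformation invariance of both sides --- Gulbrandsen's theorem \cite{Gul} for $\DT_{n,\beta}^X$ with $n \neq 0$, together with the expected invariance at $n=0$ and the deformation invariance built into the reduced quotient class defining $\mathsf{N}_{g,\beta}$ --- it suffices to verify the correspondence after specializing $(X,\beta)$ to $X = A \times E$ with $\beta = (\beta_{\dtilde}, d)$ of type $(1, \dtilde, d)$. Both assertions of the conjecture, the rationality of $Z^{\textup{DT}}_\beta(y)$ and the equality $Z^{\textup{DT}}_\beta = Z^{\textup{GW}}_\beta$ under $y = -e^{iu}$, will then follow from closed expressions for each side in terms of the theta function $K$ and the Weierstrass function $\wp$.

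For the Donaldson-Thomas side, the plan is to begin from the unweighted series $\sum_{n,d} \DThat_{n,(\beta_{\dtilde},d)}^X\, p^n q^d$ computed in Theorem \ref{dthatthm}, and to pass to the Behrend-weighted invariants via Conjecture \ref{conj_Behrend}. This produces the rational functions of Corollary* \ref{dtcor}: the substitution $p \mapsto -p$ inherent in the Behrend weighting is exactly the $y = -p$ appearing in the conjecture, so that $Z^{\textup{DT}}_\beta(y)$ is manifestly the Laurent expansion of a rational function in $y$. The rationality claim thus reduces entirely to the structure of the Behrend function controlled through Conjecture \ref{conj_Behrend}.

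For the Gromov-Witten side, I would compute $Z^{\textup{GW}}_\beta$ using the elliptic fibration $A \times E \to A$. Degenerating in the $E$-direction (or applying a product-degeneration formula adapted to the reduced quotient class) reduces the reduced quotient invariants of $A \times E$ in class $(\beta_{\dtilde}, d)$ to the reduced fixed-linear-system invariants of the abelian surface $A$, controlled in genus $g$ with point and $\lambda$-insertions by Theorems \ref{YZ_intro} and \ref{thm_point_insertion}, together with the stationary Gromov-Witten theory of the elliptic curve $E$ of Okounkov--Pandharipande \cite{OP1, OP3}. Assembling these contributions yields the same $K$--$\wp$ expression, now expanded in $u$ through the identity $K(z,\tau) = iu \exp(\cdots)$ of \eqref{KKKKKdef}.

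The final step is the formal matching of the two closed forms under $y = -e^{iu}$, which amounts to an identity among theta and Weierstrass series together with the dictionary $p^{1/2} - p^{-1/2} \leftrightarrow 2\sin(u/2)$ relating the leading trigonometric behaviour of $K(p,q)$ to the $u$-variable. The main obstacle is the Behrend function analysis of Conjecture \ref{conj_Behrend}: passing from the tractable topological Euler characteristics $\DThat_{n,\beta}^X$ to the signed invariants $\DT_{n,\beta}^X$ requires computing $\nu$ on the singular and non-reduced strata of $\Hilb^n(A \times E, \beta)/X$, where the translation quotient and the obstruction theory interact. Controlling this weighting uniformly for arbitrary type $(d_1, d_2, d_3)$, rather than only for $\dtilde = 1, 2$, is precisely what separates the cases accessible by the present methods from the full strength of the conjecture.
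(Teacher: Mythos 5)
You should first note a mismatch of genre: the statement you are proving is Conjecture \ref{conjB}, and the paper contains \emph{no proof} of it. What the paper offers is evidence: the genus $3$ lattice counts of Lemma \ref{G3lcc}, which via \eqref{YZ'} match the predictions of Conjectures \ref{conjB} and \ref{conjC}; the Jacobi-form consistency check of Section \ref{GW3_sec4}, showing that Corollary* \ref{dtcor} for $\dtilde=1,2$ is compatible with the two conjectures; and the analogy with the $K3\times E$ correspondence of \cite{ObP, Bryan-K3xE}. Your proposal, read charitably, reassembles these consistency checks; read as a proof, it has genuine gaps at each stage.

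Concretely: (1) Your DT side is doubly conditional. Conjecture \ref{conj_Behrend} is itself open, so rationality of $Z^{\textup{DT}}_\beta(y)$ is not ``manifest'' but conjectural; moreover Theorem \ref{dthatthm} computes $\DThat$ only for $\dtilde\in\{1,2\}$, and deformation invariance of $\DT_{n,\beta}$ at $n=0$ is only expected (Gulbrandsen's argument breaks down there), so even the reduction to $A\times E$ is incomplete. (2) Your GW side does not exist as an independent computation, and here the proposal becomes circular: in the paper the invariants $\mathsf{N}_{g,(1,1,d)}$ are \emph{determined by} Conjecture \ref{conjB} together with Corollary* \ref{dtcor} --- they are an output of the conjecture, not an input one can match against. (3) The specific mechanism you propose for evaluating $Z^{\textup{GW}}_{(\beta_{\dtilde},d)}$ --- degenerating in the $E$-direction to reduce to FLS invariants of $A$ plus the stationary theory of $E$ --- would fail. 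Degenerating the elliptic factor leaves the abelian category, and the reduced class, built from the global $2$-forms of $X$ via cosection localization, does not extend over such a degeneration (the paper's degeneration in Section \ref{prim4} is of the \emph{surface} $A$ to the normal cone of $E\subset A$, which is a different geometry; the identity $\mathsf{N}^{\textup{Q}}_{g,(d_1,d_2)}=\mathsf{N}_{g,(d_1,d_2,0)}$ covers only $d_3=0$). Worse, for $d>0$ the class $(\beta_{\dtilde},d)$ supports diagonal curves (Lemma \ref{graph_curves}), which are neither vertical over $E$ nor horizontal, and whose contribution --- the $12\sum_{k|d}k\,q^{2d}$ terms in Theorem \ref{dthatthm}(ii) --- visibly does not factor as (surface theory of $A$)$\,\times\,$(curve theory of $E$). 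Any product-type decomposition of the GW side must account for these, and no such formula is available. Proving Conjecture \ref{conjB} in any class remains open beyond what the consistency checks establish.
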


Conjecture \ref{conjB} is a Gromov-Witten/Donaldson-Thomas correspondence 
for reduced theories \cite{MNOP1,ObP}.
In conjunction with part (i) of
Corollary* \ref{dtcor}
and the expansion \eqref{KKKKKdef}, Conjecture \ref{conjB} determines the invariants $\mathsf{N}_{g,(1,1,d)}$
for all $d \geq 0$ by the formula
\begin{align*}
\sum_{d \geq 0} \sum_{g \geq 2} \mathsf{N}_{g, (1,1,d)} u^{2g-2} q^{d} &= 
-K^2(z,\tau)|_{y=-e^{2\pi i z}, q=e^{2\pi i\tau}}\\
&= (y + 2 + y^{-1}) \prod_{m \geq 1} \frac{(1 + y q^m)^2 (1 + y^{-1} q^m)^2}{(1-q^m)^4} \,.
\end{align*}

To capture the invariants $\mathsf{N}_{g,(1,\dtilde,d)}$ for higher $\dtilde$,
we conjecture an additional structure governing the counting.
Let
\[ f_{(d_1,d_2,d_3)}(u) = \sum_{g \geq 2} \mathsf{N}_{g, (d_1, d_2, d_3)} u^{2g-2} \,. \]
The following multiple cover rule expresses the invariants
of type $(1,\dtilde,d)$ in terms of those of type $(1,1,d)$.

\begin{conjecture} \label{conjC}
For all $\dtilde,d > 0$,
\[ f_{(1,\dtilde,d)}(u) = \sum_{k | \gcd(\dtilde,d)} \frac{1}{k}\, f_{\left(1, 1, \frac{\dtilde d}{k^2} \right)}(k u) \,. \]
\end{conjecture}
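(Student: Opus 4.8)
The plan is to read the rule as an \emph{isogeny decomposition} of the moduli of curves: the right-hand side has exactly the shape of a Hecke-type average, the sum over $k \mid \gcd(\dtilde,d)$ together with the rescaling $u \mapsto ku$ being the signature of pulling back curves along degree-$k^2$ isogenies. Since $\mathsf{N}_{g,\beta}$ is deformation invariant, I would first specialize to the product geometry $X = A\times E$ used throughout, where the relevant isogenies can be written down explicitly. The class $\beta_{\dtilde}$ of type $(1,\dtilde)$ is already primitive on $A$, so the only imprimitivity in $(1,\dtilde,d)$ comes from the interaction $\gcd(\dtilde,d)$ between the $A$- and $E$-directions; this is precisely what the isogenies below are chosen to detect.

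The key steps are as follows. \textbf{(1)} Stratify $\Mbar_g(X,(1,\dtilde,d))/X$ by a primitivity index $k\mid\gcd(\dtilde,d)$, where the stratum $\Sigma_k$ consists of stable maps $f\colon \widehat C\to X$ whose Abel--Jacobi image generates a sublattice of index governed by $k$; equivalently, $f$ factors as $\widehat C \xrightarrow{\tilde f} X_k \xrightarrow{\phi_k} X$ with $\phi_k$ an isogeny of degree $k^2$ (whose precise type is part of what step~(1) must pin down) and $\tilde f$ lying in a \emph{primitive} class of type $(1,1,\dtilde d/k^2)$ on the isogenous threefold $X_k\cong X$. \textbf{(2)} Establish the resulting bijection, up to translation, between $\Sigma_k$ and the moduli of primitive maps on $X_k$; since $\phi_k$ is étale the domain curve $\widehat C$, and hence its genus $g$, is literally unchanged, so the generating variable $u$ is preserved apart from the $u\mapsto ku$ rescaling that records the isogeny on the $q$-grading. \textbf{(3)} Compare the reduced virtual classes across this bijection and track the weight: the factor $k^{2g-2}$ is produced by the $u\mapsto ku$ rescaling (equivalently by the obstruction-theory contribution of $\phi_k$), while the remaining $\tfrac1k$ comes from the kernel of $\phi_k$ acting through the translation quotient by $X$, giving the net $k^{2g-3}$ demanded by matching coefficients of $u^{2g-2}$ on both sides.

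For the base cases I would verify the rule directly from the Donaldson--Thomas side. Assuming the GW/DT correspondence of Conjecture~\ref{conjB}, the primitive series $f_{(1,1,d)}$ are fixed by part~(i) of Corollary*~\ref{dtcor}, and the $\dtilde=2$ series follow from Theorem~\ref{dthatthm}(2) and Corollary*~\ref{dtcor}(ii). The term $-\tfrac38 K(p^2,q^2)^2$ appearing there is exactly the $p\mapsto p^2,\ q\mapsto q^2$ image of the $(\beta_1,\cdot)$ series $-K(p,q)^2$ of Corollary*~\ref{dtcor}(i); this is the $k=2$ contribution predicted by Conjecture~\ref{conjC} for $\dtilde=2$, and its presence is strong evidence that the isogeny decomposition of step~(1) is the correct mechanism.

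The main obstacle I expect is step~(3): the functoriality of \emph{reduced} virtual classes under the isogeny pushforward. The reduced class is obtained by removing a trivial quotient of the obstruction sheaf and then passing to the translation quotient by $X$, and neither operation is visibly compatible with a naive pullback along $\phi_k$; one must show that the reduction and the quotient interact with $\phi_k$ so as to produce precisely the weight $k^{2g-3}$ rather than some other power. A secondary difficulty is making the factorization claim of step~(1) rigorous on the degenerate boundary of $\Mbar_g$, where reducible or multiply-covered configurations could in principle yield an imprimitive class without genuinely factoring through an isogeny; controlling these strata is where I expect the argument to require the most care. Finally, the alternative route, deducing the rule from a Jacobi-form/Hecke structure for the threefold series in the manner of Theorem~\ref{modularity}, would only reduce Conjecture~\ref{conjC} to the still-open modularity of the abelian threefold invariants, so I would treat the isogeny decomposition above as the primary line of attack.
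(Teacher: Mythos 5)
You should first be clear about the status of the statement: Conjecture~\ref{conjC} is \emph{not proven} in the paper. The paper offers exactly two pieces of evidence, and the sound parts of your proposal coincide with them. Your ``base case'' paragraph is the paper's own consistency check: the genus~$3$ case is established by the polarized-isogeny lattice count (Lemma~\ref{G3lcc} and \eqref{YZ'}), and the $\dtilde = 2$ match with Theorem~\ref{dtthm} -- conditional on Conjectures~\ref{conjB} and~\ref{conj_Behrend}, exactly as in your text -- is proven in Section~\ref{GW3_sec4} via the Jacobi-form identity obtained by applying the Hecke operator $V_2$ to $\varphi_{-2,1} = K^2$. Your observation that $-\tfrac38 K(p^2,q^2)^2$ is the $k=2$ term is precisely what that Lemma makes rigorous. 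Your isogeny heuristic is also the mechanism the paper uses where it can: in genus~$2$ on surfaces (Theorem~\ref{YZA}) and genus~$3$ on threefolds, every stable map has nonsingular domain and factors through a polarized isogeny, so the count reduces to maximal totally isotropic subgroups of $\Ker(\phi_{\widehat{\beta}})$.

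The genuine gap is that steps (1)--(3) do not extend beyond these minimal-genus cases, and this is exactly why the statement remains a conjecture. For $g > 3$ the space $\Mbar_g(X,\beta)/X$ has reduced virtual dimension $0$ but is positive-dimensional: it contains maps with contracted components, multiple covers, and maps with singular image, and such a stable map does \emph{not} carry a well-defined ``primitivity index'' $k$, so the stratification of step (1) does not exist as stated; the virtual contributions of these degenerate strata are precisely what a multiple cover formula must account for, and no comparison of reduced classes across your correspondence is available. Step (3) is moreover circular: the weight $k^{2g-3}$ cannot be ``produced by the $u \mapsto ku$ rescaling,'' since the rescaling is merely the bookkeeping that \emph{encodes} the claimed weight -- deriving it would require an excess-intersection computation on each stratum, which is the open content. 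Your arithmetic is also off as stated: an isogeny of degree $k^2$ has kernel of order $k^2$, not $k$, so the net $\tfrac1k$ requires in addition a count of the number of distinct isogeny factorizations of given type; in the genus~$2$ surface case this count is $\nu(d_1,d_2)$, and its agreement with the multiple cover rule required the nontrivial recursion \eqref{Rec} -- an agreement the paper itself calls miraculous. The subtlety is visible in the paper's general Conjecture~\ref{ConjMC}, where an extra combinatorial factor $\mathsf{n}(d_1,d_2,d_3,k)$ appears (it happens to equal $1$ for type $(1,\dtilde,d)$, but your argument would have to explain why). So your proposal correctly reconstructs the paper's evidence and the expected mechanism, but it is a heuristic outline of an open problem, not a proof.
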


Conjecture \ref{conjC} matches the counts of genus $3$
curves by the lattice method of \cite{D, G, LS}.
The deepest support for Conjecture \ref{conjC} is a 
highly non-trivial match with part (ii) of
Corollary* \ref{dtcor}.\footnote{
Recently, further evidence has been obtained in \cite{OS}.}

Taken together,
Corollary* \ref{dtcor}
and Conjectures \ref{conjB} and \ref{conjC}
determine the invariants $\mathsf{N}_{g, \beta}$ for all primitive\footnote{The class is primitive if and only if it can be deformed to type $(1, \dtilde, d)$, see Section~\ref{cct}.} classes $\beta$.
The discussion is parallel to \cite[Conjecture A]{ObP}
concerning the virtual enumeration of curves on
$K3\times E$ in classes $(\beta,d)$ where $\beta\in H_2(K3,\mathbb{Z})$
is primitive. The latter has been computed
for $\langle \beta, \beta \rangle \in \{ -2, 0 \}$
in Donaldson-Thomas theory in \cite{Bryan-K3xE}. The proof of Theorem \ref{dtthm} in Section \ref{secdt} closely
follows the strategy of \cite{Bryan-K3xE}.

For Theorem \ref{dtthm} and Conjecture \ref{conjB}, the Hilbert scheme of
curves can be replaced by the moduli space of stable
pairs \cite{PT1}. For technical aspects of the proof of
Theorem \ref{dtthm}, ideal sheaves are simpler -- but there
is no fundamental difference in the arguments required here.

A multiple cover formula for BPS counts in classes $(1,d,d')$
was proposed in \cite{MMS}. However, the formula
in \cite{MMS} is different from ours and
does \emph{not} match the genus $3$ counts or Corollary* \ref{dtcor}.

An extension of Conjecture~\ref{conjC} to
all curve classes is discussed in Section~\ref{Subsection_imprimitive_classes}.

\subsection{Plan of the paper}
In Section 1, we recall several classical facts concerning 
divisors and curves on abelian varieties. Polarized 
isogenies, which play a central role in 
the enumeration of low genus curves, are reviewed.
Reduced virtual classes are discussed in Section 1.4.
Degenerate curves classes are analyzed in Section 1.5.

Part I of the paper (Sections 2\,-\,5) concerns the enumeration of 
curves on abelian surfaces $A$.
In Section 2, 
the genus 2 part of Theorem \ref{YZ_intro} is proven.
In Section 3, the proofs of Theorem \ref{YZ_intro} and
Theorem \ref{thm_point_insertion} for primitive classes are completed. A connection with the 
Euler characteristic calculations of stable pairs moduli spaces on $A$
by G\"ottsche and Shende \cite{GS} is discussed in Section 3.7. The
quasi-modularity of the primitive descendent potentials of $A$
is studied in Section 4 where a 
refinement of Theorem \ref{modularity} is proven. A parallel refined quasi-modularity result
for the reduced Gromov-Witten theory of $K3$ surfaces is presented in
Section~4.6.
The enumeration of hyperelliptic curves on $A$ and the proof of
Theorem \ref{thm_hyp} is given in Section 5.

Part II of the paper (Sections 6\,-\,7) concerns the enumeration of 
curves on abelian threefolds $X$.
In Section 6, the topological and Behrend weighted 
Euler characteristics of the Hilbert scheme of curves
in $A\times E$ are studied. For $d'\in \{1,2\}$,
the topological Euler characteristic theory is calculated
and Theorem \ref{dtthm} is proven (except
for genus~3 lattice counts which appear in Section 7.4). Conjecture \ref{conj_Behrend} relating the two theories is presented in Section 6.6.
In Section 7, the foundations of the quotient Gromov-Witten theory are discussed and the full descendent theory is
expressed in terms of the invariants $\mathsf{N}_{g,(d_1,d_2,d_3)}$.
The relationship between Theorem \ref{dtthm} and Conjectures \ref{conjB} and \ref{conjC}
is studied in Section 7.5. Finally, a multiple cover formula for imprimitive classes is proposed in Section 7.6.

\subsection{Acknowledgements}
We thank B.~Bakker, L.~G\"ottsche, S.~Katz, M.~Kool, A.~Klemm, A.~Knutsen, A.~Kresch, J.~Li, D.~Maulik, G.~Moore, B.~Pioline, M.~Raum, S.~Rose, J.~Schmitt, E.~Sernesi, J.~Shen, V.~Shende, and R.~Thomas for discussions about
counting invariants of abelian surfaces and threefolds. The results
here were first presented at the conference {\em Motivic invariants
related to $K3$ and abelian geometries} at Humboldt University in February 2015.
We thank G. Farkas and the Einstein Stiftung for support in Berlin.

J.B. was partially supported by NSERC Accelerator and Discovery
grants.  The research here was partially carried out during a visit of
J.B. to the Forschungsinstitut f\"ur Mathematik at ETH Z\"urich in
November 2014.  G.O. was supported by the grant SNF-200021-143274.
R.P. was partially supported by SNF-200021-143274,
ERC-2012-AdG-320368-MCSK, SwissMap, and the Einstein
Stiftung. Q.Y. was supported by the grant ERC-2012-AdG-320368-MCSK.


\section{Abelian varieties}

\subsection{Overview}
We review here some basic facts about divisor and curve classes on 
abelian varieties. A standard reference for complex abelian varieties is \cite{CAV}. A treatment of 
polarized isogenies is required for the lattice counting in Sections \ref{g2c} and \ref{secg3lc}. Using results of Kiem-Li \cite{KL}, we define reduced virtual classes on the moduli spaces of stable maps to abelian varieties. Finally, we show that the (reduced) Gromov-Witten theory of abelian varieties of arbitrary dimensions is determined by the (reduced) theories in dimensions up to $3$.

\subsection{Curve classes} \label{cct}
Let $V=\BC^n$. Let
$\Lambda \subset V$ be a rank $2n$ lattice
for which
 $$A = V/\Lambda$$ is an $n$-dimensional compact complex torus. Let $L$ be a holomorphic line bundle on $A$. The first Chern class $$c_1(L) \in H^2(A, \BZ)$$ induces a Hermitian form
\[ H : V \times V \ra \BC \]
and an alternating form
\[ E = {\rm Im}\, H : \Lambda \times \Lambda \ra \BZ \,. \]
By the elementary divisor theorem, there exists a {\em symplectic} basis of $\Lambda$ in which $E$ is given by the matrix
\[ \begin{pmatrix}
0 & D \\
-D & 0
\end{pmatrix}, \]
where $D = \text{Diag}(d_1, \ldots, d_n)$ with integers $d_i \geq 0$ satisfying
\[ d_1\, |\, d_2\, | \,\cdots\, | \,d_n \,. \]
The tuple $(d_1, \ldots, d_n)$ is uniquely determined by $L$ (in fact by $c_1(L)$) and is called the {\em type} of $L$.

A {\em polarization} on $A$ is a first Chern class $c_1(L)$ with positive definite Hermitian form $H$ (in particular $d_i > 0$). 
The polarization is {\em principal}  if $d_i = 1$ for all $i$. The moduli space of polarized $n$-dimensional abelian varieties of a fixed type is irreducible of dimension $n(n + 1)/2$.

Let $\beta \in H_2(A, \BZ)$ be a curve class on $A$. The class corresponds to
\[ \widehat{\beta} = c_1(\widehat{L}) \in H^2(\widehat{A}, \BZ) \,, \]
where $\widehat{A} = \Pic^0(A)$ is the dual complex torus of $A$ and $\widehat{L}$ is a line bundle on $\widehat{A}$. We define the {\em type}
$(d_1, \ldots,d_n)$
of $\beta$ to be the type of $\widehat{L}$. 
The class $\beta$ is primitive if and only if $d_1=1$.
For abelian surfaces, we may view $\beta$ as either a curve class or a divisor class: the resulting types are the same.

If $\beta$ is of type $(d_1, \ldots, d_n)$ with $d_i > 0$ for all $i$, then $\widehat{\beta}$ is a polarization on $\widehat{A}$. Hence, all curve classes of a fixed type $(d_1, \ldots, d_n)$ with
$d_i>0$ for all $i$ are deformation equivalent.

If $d_i = 0$ for some $i$, then we say that $\beta$ is of {\em degenerate} type. Write $k = \max\{ i \, | \, d_i \neq 0 \}$. By \cite[Theorem 3.3.3]{CAV}, there exists a subtorus $B \subset \widehat{A}$ of dimension $n - k$ with quotient map
\[ p : \widehat{A} \ra \bar{A} = \widehat{A}/B \,,\]
and a polarization
\[ \bar{\beta} = c_1(\bar{L}) \in H^2(\bar{A}, \BZ) \]
of type $(d_1, \ldots, d_k)$, such that $\widehat{\beta} = p^*(\bar{\beta})$. The deformation of $\beta$ is then governed by the deformation of $\bar{\beta}$. As a result, curve classes of a fixed type $(d_1, \ldots, d_k, 0, \ldots, 0)$ are also deformation equivalent.

Let $A$ be the product of $n$ elliptic curves $E_1 \times \cdots \times E_n$. For integers $a_1, \ldots, a_n \geq 0$, consider the curve class
\[ \beta = a_1[E_1] + \cdots + a_n[E_n]\in H_2(A,\BZ) \,. \]
The type $(d_1, \ldots, d_n)$ of $\beta$ is given by the rank and the invariant factors of the abelian group associated to 
 $(a_1, \ldots, a_n)$:
$$\bigoplus_{i=1}^n \BZ/a_i \cong \BZ^m \oplus \bigoplus_{j=1}^k \BZ/d'_j \,.$$
Here, $k, m \leq n$ and 
$$(d_1, \ldots, d_n)= (\underbrace{1, \ldots, 1}_{n-k-m}, d'_1,\ldots, d'_k, \underbrace{0, \ldots, 0}_m) \,.$$
Later we shall also say that $\beta$ is of type $(a_1, \ldots, a_n)$ without requiring $a_1 | a_2 | \cdots | a_n$.

Two tuples $(a_1, \ldots, a_n)$ and $(b_1, \ldots, b_n)$ are deformation equivalent if and only if
\[ \bigoplus_{i=1}^n \BZ/a_i \cong \bigoplus_{i=1}^n \BZ/b_i \,. \]
The primitivity of $\beta$ is determined by $\gcd(a_1, \ldots, a_n)$.

\subsection{Polarized isogenies}\label{piso}

The following discussion is based on \cite{D, G, LS}. Let
$$f : C \ra A$$
be a map from a nonsingular curve of genus $g$. Suppose the curve class 
$$\beta = f_*[C]$$ is of type $(d_1, \ldots, d_n)$ with $d_i > 0$ for all $i$. The map $f$ factors as
\[ C \xrightarrow{\mathsf{aj}} J \xrightarrow{\pi} A \,, \]
where $J$ is the Jacobian of $C$ and $\mathsf{aj}$ is the Abel-Jacobi map, defined up to translation by $J$. By duality, $\pi$ corresponds to
\[ \widehat{\pi} : \widehat{A} \ra J \quad \text{ such that } \quad \widehat{\beta} = \widehat{\pi}^*\theta \,, \]
where $\theta$ is the theta divisor class on $J$ (here we identify $J$ with $\widehat{J}$). When $g = n$, the map $\widehat{\pi}$ is a {\em polarized isogeny}.

More generally, consider the isogeny
\[ \phi_{\widehat{\beta}} : \widehat{A} \ra \skew{5}\widehat{\widehat{A}} \cong A \,, \quad\quad x \mapsto t_x^*\widehat{L} \otimes \widehat{L}^{-1} \,, \]
where $\widehat{\beta} = c_1(\widehat{L})$ for some line bundle $\widehat{L}$ and $t_x : \widehat{A} \to \widehat{A}$ is the translation by $x$. The finite kernel of $\phi_{\widehat{\beta}}$ admits a non-degenerate multiplicative alternating form
\[ \langle\,\,\,, \,\,\rangle : \Ker(\phi_{\widehat{\beta}}) \times \Ker(\phi_{\widehat{\beta}}) \ra \BC^* \,, \]
called the {\em commutator pairing}. By \cite[Corollary 6.3.5]{CAV}, there is a bijective correspondence between the following two sets:
\begin{itemize}
\item polarized isogenies from $(\widehat{A}, \widehat{\beta})$ to principally polarized abelian varieties $(B, \theta)$,
\item maximal totally isotropic subgroups of $\Ker(\phi_{\widehat{\beta}})$.
\end{itemize}

The cardinality of both sets depends only on the type $(d_1, \ldots, d_n)$ of $\beta$, and is denoted by
\[\nu(d_1, \ldots, d_n) \,. \]
In fact, under a suitable basis of $\widehat{\Lambda}$ we have
\begin{equation} \Ker(\phi_{\widehat{\beta}}) \cong (\BZ/d_1 \times \cdots \times \BZ/d_n)^2 \,, \label{Kerbeta} \end{equation}
and in terms of standard generators $e_1, \ldots, e_n, f_1, \ldots, f_n$ of \eqref{Kerbeta},
\[ \langle e_k, f_{\ell} \rangle = e^{\delta_{k\ell}\frac{2\pi i}{d_k}} \,. \]
The number $\nu(d_1, \ldots, d_n)$ can be computed as follows.

\begin{lemma}[Debarre \cite{D}] We have
\begin{equation} \nu(d_1, \ldots, d_n) = \sum_{K < \BZ/d_1 \times \cdots \times \BZ/d_n} \#\Hom^{\rm sym}(K, \widehat{K}) \,, \label{Lattice} \end{equation}
where $\widehat{K} = \Hom(K, \BC^*)$ and $\Hom^{\rm sym}$ stands for symmetric homomorphisms.
\end{lemma}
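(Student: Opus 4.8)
The plan is to translate the statement, via the identification \eqref{Kerbeta} of $\Ker(\phi_{\widehat{\beta}})$ with $(\BZ/d_1 \times \cdots \times \BZ/d_n)^2$ together with the explicit commutator pairing, into a purely group-theoretic count of Lagrangian subgroups of a finite symplectic module. Write $D = \BZ/d_1 \times \cdots \times \BZ/d_n$ and $\widehat{D} = \Hom(D, \BC^*)$. The generators $e_1, \ldots, e_n$ span a copy of $D$ and $f_1, \ldots, f_n$ span a complementary copy; the formula $\langle e_k, f_{\ell}\rangle = e^{\delta_{k\ell} 2\pi i / d_k}$ shows that the $f$-span is identified with $\widehat{D}$ and that the commutator pairing becomes the standard symplectic pairing $\langle (a,\chi),(b,\psi)\rangle = \psi(a)\,\chi(b)^{-1}$ on $G = D \oplus \widehat{D}$. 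This pairing is nondegenerate and alternating, so every maximal totally isotropic subgroup $L$ satisfies $L = L^{\perp}$ (otherwise any $g \in L^{\perp}\setminus L$ would enlarge $L$), whence $\#L = \#D$. Thus $\nu(d_1,\ldots,d_n)$ counts the Lagrangians $L \subset G$.

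Next I would set up the correspondence between Lagrangians and pairs $(K, \rho)$. Given a Lagrangian $L$, let $K = \pi_D(L)$ be its image under the projection $\pi_D : G \to D$. First I would show $L \cap \widehat{D} = K^{\perp} := \{\chi \in \widehat{D} : \chi|_K = 1\}$: the containment $\subseteq$ is immediate from isotropy, and equality follows by comparing orders via the exact sequence $0 \to L\cap\widehat{D} \to L \to K \to 0$ and $\#L = \#D = \#K \cdot \#K^{\perp}$. Restriction identifies $\widehat{D} / K^{\perp} \cong \widehat{K}$, so passing to the quotient $G \to D \oplus \widehat{K}$ carries $L$ to the graph of a homomorphism $\rho : K \to \widehat{K}$; equivalently $L = \{(a,\chi) : a \in K,\ \chi|_K = \rho(a)\}$ is the preimage of that graph. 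A short computation, using $\chi(b) = \rho(a)(b)$ and $\psi(a) = \rho(b)(a)$ for $(a,\chi),(b,\psi)\in L$, shows that the isotropy of $L$ is exactly the condition $\rho(a)(b) = \rho(b)(a)$, i.e. $\rho \in \Hom^{\rm sym}(K,\widehat{K})$.

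Conversely, for any subgroup $K < D$ and any $\rho \in \Hom^{\rm sym}(K,\widehat{K})$ I would set $L_{\rho} = \{(a,\chi) : a \in K,\ \chi|_K = \rho(a)\}$ and verify directly that it is a subgroup (being the preimage of a subgroup under a homomorphism), that it has order $\#K \cdot \#K^{\perp} = \#D$, and that it is isotropic precisely because $\rho$ is symmetric — hence a Lagrangian projecting to $K$ with invariant $\rho$. Since the two constructions are mutually inverse, the Lagrangians are in bijection with pairs $(K,\rho)$, and summing over all subgroups $K$ of $D$ yields the formula \eqref{Lattice}.

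I expect the main subtlety to be the structural step identifying each Lagrangian with the \emph{preimage} of a graph rather than with the graph of an honest homomorphism $K \to \widehat{D}$. Over a field this is the familiar statement that Lagrangians containing $K^{\perp}$ correspond to symmetric forms on $K$, but over the finite group $D$ the extension $0 \to K^{\perp} \to \widehat{D} \to \widehat{K} \to 0$ need not split, and a symmetric $\rho$ need not lift to a homomorphism $K \to \widehat{D}$. The preimage description sidesteps this difficulty: it produces $L_{\rho}$ directly as a subgroup with no lifting required, and it is here that care is needed to keep the bijection clean.
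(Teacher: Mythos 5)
Your proof is correct. There is nothing in the paper to compare it against: the lemma is stated with a citation to Debarre \cite{D} and no argument is given, and your route --- translating the commutator pairing into the standard symplectic pairing on $G = D \oplus \widehat{D}$ with $D = \BZ/d_1 \times \cdots \times \BZ/d_n$, showing maximal isotropics are Lagrangians $L = L^{\perp}$ of order $\#D$, and setting up the bijection $L \leftrightarrow (K,\rho)$ with $K = \pi_D(L)$ and $\rho \in \Hom^{\rm sym}(K,\widehat{K})$ --- is precisely the standard argument (and, in substance, Debarre's). You also correctly isolate and resolve the one genuine subtlety: since the extension $0 \to K^{\perp} \to \widehat{D} \to \widehat{K} \to 0$ need not split, a symmetric $\rho$ need not lift to $\Hom(K,\widehat{D})$, and defining $L_{\rho}$ as the \emph{preimage} of the graph is what keeps the inverse construction well-defined; note that $\pi_D(L_{\rho}) = K$ does use surjectivity of the restriction $\widehat{D} \to \widehat{K}$, which holds because $\BC^*$ is divisible. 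The only steps left implicit are routine and easily supplied: that $\#L \cdot \#L^{\perp} = \#G$ for a nondegenerate pairing on a finite abelian group (needed for your ``whence $\#L = \#D$''), and that the image $\bar{L} \subset K \oplus \widehat{K}$ is a graph because it has order $\#K$ and surjects onto the first factor.
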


A straightforward analysis yields
\begin{equation} \nu(1, \ldots, 1, d) = \sigma(d) = \sum_{k | d} k \,. \label{Primlat} \end{equation}
A list of values of $\nu(d_1, \ldots, d_n)$ can be found in \cite{LS}, but some of the entries are incorrect. For example, the number $\nu(2, 4)$ should be $39$ and not $51$.

The counts of polarized isogenies are closely related to the counts of the lowest genus curves on abelian surfaces and threefolds. Moreover, this lattice method is also important in counting higher genus curves in class of type $(1, 2, d)$, see Sections \ref{g2c} and \ref{secg3lc}.

\subsection{Reduced virtual classes} \label{cosection}

Let $A$ be an abelian variety of dimension $n \geq 2$, and let $\beta \in H_2(A, \BZ)$ be a curve class of type
\[ (d_1, \ldots, d_k, \underbrace{0, \ldots, 0}_m) \]
with $d_i > 0$ for all $i$. Here, $k > 0$, $m \geq 0$, and $k + m = n$.

By the discussion in Section \ref{cct}, there exists a subtorus $A' \subset A$ of dimension $k$ and a curve class $\beta' \in H_2(A', \BZ)$ of type $(d_1, \ldots, d_k)$ such that $\beta$ is the push-forward of $\beta'$. Write
\[ \pi : A \to A'' = A/A' \]
for the quotient map.

Consider the moduli space of stable maps $\Mbar_g(A, \beta)$. Using the {\em cosection localization} method of Kiem-Li \cite{KL}, we define a (maximally) reduced virtual class
\[ \left[\Mbar_g(A, \beta)\right]^{\text{red}} \]
on $\Mbar_g(A, \beta)$. The case with marked points is done similarly. The result provides a foundation for the reduced Gromov-Witten theory of abelian varieties.

By \cite[Section 6]{KL}, every holomorphic 2-form $\theta \in H^0(A, \Omega^2_A)$ induces a map
\[ \sigma_\theta : \text{Ob}_{\Mbar_g(A, \beta)} \ra \mathcal{O}_{\Mbar_g(A, \beta)} \,, \]
where $\text{Ob}_{\Mbar_g(A, \beta)}$ is the obstruction sheaf of $\Mbar_g(A, \beta)$. 

\begin{lemma}
The map $\sigma_\theta$ is trivial if
\[ \theta \in \pi^*H^0(A'', \Omega^2_{A''}) \]
and surjective otherwise.
\end{lemma}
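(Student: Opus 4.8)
The plan is to write the Kiem--Li cosection $\sigma_\theta$ explicitly over each moduli point and reduce the dichotomy to a short linear-algebra computation combined with one geometric fact about the image of stable maps in class $\beta$. First I would trivialize the tangent bundle: since $A$ is an abelian variety, $T_A \cong \CO_A \otimes_{\BC} V$ with $V = H^0(A, T_A)$, so over a point $[f\colon C \to A]$ the obstruction space is $H^1(C, f^* T_A) \cong H^1(C, \CO_C) \otimes V$ and $H^0(A, \Omega_A) \cong V^*$. Following \cite[Section 6]{KL}, the $2$-form $\theta \in H^0(A, \Omega^2_A) = \wedge^2 V^*$ yields the contraction $\iota_\theta \colon V \to V^*,\ v \mapsto \theta(v, -)$, and the cosection is the map on $H^1$ induced by the sheaf homomorphism $f^* T_A \to f^* \Omega_A \to \omega_C$ followed by the trace $H^1(C, \omega_C) \xrightarrow{\sim} \BC$. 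Concretely,
$$\sigma_\theta(\xi \otimes v) = \big\langle \xi,\ f^*(\iota_v \theta) \big\rangle,$$
where $\langle\,,\,\rangle$ is the Serre pairing $H^1(C, \CO_C) \times H^0(C, \omega_C) \to \BC$ and $f^* \colon V^* = H^0(A, \Omega_A) \to H^0(C, \omega_C)$ is pullback of $1$-forms. Because the Serre pairing is perfect, $\sigma_\theta$ vanishes at $[f]$ if and only if $f^*(\iota_v \theta) = 0$ for all $v$. Setting $V_f \subseteq V$ to be the tangent space at the origin of the subtorus of $A$ generated by $f(C)$, so that $\Ker(f^*) = \operatorname{Ann}(V_f)$, the antisymmetry of $\theta$ turns this into the pointwise criterion: $\sigma_\theta$ vanishes at $[f]$ if and only if $V_f \subseteq \Ker(\theta)$, i.e. $\iota_w \theta = 0$ for every $w \in V_f$.

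The geometric heart of the argument is the claim that $V_f = V'$ for \emph{every} stable map $f$ in class $\beta$, where $V' = H^0(A', T_{A'}) \subseteq V$ is the tangent space to $A'$. To see this I would compose with $\pi \colon A \to A''$: since $\beta$ is pushed forward from $A'$ and $\pi|_{A'}$ is constant, $(\pi \circ f)_*[C] = \pi_* \beta = 0$. A stable map to the abelian variety $A''$ with vanishing class must be constant, because on each irreducible component the push-forward class is effective and a sum of effective $1$-cycles vanishes only if each summand does (pair against an ample class); connectedness of $C$ then forces $\pi \circ f$ to be constant. Hence $f(C)$ lies in a fiber of $\pi$, i.e. a translate of $A' = \Ker \pi$, giving $V_f \subseteq V'$. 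Conversely, if $f(C)$ were contained in a translate of a \emph{proper} subtorus $B \subsetneq A'$, then $\beta' = f_*[C]$ would be supported on $B$ and hence of degenerate type on $A'$, contradicting $d_i > 0$ for all $i \le k$ (Section \ref{cct}); therefore $V_f = V'$.

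With $V_f = V'$ established for all $f$, the lemma follows by identifying $\pi^* H^0(A'', \Omega^2_{A''})$ with $\wedge^2 \operatorname{Ann}(V') = \{\theta : V' \subseteq \Ker(\theta)\}$: a $2$-form lies in this subspace exactly when it descends to $V/V' = T_0 A''$. If $\theta \in \pi^* H^0(A'', \Omega^2_{A''})$ then $V' \subseteq \Ker(\theta)$, so the pointwise criterion gives $\sigma_\theta = 0$ at every point and $\sigma_\theta$ is trivial. If $\theta \notin \pi^* H^0(A'', \Omega^2_{A''})$ then $V' \not\subseteq \Ker(\theta)$, so $\sigma_\theta$ is nonzero — hence surjective onto $\CO$ — at every point, since $V_f = V'$ is independent of $f$. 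The main obstacle is not the linear algebra but the careful construction of $\sigma_\theta$ as a genuine sheaf homomorphism over the whole moduli stack (including nodal domains, where one works with the dualizing sheaf $\omega_C$), which I would import from \cite[Section 6]{KL}; the one genuinely geometric input, the factorization of every stable map through a translate of $A'$, is then the conceptual crux.
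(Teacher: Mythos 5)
Your proof is correct and takes essentially the same route as the paper: both reduce the lemma to the pointwise criterion that $\sigma_\theta$ is trivial at $[f]$ if and only if the contraction of $\theta$ against the tangent directions of $\mathrm{Im}(f)$ vanishes (you derive this explicitly via Serre duality where the paper cites \cite[Proposition 6.4]{KL}), and both then conclude from the two geometric facts that $\mathrm{Im}(f)$ lies in a translate of $A'$ and that its tangent directions span $T_{A', 0_{A'}}$ because the non-degeneracy of $\beta'$ forces the image curve to generate $A'$. The only point to flag is that your identification $\Ker(f^*) = \operatorname{Ann}(V_f)$ silently contains the nontrivial spanning statement, which is exactly \cite[Lemma 8.2]{Debarre} and should be cited rather than treated as linear algebra.
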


\begin{proof}
Let $[f : C \to A] \in \Mbar_g(A, \beta)$ be a stable map in class $\beta$. After translation, we may assume $\text{Im}(C) \subset A'$.

By \cite[Proposition 6.4]{KL}, the map $\sigma_\theta$ is trivial at $[f]$ if and only if the composition
\begin{equation} \label{compo}
T_{C_{\text{reg}}} \xrightarrow{df} f^*T_A |_{C_{\text{reg}}} \xrightarrow{f^*\widehat{\theta}} f^*\Omega_A |_{C_{\text{reg}}}
\end{equation}
is trivial. Here $C_{\text{reg}}$ is the regular locus of $C$ and $\widehat{\theta} : T_A \to \Omega_A$ is the map induced by $\theta$. Since $\text{Im}(C) \subset A'$, it is clear that \eqref{compo} is trivial if $\theta \in \pi^*H^0(A'', \Omega^2_{A''})$.

For the surjectivity statement, we identify the tangent space $T_{A, x}$ at $x \in A$ with $T_{A, 0_A}$ by translation. Since $\beta'$ is of type $(d_1, \ldots, d_k)$ with $d_i > 0$, the curve $\text{Im}(C)$ generates $A'$ as a group. By \cite[Lemma 8.2]{Debarre}, there exists an open dense subset $U \subset \text{Im}(C)_{\text{reg}}$, such that $T_{A', 0_{A'}}$ is spanned by
\[ T_{\text{Im}(C)_{\text{reg}}, x} \quad \text{ for } \quad x \in U \,. \]
It follows that for any $\theta \in H^0(A, \Omega^2_A) \setminus \pi^*H^0(A'', \Omega^2_{A''})$, there exists a point $x \in U$ with \eqref{compo} non-trivial at $x$.
\end{proof}

Hence, by taking a basis of the quotient $H^0(A, \Omega^2_A) / \pi^* H^0(A'', \Omega^2_{A''})$, we obtain a surjective map
\[ \sigma : \text{Ob}_{\Mbar_g(A, \beta)} \ra \mathcal{O}_{\Mbar_g(A, \beta)}^{\oplus r(k, m)} \]
with
\[ r(k, m) = \binom{k + m}{2} - \binom{m}{2} = \frac{k(k - 1)}{2} + km \,. \]
Then, by the construction of \cite{KL}, the map $\sigma$ yields a reduced virtual class $[\Mbar_g(A, \beta)]^{\text{red}}$ of dimension
\[ \text{vdim}\,\Mbar_g(A, \beta) + r(k, m) = (k + m - 3)(1 - g) + \frac{k(k - 1)}{2} + km \,. \]

\subsection{Gromov-Witten theory in degenerate curve classes} \label{dcosection}
We now explore the possibilities of obtaining 
non-trivial reduced Gromov-Witten invariants for $A$ and $\beta$. 
By deformation invariance, the invariants depend only on the type 
\[ (d_1, \ldots, d_k, \underbrace{0, \ldots, 0}_m)\, . \]
We may then assume
\[ A = A' \times A'' \]
with $A'$ generic among abelian varieties carrying a curve class of type $(d_1, \ldots, d_k)$, and $A''$ a product of $m$ elliptic curves, 
\[ A'' = E_1 \times \cdots \times E_m \,. \]
By the genericity of $A'$, there are no stable maps of genus less than $k = \dim \, A'$ in class $\beta$. Hence, all invariants in genus $< k$ vanish.

We list the following four cases according to the number $k$ of non-zero entries in the type of $\beta$.

\vspace{9pt}
\noindent \textbf{Case} $k = 1$.\ \ For $g \geq 1$, stable maps $[f : C \to A] \in \Mbar_g(A, \beta)$ come in $m$-dimensional families via the translation action of $A''$. On the other hand, the translation by the elliptic curve $A'$ fixes $\text{Im}(f)$. The expected dimension modulo the translation by $A''$ is
\[ \text{vdim}\,\Mbar_g(A, \beta) + r(1, m) - m = (m - 2)(1 - g) \,. \]

Integrals over the reduced class $[\Mbar_g(A, \beta)]^{\text{red}}$ can be evaluated by eliminating the $E$-factors. In each step
from
\[ A' \times E_1 \times \cdots \times E_{i + 1} \quad \text{ to } \quad A' \times E_1 \times \cdots \times E_i \,, \]
we find a surjective map $\hodge^{\vee} \to \mathcal{O}$ where $\hodge$ is the Hodge bundle. We then obtain a copy of the top Chern class of
\[ \Ker(\hodge^{\vee} \ra \mathcal{O}) \]
which is $(-1)^{g - 1}\lambda_{g - 1}$. This follows from a close analysis of the obstruction sheaf and the definition of the reduced class. In the end, we arrive at integrals over $[\Mbar_g(A', \beta')]^{\text{vir}}$ with
\[ \big((-1)^{g - 1}\lambda_{g - 1}\big)^m \]
in the integrand.

For $m = 1$ ($\dim \, A = 2$), the theory becomes the study of $\lambda_{g - 1}$-integrals on the elliptic curve $A'$.
Such Hodge integrals may be expressed \cite{FP1} in terms
of the descendent theory of an elliptic curve \cite{OP1,OP3}.

For $m \geq 2$ ($\dim \, A \geq 3$), all invariants in genus $g \geq 2$ vanish. By Mumford's relation for $g\geq 2$,
$$\lambda_{g - 1}^2 = 2\lambda_g\lambda_{g - 2}\, ,$$ 
and $\lambda_g$ annihilates the virtual fundamental
class of non-constant maps to the elliptic curve $A'$.
In genus~$1$, all invariants are multiples of
\begin{equation}\label{gg11} \int_{[\Mbar_1(A', \beta')]^{\text{vir}}} \mathsf{1}
= \frac{\sigma(d_1)}{d_1} \end{equation}
for $\beta'$ of type $(d_1)$.

\vspace{9pt}
\noindent \textbf{Case} $k = 2$.\ \ For $g \geq 2$, stable maps in $\Mbar_g(A, \beta)$ come in $(2 + m)$-dimensional families via the translation action of $A$. The expected dimension modulo translation is
\[ \text{vdim}\,\Mbar_g(A, \beta) + r(2, m) - (2 + m) = (m - 1)(2 - g) \,. \]
Similar to the $k = 1$ case, by eliminating each $E$-factor we find a surjective map $\hodge^{\vee} \ra \mathcal{O}^{\oplus 2}$, and obtain a copy of the top Chern class of
\[ \Ker(\hodge^{\vee} \ra \mathcal{O}^{\oplus 2}) \]
which is $(-1)^{g - 2}\lambda_{g - 2}$.

The reduced Gromov-Witten theory of the abelian surface $A'$ is the subject of Part I of the paper. For $m = 1$ ($\dim \, A = 3$), we find  integrals of the form 
$$\int_{[\Mbar_g(A', \beta')]^{\text{red}}} (-1)^{g - 2} \lambda_{g - 2}\, \ldots\, , $$
where the dots stand for further terms in the integrand.
Our interest in  $\lambda_{g - 2}$-integrals 
on an abelian surface (see \eqref{123} and Theorem \ref{YZ_intro})
is directly motivated by Gromov-Witten theory in degenerate
curve classes on abelian
threefolds.

For $m \geq 2$ ($\dim \, A  \geq 4$), all invariants in genus $g \geq 3$ vanish for dimension reasons. We are then reduced to the genus $2$ invariants of $A'$ and $\beta'$.

\vspace{9pt}
\noindent \textbf{Case} $k = 3$.\ \ Similar to the $k = 2$ case, for $g \geq 3$, the expected dimension modulo translation is
\[ \text{vdim}\,\Mbar_g(A, \beta) + r(3, m) - (3 + m) = m(3 - g) \,. \]
Here by eliminating each $E$-factor we find a surjective map $\hodge^{\vee} \ra \mathcal{O}^{\oplus 3}$, and obtain a copy of the top Chern class of
\[ \Ker(\hodge^{\vee} \ra \mathcal{O}^{\oplus 3}) \]
which is $(-1)^{g - 3}\lambda_{g - 3}$.

The reduced Gromov-Witten theory of the abelian threefold $A'$ is studied in Part II of the paper. For $m \geq 1$ ($\dim \, A  \geq 4$), all invariants in genus $g \geq 4$ vanish for dimension reasons. We are reduced to the genus $3$ invariants of $A'$ and $\beta'$.

\vspace{9pt}
\noindent \textbf{Case} $k \geq 4$.\ \ For $g \geq k$, the expected dimension modulo translation is
\begin{multline} \label{dimcount}
\text{vdim}\,\Mbar_g(A, \beta) + r(k, m) - (k + m) \\
= (k - 3)\bigg(\frac{k}{2} + 1 - g\bigg) + m(k - g) \,.
\end{multline}
The right hand side of \eqref{dimcount} is always negative for $g \geq k \geq 4$. Hence, all invariants vanish.

\vspace{9pt}
In conclusion, the (reduced) Gromov-Witten theory of abelian varieties 
of arbitrary dimensions is completely determined by the (reduced)
Gromov-Witten theories of abelian varieties of dimensions $1\leq d \leq 3$. 
The analysis here justifies our focus on these low dimensions.

Furthermore, for abelian varieties of dimension at least $4$, 
only genus  $1\leq g \leq 3$ invariants can possibly survive.
Exact formulas{\footnote{For genus 1, the
formula is \eqref{gg11}. For genus 2, the formula is given
by Theorem \ref{YZ_intro}. For genus 3, the formula appears in Lemma \ref{G3lcc}.}}
are available for the genus $1\leq g \leq 3$ invariants which arise
for abelian varieties of dimension at least 4.

\newpage
{\large{\part{{Abelian surfaces}}}}

\vspace{8pt}
\section{The genus 2 case} \label{g2c}

\subsection{Quotient Gromov-Witten invariants} \label{secg2lc}
Let $A$ be an abelian surface, and let $\beta \in H_2(A, \BZ)$ be a curve class of type $(d_1, d_2)$ with $d_1, d_2 > 0$. In Section \ref{bcc}, we defined invariants 
\[ \mathsf{N}^{\text{FLS}}_{g,\beta} = \mathsf{N}^{\text{FLS}}_{g,(d_1,d_2)} \]
counting genus $g$ curves in a fixed linear system.
It is sometimes more natural to count curves up to translation.
A reasonable path\footnote{See Section \ref{GW3_sec1} for such a treatment for abelian threefolds.} to the definition of
such invariants is by integrating over the quotient stack
\begin{equation} \Mbar_{g}(A, \beta) / A \,. \label{qmbargn} \end{equation}

Classically, people have taken a simpler course.
Let
\[ p : \Mbar_{g}(A, \beta) \ra \Pic^{\beta}(A) \cong \widehat{A} \,, \]
be the morphism which sends a curve $[f : C \rightarrow A]$
to the divisor class associated with its image curve.{\footnote{The construction of $p$ relies upon the
Hilbert-Chow morphism.}}
The map $p$ is equivariant with respect to
the actions of $A$ on $\Mbar_{g}(A, \beta)$ 
by translation and on $\widehat{A}$ by the isogeny
$$\phi_{\beta} : A \rightarrow \widehat{A}\, .$$
An element $x \in A$ fixes a linear system of type $(d_1, d_2)$
if and only if $x$ is an element of
\begin{equation} \Ker( \phi_{\beta} : A \to \widehat{A} ) \cong (\BZ/d_1 \times \BZ/d_2)^2 \,. \label{KerFLS} \end{equation}
The quotient space \eqref{qmbargn} equals the quotient of
$\Mbar_{g}(A, \beta)^{\text{FLS}}$ by the finite group \eqref{KerFLS} with $(d_1d_2)^2$ elements. Therefore, we {\em define}
the invariants counting curves up to translation by
\begin{equation} \mathsf{N}^{\text{Q}}_{g,(d_1, d_2)} = \frac{1}{(d_1 d_2)^2} \mathsf{N}^{\text{FLS}}_{g,(d_1, d_2)} \,. \label{defqqq} \end{equation}

In genus $2$, the invariants are related to the lattice counts considered in Section \ref{piso}.

\begin{lemma} \label{G2lattice}
For all $d_1, d_2 > 0$,
\begin{equation*} \mathsf{N}^{\textup{Q}}_{2,(d_1, d_2)} = \nu(d_1, d_2) \,.
\end{equation*}
\end{lemma}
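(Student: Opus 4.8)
The plan is to prove the identity by exploiting deformation invariance: since $\mathsf{N}^{\mathrm{Q}}_{2,(d_1,d_2)}$ depends only on the type $(d_1,d_2)$, I would evaluate it on a \emph{generic} abelian surface $A$ carrying a class $\beta$ of type $(d_1,d_2)$, where the invariant reduces to an honest enumeration of smooth genus $2$ curves, and then identify that enumeration with the lattice count $\nu(d_1,d_2)$ via the correspondence of Section \ref{piso}.

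First I would analyze $\Mbar_2(A,\beta)$ for generic $A$. Because $\beta$ is non-degenerate and $A$ is generic, $A$ carries no rational or elliptic curves, and by Riemann--Hurwitz there is no nonconstant morphism of degree $\geq 2$ between smooth genus $2$ curves. Consequently every stable map contributing in class $\beta$ must have smooth irreducible domain and be birational onto a smooth genus $2$ image curve $C \subset A$: there are no contracted components, no nodal degenerations (these would force elliptic or rational images), and the automorphism group of $[f\colon C \to A]$ is trivial since $f$ is generically injective. Thus, after imposing the fixed-linear-system condition (or, equivalently, passing to the translation quotient as in \eqref{defqqq}), the relevant moduli space is supported on a finite set of points indexed by genus $2$ curves in class $\beta$ up to translation.

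Next I would run the isogeny correspondence. Each such $C \subset A$ embeds by Abel--Jacobi as a theta divisor in its Jacobian $J$, and the inclusion $C \hookrightarrow A$ induces an isogeny $\pi\colon J \to A$ restricting (up to translation) to the inclusion on $C$; dualizing yields a polarized isogeny $\widehat{\pi}\colon (\widehat{A},\widehat{\beta}) \to (J,\theta)$ onto the principally polarized $(J,\theta)$. Conversely, a polarized isogeny from $(\widehat{A},\widehat{\beta})$ to a principally polarized abelian surface produces, for generic $A$ so that the target theta divisor is a smooth genus $2$ curve, a curve $C \subset A$ in class $\beta$, well defined up to translation. By \cite[Corollary 6.3.5]{CAV} these isogenies are in bijection with maximal totally isotropic subgroups of $\Ker(\phi_{\widehat{\beta}})$, that is, with the $\nu(d_1,d_2)$ objects of Section \ref{piso}; this is precisely the Debarre--G\"ottsche--Lange--Sernesi count. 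Hence there are exactly $\nu(d_1,d_2)$ genus $2$ curves in class $\beta$ up to translation.

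Finally I must show that the reduced virtual count agrees with this naive enumeration. The reduced class here has dimension $g-2=0$ and integrand $(-1)^{g-2}\lambda_{g-2}=1$, so it suffices to prove that for generic $A$ each of the finitely many points is a reduced, unobstructed point of the cosection-reduced deformation theory of Section \ref{cosection} and hence contributes exactly $+1$. I expect this to be the main obstacle: one must check that the reduced tangent space modulo translations vanishes (each curve is infinitesimally rigid up to translation, following from rigidity of the associated isogeny) and that the reduced obstruction space vanishes, so that the moduli space is smooth of dimension $0$ at every contributing point with no boundary contributions. Granting this transversality, the naive count equals the reduced invariant, and combined with $\mathsf{N}^{\mathrm{Q}}_{2,(d_1,d_2)} = \frac{1}{(d_1 d_2)^2}\mathsf{N}^{\mathrm{FLS}}_{2,(d_1,d_2)}$ we obtain $\mathsf{N}^{\mathrm{Q}}_{2,(d_1,d_2)} = \nu(d_1,d_2)$, consistent with $\nu(1,\ldots,1,d)=\sigma(d)$ in \eqref{Primlat}.
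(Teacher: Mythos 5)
Your route is the same as the paper's (specialize to generic $A$, show all genus $2$ stable maps have smooth domain and are birational, then invoke the Debarre--G\"ottsche--Lange--Sernesi correspondence of Section \ref{piso}), but there is a genuine gap in the accounting: you never verify that the translation action of $A$ on $\Mbar_2(A,\beta)$ is \emph{free}, and your final arithmetic needs exactly this. To pass from ``$\nu(d_1,d_2)$ curves up to translation'' to $\mathsf{N}^{\text{FLS}}_{2,(d_1,d_2)} = (d_1d_2)^2\,\nu(d_1,d_2)$ via \eqref{defqqq}, each translation class must meet the fixed linear system in exactly $(d_1d_2)^2$ distinct reduced points: the translates $t_x(C_0)$ remaining in $|L|$ are indexed by $x \in \Ker(\phi_\beta) \cong (\BZ/d_1 \times \BZ/d_2)^2$ as in \eqref{KerFLS}, but distinct $x$ give distinct curves only if no nonzero translation stabilizes $C_0$; equivalently, the quotient $\Mbar_2(A,\beta)/A$ would otherwise contain stacky points contributing $1/|\mathrm{Stab}|$ rather than $1$. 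This is not automatic --- freeness genuinely fails for genus $>2$ (genus $3$ hyperelliptic maps in type $(1,2)$ have $\BZ/2$-stabilizers, see Sections \ref{hyp_3} and \ref{secg3lc}), so the check is the delicate part of the lemma. The paper devotes the second half of its proof to it: writing $f = \pi \circ \mathsf{aj}$, observing that $[\mathsf{aj}(C)]$ is a divisor class of type $(1,1)$ so that only $0_J$ fixes $\mathsf{aj}(C)$, and analyzing $\pi^{-1}(f(C)) = \bigcup_{x\in\Ker(\pi)} t_x(\mathsf{aj}(C))$ to conclude the stabilizer of $f(C)$ in $A$ is trivial. Your remark that the stable map $[f]$ has trivial automorphisms concerns a different group (domain automorphisms commuting with $f$) and does not substitute. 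Relatedly, the paper records why each polarized isogeny yields only \emph{one} translation class of maps --- the automorphism $-1$ of $A$ corresponds to the hyperelliptic involution of $C$ --- which is precisely where genus $3$ acquires its factor of $2$ in Lemma \ref{G3lcc}; your formulation in terms of image curves up to translation happens to absorb this, but it deserves an explicit sentence.

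Two smaller points. First, the transversality step you leave open can be closed (the paper asserts it at a similar level of detail): for generic $A$ the map $f$ is an immersion, since $\mathsf{aj}$ is an embedding and the isogeny $\pi$ is \'etale, so the normal sheaf is the line bundle $N_f \cong \omega_C$ from the sequence $0 \to T_C \to f^{\ast}T_A \cong \O_C^{\oplus 2} \to N_f \to 0$; then $h^0(N_f) = h^0(\omega_C) = 2$ equals the reduced virtual dimension of $\Mbar_2(A,\beta)$, and the single obstruction $h^1(\omega_C) = 1$ is exactly the piece removed by the cosection reduction of Section \ref{cosection}, so each point is reduced and contributes $+1$. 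Second, your claim that the image is a \emph{smooth} genus $2$ curve is false whenever $d_1 d_2 > 1$: curves in class $\beta$ have arithmetic genus $1 + d_1 d_2$, so the genus $2$ members are nodal (for type $(1,2)$ they are the $12 = (d_1d_2)^2\,\nu(1,2)$ one-nodal members of the pencil appearing in Section \ref{secdt}); only the domain and the theta divisor in $J$ are smooth. This is not load-bearing --- the correspondence concerns maps, not embedded smooth curves --- but as stated it is incorrect.
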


\begin{proof}
Let $\beta$ be of type $(d_1, d_2)$ and assume $\End_{\BQ}(A) = \BQ$. In particular,
$A$ is simple (contains no elliptic curves) and $\Aut(A) = \{\pm 1\}$. It follows that every genus $2$ stable map
$$f : C \ra A$$
in class $\beta$ has a nonsingular domain $C$. As discussed in Section \ref{piso}, to such a map $f$, we can associate a polarized isogeny $$(\widehat{A}, \widehat{\beta}) \to (J, \theta)\, ,$$ where $J$ is the Jacobian of $C$.

Conversely, every simple principally polarized abelian surface $(B, \theta)$ is the Jacobian of a unique nonsingular genus $2$ curve $C$. Hence, each polarized isogeny $(\widehat{A}, \widehat{\beta}) \to (B, \theta)$ induces a map
$$f : C \xrightarrow{\mathsf{aj}} B \to A \,.$$
The map $f$ is unique up to translation and automorphism of $A$. Moreover, the automorphism $-1$ of $A$ corresponds to the hyperelliptic involution of $C$. 

The abelian surface $A$ acts freely\footnote{The action is in general {\em not} free in genus $> 2$. For example, maps from nonsingular genus $3$ hyperelliptic curves in class of type $(1, 2)$ have $\BZ/2$-stabilizers, see Sections \ref{hyp_3} and~\ref{secg3lc}.} on $\Mbar_2(A, \beta)$ by translation. To prove this, we decompose a genus $2$ map $f : C \to A$ as
\[ f : C \xrightarrow{\mathsf{aj}} J \xrightarrow{\pi} A \,. \]
First, since $\big[\mathsf{aj}(C)\big]$ is a divisor class of type $(1, 1)$, the only element in $J$ fixing $\mathsf{aj}(C)$ is $0_J$. Second, the preimage $\pi^{-1}\big(f(C)\big)$ is the union
\begin{equation} \bigcup_{x \in \Ker(\pi)} t_x\big(\mathsf{aj}(C)\big) \,, \label{Union} \end{equation}
where $t_x : J \to J$ is the translation by $x$. Suppose a point $a \in A$ fixes $f(C)$, and let $b \in \pi^{-1}\{a\}$. By \eqref{Union}, we have
\[ t_b\big(\mathsf{aj}(C)\big) = t_x\big(\mathsf{aj}(C)\big) \]
for some $x \in \Ker(\pi)$. In other words, the element $b - x \in J$ fixes $\mathsf{aj}(C)$. Hence, $b - x = 0_J$ and $a = \pi(b) = \pi(b - x) = 0_A$.

It follows that $\Mbar_2(A, \beta) / A$ is precisely a set of $\nu(d_1, d_2)$ isolated reduced points (or equivalently, $\Mbar_2(A, \beta)^{\text{FLS}}$ is a set of $(d_1d_2)^2 \nu(d_1, d_2)$ isolated reduced points).
\end{proof}

\subsection{Genus 2 counts}
For genus $2$, the following result determines the counts in all classes.

\begin{thm} \label{YZA}
For all $d_1, d_2 > 0$,
\begin{equation} \mathsf{N}^{\textup{Q}}_{2,(d_1, d_2)} = \sum_{k | \gcd(d_1,d_2)} \sum_{ \ell | \frac{ d_1 d_2 }{k^2}} k^{3} \ell \,. \label{YZ} \end{equation}
\end{thm}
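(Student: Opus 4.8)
The plan is to reduce the theorem to a purely arithmetic identity and then evaluate a lattice sum. By Lemma \ref{G2lattice} the left-hand side equals $\nu(d_1,d_2)$, so it suffices to prove
\[ \nu(d_1,d_2) = \sum_{k \mid \gcd(d_1,d_2)} k^3\, \sigma\!\left(\frac{d_1 d_2}{k^2}\right), \]
where $\sigma(m)=\sum_{\ell\mid m}\ell$ and $\nu$ is computed by Debarre's formula \eqref{Lattice}. This is an identity about subgroups of $G = \BZ/d_1 \times \BZ/d_2$, with the primitive normalization \eqref{Primlat} already verifying the case $\gcd(d_1,d_2)=1$.

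First I would establish multiplicativity and reduce to prime powers. Decomposing $G$ into its $p$-primary parts $G_p$ by the Chinese remainder theorem, every subgroup $K<G$ factors uniquely as $\prod_p K_p$ with $K_p<G_p$, and correspondingly $\#\Hom^{\rm sym}(K,\widehat K) = \prod_p \#\Hom^{\rm sym}(K_p,\widehat{K_p})$; hence $\nu$ is multiplicative under coprime factorization of the pair $(d_1,d_2)$. The right-hand side is multiplicative as well, since $\gcd$ and $\sigma$ are multiplicative and the sum over $k$ factors over primes. Thus it suffices to prove, for each prime $p$ and $0\le a\le b$,
\[ \nu(p^a,p^b) = \sum_{i=0}^{a} p^{3i}\,\sigma\!\left(p^{a+b-2i}\right). \]
The next ingredient is the local count $\#\Hom^{\rm sym}(K,\widehat K)$. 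Each $K<\BZ/p^a\times\BZ/p^b$ is isomorphic to $\BZ/p^s\times\BZ/p^t$ for a unique $0\le s\le t$ with $s\le a$, $t\le b$, and viewing symmetric homomorphisms as symmetric bilinear forms $K\times K\to\BQ/\BZ$ one finds three independent entries: a form on $\BZ/p^s$ ($p^s$ choices), a form on $\BZ/p^t$ ($p^t$ choices), and a coupling term of order dividing $\gcd(p^s,p^t)=p^s$ ($p^s$ choices). Hence $\#\Hom^{\rm sym}(K,\widehat K)=p^{2s+t}$.

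It then remains to count, for each admissible pair $(s,t)$, the number $N_{s,t}$ of subgroups of $\BZ/p^a\times\BZ/p^b$ of that isomorphism type (a classical enumeration via Hermite normal forms / Hall polynomials) and to evaluate the finite sum $\sum_{s\le t} N_{s,t}\,p^{2s+t}$. The main obstacle is precisely this last step: showing that the weighted subgroup count collapses to $\sum_i p^{3i}\sigma(p^{a+b-2i})$. I expect the cleanest route is to organize the summation by the parameter $k=p^{s}$ — equivalently by the imprimitivity of the associated class — so that the $p^{3i}$ factor emerges from the $p^{2s+t}$ weight together with $N_{s,t}$, while the residual summation over $t$ reconstructs $\sigma(p^{a+b-2i})$; this is exactly the multiple cover structure advertised in the introduction. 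Sanity checks in the smallest cases $(p^a,p^b)\in\{(1,1),(1,p),(p,p),(p,p^2)\}$ confirm both the local weight $p^{2s+t}$ and the final collapse, and they guide the general bookkeeping.
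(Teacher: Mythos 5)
Your setup coincides with the paper's proof: Lemma \ref{G2lattice} reduces the theorem to the arithmetic identity for $\nu(d_1,d_2)$, the CRT/multiplicativity step (which the paper leaves implicit with ``we are immediately reduced'') localizes at a prime, and your local weight $\#\Hom^{\rm sym}(K,\widehat K)=p^{2s+t}$ for $K\cong \BZ/p^s\times \BZ/p^t$, $s\le t$, is correct and consistent with the two facts the paper uses (the value $p^k$ for cyclic $K$, and the factor $p^3$ when $(s,t)$ is raised to $(s+1,t+1)$). The genuine gap is exactly where you flag it: you never evaluate the weighted sum $\sum_{s\le t}N_{s,t}\,p^{2s+t}$. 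Saying you ``expect'' the collapse to $\sum_i p^{3i}\sigma(p^{a+b-2i})$, with small-case sanity checks, leaves unproved the one step that carries the entire content of the theorem beyond the primitive case \eqref{Primlat} --- the surprising multiple cover structure is precisely this collapse, so it cannot be taken on faith.

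Two concrete inputs close the gap, and together they constitute the paper's argument. First, the quotient map $\pi:\BZ/p^a\times\BZ/p^b\to\BZ/p^{a-1}\times\BZ/p^{b-1}$ induces a bijection between subgroups of type $(s,t)$ with $s\ge 1$ upstairs and subgroups of type $(s-1,t-1)$ downstairs: any rank-$2$ subgroup contains the full $p$-torsion of the ambient group, which is $\ker\pi$. Hence $N^{(a,b)}_{s,t}=N^{(a-1,b-1)}_{s-1,t-1}$, and combined with the weight ratio $p^{2s+t}/p^{2(s-1)+(t-1)}=p^3$ this yields the recursion $\nu(p^a,p^b)=C(p^a,p^b)+p^3\,\nu(p^{a-1},p^{b-1})$, where $C$ denotes the cyclic contribution, so there is no need to know the $N_{s,t}$ in closed form at all. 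Second, the cyclic part requires Butler's count $\#\{\BZ/p^k<\BZ/p^a\times\BZ/p^b\}=\sum \varphi(p^{\min(i,j)})$, summed over $0\le i\le a$, $0\le j\le b$ with $\max(i,j)=k$, followed by a short but genuine computation showing $\sum_k p^k\,\#\{\BZ/p^k<\BZ/p^a\times\BZ/p^b\}=\sigma(p^{a+b})=\nu(1,p^{a+b})$; this is a real calculation in the paper, not a formality. Granting these, your fixed-$s$ organization is literally the unrolled recursion: iterating the bijection gives $N^{(a,b)}_{s,t}=N^{(a-s,b-s)}_{0,t-s}$, whence $\sum_t N_{s,t}\,p^{2s+t}=p^{3s}\sigma(p^{a+b-2s})$, which is exactly the identity you conjectured. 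So nothing in your plan would fail, but as written the decisive combinatorial step is asserted rather than proved.
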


The primitive case (where $\gcd(d_1, d_2) = 1$) was proven in \cite{D, G, LS} via the lattice method discussed in Section \ref{piso}  and in \cite{BLA, Ros14} via geometric arguments. A closer look at the lattice method actually yields a proof of Theorem \ref{YZA} in the general case.

\begin{proof}[Proof of Theorem \ref{YZA}]
To prove \eqref{YZ} for $\nu(d_1, d_2)$, we are immediately reduced to the case $\nu(p^m, p^n)$ where $p$ is a prime number and $m \leq n$. For $m = 0$, we have, by \eqref{Primlat},
\[ \nu(1, p^n) = \sigma(p^n) = \sum_{k = 0}^n p^k \,. \]
It suffices then to prove the following recursion:
\begin{equation} \nu(p^m, p^n) = \nu(1, p^{m + n}) + p^3 \nu(p^{m - 1}, p^{n - 1}) \label{Rec} \end{equation}
for $1 \leq m \leq n$.

The proof uses \eqref{Lattice}. Consider the quotient map
\[ \pi : \BZ/p^m \times \BZ/p^n \ra \BZ/p^{m - 1} \times \BZ/p^{n - 1} \,. \]
For $1\leq r \leq s$, the map $\pi$ induces 
a bijective correspondence between the following
two sets:
\begin{itemize} 
\item subgroups of $\BZ/p^m \times \BZ/p^n$ isomorphic to $\BZ/p^r \times \BZ/p^s$,
\item subgroups of $\BZ/p^{m - 1} \times \BZ/p^{n - 1}$ isomorphic to $\BZ/p^{r - 1} \times \BZ/p^{s - 1}$.
\end{itemize}
 We also have
\begin{multline*} \#\Hom^{\rm sym}(\BZ/p^r \times \BZ/p^s, \widehat{\BZ/p^r \times \BZ/p^s}) \\
= p^3 \#\Hom^{\rm sym}(\BZ/p^{r - 1} \times \BZ/p^{s - 1}, \widehat{\BZ/p^{r - 1} \times \BZ/p^{s - 1}}) \,. \end{multline*}
The remaining subgroups of $\BZ/p^m \times \BZ/p^n$ are cyclic and isomorphic to $\BZ/p^k$ for some $0 \leq k \leq n$. Moreover,
\[ \#\Hom^{\rm sym}(\BZ/p^k, \widehat{\BZ/p^k}) = \#\Hom(\BZ/p^k, \widehat{\BZ/p^k}) = p^k \,. \]
Applying \eqref{Lattice}, we find
\[ \nu(p^m, p^n) = \sum_{k = 0}^n p^k \#\{\BZ/p^k < \BZ/p^m \times \BZ/p^n\} + p^3 \nu(p^{m - 1}, p^{n - 1}) \,. \]
The numbers of cyclic subgroups can be deduced from classical group theory (see  \cite[Lemma 1.4.1]{SUBG}):
\[ \#\{\BZ/p^k < \BZ/p^m \times \BZ/p^n\} = \sum_{\substack{0 \leq i \leq m, 0 \leq j \leq n \\ \max(i, j) = k}} \varphi(p^{\min(i, j)}) \,, \]
where $\varphi$ is Euler's phi function.\footnote{$\varphi(1) = 1$ and $\varphi(p^k) = p^k - p^{k - 1}$ for $k \geq 1$.} We have
\begin{align*}
& \sum_{k = 0}^n p^k \#\{\BZ/p^k < \BZ/p^m \times \BZ/p^n\} \\
={} & \sum_{k = 0}^n p^k \sum_{\substack{0 \leq i \leq m, 0 \leq j \leq n \\ \max(i, j) = k}} \varphi(p^{\min(i, j)}) \\
={} & \sum_{k = 0}^m p^k \sum_{i = 0}^k \varphi(p^i) + \sum_{k = m + 1}^n p^k \sum_{i = 0}^m \varphi(p^i) + \sum_{k = 1}^m p^k \sum_{j = 0}^{k - 1} \varphi(p^j) \\
={} & \sum_{k = 0}^m p^{2k} + \sum_{k = m + 1}^n p^{m + k} +\sum_{k = 1}^m p^{2k - 1} \\
={} & \sum_{k = 0}^{m + n} p^k = \nu(1, p^{m + n}) \,,
\end{align*}
proving the recursion \eqref{Rec}. Theorem \ref{YZA} then follows from Lemma \ref{G2lattice}.
\end{proof}

Since $g=2$ is the minimal genus for curve counting on abelian
surfaces, Theorem \ref{YZA} may be viewed as the analogue of the Yau-Zaslow
conjecture \cite{YZ} for $g=0$ counting on $K3$ surfaces. In the $K3$ case,  primitive classes
were handled first in \cite{Beau,BLA2}. To treat the imprimitive classes, 
a completely new approach \cite{KMPS} was required (and came a decade later).
For abelian surfaces, the lattice counting for the primitive case is much easier
than the complete result of Theorem \ref{YZA}. The perfect
matching of the lattice counts
in all cases with formula of Theorem \ref{YZA}  appears miraculous.

\subsection{Multiple cover rule} \label{mcr}
A multiple cover formula in $g=2$ can be extracted
from Theorem \ref{YZA}. The result 
follows the structure of the complete multiple cover formula for $K3$ surfaces \cite{PT2}.
We state here  the multiple cover conjecture for the invariants
$\mathsf{N}^{\text{Q}}_{g,(d_1,d_2)}$
for all $g$.

For $d_1, d_2 >0$, define the generating series of the {\em quotient} invariants:
\[ f_{(d_1, d_2)}(u) = \sum_{g \geq 2} \mathsf{N}^{\text{Q}}_{g,(d_1, d_2)} u^{2g-2} \,. \]
The quotient invariants are defined in terms of the FLS invariants in \eqref{defqqq}.

\vspace{6pt}
\noindent{\bf Conjecture} ${\mathbf{A'}.}$ 
{\em For all $d_1, d_2 > 0$,}
\[ f_{(d_1, d_2)}(u) = \sum_{k | \gcd(d_1, d_2)} k f_{\left( 1, \frac{d_1 d_2}{k^2} \right)}(k u) \,. \]

\vspace{1pt}
Theorem \ref{YZA} implies the $g=2$ case of Conjecture $\mathrm{A'}$.
By an elementary check, Conjecture \ref{conjA} is {\em equivalent} to
Conjecture $\mathrm{A'}$ plus
the $k=0$ case of
Theorem \ref{thm_point_insertion}.
Since Theorem \ref{thm_point_insertion}
is proven in Section \ref{primc}, Conjectures \ref{conjA} and $\mathrm{A'}$ are equivalent.

\section{Primitive classes} \label{primc}
\subsection{Overview}
Let $A$ be an abelian surface, let $g\geq 2$ be the genus, and 
let $\beta \in H_2(A,\BZ)$ be a curve class of
type $(d_1, d_2)$ with $d_1, d_2 > 0$.
The class $\beta$ is primitive if $\gcd(d_1,d_2)=1$.

The proof of Theorem \ref{thm_point_insertion} is presented here.
We  proceed in two steps.
First, we relate the FLS invariants to the reduced Gromov-Witten invariants
of $A$ with pure point insertions and $\lambda$ classes in Sections \ref{prim2} and \ref{prim3}.
Next, we degenerate an elliptically fibered 
abelian surface $A$. Using
the degeneration formula in Sections \ref{prim4} and \ref{P1Eevaluation}, we reduce the calculation to an evaluation on $\p^1 \times E$. The proof of Theorem \ref{thm_point_insertion} is completed in Section \ref{prim6}.

We conclude with an application
in Section \ref{prim7}: a new proof is presented
of a result by G\"ottsche and Shende \cite{GS}
concerning the Euler characteristics of the moduli spaces of stable pairs on abelian surfaces
in irreducible classes.

\subsection{Notation} \label{prim2}
Let $\alpha \in H^{\ast}(\Mbar_{g.n},\BQ)$
be a cohomology class on the moduli space of stable curves $\Mbar_{g,n}$, and let
\[ \gamma_1, \dots, \gamma_n \in H^{\ast}(A, \BQ) \]
be cohomology classes on $A$.
The classes $\alpha$ and $\gamma_i$ can be pulled back to
the moduli spaces
\[ \Mbar_{g,n}(A,\beta)^{\text{FLS}} \quad \text{ and } \quad \Mbar_{g,n}(A,\beta) \]
via the forgetful map $\pi$ and the evaluation maps $\ev_1, \dots, \ev_n$.

For each marking $i\in \{1,\dots,n\}$, let $L_i$
be the associated cotangent line bundle on $\Mbar_{g,n}(A,\beta)$. Let
\[
 \psi_i = c_1(L_i) \in H^2(\Mbar_{g,n}(A,\beta),\BQ)
\]
be the first Chern class. Since
$$\Mbar_{g,n}(A,\beta)^{\text{FLS}}\subset \Mbar_{g,n}(A,\beta)\, ,$$
the classes $\psi_i$ restrict to $\Mbar_{g,n}(A,\beta)^{\text{FLS}}$.

The reduced Gromov-Witten invariants of $A$ are defined by:
\begin{equation*}
\Big\langle \alpha \,;\, \tau_{a_1}(\gamma_1) \dots \tau_{a_n}(\gamma_n) \Big\rangle_{g,\beta}^{A, \text{red}} = 
 \int_{ [ \Mbar_{g,n}(A,\beta) ]^{\text{red}} } \pi^{\ast}(\alpha)\, \prod_{i=1}^n \ev_i^{\ast}(\gamma_i)\, \psi_i^{a_i}\ .
\end{equation*}
The FLS invariants of $A$ are defined by:
\begin{equation*}
\Big\langle \alpha \,;\, \tau_{a_1}(\gamma_1) \dots \tau_{a_n}(\gamma_n)
\Big\rangle_{g,\beta}^{A, \text{FLS}} =
 \int_{ [ \Mbar_{g,n}(A,\beta)^{\text{FLS}} ]^{\text{red}} } \pi^{\ast}(\alpha) \, \prod_{i=1}^n \ev_i^{\ast}(\gamma_i) \, \psi_i^{a_i} \,.
\end{equation*}

The FLS invariants can be expressed in terms of the usual invariants
by a result\footnote{See \cite[Section 4]{KT} for an algebraic proof.}
of Bryan and Leung \cite{BLA} as follows.
Let $$\xi_1,\, \xi_2,\, \xi_3,\, \xi_4 \in H_1(A,\BZ)$$
be a basis for which the corresponding dual classes 
$$\widehat{\xi}_1,\,  \widehat{\xi}_2, \, \widehat{\xi}_3,\, \widehat{\xi}_4 \in H^1(\widehat{A},\BZ) \cong H_1(A, \BZ)$$
satisfy the normalization
\[ \int_{\widehat{A}} \widehat{\xi}_1 \cup \widehat{\xi}_2  \cup \widehat{\xi}_3
\cup \widehat{\xi}_4 = 1\, . \]
By first trading the higher descendents
$\tau_k( \gamma_i)$ for classes pulled back from $\Mbar_{g,n}$
up to boundary terms, we may reduce to non-gravitational insertions $\tau_0(\gamma_i)$.
Then
\begin{equation}
\Big\langle \alpha \,;\, \prod_{i=1}^{n} \tau_{0}(\gamma_i) \Big\rangle_{g,\beta}^{A, \text{FLS}}
 = \Big\langle \alpha \,;\, \prod_{i=1}^{4} \tau_0(\xi_i) \cdot \prod_{i=1}^{n} \tau_{0}(\gamma_i) \Big\rangle_{g,\beta}^{A, \text{red}} \,,
\label{mmbbbnv}
\end{equation}
where $\alpha$ on the right side of \eqref{mmbbbnv}  is viewed to be a
cohomology class  on $\M_{g,n+4}$
via pull-back along 
the map which forgets the four new points.

\pagebreak

\subsection{Odd and even classes} \label{prim3}
\subsubsection{Trading FLS for insertions}

We prove here the following simple rule
which trades the FLS condition for insertions in the reduced  Gromov-Witten theory of $A$.

As in Section \ref{ptptpt}, let $\omeg\in H^4(A,\BZ)$ be the class of a point.
\begin{prop} \label{ev_odd_prop} 
For $g \geq 2$ and $d_1, d_2 > 0$, we have
\begin{equation*} 
\Big \langle  \alpha \,;\, \tau_0(\omeg)^k \Big \rangle^{A, \textup{FLS}}_{g, (d_1, d_2)} = 
\frac{d_1d_2}{(k+1)(k+2) } \Big \langle \alpha \,;\, \tau_0(\omeg)^{k+2} \Big \rangle^{A, \textup{red}}_{g,(d_1, d_2)}
\end{equation*}
for all $\alpha \in H^*(\overline{M}_{g},\BQ)$ and $k\geq 0$.
\end{prop}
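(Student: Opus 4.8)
The plan is to compare the two sides geometrically by expressing the FLS condition as a specific cohomological insertion, and then to reduce everything to a point-class computation. Recall from equation \eqref{mmbbbnv} that the FLS invariant equals the reduced invariant with four extra odd insertions $\prod_{i=1}^{4}\tau_0(\xi_i)$, where $\xi_1,\dots,\xi_4\in H_1(A,\BZ)$ form a symplectic basis normalized so that $\int_{\widehat A}\widehat\xi_1\cup\widehat\xi_2\cup\widehat\xi_3\cup\widehat\xi_4=1$. So the content of the proposition is that
\[
\Big\langle \alpha\,;\,\textstyle\prod_{i=1}^4\tau_0(\xi_i)\cdot\tau_0(\omeg)^k\Big\rangle^{A,\text{red}}_{g,(d_1,d_2)}
=\frac{d_1d_2}{(k+1)(k+2)}\Big\langle\alpha\,;\,\tau_0(\omeg)^{k+2}\Big\rangle^{A,\text{red}}_{g,(d_1,d_2)}.
\]
First I would trade the four odd insertions $\tau_0(\xi_i)$ for point insertions. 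The key input is the divisor/diagonal structure on the abelian surface: the four classes $\widehat\xi_i$ are related to the curve class $\beta$ through the intersection pairing, and on the locus of maps whose image lies in a curve of class $\beta$, the evaluation pullbacks $\ev^*(\xi_i)$ combine to produce the point class $\omeg$ together with a numerical factor governed by $\int_A\beta\cup\beta=2d_1d_2$ (equivalently the self-intersection of a type-$(d_1,d_2)$ divisor). This is where the factor $d_1d_2$ enters.

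The second step is combinatorial. Once the four odd insertions are converted into two additional point insertions (each pair $\widehat\xi_{2i-1}\widehat\xi_{2i}$ of symplectically paired classes contributing one point class), one arrives at $\tau_0(\omeg)^{k+2}$, but with the new point insertions carried by distinct markings. Since $\omeg$ is the point class and all point insertions are interchangeable, the reordering among the $k+2$ markings that carry $\omeg$ introduces the symmetry factor $\frac{1}{(k+1)(k+2)}$, reflecting that the two newly created point markings are indistinguishable from the original $k$ point markings. I would make this bookkeeping precise by comparing the forgetful maps and the $S_{k+2}$-symmetry of the integrand, using that $\alpha$ is pulled back from $\overline M_g$ and hence is insensitive to the marked points.

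The main obstacle I anticipate is controlling the geometric step where $\ev_i^*(\xi_i)$ is replaced by a point class. The subtlety is that $H_1(A,\BZ)$ insertions are genuinely \emph{odd} cohomology, so signs and the precise way the four odd classes pair up to give the even point class $\omeg\in H^4(A,\BZ)$ must be tracked carefully through the reduced virtual class; one must verify that the translation action of $A$ and the cosection construction of Section \ref{cosection} interact correctly with these insertions, so that only the $\widehat\xi_1\widehat\xi_2\widehat\xi_3\widehat\xi_4$-term survives and contributes the correct normalization. I expect the cleanest route is to use the fact that $\ev^*:H^*(A)\to H^*(\overline M_{g,n}(A,\beta))$ factors through the universal Abel--Jacobi/Albanese structure, so that the four odd classes can be integrated out against the $A$-action fibers, leaving exactly the claimed multiple of the all-point-insertion invariant. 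The remaining verification of the rational constant $\frac{d_1d_2}{(k+1)(k+2)}$ is then a direct check on the normalization \eqref{mmbbbnv} combined with the symmetry count.
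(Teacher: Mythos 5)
Your reduction via \eqref{mmbbbnv} to a statement purely about reduced invariants with four extra odd insertions is exactly the paper's starting point, and you correctly sense that translation invariance must be the engine. But from there the proposal has two genuine gaps. First, there is no mechanism by which the evaluation pullbacks $\ev_i^{\ast}(\xi_i)$ at four \emph{distinct} markings ``combine to produce the point class'': evaluation classes at different marked points cannot simply be multiplied together, and your proposed normalization via $\int_A \beta\cup\beta = 2d_1d_2$ would in any case give the wrong constant (the proposition needs $d_1d_2$, not $2d_1d_2$, and nothing in your argument kills the factor $2$). The actual source of $d_1d_2$ in the paper is two separate applications of the \emph{divisor equation}: one specializes to the split surface $A=E_1\times E_2$, where $\int_{(d_1,d_2)}\omega_1 = d_1$ and $\int_{(d_1,d_2)}\omega_2 = d_2$, and the divisor classes $\omega_1,\omega_2$ are produced from the odd insertions by the abelian vanishing relation (Lemma \ref{abelvan}): for a suitable odd class $u$ pulled back at the first marking and $\gamma$ built from the remaining insertions, translation invariance forces the integral of $\pi_1^{\ast}(u)\cup p^{\ast}(\gamma)$ to vanish, and expanding $p^{\ast}(\gamma)$ via \eqref{jjjj} yields the identities of Lemmas \ref{Lemma_even_odd_1} and \ref{Lemma_even_odd_2}. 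This relation-producing step --- choosing $u = \aaa_1\omega_2$ against $\gamma = \bbb_1\omega_2\otimes\omeg^{\otimes k+1}$, and then $u=\omega_1\aaa_2$ against $\gamma = \omega_1\bbb_2\otimes\aaa_1\omega_2\otimes\bbb_1\omega_2\otimes\omeg^{\otimes k}$ --- is the missing key idea; your closing paragraph gestures at it (``integrated out against the $A$-action fibers'') but supplies no actual relation.

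Second, the explanation of the factor $\frac{1}{(k+1)(k+2)}$ as a symmetry factor is incorrect in principle: Gromov--Witten markings are labeled, so interchangeable insertions never introduce a quotient by $S_{k+2}$, and $\langle\alpha\,;\,\tau_0(\omeg)^{k+2}\rangle$ is an honest integral with no symmetrization. In the paper the factors $(k+2)$ and $(k+1)$ arise for a different reason: when the abelian vanishing relation is expanded, the odd class $v$ gets distributed to each of the point-class slots in turn, producing $k+2$ (respectively $k+1$) terms that are all \emph{equal} by the symmetry of the remaining integrand (here it matters that $\alpha$ is pulled back from $\overline{M}_g$); summing these equal terms gives the multiplicities. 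So the combinatorial count you want comes from counting terms in a linear relation, not from dividing by indistinguishability. As written, the proposal would not assemble into a proof without importing the abelian vanishing lemma and the divisor-equation bookkeeping wholesale.
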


The proof uses the action of $A$ on the moduli space $\Mbar_{g,n}(A,\beta)$
to produce relation among various Gromov-Witten invariants.
The argument is a modification of an elliptic vanishing argument introduced in \cite{OP3}.

\subsubsection{Abelian vanishing} \label{ababab}
Let $\beta \in H_2(A, \BZ)$ be any curve class. For $n \geq 1$, let
\[ \ev_1 : \Mbar_{g,n}(A, \beta) \ra A \]
be the first evaluation map. Denote the fiber
of the identity element $0_A \in A$ by
\[ \Mbar_{g,n}^{0}(A, \beta) = \ev_1^{-1}(0_A) \,. \]
We have the product decomposition
\begin{equation}\label{333}
 \Mbar_{g,n}(A, \beta) = \Mbar_{g,n}^{0}(A, \beta) \times A \,. 
 \end{equation}
The reduced virtual class
on $\Mbar_{g,n}(A, \beta)$ is
pulled back from a class on $\Mbar_{g,n}^0(A, \beta)$.
Consider the commutative diagram
\[
 \xymatrix{
\Mbar_{g,n}(A, \beta) \ar[d]^{\text{pr}} \ar[r]^-{\ev} & A^n \ar[d]^{p} \\
\Mbar^{0}_{g,n}(A, \beta) \ar[r] & A^{n-1}
}
\]
where $\text{pr}$ is projection onto the first factor of \eqref{333} and 
\begin{equation*} p( x_1, \dots, x_n ) = (x_2 - x_1, \dots, x_n - x_1) \,.  \end{equation*}

\begin{lemma} \label{abelvan} Let $\alpha \in H^{\ast}(\Mbar_{g,n},\BQ)$
and $\gamma\in H^{\ast}(A^{n-1},\BQ)$
be arbitrary classes. 
For any $\gamma_1 \in H^{\ast}(A, \BQ)$ of degree $\deg(\gamma_1) \leq 3$,
\begin{equation*} \int_{ [ \Mbar_{g,n}(A, \beta) ]^{\textup{red}} } \pi^{\ast}(\alpha) 
\cup \ev_1^{\ast}(\gamma_1) \cup \ev^{\ast} p^{\ast}(\gamma)  = 0 \,.  \end{equation*}
\end{lemma}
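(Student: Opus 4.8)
The plan is to exploit the product decomposition \eqref{333}, $\Mbar_{g,n}(A,\beta) = \Mbar_{g,n}^0(A,\beta) \times A$, and to show that every factor of the integrand except $\ev_1^*(\gamma_1)$ is pulled back along the projection $\text{pr}$ onto the first factor, while $\ev_1^*(\gamma_1)$ is pulled back from the second factor $A$. First I would observe that the forgetful map $\pi$ to $\Mbar_{g,n}$ is constant along the translation action of $A$, since post-composing a stable map with a translation alters neither the domain curve nor its markings; hence $\pi$ factors through $\text{pr}$ and $\pi^*(\alpha) = \text{pr}^*(\alpha_0)$ for a class $\alpha_0$ on $\Mbar_{g,n}^0(A,\beta)$. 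Likewise, the composite $p \circ \ev$ sends a stable map $f$ to the vector of differences $(f(p_2) - f(p_1), \dots, f(p_n) - f(p_1))$, which is unchanged under simultaneous translation of all the $f(p_i)$; so $p \circ \ev$ is $A$-invariant, factors through $\text{pr}$ by the commutativity of the displayed square, and $\ev^* p^*(\gamma) = \text{pr}^*(\gamma_0)$. Finally, by the property recalled just before the lemma, the reduced virtual class is pulled back from $\Mbar_{g,n}^0(A,\beta)$; concretely, under \eqref{333} it takes the exterior product form $[\Mbar_{g,n}^0(A,\beta)]^{\text{red}} \times [A]$.

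Next I would identify $\ev_1$ under the decomposition \eqref{333}. The isomorphism $\Mbar_{g,n}^0(A,\beta) \times A \cong \Mbar_{g,n}(A,\beta)$ is realized by translation: a pair $(m,a)$ maps to the translate $t_a\cdot m$. Since $\ev_1(m) = 0_A$ for $m \in \Mbar_{g,n}^0(A,\beta)$, we get $\ev_1(t_a \cdot m) = a$, so $\ev_1$ is exactly the projection onto the $A$ factor. Therefore $\ev_1^*(\gamma_1)$ is pulled back from $A$.

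It then remains to integrate the product of a class pulled back from $\Mbar_{g,n}^0(A,\beta)$ against one pulled back from $A$, over the product class $[\Mbar_{g,n}^0(A,\beta)]^{\text{red}} \times [A]$. By the product (Künneth) formula the integral factors as
\[
\left( \int_{[\Mbar_{g,n}^0(A,\beta)]^{\text{red}}} \alpha_0 \cup \gamma_0 \right) \cdot \left( \int_{[A]} \gamma_1 \right).
\]
Since $\dim_{\BC} A = 2$, the fundamental class $[A]$ pairs nontrivially only with $H^4(A,\BQ)$; as $\deg(\gamma_1) \leq 3 < 4$, the second factor vanishes, and hence so does the whole integral. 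The only genuinely delicate point is the product structure of the reduced virtual class in \eqref{333} — namely that translation invariance of the obstruction theory makes $[\Mbar_{g,n}(A,\beta)]^{\text{red}}$ literally the exterior product with $[A]$ — but this is exactly the statement recalled before the lemma, so I would simply invoke it rather than re-derive it.
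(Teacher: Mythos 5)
Your proposal is correct and is essentially the paper's argument: both rest on the observation that $\pi^{\ast}(\alpha)$, $\ev^{\ast}p^{\ast}(\gamma)$, and the reduced class are pulled back along $\text{pr}$ in the decomposition \eqref{333}, so the integral vanishes because $\gamma_1$ has degree $\leq 3 < 4 = \dim_{\RR} A$. The paper phrases the final step via the push-pull formula, noting $\text{pr}_{\ast}\ev_1^{\ast}(\gamma_1) = 0$ for dimension reasons without needing your (correct) explicit identification of $\ev_1$ with the projection onto the $A$-factor, but this is a cosmetic difference, not a different route.
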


\begin{proof}
The class 
\[ \Big( \pi^{\ast}(\alpha) \cup \ev^{\ast}p^{\ast}(\gamma)  \Big) \cap [ \Mbar_{g,n}(A, \beta) ]^{\text{red}} \]
is the pull-back via $\text{pr}$ of a class $\theta$ on $\Mbar_{g,n}^{0}(A, \beta)$.
Using the push-pull formula, we have
\[ \text{pr}_{\ast}\Big( \text{ev}_1^{\ast}(\gamma_1) \cap \text{pr}^{\ast}(\theta) \Big) = \text{pr}_{\ast} \ev_1^{\ast}(\gamma_1) \cap \theta = 0 \,.\]
The last equality holds for dimension reasons since $\gamma_1\in H^{\leq 3}(A,\BQ)$
and the fibers of $\text{pr}$ are $A$.
\end{proof}

\subsubsection{Proof of Proposition \ref{ev_odd_prop}}
We study the split abelian surface
\[ A = E_1 \times E_2, \]
where $E_1$ and $E_2$ are two generic elliptic curves.
Consider the curve class
\[ (d_1, d_2) = d_1 [E_1] + d_2 [E_2] \in H_2(A, \BZ) \,. \]

For $i \in \{1,2\}$, let $\pt_i \in H^2(E_i, \BZ)$ be the class of a point on $E_i$, and let
\[ \aaa_i, \bbb_i \in H^{1}(E_i, \BZ) \]
be a symplectic basis.
We use freely the identification induced by the K\"unneth decomposition
\[ H^{\ast}(E_1 \times E_2, \BZ) = H^{\ast}(E_1, \BZ) \otimes H^{\ast}(E_2, \BZ) \,. \]

The proof of Proposition \ref{ev_odd_prop} follows directly from \eqref{mmbbbnv} and the following
two Lemmas.

\begin{lemma} \label{Lemma_even_odd_1} For $\alpha\in H^*(\overline{M}_g,\BQ)$, we have
\begin{multline*}
 d_2 \,\Big\langle \alpha \,;\, \tau_0(\omeg)^{k+2} \Big\rangle^{A, \textup{red}}_{g, (d_1, d_2)} \\ = 
(k+2) \cdot \Big\langle \alpha \,;\, \tau_0( \aaa_1 \omega_2) \tau_0( \bbb_1 \omega_2 ) 
\tau_0( \omeg )^{k+1} \Big\rangle^{A, \textup{red}}_{g, (d_1, d_2)} \,.
\end{multline*}
\end{lemma}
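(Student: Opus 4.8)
The plan is to split the asserted identity into two independent steps: a divisor equation on the $E_2$-factor that accounts for the factor $d_2$, and a translation argument on the $E_1$-factor that accounts for the factor $k+2$ and exchanges two point insertions for the odd insertions $\aaa_1\omega_2$ and $\bbb_1\omega_2$.

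\emph{Step 1 (the factor $d_2$).} Consider the divisor class $D=\pr_2^\ast(\omega_2)=1\otimes\omega_2\in H^2(A,\BQ)$. Since $\pr_{2\ast}\beta=d_2[E_2]$, we have $\int_\beta D=d_2$. Applying the divisor equation to the marking carrying $D$, and noting that every remaining insertion is a non-gravitational $\tau_0(\cdot)$ (so the descendent correction terms are absent) while $\alpha$ is pulled back from $\Mbar_g$ (so it is compatible with the forgetful map), I obtain
\[ d_2\,\big\langle\alpha\,;\,\tau_0(\omeg)^{k+2}\big\rangle^{A,\mathrm{red}}_{g,(d_1,d_2)}=\big\langle\alpha\,;\,\tau_0(1\otimes\omega_2)\,\tau_0(\omeg)^{k+2}\big\rangle^{A,\mathrm{red}}_{g,(d_1,d_2)}. \]
This uses that the divisor equation holds for the reduced theory, which follows from the compatibility of the reduced obstruction theory with forgetful maps.

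\emph{Step 2 (the factor $k+2$).} It remains to show the right-hand side equals $(k+2)\langle\alpha;\tau_0(\aaa_1\omega_2)\tau_0(\bbb_1\omega_2)\tau_0(\omeg)^{k+1}\rangle$, an identity between integrals over $[\Mbar_{g,k+3}(A,\beta)]^{\mathrm{red}}$. Crucially, on both sides \emph{every} marking carries $\omega_2$ in its $E_2$-component, so the $E_2$-evaluations are a common factor and only the $E_1$-components differ (recall $\omeg=\omega_1\omega_2$). I would use the translation action of $E_1\subset A$ (the mechanism behind the abelian vanishing Lemma \ref{abelvan}): fixing the $E_1$-position of the marking carrying $1\otimes\omega_2$ (resp.\ $\aaa_1\omega_2$) splits $\Mbar_{g,k+3}(A,\beta)=\widetilde\Mbar\times E_1$, and by the translation-invariance of the reduced class (Section \ref{cosection}) one has $[\Mbar_{g,k+3}(A,\beta)]^{\mathrm{red}}=[\widetilde\Mbar]^{\mathrm{red}}\boxtimes[E_1]$. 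Writing $t$ for the $E_1$-coordinate and $(\cdot)^\circ$ for pullback along the $E_1$-evaluation restricted to $\widetilde\Mbar$, the group-law coproduct on $E_1$ gives, for each point insertion,
\[ \ev_i^{(1)\ast}\omega_1=\omega_1^\circ+\aaa_1^\circ\, t^\ast\bbb_1-\bbb_1^\circ\, t^\ast\aaa_1+t^\ast\omega_1 . \]
Integration over the $E_1$-factor extracts the coefficient of the top class $t^\ast\omega_1$; by degree this forces either one point insertion to supply $t^\ast\omega_1$, or two insertions to supply $t^\ast\aaa_1$ and $t^\ast\bbb_1$.

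Carrying out this reduction on both sides, I expect each to collapse onto the \emph{same} family of integrals over $\widetilde\Mbar$: for each type of contribution the left-hand side yields a sum over which of the $k+2$ symmetric markings supplies the extra $E_1$-class, whereas the right-hand side produces a single representative (its distinguished marking already supplying $t^\ast\aaa_1$). Since the remaining integrand is invariant under the $S_{k+2}$ permuting the markings that carry $\omeg$, all summands on the left integrate to a common value, producing the factor $k+2$; combined with Step 1 this gives the Lemma. I expect the main obstacle to be the sign bookkeeping in this last step: one must propagate the Koszul signs of the odd classes $\aaa_1,\bbb_1$ through the coproduct and the $E_1$-integration, and check that the two-insertion contributions on the two sides match term-by-term (not only the one-insertion contributions), so that the factor $k+2$ emerges cleanly and with the correct sign.
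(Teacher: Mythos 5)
Your Step 1 coincides with the paper's first move: the divisor equation applied to $1\otimes\omega_2$ (with $\int_\beta \omega_2 = d_2$) reduces the Lemma to the identity
\[
\Big\langle \alpha \,;\, \tau_0(\omega_2)\,\tau_0(\omeg)^{k+2} \Big\rangle^{A,\mathrm{red}}_{g,(d_1,d_2)}
= (k+2)\,\Big\langle \alpha \,;\, \tau_0(\aaa_1\omega_2)\,\tau_0(\bbb_1\omega_2)\,\tau_0(\omeg)^{k+1} \Big\rangle^{A,\mathrm{red}}_{g,(d_1,d_2)} .
\]
The gap is in your Step 2, and it sits exactly where you flagged the risk. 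If you carry out the $E_1$-slice reduction on both sides, the coefficient of the top class $t^{\ast}\omega_1$ on the left consists of two families: one point marking supplying $t^{\ast}\omega_1$, or two point markings supplying $\aaa_1^{\circ}\,t^{\ast}\bbb_1$ and $-\bbb_1^{\circ}\,t^{\ast}\aaa_1$. On the right there is a third family with no left-hand counterpart: both distinguished markings contribute their slice parts $\aaa_1^{\circ},\bbb_1^{\circ}$ while two of the $k+1$ point markings jointly supply $t^{\ast}\aaa_1\, t^{\ast}\bbb_1 = t^{\ast}\omega_1$ through their odd components. These are slice integrals with \emph{four} odd $E_1$-insertions $\aaa_1\omega_2,\bbb_1\omega_2,\aaa_1\omega_2,\bbb_1\omega_2$, and they do not cancel: tracking the Koszul signs, the two role assignments for a fixed pair of point markings both produce $-\aaa_1^{(i)}\bbb_1^{(j)}$-type classes, and relabeling the two markings identifies the corresponding slice integrals with the \emph{same} sign, so the pair adds rather than cancels. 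Consequently the two sides do not collapse onto a common family of slice integrals, the mixed contributions (one distinguished marking supplying a $t$-class against one point marking) further perturb the coefficients, and the clean "$S_{k+2}$-symmetry yields $k+2$" extraction fails; you would be left needing independent relations among three species of slice integrals that your argument does not provide (and which are not forced to vanish — the balancing condition $\sum v_{\aaa_1} = \sum v_{\bbb_1}$ is satisfied by the four-odd terms).

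The paper's proof is engineered precisely to avoid this proliferation. Rather than expanding both sides over a slice, it applies the abelian vanishing Lemma \ref{abelvan} a single time, to the integrand $\ev_1^{\ast}(\aaa_1\omega_2)\cup \ev^{\ast}p^{\ast}\bigl(\bbb_1\omega_2\otimes\omeg^{\otimes (k+1)}\bigr)$. The key mechanism is degree saturation at the first marking: since it already carries the degree-$3$ class $\aaa_1\omega_2$, the expansion of $p^{\ast}$ can deposit at most the single missing class $\bbb_1$ there, so every would-be four-odd cross term dies, and exactly three groups of terms survive — the target bracket, the bracket $\langle \alpha; \tau_0(\omega_2)\tau_0(\omeg)^{k+2}\rangle$, and $k+1$ permuted copies of the target bracket — yielding the factor $k+2$ directly among honest reduced invariants. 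If you want to rescue your translation-based setup, replace the two-sided slice comparison by this one vanishing relation with a saturating odd insertion; that is the missing idea.
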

\begin{proof}
Consider the class
\[ \gamma = \bbb_1 \omega_2 \otimes \omeg^{\otimes k+1 } \in H^{\ast}( A^{k+2},\BQ ) \, . \]
In the notation of Section \ref{ababab} for $n=k+3$,
we have
\begin{equation}\label{jjjj}
p^{\ast} \big( \mathsf{1}^{\otimes i-1} \otimes v \otimes \mathsf{1}^{\otimes k+2-i} \big) = - v \otimes \mathsf{1}^{\otimes k+2} + \mathsf{1}^{\otimes i} \otimes v \otimes \mathsf{1}^{\otimes k+2-i} 
\end{equation}
for all $i=1,\dots, k+2$ and all $v \in \{ \aaa_1, \bbb_1, \aaa_2, \bbb_2\}$.

Denote the projection onto the first factor of $A^{k+3}$ by
$$\pi_1: A^{k+3} \rightarrow A\, .$$
Let $u=\aaa_1 \omega_2$.
Via several applications of \eqref{jjjj}, we find
\begin{align*}
\pi_1^{\ast}(u) \cup p^{\ast}(\gamma)  & =  \aaa_1 \omega_2 \otimes \bbb_1 \omega_2 \otimes \omeg^{\otimes k+1} - \omeg \otimes \omega_2 \otimes \omeg^{\otimes k+1} \\
& \qquad - \sum_{i=1}^{k+1} \omeg \otimes \bbb_1 \omega_2 \otimes \omeg^{\otimes i-1} \otimes \aaa_1 \omega_2 \otimes \omeg^{\otimes k+1-i} \,.
\end{align*}
After applying the abelian vanishing of Lemma \ref{abelvan}, we obtain
\begin{multline*} \Big\langle \alpha \,;\, \tau_0(\omega_2) \tau_0(\omeg)^{k+2} \Big\rangle^{A, \text{red}}_{g, (d_1, d_2)}
\\ = (k+2) \, \Big\langle \alpha \,;\, \tau_0( \aaa_1 \omega_2) \tau_0( \bbb_1 \omega_2 ) \tau_0( \omeg)^{k+1} \Big\rangle^{A, \text{red}}_{g, (d_1, d_2)} \,.
\end{multline*}
The Lemma then follows from the  divisor equation.
\end{proof}

\begin{lemma} \label{Lemma_even_odd_2} For $\alpha\in H^*(\overline{M}_g,\BQ)$, we have
\begin{multline*}
d_1 \, \Big\langle \alpha \,;\, \tau_0( \aaa_1 \omega_2) \tau_0( \bbb_1 \omega_2 ) \tau_0( \omeg )^{k+1} \Big\rangle^{A, \textup{red}}_{g, (d_1, d_2)} \\
 = (k+1) \Big\langle \alpha \,;\,  
\tau_0( \aaa_1 \omega_2) \tau_0( \bbb_1 \omega_2 ) \tau_0( \omega_1 \aaa_2 ) \tau_0( \omega_1 \bbb_2 )
\tau_0( \omeg )^{k} \Big\rangle^{A, \textup{red}}_{g, (d_1, d_2)} \,.
\end{multline*}
\end{lemma}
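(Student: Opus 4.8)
The plan is to prove Lemma \ref{Lemma_even_odd_2} by exactly the same abelian vanishing technique used in the proof of Lemma \ref{Lemma_even_odd_1}, now exploiting the $E_2$-factor of the split surface $A = E_1 \times E_2$ rather than the $E_1$-factor. The two lemmas are structurally symmetric: Lemma \ref{Lemma_even_odd_1} converted a point insertion $\tau_0(\omeg)$ into the pair $\tau_0(\aaa_1\omega_2)\tau_0(\bbb_1\omega_2)$ splitting off the $E_1$ one-forms, and the factor $d_2$ on the left reflects the $E_2$-degree via the divisor equation. Here I expect the mirror statement: converting one of the remaining point insertions into the pair $\tau_0(\omega_1\aaa_2)\tau_0(\omega_1\bbb_2)$, with the factor $d_1$ now coming from the $E_1$-degree.

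Concretely, I would apply Lemma \ref{abelvan} with $n = k+4$ and with the class
\[ \gamma = \omega_1\bbb_2 \otimes \aaa_1\omega_2 \otimes \bbb_1\omega_2 \otimes \omeg^{\otimes k} \in H^{\ast}(A^{k+3},\BQ)\, , \]
paired against the first-factor insertion $u = \omega_1\aaa_2$. Using the pullback formula \eqref{jjjj} for $v \in \{\aaa_2,\bbb_2\}$ repeatedly, the product $\pi_1^{\ast}(u) \cup p^{\ast}(\gamma)$ expands into a ``diagonal'' term carrying all four one-form insertions $\tau_0(\aaa_1\omega_2)\tau_0(\bbb_1\omega_2)\tau_0(\omega_1\aaa_2)\tau_0(\omega_1\bbb_2)$ together with $(k+1)$ correction terms, in complete parallel to the computation in Lemma \ref{Lemma_even_odd_1}. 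Abelian vanishing (Lemma \ref{abelvan}) kills every term in which the first-factor insertion has cohomological degree $\leq 3$, leaving only the identity relating the degree-$2$ insertion $\tau_0(\omega_1\aaa_2)$ to the four-one-form expression with combinatorial coefficient $(k+1)$. A final application of the divisor equation (to the divisor class $\omega_1\aaa_2$, whose pairing with the class $(d_1,d_2) = d_1[E_1]+d_2[E_2]$ supplies the factor $d_1$) then yields the stated identity.

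The only genuinely delicate point is bookkeeping the one-form insertions and their signs: the classes $\aaa_i,\bbb_i$ are odd, so the Künneth factors anticommute, and care is needed to track the sign conventions when reordering the tensor factors and when applying the divisor equation to an insertion sitting among odd classes. I expect this to be the main obstacle, but it is entirely routine once the correct choice of $\gamma$ and $u$ is fixed; the structural content is identical to Lemma \ref{Lemma_even_odd_1} with the roles of $E_1$ and $E_2$ interchanged.
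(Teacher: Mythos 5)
Your proposal is exactly the paper's proof: the same choices $u = \omega_1 \aaa_2$ and $\gamma = \omega_1\bbb_2 \otimes \aaa_1\omega_2 \otimes \bbb_1\omega_2 \otimes \omeg^{\otimes k}$, the same expansion of $\pi_1^{\ast}(u) \cup p^{\ast}(\gamma)$ via \eqref{jjjj}, one application of the abelian vanishing of Lemma \ref{abelvan}, and a concluding divisor equation, mirroring Lemma \ref{Lemma_even_odd_1}. Two small corrections to your bookkeeping: the final divisor equation is applied to the degree-$2$ insertion $\tau_0(\omega_1)$ (produced when the $\bbb_2$-factor of the first slot of $\gamma$ is transferred to the first marking), not to the degree-$3$ class $\omega_1\aaa_2$, which is not a divisor; and the cross terms carrying mixed insertions $\tau_0(vw)$ with $v \in \{\aaa_1, \bbb_1\}$, $w \in \{\aaa_2, \bbb_2\}$ are killed by the divisor equation (since such $vw$ pairs to zero with $\beta$), not by abelian vanishing, which is invoked only once for the full integral.
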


\begin{proof}
Consider the class
\[ \gamma = \omega_1 \bbb_2 \otimes \aaa_1 \omega_2 \otimes \bbb_1 \omega_2 \otimes \omeg^{\otimes k} \, \]
and let $u = \omega_1 \aaa_2$.
We apply the abelian vanishing just as in the
proof of Lemma \ref{Lemma_even_odd_1}.
Every term with an insertion of the form $\tau_0( v w )$ for
$v \in \{ \aaa_1, \bbb_1 \}$ and $w \in \{ \aaa_2, \bbb_2 \}$
contributes zero by the divisor equation. We obtain
\begin{multline*}
\Big\langle \alpha \,;\, \tau_0(\omega_1) \tau_0(\aaa_1 \omega_2 ) \tau_0(\bbb_1 \omega_2) \tau_0(\omeg)^{k+1} \Big\rangle^{A, \text{red}}_{g, (d_1, d_2)} \\
= (k+1)
\Big\langle \alpha \,;\,  \tau_0( \aaa_1 \omega_2) \tau_0( \bbb_1 \omega_2 ) \tau_0( \omega_1 \aaa_2 ) \tau_0( \omega_1 \bbb_2 ) \tau_0(\omeg)^k \Big\rangle^{A, \text{red}}_{g, (d_1, d_2)} \,,
\end{multline*}
which yields the result by an application of the divisor equation.
\end{proof}


\subsection{Degeneration formula} \label{prim4}
Let the abelian surface $A$ 
\[ A = E_1 \times E_2 \]
be the product of two generic elliptic curves $E_1$ and $E_2$, and
let
\[ E = 0_{E_1}\times E_2 \]
be a fixed fiber of the projection to $E_1$.
The degeneration of $A$ to the normal cone of $E\subset A$ is the family
\begin{equation} \epsilon : X = {\rm Bl}_{E \times 0} ( A\times \p^1 ) \ra \p^1 \,. \label{normal_cone} \end{equation}
For $\xi \in \p^1\setminus \{ 0\}$, the fiber $X_\xi = \epsilon^{-1}(\xi)$ is
isomorphic to $A$. For $\xi=0$, we have
\begin{equation*} X_{0} = A \, \cup_{E} \, (\p^1 \times E)  \, .  \end{equation*}
We will apply the degeneration formula 
of Gromov-Witten theory \cite{Junli1,Junli2} to the family \eqref{normal_cone}.

For our use, the degeneration formula must be modified for the reduced virtual class.
More precisely, the degeneration formula expresses
the reduced Gromov-Witten theory of $A$ in
terms of the reduced relative Gromov-Witten theory of $A/E$ and standard
relative Gromov-Witten theory of
$(\p^1 \times E)/E$.
The technical point in the modification of the degeneration formula is to define
a reduced virtual class on the moduli space
\[ \Mbar_{g,n}(\epsilon, \beta) \]
of stable maps to the fibers of $\epsilon$.
Note that every fiber $X_\xi$ and every expanded degeneration $\widehat{X}_\xi$
maps to the abelian surface $A$. The pull-back of the symplectic form of $A$ to $\widehat{X}_\xi$
then yields a 2-form on $\widehat{X}_\xi$ which vanishes  on all components except $A$.
With the usual arguments \cite{MP, MPT}, we obtain a quotient of the obstruction sheaf which
only changes the obstruction sheaf on
the $A$ side. The outcome is the desired degeneration formula.
A parallel argument can be found in \cite[Section 6]{MPT}.

\subsection{The surface $\p^1 \times E$} \label{P1Eevaluation}
For the trivial elliptic fibrations
\[ p : A \ra E_1 \quad \text{ and } \quad \widehat{p} : \p^1\times E \rightarrow \p^1\, , \]
we denote the section class by $B$ and the fiber class by $E$.
We also write
\[ (d_1, d_2) = d_1 B + d_2 E \]
for the corresponding classes in $H_2(A,\BZ)$ and  $H_2(\p^1 \times E,\BZ)$.

We will use the standard bracket notation
\begin{multline*}
\Big\langle \ \mu\ \Big| \ \alpha\, \prod_{i} \tau_{a_i}(\gamma_i) \ \Big| \ \nu \ \Big\rangle^{\p^1 \times E}_{g, (1,d)} \\
 = \int_{ [\overline{M}_{g,n}( (\p^1\times E)/\{0,\infty\}, (1,d))_{\mu, \nu}]^{\text{vir}} } \alpha\, \cup\, \prod_{i}  \psi_i^{a_i} \ev_i^{\ast}(\gamma_i)
\end{multline*}
for the Gromov-Witten invariants of $\p^1 \times E$ relative to 
the fibers over $0,\infty\in \p^1$.
The integral is over the moduli space of stable maps
 \[ \overline{M}_{g,n}( (\p^1 \times E)/\{0,\infty\},\,  (1,d)) \]
relative to the fibers over $0,\infty \in \p^1$ in class $(1,d)$.
Here,
\[ \mu \in H^{\ast}(0\times E) \quad \text{ and } \quad \nu \in H^{\ast}
(\infty \times E) \]
are cohomology classes on the relative divisors. The integrand contains
$\alpha \in H^*(\overline{M}_{g,n},\BQ)$ and the descendents.

We form the generating series of relative invariants
\begin{multline*}
\Big\langle \ \mu \ \Big| \ \alpha\, \prod_{i} \tau_{a_i}(\gamma_i) \ \Big| \ \nu \ \Big\rangle^{\p^1 \times E} \\
= \sum_{g \geq 0} \sum_{d \geq 0} u^{2g-2}q^d
\Big\langle \ \mu \ \Big| \ \alpha\, \prod_{i} \tau_{a_i}(\gamma_i) \ \Big| \ \nu \ \Big\rangle^{\p^1 \times E}_{g, (1,d)} \,.
\end{multline*}
Similar definitions apply also to the case of a single relative divisor and/or
the case of the abelian surface $A$ (with respect to the reduced virtual class). 

We will require several exact evaluations.
Let $$\hodge^\vee(1) = c(\BE^{\vee})$$
denote the total Chern class of the dual of the Hodge bundle,
and let $\omega$ be the class of a point on the relative divisors of $A$ and 
$E\times \p^1$.
\begin{lemma} \label{P1Eeval344} We have
\begin{align*}
&\Big\langle \ \mathsf{1}\  \Big|\  \hodge^{\vee}(1)\, \tau_0(\mathsf{p})\ \Big| \ \mathsf{1}\ \Big \rangle^{ \p^1 \times E}  = \frac{1}{u^2}\, , \\
&\Big\langle \ \mathsf{1}\  \Big|\  \hodge^{\vee}(1)\, \tau_0(\mathsf{p})\ \Big| \ \omega\ \Big \rangle^{ \p^1 \times E}  =
\frac{1}{u^2} \sum_{d \geq 1} \sum_{m | d} \frac{d}{m} \Big( 2 \sin(mu/2) \Big)^2 q^d\, , \\
&\Big\langle \ \omega \ \Big| \ \hodge^\vee(1)\, \tau_0(\omeg) \ \Big| \ \omega \ \Big 
\rangle^{\p^1 \times E} =  0\, .
\end{align*}
\end{lemma}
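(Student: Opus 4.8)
The three evaluations are relative Gromov-Witten integrals on $\p^1 \times E$ in section-plus-fiber classes $(1,d)$, with the Hodge insertion $\hodge^\vee(1) = c(\BE^\vee)$. The unifying principle is that $\p^1 \times E$ fibers over $\p^1$ with elliptic fibers, and the class $(1,d)$ meets each fiber in one point, so the relevant maps are, up to the section, determined by a degree-$d$ map of the domain to the base elliptic factor combined with a genus-tracking of the fiber structure. The plan is to reduce each integral to the known descendent/Hodge theory of an elliptic curve, solved by Okounkov-Pandharipande \cite{OP1,OP3} and recast into Hodge-integral form via \cite{FP1}, and to exploit the rigidity imposed by the point insertion $\tau_0(\mathsf{p})$ together with the relative conditions $\mu,\nu \in \{\mathsf{1},\omega\}$.

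First I would treat the simplest case, the $\langle\,\mathsf{1}\,|\,\hodge^\vee(1)\tau_0(\mathsf{p})\,|\,\mathsf{1}\,\rangle$ series equalling $u^{-2}$. Here the trivial relative conditions impose no constraint at $0,\infty$, so the section class $B=(1,0)$ contributes the leading term while higher $d$ must cancel. I expect the mechanism to be a degeneration or dimension/divisor argument showing that the only surviving contribution is the $d=0$, genus $0$ term — a single rigid section $\p^1 \times \{\text{pt}\}$ through the point insertion — giving exactly $u^{-2}$ (the genus-$0$ three-point normalization). The vanishing of all $d\geq 1$ terms should follow because the Hodge class $\hodge^\vee(1)$ together with the single point insertion over-determines the moduli problem once a fiber class is added.

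Second, for the mixed condition $\langle\,\mathsf{1}\,|\,\cdots\,|\,\omega\,\rangle$, the relative point condition $\omega$ at $\infty\times E$ pins the fiber image, and the computation becomes a genuine count over degrees $d\geq 1$. I would match this against the elliptic-curve series: the sum $\sum_{d\geq 1}\sum_{m|d}\frac{d}{m}(2\sin(mu/2))^2 q^d$ is, up to the $u^{-2}$ prefactor, precisely the shape produced by the stationary theory of $E$ with one point insertion (the function $\Sf$ of Section \ref{ptptpt} up to the $u\leftrightarrow 2\sin(u/2)$ substitution). Concretely I would use the degeneration/TQFT structure of relative invariants of $\p^1\times E$ over $\p^1$, reducing to the cap and tube contributions, and identify the tube with the Eisenstein-type generating series coming from \cite{OP3}. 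Third, the vanishing $\langle\,\omega\,|\,\cdots\,|\,\omega\,\rangle = 0$ I expect to follow from a dimension count: imposing point conditions at \emph{both} relative divisors, together with the extra point insertion $\tau_0(\omeg)$ and the top-degree-seeking Hodge class, exceeds the reduced/relative virtual dimension, forcing every coefficient to vanish; alternatively an abelian-vanishing-style translation argument on the $E$-factor kills it.

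The main obstacle will be the second evaluation: producing the exact closed form $\sum_{m|d}\frac{d}{m}(2\sin(mu/2))^2$ rather than merely a quasimodular series. This requires the precise divisor-sum structure of the elliptic descendent series, which means carefully invoking the Okounkov-Pandharipande solution in the stationary sector and correctly converting $\psi$-descendents and Hodge classes through \cite{FP1}, while tracking the $u = $ genus-expansion variable and the trigonometric normalization $u\mapsto 2\sin(u/2)$ that governs the Gromov-Witten side. The first and third evaluations are, by contrast, essentially dimension/rigidity statements and should be routine once the geometry is set up.
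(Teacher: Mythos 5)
Your plan gets the answers and the general geography right, but the two steps you treat as ``routine dimension/rigidity statements'' rest on a false premise: none of these brackets is over- or under-determined. All three are dimensionally \emph{balanced}. The relative moduli space of genus $g$ maps to $\p^1 \times E$ in class $(1,d)$ with one interior marking and the two relative points has virtual dimension $g+2$, and against this the integrand saturates exactly: in the first bracket $\tau_0(\ppp)$ (degree $2$) pairs with the $\lambda_g$ term of $\hodge^\vee(1)$, in the second $\omega$ and $\tau_0(\ppp)$ pair with $\lambda_{g-1}$, and in the third $\omega$, $\omega$, $\tau_0(\ppp)$ pair with $\lambda_{g-2}$ (nontrivial for $g \geq 2$). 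So your proposed mechanisms --- ``the Hodge class together with the point insertion over-determines the moduli problem once a fiber class is added'' for the first identity, and ``imposing point conditions at both relative divisors \dots exceeds the virtual dimension'' for the third --- would both fail: the integrals exist at the expected dimension and their vanishing must come from geometry, not from a count. For the first bracket the actual mechanism (inside the cited \cite[Lemma 24]{MPT} argument) is Hodge-theoretic: for $d \geq 1$ the map to the $E$-factor is non-constant on some component, so the pulled-back holomorphic $1$-form gives a nowhere-vanishing section of $\BE$ and hence $\lambda_g = 0$; for $d = 0$ and $g \geq 1$ the obstruction theory in the $E$-direction contributes an extra factor $(-1)^g \lambda_g$, and $\lambda_g^2 = 0$ by Mumford's relation; only $(g,d)=(0,0)$ survives, giving $u^{-2}$. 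For the third bracket the paper's proof is a one-line emptiness argument you do not quite reach: since the invariant is independent of which points on the relative divisors represent $\omega$, choose them \emph{distinct}; any map in class $(1,d)$ has a unique component of degree $1$ over $\p^1$, which is a section $\p^1 \times \{x\}$ because there are no nonconstant maps $\p^1 \to E$, so both relative points have the same $E$-coordinate $x$ and the constrained moduli space is empty. Your fallback (``an abelian-vanishing-style translation argument'') gestures in this direction but is not the argument.

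On the second evaluation, your degeneration/TQFT plan --- reduce over the base $\p^1$ to cap and tube contributions and match against the Okounkov--Pandharipande theory of the elliptic curve, with \cite{FP1} converting Hodge classes --- is in the right spirit and is essentially what underlies the closed form; but be aware that the paper does not redo this computation at all: it simply cites \cite[Lemmas 25 and 26]{MPT}, where the divisor-sum $\sum_{m|d} \frac{d}{m} (2\sin(mu/2))^2$ is established. So the genuine gap in your proposal is concentrated in the first and third identities, where the dimension-excess reasoning is incorrect and the correct substitutes are the $\lambda_g$ vanishing via the elliptic $1$-form (plus $\lambda_g^2 = 0$ in degree $0$) and the distinct-points emptiness argument, respectively.
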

\begin{proof}
The first equation is obtained by exactly following the proof of \cite[Lemma 24]{MPT}. The second equation follows from
\cite[Lemmas 25 and 26]{MPT}. For the third equality, the point conditions
on the relative divisors can be chosen to be different. Then, since the degree over $\p^1$
is 1 (and there are no nonconstant maps from $\p^1$ to $E$), the moduli space
with the relative conditions imposed is empty.
\end{proof}

\begin{lemma} \label{Aeval344} For $g \geq 0$ and $d \geq 0$,
\begin{equation*} \Big\langle\ \hodge^{\vee}(1)\ \Big|\ \pt\ \Big\rangle^{A, \textup{red}}_{g,(1,d)} = \delta_{g,1} \delta_{d,0}  \, .\end{equation*}
\end{lemma}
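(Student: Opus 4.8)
The plan is to reduce the integrand by a dimension count and then exploit the pullback of the holomorphic one-forms from the two factors of $A = E_1 \times E_2$ to the Hodge bundle.

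Since $\beta = (1,d) = B + dE$ satisfies $\beta \cdot E = 1$, the relative moduli space $\Mbar_g(A/E, (1,d))$ carries a single relative marking of contact order $1$, whose relative evaluation map lands in $E$. A direct count shows that the reduced virtual dimension equals $g$, so that after imposing $\ev^{\ast}(\pt)$ with $\pt \in H^2(E)$, the total Chern class $\hodge^{\vee}(1) = c(\mathbb{E}^{\vee}) = \sum_i (-1)^i \lambda_i$ can contribute only its term of degree $g-1$. Hence
\[
\Big\langle\ \hodge^{\vee}(1)\ \Big|\ \pt\ \Big\rangle^{A,\text{red}}_{g,(1,d)} = \int_{[\Mbar_g(A/E,(1,d))]^{\text{red}}} (-1)^{g-1}\lambda_{g-1} \cup \ev^{\ast}(\pt)\,.
\]

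The heart of the argument is that the two holomorphic one-forms $dz_1, dz_2$ spanning $H^0(A,\Omega^1_A)$ pull back, via $f^{\ast}$ followed by $\Omega^1_C \to \omega_C$, to sections $s_1 = f^{\ast}dz_1$ and $s_2 = f^{\ast}dz_2$ of the Hodge bundle $\mathbb{E}$ over the moduli space; equivalently they assemble into $\mathcal{O}^{\oplus 2} \to \mathbb{E}$, dually $\mathbb{E}^{\vee} \to \mathcal{O}^{\oplus 2}$. On the locus where $s_1, s_2$ are independent this produces a rank-two trivial subbundle of $\mathbb{E}$, so $\lambda_{g-1} = c_{g-1}(\mathbb{E})$ becomes the degree-$(g-1)$ part of the Chern class of a rank-$(g-2)$ bundle and therefore vanishes. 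For $d \geq 1$ both projections $f_1 : C \to E_1$ (degree $1$) and $f_2 : C \to E_2$ (degree $d$) are nonconstant, with $s_1$ and $s_2$ supported on the horizontal, respectively vertical, parts of the domain; they are thus generically independent, and I would argue that the vanishing of $(-1)^{g-1}\lambda_{g-1}$ propagates to the whole integral. I expect this to be the main obstacle: one must control the degeneracy locus where $s_1$ and $s_2$ become dependent (for instance where the horizontal component is contracted by $f_2$) and its interaction with the reduced virtual class. This is exactly the point handled by the Kiem--Li cosection localization underlying the reduced class, parallel to the evaluations in \cite[Section 6]{MPT}.

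For $d = 0$ the class is $B$, every stable map has image in a fixed translate $E_1 \times \{c\}$, and only the single section $s_1$ survives, giving $\mathbb{E} \supset \mathcal{O}$; the relative point condition fixes $c$. Under the resulting trivial sub, $(-1)^{g-1}\lambda_{g-1}$ reduces to the top Hodge class of the contracted genus-$(g-1)$ locus, which I would show vanishes for $g \geq 2$ via the standard Hodge-integral relations of Mumford. For $g = 1$ the integrand is $\lambda_0 = 1$, and the reduced moduli space consists of the single map $E_1 \xrightarrow{\sim} E_1 \times \{c\}$, which is rigid with trivial automorphism group once the relative marking (lying over $0_{E_1}$) is fixed; it is one reduced point and contributes $1$. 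Combining the three cases gives $\delta_{g,1}\delta_{d,0}$, as claimed.
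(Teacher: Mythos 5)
Your construction is the same as the paper's (the two sections of $\BE$ obtained by pulling back the generating one-forms from $E_1$ and $E_2$), but at the decisive step for $d>0$ you obtain only \emph{generic} independence of $s_1,s_2$ and then hope the vanishing of $\lambda_{g-1}$ ``propagates.'' That is a genuine gap: if the sections are merely generically independent, $c_{g-1}(\BE)$ does not vanish --- it is represented by a degeneracy class supported precisely on the locus you flag as the main obstacle, and such loci can carry the entire reduced virtual integral. No cosection-localization input is needed or used at this point in the paper; instead the paper shows the degeneracy locus is \emph{empty}. If $c\,s_1 + c'\,s_2$ vanished at some $[f_0 : C \ra A]$, then $f_0^{\ast}(c\gamma + c'\gamma') = 0$, which forces the image of $f_0$ into a translate of a one-dimensional abelian subvariety $V \subset E_1 \times E_2$; since $f_0$ has class $(1,d)$ with $d>0$, such a $V$ would induce an isogeny between $E_1$ and $E_2$, contradicting their genericity. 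Hence $\CO^2 \ra \BE$ is a subbundle everywhere, and $\lambda_{g-1} = c_{g-1}(\BE/\CO^2) = 0$ identically, with no analysis of contracted horizontal components required.

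Your $d=0$ case is also incorrect as stated. A single surviving section gives a trivial sub-line-bundle $\CO \subset \BE$, which kills $\lambda_g$, \emph{not} $\lambda_{g-1}$; and $\lambda_{g-1}$-integrals against $[\Mbar_g(E_1/0,1)]^{\text{vir}}$ are in general nonzero (they are exactly the Hodge integrals appearing in the $k=1$ degenerate case of Section \ref{dcosection}, expressible via the descendent theory of an elliptic curve). The ingredient you are missing is that for $d=0$ the reduced obstruction theory contains the extra factor $R^1\pi_{\ast}(f^{\ast}T_{E_2})$, whose contribution after reduction is a second copy of $(-1)^{g-1}\lambda_{g-1}$, so the invariant equals $\int_{[\Mbar_g(E_1/0,1)]^{\text{vir}}} \lambda_{g-1}^2$; the paper then concludes for $g \geq 2$ via Mumford's relation $\lambda_{g-1}^2 = 2\lambda_g\lambda_{g-2}$ combined with $\lambda_g = 0$ from the trivial subbundle. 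Your proposed route through ``the top Hodge class of the contracted genus-$(g-1)$ locus'' does not substitute for this: without the extra obstruction class the integrand $\lambda_{g-1}$ does not even match the virtual dimension $2g-2$ of $\Mbar_g(E_1/0,1)$, and the vanishing you assert for $g \geq 2$ is unestablished. (Your $g=1$, $d=0$ evaluation of $1$ agrees with the paper, since there $\lambda_0^2 = \lambda_0 = 1$ and the moduli space is a single point.)
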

\begin{proof}
By dimension reasons only the term 
\[ (-1)^{g-1} \lambda_{g-1} = c_{g-1}(\BE^{\vee}) \]
contributes in the evaluation of Lemma \ref{Aeval344}.

\noindent \emph{Case $d > 0$.}
We will prove the vanishing of $\lambda_{g-1}$ on $\Mbar_{g}(A / E, \beta)$ by
giving two linearly independent sections of $\BE$.

Let $\gamma, \gamma' \in H^0(A, \Omega_A)$ be the pull-backs to $A$
of non-zero global differential forms on $E_1$ and $E_2$ respectively.
Let
\[ \pi : \CC \ra \Mbar_{g}(A / E, \beta) \]
be the universal curve and $f : \CC \ra A$ be the universal map.
We have the induced sequence
\[ \CO_\CC^{2} \xrightarrow{(\gamma, \gamma')} f^{\ast} \Omega_A \ra \Omega_\pi \ra \omega_\pi \,. \]
By pushforward via $\pi$ we obtain the sequence
\[ s: \CO_{\Mbar_{g}(A / E, \beta)}^2 \ra \pi_{\ast} \omega_{\pi} = \BE \,. \]

If $s$ does not define a 2-dimensional subbundle of $\BE$, there exists a map $f_0 : C \ra A$ in class $(1,d)$ and nonzero
elements $c, c' \in \BC$ such that
\[ f_0^{\ast}( c \gamma + c' \gamma' ) = 0 \,. \]
Thus $f_0$ must map to a (translate of a) 1-dimensional abelian subvariety $V$ inside $E_1 \times E_2$.
Because $f$ has degree $(1,d)$, the subvariety $V$ induces an isogeny between $E_1$ and $E_2$.
But $E_1$ and $E_2$ were choosen generic, which is a contradiction.
Hence $s$ is injective and $\lambda_{g-1} = 0$.

\noindent \emph{Case $d = 0$.} Then,
we have the factor $R^1 \pi_{\ast} (f^{\ast} T_{E_2})$ inside the obstruction sheaf, which yields an additional class $(-1)^{g - 1}\lambda_{g-1}$ after reduction.
Therefore,
\[ \Big\langle\ \hodge^{\vee}(1)\ \Big|\ \pt\ \Big\rangle^{A, \text{red}}_{g,(1,0)} = \int_{[ \Mbar_g(E_1/0,1) ]^{\text{vir}}} \lambda_{g-1}^2 \,. \]
For $g \geq 2$, we have $\lambda_{g-1}^2 = 2 \lambda_g \lambda_{g-2}$ by Mumford's relation.
By pulling-back the global non-zero $1$-form from $E_1$, we obtain a 1-dimensional subbundle of $\BE$. Therefore, $\lambda_g = 0$ and the integral vanishes.

Finally, for $g = 1$ and $d = 0$ the moduli space $\Mbar_1(E_1/0,1)$ is a single point and the invariant is $1$.
\end{proof}

\subsection{Proof of Theorem \ref{thm_point_insertion}} \label{prim6}
We are now able to evaluate the invariants $\mathsf{N}^{\text{FLS}}_{g,k,(1,d)}$
of Section \ref{ptptpt} and prove Theorem \ref{thm_point_insertion}.
By definition,
\begin{equation}\label{vv45}
\sum_{g\geq 2} \sum_{d\geq 1} \mathsf{N}^{\text{FLS}}_{g,k,(1,d)} u^{2g-2} q^d=
 \sum_{g \geq 2} \sum_{d \geq 1} u^{2g-2} q^d \Big\langle \BE^{\vee}(1) \, \tau_0(\omeg)^k \Big\rangle_{g, (1,d)}^{A,\text{FLS}}  \, .
\end{equation}
By Proposition \ref{ev_odd_prop}, the right side of \eqref{vv45} equals
$$ q\frac{d}{dq}\ \frac{1}{(k+1)(k+2)} \sum_{g \geq 0} \sum_{d \geq 0} u^{2g-2} q^d \Big \langle \BE^{\vee}(1) \, \tau_0(\omeg)^{k+2} \Big\rangle_{g, (1,d)}^{A, \textup{red}} \, .$$
Next, we apply the degeneration formula. Only one term satisfies the dimension constraints:
$$ 
 q\frac{d}{dq}\ \frac{u^2}{(k+1)(k+2)} \cdot \Big \langle\ \BE^{\vee}(1)\ \Big|\ \omega\ \Big\rangle^{A, \text{red}}
\cdot \Big\langle\ \mathsf{1}\ \Big|\ \BE^{\vee}(1) \, \tau_0(\omeg)^{k+2} \ \Big\rangle^{\p^1
\times E} \, .$$
An application of Lemma \ref{Aeval344} then yields
$$ q\frac{d}{dq}\ \frac{u^2}{(k+1) (k+2)} \cdot \Big\langle\ \mathsf{1}\ \Big|\ \BE^{\vee}(1) \, \tau_0(\omeg)^{k+2} \ \Big\rangle^{\p^1\times E}\, . $$
We degenerate the base $\p^1$ to obtain a chain of $k+3$ surfaces
isomorphic to $\p^1 \times E$. The first $k+2$ of these each receive a single
insertion $\tau_0(\omeg)$. Using the vanishing of Lemma \ref{P1Eeval344} and
the evaluation of the last $\p^1\times E$ by \cite[Lemma 24]{MPT}, we obtain
\begin{multline*}
q\frac{d}{dq}\ \frac{u^2 (k+2)}{(k+1) (k+2)}  \cdot 
\Big\langle \ \mathsf{1} \ \Big|\ \hodge^{\vee}(1)\, \tau_0(\mathsf{p})\ \Big| \ \mathsf{1}\ \Big\rangle^{ \p^1 \times E} \\
\cdot \bigg( u^2 \Big\langle \ \mathsf{1}\  \Big|\  \hodge^{\vee}(1)\, \tau_0(\mathsf{p})\ \Big| \ \omega\ \Big \rangle^{ \p^1 \times E} \bigg)^{k+1}\, . 
\end{multline*}
A further application of Lemma \ref{P1Eeval344} yields
$$q\frac{d}{dq}\ \frac{1}{k+1} \Big( \sum_{d \geq 1} \sum_{m | d} \frac{d}{m} \Big( 2 \sin(mu/2) \Big)^2 q^d \Big)^{k+1} \, .$$
Rewriting the result in the variables 
$$p=e^{2\pi i z}\, , \ \ \ q= e^{2\pi i \tau}$$
and $u=2\pi z$, we obtain
$$q\frac{d}{dq}\, \frac{1}{k+1} \Big( {- \sum_{d \geq 1}} \sum_{m | d} \frac{d}{m} (p^m - 2 + p^{-m}) q^d \Big)^{k+1} 
= q\frac{d}{dq}\, \frac{1}{k+1} \mathsf{S}(z,\tau)^{k+1} \, .$$
The proof of Theorem \ref{thm_point_insertion} is thus complete. \qed

\subsection{Relation to stable pairs invariants} \label{prim7}
Let $A$ be an abelian surface and let
$\beta$ be an \emph{irreducible} curve class of type $(1,d)$.
Let 
\[ P_n(A, \beta) \]
be the
moduli space of stable pairs $(F,s)$ on $A$ in class $\beta$ and with Euler
characteristic $\chi(F) = n$, see \cite{PT1}.
The moduli spaces $P_n(A, \beta)$ are isomorphic
to the relative Hilbert scheme over the universal
family of curves in class $\beta$.
It is nonsingular of dimension $2d + n +1$.

Consider the Hilbert-Chow map
\[ p : P_n(A, \beta) \ra \Pic^{\beta}(A) \cong \widehat{A} \,. \]
with sends a stable pair $(F,s)$ to the divisor class associated to the support of $F$.
The map is equivariant with respect to the action of $A$ and is an isotrivial \'etale fibration.
The fiber of $p$ over $0_{\widehat{A}}$ is denoted by
\[ P_n(A, \beta)^{\text{FLS}} = p^{-1}(0_{\widehat{A}}), \]
We define the FLS stable pairs invariants in class $\beta$ by the signed
Euler characteristic
\begin{equation}
\mathsf{P}_{n,\beta}^{\text{FLS}}
= (-1)^{2d + n - 1} e \big( P_n(A, \beta)^{\text{FLS}} \big) \,. \label{stable_pair_xxkt}
\end{equation}
The definition agrees with the definition
of residue stable pairs invariants
of the threefold $A \times \BC$ using torus localization, see \cite{MPT}.

By deformation invariance of the
Euler characteristic under deformations with smooth fibers,
$\mathsf{P}_{n,\beta}^{\text{FLS}}$ only depends on the
type $(1,d)$ of the irreducible class $\beta$.
We write
\[ \mathsf{P}_{n,\beta}^{\text{FLS}} = \mathsf{P}_{n,(1,d)}^{\text{FLS}} \,. \]
The Euler characteristics \eqref{stable_pair_xxkt} (in fact the $\chi_y$-genus) have
been computed by G\"ottsche and Shende in \cite{GS}:
\begin{thm}[G\"ottsche-Shende \cite{GS}] \label{GS_thm} We have
\begin{equation*}
\sum_{d \geq 1} \sum_{n \in \BZ} \mathsf{P}_{n,(1,d)}^{\textup{FLS}} (-p)^n q^d
=
- \sum_{d \geq 1} \sum_{m | d} \frac{d^2}{m} ( p^{m} - 2 + p^{-m} ) q^d \,.
\label{GS_gen} 
\end{equation*}
\end{thm}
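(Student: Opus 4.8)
The plan is to obtain Theorem~\ref{GS_thm} as a consequence of the $k=0$ case of Theorem~\ref{thm_point_insertion} together with the Gromov-Witten/Pairs correspondence, thereby giving a proof independent of the direct relative Hilbert scheme analysis of \cite{GS}. I would begin with the purely formal observation that
\[
q\frac{d}{dq}\,\Sf(z,\tau)
= -\sum_{d\geq 1}\sum_{m|d}\frac{d^2}{m}\big(p^m-2+p^{-m}\big)q^d,
\]
so that the right-hand side of the asserted identity is exactly $q\frac{d}{dq}\Sf$. By the $k=0$ specialization of Theorem~\ref{thm_point_insertion}, this series equals the Gromov-Witten generating function $\sum_{g\geq 2}\sum_{d\geq 1}\mathsf{N}^{\textup{FLS}}_{g,(1,d)}\,u^{2g-2}q^d$. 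The theorem is therefore reduced to the coefficientwise identity
\[
\sum_{n\in\BZ}\mathsf{P}^{\textup{FLS}}_{n,(1,d)}(-p)^n
= \sum_{g\geq 2}\mathsf{N}^{\textup{FLS}}_{g,(1,d)}\,u^{2g-2},
\qquad p=e^{iu},
\]
for every $d\geq 1$.

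To prove this identity I would realize both sides as residue invariants of the noncompact Calabi-Yau threefold $A\times\BC$, equipped with the scaling $\BC^{\ast}$-action on the second factor. On the pairs side this is precisely the identification recorded just before the statement: $\mathsf{P}^{\textup{FLS}}_{n,(1,d)}$ is the residue stable pairs invariant computed by torus localization in the sense of \cite{MPT}. On the Gromov-Witten side, localization of the residue theory of $A\times\BC$ concentrates on stable maps into the zero section $A\times 0$, and the normal contribution of the trivial factor $T_{\BC}$ yields, after extracting the residue in the equivariant parameter, precisely the integrand $(-1)^{g-2}\lambda_{g-2}$, while the fixed-linear-system condition descends from $A$ as in Section~\ref{prim3}. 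Thus the residue Gromov-Witten series of $A\times\BC$ in class $(1,d)$ equals $\sum_{g}\mathsf{N}^{\textup{FLS}}_{g,(1,d)}u^{2g-2}$, and the desired identity is exactly the reduced residue Gromov-Witten/Pairs correspondence for the class $(1,d)$.

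It remains to establish this correspondence, which is the heart of the matter. The essential simplification is that $\beta$ is \emph{irreducible} of type $(1,d)$: every curve in the family is then integral, $P_n(A,\beta)$ is the nonsingular relative Hilbert scheme over the universal curve, and rationality of the pairs series holds automatically. I would match the two residue theories by degenerating $A$ to the normal cone of $E\subset A$ as in Section~\ref{prim4}, applying the degeneration formula on both the pairs and the Gromov-Witten sides and reducing each to the same relative geometry of $\p^1\times E$, where the local correspondence of \cite{PT1} applies. The \textbf{main obstacle} is precisely this transport of the correspondence of \cite{PT1} into the reduced, cosection-localized residue setting: one must verify that the reduced virtual classes, the $\BC^{\ast}$-residues, and the degeneration formula remain simultaneously compatible on the two theories, so that the relative $\p^1\times E$ evaluations are identified under $p=e^{iu}$. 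Granting this compatibility, the two generating series coincide and the formula of \cite{GS} follows.
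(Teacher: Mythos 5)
Your proposal follows the same route the paper itself sketches in Section \ref{prim7}: rewrite the right-hand side as $q\frac{d}{dq}\Sf(z,\tau)$, identify it with the FLS Gromov--Witten series via the $k=0$ case of Theorem \ref{thm_point_insertion}, interpret $\mathsf{P}^{\textup{FLS}}_{n,(1,d)}$ as residue stable pairs invariants of $A\times\BC$, and match the two theories through the degeneration $A \rightsquigarrow A\cup_E(\p^1\times E)$ and a Gromov--Witten/Pairs correspondence. But there is a concrete missing step on the pairs side. The degeneration formula transports cohomological insertions, not the FLS constraint, which is the fiber over $0_{\widehat{A}}$ of the Hilbert--Chow map $p : P_n(A,\beta) \to \widehat{A}$; that fiber condition does not specialize through the degeneration. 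On the Gromov--Witten side you handle this correctly (your appeal to Section \ref{prim3}, where Proposition \ref{ev_odd_prop} trades the FLS cut for two point insertions), but on the pairs side you propose to degenerate the FLS-constrained residue theory directly, and that is exactly where the argument stalls. The paper's outline supplies the needed mechanism, which your proposal omits: an analog of the abelian vanishing relation of Lemma \ref{abelvan} for stable pairs, which, as in Proposition \ref{ev_odd_prop}, expresses the FLS condition by point insertions on the full moduli space $P_n(A,\beta)$ \emph{before} any degeneration is applied. You would need to state and prove this pairs-side vanishing, using the $A$-equivariance of the isotrivial fibration $p$ in the role played by the product decomposition \eqref{333} in the Gromov--Witten argument.

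Your other weak point is the attribution of the key external input: \cite{PT1} only \emph{conjectures} the Gromov--Witten/Pairs correspondence and contains no proven local correspondence applicable to the relative geometries $A/E$ and $(\p^1\times E)/E$ that arise after degenerating. The proven correspondence the paper invokes is that of Pandharipande--Pixton \cite{PaPix2}. Since, as you yourself identify, this correspondence (in its reduced, residue, relative form) is the heart of the matter, resting it on \cite{PT1} leaves the proof incomplete as written; with \cite{PaPix2} substituted and the pairs-side insertion-trading step added, your argument coincides with the paper's second proof.
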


The $k=0$ case of Theorem \ref{thm_point_insertion} yields a second proof of this result:
using an analog of the abelian vanishing relation (Lemma \ref{abelvan}) for stable pairs,
we may express the FLS condition by point insertions
on the full moduli space $P_n(A,\beta)$ as in Proposition~\ref{ev_odd_prop}.
After degenerating the abelian surface $A$ to $\p^1 \times E$,
we can apply the Gromov-Witten/Pairs
correspondence \cite{PaPix2}, which yields the result.

In \cite{MPT}, a parallel study of the reduced invariants
of $K3$ surfaces was undertaken. The Gromov-Witten/Pairs correspondence and the Euler
characteristic calculations of Kawai-Yoshioka \cite{KY} were together used
to evaluate the Gromov-Witten side to prove the primitive Katz-Klemm-Vafa
conjecture. The analogue of the
Kawai-Yoshioka calculation for abelian surfaces is Theorem \ref{GS_thm}.
However, for abelian surfaces we are able to evaluate the Gromov-Witten side directly without using
input from the stable pairs side.


\section{Quasi-modular forms} \label{section_modularity}
\subsection{Descendent series} \label{secds}
Let
$A = E_1 \times E_2$
be the product of two generic elliptic curves $E_1$ and $E_2$,
and let
\[ (d_1, d_2) = d_1 [E_1] + d_2 [E_2] \in H_2(A, \BZ)\, . \]
For $i \in\{ 1,2\}$, let $\pt_i \in H^2(E_i, \BZ)$ be the class of a point on $E_i$ and let
\[ \aaa_i, \bbb_i \in H^{1}(E_i, \BZ) \]
be a symplectic basis.
As before, we use freely the identification induced by the K\"unneth decomposition
\[ H^{\ast}(E_1 \times E_2, \BZ) = H^{\ast}(E_1, \BZ) \otimes H^{\ast}(E_2, \BZ) \,. \]
A class $\gamma \in H^{\ast}(A, \BQ)$
is {\em monomial} if $\gamma$ is a product
\[ 
\gamma = \aaa_1^{i} \bbb_1^{j} \aaa_2^{k} \bbb_2^{l}\, ,\ \ \ \  i,j,k, l \in \{ 0,1 \}\, .\]
A basis of the cohomology of $A$ is given by monomial classes.
For a monomial class $\gamma$, we denote by
$v_s(\gamma)$ the exponent of $s \in \{ \aaa_1, \bbb_1, \aaa_2, \bbb_2 \}$ in $\gamma$. Hence,
\[ \gamma = \aaa_1^{v_{\aaa_1}(\gamma)} \bbb_1^{v_{\bbb_1}(\gamma)} \aaa_2^{v_{\aaa_2}(\gamma)} \bbb_2^{v_{\bbb_2}(\gamma)} \,. \]

Let
$\gamma_1, \dots, \gamma_n \in H^{\ast}(A, \BQ)$
be monomial classes,
and let
$$a_1, \dots, a_n \in \BZ^{\geq 0}$$
be non-negative integers.
The primitive descendent potential of $A$ with insertions $\tau_{a_1}(\gamma_1) \dots \tau_{a_n}(\gamma_n)$ is defined by
\[ \mathsf{F}_{g}^{A}( \tau_{a_1}(\gamma_1) \dots \tau_{a_n}(\gamma_n) )
= \sum_{d \geq 0} \Big\langle \tau_{a_1}(\gamma_1) \dots \tau_{a_n}(\gamma_n) \Big\rangle_{g,(1,d)}^{A, \text{red}} q^{d} \,.  \]
Theorem \ref{modularity} states a modularity{\footnote{The vector
space of quasi-modular forms was defined in Section \ref{qmfo}.}} property for 
 $\mathsf{F}_{g}^{A}( \tau_{a_1}(\gamma_1) \dots \tau_{a_n}(\gamma_n) )$.
The following refined result will be proven here.

\begin{thm} \label{modularity_refined} The primitive descendent
potential satisfies the following properties for all $g\geq 1$:
\begin{enumerate}
\item[(i)] $\mathsf{F}_{g}^{A}( \tau_{a_1}(\gamma_1) \dots \tau_{a_n}(\gamma_n) )$ vanishes unless 
\[ \sum_{k=1}^{n} v_{\aaa_i}(\gamma_k) = \sum_{k=1}^{n} v_{\bbb_i}(\gamma_k) \,, \ \ \ \ 
i\in \{1,2\}\, .
\]
\item[(ii)]
$\mathsf{F}_{g}^{A}( \tau_{a_1}(\gamma_1) \dots \tau_{a_n}(\gamma_n) ) 
\in \text{\em QMod}_{2(g-2) + 2 \ell}$
for $\ell = \sum_{k=1}^{n} v_{\aaa_2}(\gamma_k)$.
\end{enumerate}
\end{thm}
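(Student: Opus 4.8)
The plan is to handle the two parts by different mechanisms: part (i) by a monodromy (deformation-invariance) argument, and part (ii) by reducing to the quasi-modularity of the Gromov-Witten theory of the elliptic curve $E_2$, in the form established by Okounkov-Pandharipande.

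For part (i) I would exploit deformation invariance of the reduced invariants together with the monodromy of the moduli of elliptic curves. Fixing $E_2$ and letting the complex structure of $E_1$ vary, the class $\beta = [E_1] + d[E_2]$ stays algebraic, so $\mathsf{F}_g^A$ is unchanged while the monodromy acts on $H^1(E_1) = \langle \aaa_1, \bbb_1 \rangle$ as the standard representation of $SL_2(\BZ)$ (and trivially on $H^0$ and on $H^2 = \langle \aaa_1 \bbb_1 \rangle = \langle \pt_1 \rangle$). Hence the multilinear functional $(\gamma_1, \ldots, \gamma_n) \mapsto \mathsf{F}_g^A(\prod_k \tau_{a_k}(\gamma_k))$ must be invariant under the induced action of $SL_2(\BZ) \times SL_2(\BZ)$ on the $E_1$- and $E_2$-factors. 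Since the standard representation is symplectic and its invariant tensors are generated by the pairing $\aaa_i \wedge \bbb_i$, every nonzero invariant must contract the degree-one $E_i$-insertions in $(\aaa_i, \bbb_i)$-pairs; this forces $\sum_k v_{\aaa_i}(\gamma_k) = \sum_k v_{\bbb_i}(\gamma_k)$, which is exactly (i).

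For part (ii) the key geometric input is that $\beta$ has degree $1$ over $E_1$ and degree $d$ over $E_2$. A degree-$1$ map to the elliptic curve $E_1$ cannot have smooth connected genus-$g$ domain for $g \geq 2$, so every stable map in class $(1,d)$ consists of a genus-$1$ spine mapping isomorphically to $E_1$ together with vertical components covering the fibers $\{\text{pt}\} \times E_2 \cong E_2$. I would make this precise by degenerating $A$ as in the proof of Theorem \ref{thm_point_insertion} (or via the product TQFT structure), using the cosection coming from $H^{1,0}(E_1) \otimes H^{1,0}(E_2)$ to convert the reduced class on $\Mbar_g(A, (1,d))$ into the ordinary virtual class of the stationary (possibly $\lambda$-twisted) Gromov-Witten theory of $E_2$ at effective genus $g - 1$. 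Under this reduction the insertions $\aaa_1, \bbb_1$ restrict to the spine and, after the $(\aaa_1, \bbb_1)$-pairing of part (i), contribute only a numerical factor independent of $q$, while the insertions $\aaa_2, \bbb_2$ pair into $\ell = \sum_k v_{\aaa_2}(\gamma_k) = \sum_k v_{\bbb_2}(\gamma_k)$ point classes on $E_2$ that become the stationary insertions carrying the modular weight.

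Finally I would invoke the theorem of Okounkov-Pandharipande \cite{OP1,OP3} that the stationary $n$-point series of $E_2$ in $q = e^{2\pi i \tau}$ is quasi-modular of pure weight $\sum_j (b_j + 2)$, where $b_j$ are the descendent orders. In the reduction the genus is $g - 1$, so the dimension constraint of the $E_2$-theory fixes $\sum_j b_j = 2(g - 1) - 2 = 2(g - 2)$, and there are $\ell$ point insertions; the total weight is therefore $2(g - 2) + 2\ell$, proving (ii). The main obstacle I anticipate is the reduction step: arranging that the reduced virtual class, the Hodge and $\lambda$-classes produced from the $E_1$-direction, and the comparison of the $\psi$-classes on $\Mbar_g(A,\beta)$ with those on the $E_2$-moduli are all tracked correctly, and then upgrading mere quasi-modularity of the resulting $\lambda$-twisted stationary series to the exact pure weight claimed. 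This precise weight bookkeeping for Hodge-twisted elliptic series, rather than their quasi-modularity alone, is where the real work lies.
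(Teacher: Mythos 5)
Your part (i) is correct but genuinely different from the paper. You argue via deformation invariance and monodromy: varying $E_1$ (resp.\ $E_2$) keeps $(1,d)$ algebraic, the monodromy acts through $\mathrm{SL}_2(\BZ)$ on $\langle \aaa_i, \bbb_i\rangle$, and since $\mathrm{SL}_2(\BZ)$ is Zariski dense in $\mathrm{SL}_2$, invariance of the multilinear functional under the diagonal torus already forces $\sum_k v_{\aaa_i}(\gamma_k) = \sum_k v_{\bbb_i}(\gamma_k)$. The paper instead first reduces descendents to tautological-class integrands (Proposition \ref{modularity_proposition}) and deduces (i) from algebraicity of $\ev_{\ast}\big(\pi^{\ast}(\alpha)\cap[\Mbar_{g,n}(A,\beta)]^{\textup{red}}\big)$ together with Tate's theorem \cite{Tat} that Hodge classes on $(E_1\times E_2)^n$ are generated by divisors. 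Both arguments are soft; yours avoids \cite{Tat}, while the paper's version is phrased at the level of $\alpha$-twisted invariants, which is the form the induction for (ii) actually consumes.

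For part (ii) there is a genuine gap, and it sits exactly where you flagged it --- but it is larger than a bookkeeping issue. First, a concrete error: once $\lambda$-classes from the $E_1$-direction enter the integrand, the dimension constraint does \emph{not} fix $\sum_j b_j = 2(g-2)$ for the stationary descendents of $E_2$; the Hodge classes absorb a variable portion of the dimension, so different distributions yield stationary series of different naive Okounkov--Pandharipande weights, and the pure weight $2(g-2)+2\ell$ can only be recovered from a refined statement tying the weight to the genus and the number of point insertions \emph{uniformly} over all $\lambda$/$\psi$-distributions. This is precisely Lemma \ref{2uVMV}(ii), which sharpens \cite[Lemma 30]{MPT} using \cite[Proposition 28]{MPT}, and proving and propagating it is where the paper's work happens. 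Second, the paper never performs your one-shot ``spine plus vertical components'' reduction (which is correct only set-theoretically; the reduced virtual class is not computed on that locus directly). Instead it runs a lexicographic induction on $(g,n)$: the strong Getzler--Ionel and Looijenga vanishings of \cite{FPM} push tautological classes of degree $\geq g$ (resp.\ $\geq g-1$) to the boundary; the abelian vanishing (Lemma \ref{abelvan}) raises insertion degrees until $\gamma_1 = \ppp$; the degeneration to $A\cup_E(\p^1\times E)$ produces a relation between absolute and relative potentials whose transformation matrix is unipotent and must be inverted; the $\psi_1$-factor at the leading term $(g',n')=(g,n-1)$ is handled by the cotangent-line comparison $L_{\textup{rel}}\cong \pi^{\ast}L_1$ (Lemma \ref{dssxq}) and the vanishing locus of the canonical section, which trades $\psi_1$ for rubber terms controlled by rubber calculus. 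None of this machinery appears in your outline, and without it the $\psi$-classes and boundary configurations (spines with self-nodes, vertical trees attached at nodes) are not controlled; so your proposal is a plausible plan whose central step remains unproven.
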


Part (i) of Theorem \ref{modularity_refined} is a basic balancing condition.
Part (ii) is a homogeneity property which refines the statement of Theorem \ref{modularity}.

Having the precise weight is useful in applications.
For example, by part (ii) of Theorem \ref{modularity_refined},
the series $\mathsf{F}_2^A( \tau_1( \ppp) )$ is a quasi-modular form
of weight $2$, and hence a multiple of $E_2(\tau)$.
The constant coefficient is given by
\[ \int_{[ \Mbar_{2,1}(A, (1,0)) ]^{\text{red}} } \ev_1^{\ast}(\ppp) \psi_1
= \int_{ [ \Mbar_{2,1}(E_1, 1) ]^{\text{vir}} } (- \lambda_1) \ev_1^{\ast}(\omega_1) \psi_1
= - \frac{1}{12} \,. \]
We conclude
\[ \mathsf{F}_2^A( \tau_1( \ppp) ) = - \frac{1}{12} E_2(\tau) = - \frac{1}{12} + 2 \sum_{d \geq 0} \sigma(d) q^d \,. \]

For genus $g=1$, both parts  of Theorem \ref{modularity_refined} are 
easy to see.
The contributions of curve classes $(1,d>0)$ vanish for $g=1$
since the moduli space of maps is {\em empty}: an element of
$$\overline{M}_{1,n}(E_1 \times E_2, (1,d>0))$$
would yield an isogeny between $E_1$ and $E_2$ contradicting the
genericity of $E_1$ and $E_2$.
The series 
$\mathsf{F}_{1}^{A}( \tau_{a_1}(\gamma_1) \dots \tau_{a_n}(\gamma_n) ) $
therefore has only a constant term in $q$.
For curve classes of degree $(1,0)$, the moduli space factors as
$$\Mbar_{1,n}(E_1\times E_2, (1,0)) \cong \Mbar_{1,n}(E_1,1) \times E_2\ .$$
The balancing condition of part (i) then follows by the separate
balancing on the two factors.
For nonvanishing invariants
$$\Big\langle \tau_{a_1}(\gamma_1) \dots \tau_{a_n}(\gamma_n) 
\Big\rangle_{1,(1,0)}^{A,\text{red}}\, ,$$
we must have 
$$\ell = \sum_{k=1}^{n} v_{\aaa_2}(\gamma_k)=1.$$
Hence, part (ii) correctly predicts a quasi-modular form of weight
\[ 2(g-2) + 2 \ell = 2 (1 - 2) + 2 \cdot 1 = 0 \,. \]

\subsection{Tautological classes}
The first step in the proof of
Theorem \ref{modularity_refined} is to recast the result in terms of tautological classes
on the moduli space of curves.
For $2g-2+n > 0$, let
\[ R^*(\Mbar_{g,n}) \subset H^{\ast}(\Mbar_{g,n},\mathbb{Q}) \]
be the tautological ring. Let
\[ \pi : \Mbar_{g,n}(A, \beta) \ra \Mbar_{g,n} \]
be the forgetful map.
For $\alpha \in R^*(\Mbar_{g,n})$,
we define $\alpha$-twisted reduced invariants of $A$ by
\[
\Big\langle \alpha \, ; \, \gamma_1, \dots, \gamma_n \Big\rangle_{g,\beta}^{A,\text{red}} =
 \int_{[ \Mbar_{g,n}(A,\beta) ]^{\text{red}} } \pi^{\ast}(\alpha) \cup \prod_{i} \ev_i^{\ast}(\gamma_i)\, .
\]
Here, $\beta$ is a curve class on $A$ and 
$$\gamma_1, \dots, \gamma_n \in H^{\ast}(A,\mathbb{Q})\, $$
are monomial classes.
The associated primitive potential  is defined by
\[ \mathsf{F}^{A}_{g}( \alpha ; \gamma_1, \dots, \gamma_n )
=
\sum_{d \geq 0} \Big\langle \alpha \, ; \, \gamma_1, \dots, \gamma_n 
\Big\rangle_{g,(1,d)}^{A,\text{red}} q^d \,. \]

\begin{prop} \label{modularity_proposition}
The primitive $\alpha$-twisted
potential satisfies the following properties for all $g\geq 1$:
\begin{enumerate}
\item[(i)]
$\mathsf{F}^{A}_{g}( \alpha ; \gamma_1, \dots, \gamma_n )$ vanishes unless 
\[ \sum_{k=1}^{n} v_{\aaa_i}(\gamma_k) = \sum_{k=1}^{n} v_{\bbb_i}(\gamma_k)  \,, \ \ \ \ 
i\in \{1,2\}\, . \]
\item[(ii)] $\mathsf{F}^{A}_{g}( \alpha ; \gamma_1, \dots, \gamma_n ) \in \text{\em QMod}_{2(g-2) + 2 \ell}$ for
$\ell = \sum_{k=1}^{n} v_{\aaa_2}(\gamma_k)$.
\end{enumerate}
\end{prop}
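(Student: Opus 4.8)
The plan is to prove Proposition \ref{modularity_proposition} by reducing the reduced Gromov–Witten theory of $A = E_1 \times E_2$ in the class $(1,d)$ to the Gromov–Witten theory of the elliptic curve $E_2$, whose quasi-modularity and weight grading are supplied by Okounkov–Pandharipande \cite{OP1,OP3}. Granting the Proposition, the descendent statement of Theorem \ref{modularity_refined} follows by absorbing each $\psi_i^{a_i}$ into the tautological factor $\alpha$, using the boundary comparison between $\psi_i$ on $\Mbar_{g,n}(A,\beta)$ and the pullback of the cotangent class from $\Mbar_{g,n}$; this changes neither $\ell$ nor the predicted weight, so it suffices to treat the $\alpha$-twisted invariants.

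First I would prove the balancing condition (i) by deformation invariance together with monodromy. Since the class $(1,d)$ remains non-degenerate of type $(1,d)$ over the whole family of products $E_1 \times E_2$ parametrized by $M_{1,1} \times M_{1,1}$, the reduced invariants are constant along the family, while the monodromy group $SL_2(\ZZ) \times SL_2(\ZZ)$ acts on $H^1(E_1) \oplus H^1(E_2)$ in the standard representation and fixes the homology classes $[E_1], [E_2]$. Hence, for each fixed $d$, the multilinear functional $(\gamma_1,\dots,\gamma_n) \mapsto \langle \alpha \,;\, \gamma_1,\dots,\gamma_n \rangle^{A,\mathrm{red}}_{g,(1,d)}$ is $SL_2(\ZZ)^2$-invariant. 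As $SL_2(\ZZ)$ is Zariski dense in $SL_2$, this functional is invariant under the full torus acting on $\langle \aaa_i, \bbb_i \rangle$ with weights $(+1,-1)$; vanishing on every nonzero torus weight is precisely the balancing $\sum_k v_{\aaa_i}(\gamma_k) = \sum_k v_{\bbb_i}(\gamma_k)$ for $i \in \{1,2\}$.

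For the quasi-modularity (ii) I would exploit that the class $(1,d)$ has degree $1$ over $E_1$. Applying the reduced degeneration formula of Section \ref{prim4} to the degeneration of $A$ into $A \cup_E (\p^1 \times E)$ with $E = 0_{E_1} \times E_2$—where the pulled-back symplectic form of $A$ only modifies the obstruction theory on the $A$-component—expresses $\mathsf{F}^A_g(\alpha ; \gamma_1,\dots,\gamma_n)$ through relative Gromov–Witten series of $(\p^1 \times E)/\{0,\infty\}$ of $E_1$-degree $1$, the residual reduced $A/E$ side being rigid and evaluating simply by the genericity argument of Lemma \ref{Aeval344} (no isogeny between $E_1$ and $E_2$). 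Because there are no nonconstant maps $\p^1 \to E_2$, these relative series are governed by the Gromov–Witten theory of the elliptic curve $E_2$, which is quasi-modular by \cite{OP1,OP3}; the tautological factor $\alpha$ distributes over the degenerate pieces by the splitting axiom and reduces to the $\psi$- and $\kappa$-insertions already covered by the elliptic-curve evaluations. As quasi-modular forms are closed under the products and $q\frac{d}{dq}$-derivatives appearing in the gluing, the whole series is quasi-modular.

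The main obstacle is the sharp weight count: Theorem \ref{modularity} already yields membership in $\mathrm{QMod}_{\le 2(g-2)+2n}$, and the real content is to pin the weight at exactly $2(g-2) + 2\ell$. I would obtain this from the graded refinement of Okounkov–Pandharipande's theory, under which the $2\ell$ balanced odd insertions transported from $H^1(E_2)$ each contribute weight $1$, while the remaining $2(g-2)$ is the genus contribution carried through the $E_1$-reduction (the Hodge/$\lambda$-factor produced when the degree-$1$ $E_1$-direction is integrated out, exactly as in the evaluations of Lemmas \ref{P1Eeval344} and \ref{Aeval344}). Tracking these weights simultaneously with the reduced obstruction theory across the degeneration—so that neither the gluing nodes nor the tautological class $\alpha$ introduce extra weight—is the delicate step; here the balancing from part (i) is what guarantees that the odd $E_2$-insertions occur in $\ell$ pairs and hence contribute the even weight $2\ell$.
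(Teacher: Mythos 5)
Your argument for part (i) is sound and genuinely different from the paper's: the paper deduces balancing from Tate's theorem that Hodge classes on $(E_1\times E_2)^n$ are generated by divisor classes, combined with algebraicity of $\ev_*\bigl(\pi^*(\alpha)\cap[\Mbar_{g,n}(A,\beta)]^{\text{red}}\bigr)$, while your monodromy/Zariski-density argument reaches the same torus-weight conclusion; both are correct and of comparable difficulty.

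Part (ii), however, has a genuine gap at its central step. After degenerating $A \rightsquigarrow A\cup_E(\p^1\times E)$, the degeneration formula produces relative invariants of $A/E$ in classes $(1,d')$ for \emph{all} $d'\ge 0$ and all genera up to $g$ — including the term in which the full genus $g$ and all but one insertion remain on the abelian side. Your claim that this $A/E$ factor is ``rigid and evaluates simply by the genericity argument of Lemma \ref{Aeval344}'' fails: that lemma is special to the integrand $\hodge^{\vee}(1)$ paired with a relative point, where the two pulled-back $1$-forms give an injection $\CO^2\hookrightarrow\BE$ killing every $d'>0$ contribution. For general tautological $\alpha$ the relative $A/E$ series are nonconstant quasi-modular forms — already $\mathsf{F}_2^A(\tau_1(\ppp))=-\tfrac{1}{12}E_2(\tau)$ shows that classes $(1,d>0)$ on the abelian surface contribute for integrands other than $\hodge^\vee(1)$ — so the degeneration does \emph{not} reduce the theory to the Gromov–Witten theory of $E_2$. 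This is exactly why the paper runs a lexicographic induction on $(g,n)$ with several mechanisms absent from your outline: the relative series $\mathsf{F}^{A/E}_{g'}$ are controlled by inverting a unipotent matrix relating absolute and relative invariants (Lemma \ref{quasi_mod_relative_lemma}); the self-referential $(g',n')=(g,n-1)$ term is handled by the cotangent-line comparison $L_{\text{rel}}\cong\pi^*L_1$ (Lemma \ref{dssxq}) and the section of $L_{\text{rel}}$ whose zero locus is the boundary, yielding the rubber relation \eqref{iiww}; and the configurations that degeneration cannot reach — $\alpha$ of degree $\ge 2g$ with insertions of low degree, or pure point insertions with $n=1$ — require the strong Getzler–Ionel and Looijenga vanishings of \cite{FPM} to push $\alpha$ into $R^*(\partial\Mbar_{g,n})$, plus the abelian vanishing of Lemma \ref{abelvan} to raise insertion degrees to $\ppp$ before degenerating. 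Finally, the exact weight $2(g-2)+2\ell$ is only asserted in your proposal (``graded refinement''); in the paper it is pinned by the refined modularity of Lemma \ref{2uVMV}(ii) on the $(\p^1\times E)/E$ side, propagated through the unipotent inversion and the divisor-equation operator $q\frac{d}{dq}$, which is homogeneous of weight $2$.
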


The  cotangent line classes
$$\psi_1, \dots, \psi_n\in H^2(\Mbar_{g,n}(A,\beta),\mathbb{Q})$$
can be expressed as pull-backs of the corresponding cotangent line classes from $\Mbar_{g,n}$ up
to boundary corrections.{\footnote{The unstable $g=0$ cases do not play a role.
For abelian varieties, genus 0 invariants (standard or reduced) 
are non-vanishing only in the constant map case where stability requires
3 special points.}} Integration over the boundary corrections is governed 
by the splitting formula for reduced invariants. The boundary corrections yield
integrals of lower genus or fewer marked points.
Arguing inductively, the descendent
series $\mathsf{F}_{g}^{A}( \tau_{a_1}(\gamma_1) \dots \tau_{a_n}(\gamma_n) )$
can therefore be expressed in terms of the series
$$\big\{ \mathsf{F}^{A}_{g'}( \alpha' ; \gamma'_1, \dots, \gamma'_{n'} )\big\} $$
for various $\alpha', g', \gamma'_1, \dots, \gamma'_{n'}$.
By a simple verification, the splitting formula preserves the vanishing and modularity statements
of Theorem \ref{modularity_refined}.
Hence, Proposition \ref{modularity_proposition} implies Theorem \ref{modularity_refined}.

The balancing condition of part (i) of Proposition \ref{modularity_proposition} follows easily from a Hodge  theoretic argument. Alternatively, the balancing condition
can be obtained inductively via the proof of part (ii) of
Proposition \ref{modularity_proposition}. 

\subsection{Proof of Proposition \ref{modularity_proposition} \textup{(i)}}
By \cite{Tat}, or by a direct Mumford-Tate group calculation in case $E_1$ and $E_2$ are generic, the subring of Hodge classes on $A^n = (E_1 \times E_2)^n$,
\[ \text{Hdg}^*(A^n) \subset H^*(A^n, \BQ)\, , \]
is generated by divisor classes.\footnote{In particular, all Hodge classes on $A^n$ are algebraic.}

Hence, under the K\"unneth decomposition, the ring $\text{Hdg}^*(A^n)$ is generated by pull-backs of divisors classes in
\[H^2(E_i, \BQ) \,, \quad H^1(E_i, \BQ) \otimes H^1(E_i, \BQ) \,, \quad H^1(E_1, \BQ) \otimes H^1(E_2, \BQ) \,, \]
for $i = 1, 2$. We have $H^2(E_i, \BQ) = \langle \omega_i \rangle = \langle \aaa_i\bbb_i \rangle$. By the genericity of $E_1$ and $E_2$, all divisor classes in $H^1(E_i, \BQ) \otimes H^1(E_i, \BQ)$ are multiples of
\[ \aaa_i \otimes \bbb_i - \bbb_i \otimes \aaa_i \,, \]
and there are no non-zero divisor classes in $H^1(E_1, \BQ) \otimes H^1(E_2, \BQ)$.

It follows that all classes in $\text{Hdg}^*(A^n)$ are linear combinations of
\[ \delta_1 \otimes \cdots \otimes \delta_n \]
where $\delta_k \in H^*(A, \BQ)$ are monomial classes as defined in Section \ref{secds}, such that
\[ \sum_{k=1}^{n} v_{\aaa_i}(\delta_k) = \sum_{k=1}^{n} v_{\bbb_i}(\delta_k)  \,, \ \ \ \ 
i\in \{1,2\}\, . \]

Consider the evaluation map
\[ \ev : \Mbar_{g, n}(A, \beta) \ra A^n \,.\]
By the push-pull formula, we have
\begin{equation}
\Big\langle \alpha \, ; \, \gamma_1, \dots, \gamma_n 
\Big\rangle_{g,(1,d)}^{A,\text{red}} = \int_{\ev_*(\pi^*(\alpha) \cap [\Mbar_{g, n}(A, \beta)]^{\text{red}})} \gamma_1 \otimes \cdots \otimes \gamma_n \,. \label{doubleb}
\end{equation}
By the algebraicity of $\alpha$ and $[\Mbar_{g, n}(A, \beta)]^{\text{red}}$,  the integral \eqref{doubleb} is zero unless
\[\pushQED{\qed} \sum_{k=1}^{n} v_{\aaa_i}(\gamma_k) = \sum_{k=1}^{n} v_{\bbb_i}(\gamma_k)  \,, \ \ \ \ 
i\in \{1,2\}\, . \qedhere\popQED \]

\subsection{Proof of Proposition \ref{modularity_proposition} (ii): Base cases}
We argue by induction on the pair $(g,n)$,
where $g \geq 1$ is the genus of the source curve
and $n \geq 0$ is the number
of marked points.
We order the pairs $(g,n)$ lexicographically: we define
$(g', n') < (g,n)$ if and only if
\begin{itemize}
 \item $g' < g$ \ or
 \item $g' = g$, $n' < n$.
\end{itemize}

\vspace{9pt}
\noindent{\bf{Base cases:}} $g = 1$ and $n\geq 0$.
\vspace{9pt}

We have already observed that Theorem \ref{modularity_refined} holds
in all $g=1$ cases. Proposition \ref{modularity_proposition} holds in
$g=1$ by the same argument. We discuss the $n=0$ and $n=1$ cases
as examples.

In case $(g,n) = (1,0)$, the only series is 
$$\mathsf{F}^{A}_{1}(\,\, ;\,) =0\, ,$$
since the reduced virtual dimension is 1 and there are no insertions.

In case $(g,n) = (1,1)$,
the moduli space $\Mbar_{1,1}(A, (1,d))$ has
reduced virtual dimension 2. We must have either 
$\alpha \in R^1(\Mbar_{1,1})$ or $\tau(\ppp)$ as integrands.
Such an $\alpha$
is a multiple of the class of a point on $\Mbar_{1,1}$.
Because a generic elliptic curve does not admit a non-vanishing map to 
$E_1 \times E_2$,
the integral vanishes.
In the second case, we evaluate
\begin{equation*}\label{paap} 
\mathsf{F}^A_1( \mathsf{1} ; \ppp ) = \big\langle \tau_0(\ppp) \big\rangle_{1, (1,0)}^{A, \text{red}} = 1 \,, 
\end{equation*}
which is a quasi-modular form of weight 0.

\subsection{Proof of Proposition \ref{modularity_proposition} (ii): Induction}

 Consider a pair $(g,n)$
 satisfying $g\geq 2$
 and assume
Proposition \ref{modularity_proposition} is proven
in {\em all} cases $(g',n')$ where $$(g',n') < (g,n)\, .$$

Let $\gamma_1, \dots, \gamma_n \in H^{\ast}(A,\mathbb{Q})$ be monomial classes, and
let $\alpha \in R^*(\Mbar_{g,n})$ be a tautological class 
of pure degree. We  must prove Proposition \ref{modularity_proposition}  holds 
for the series
\begin{equation}\label{vppq}
\mathsf{F}_g^A( \alpha ; \gamma_1, \dots, \gamma_n )\, .
\end{equation}

We may assume the dimension constraint
\begin{equation} 2g + 2n = \deg(\alpha) + \sum_{i=1}^{n} \deg(\gamma_i) \label{dim_constraint} \end{equation}
is satisfied, since the series \eqref{vppq} vanishes otherwise.
In the dimension constraint \eqref{dim_constraint}, $\deg$ denotes 
the \emph{real} cohomological degree of a class (both on $\overline{M}_{g,n}$
and $A$).

\vspace{9pt}
\noindent \textbf{Case (i):} $\sum_{i=1}^{n} \deg(\gamma_{i}) \leq 2 n$.
\vspace{9pt}

From dimension constraint \eqref{dim_constraint}, we find
\[ \deg(\alpha) = 2g + 2n - \sum_i \deg(\gamma_i) \geq 2g \,, \]
or equivalently,
\[ \alpha \in R^{\geq g}(\Mbar_{g,n}) \,. \]
Using the strong form of Getzler-Ionel vanishing proven in \cite[Proposition 2]{FPM}, we can find a class 
\[ \widetilde{\alpha} \in R^*( \partial \Mbar_{g,n}) \]
such that $\iota_{\ast} \widetilde{\alpha} = \alpha$,
where $\iota : \partial \Mbar_{g,n} \ra \Mbar_{g,n}$ is the inclusion of the boundary.
By the splitting formula, the series
\[
\mathsf{F}^A_g( \alpha ; \gamma_1, \dots, \gamma_n )
= 
\mathsf{F}^A_g( \iota_{\ast}\widetilde{\alpha} ; \gamma_1, \dots, \gamma_n )
\]
is expressed in terms of a linear combination of series
$$\mathsf{F}^A_{g'}( \widetilde{\alpha} ; \ggamma_1, \dots, \ggamma_{n'} ) \ \text{ for which }\ (g',n') < (g,n)\, .$$
The induction hypothesis for the latter implies Proposition 
\ref{modularity_proposition} holds for
$\mathsf{F}^A_g( \alpha ; \gamma_1, \dots, \gamma_n )$.

\vspace{9pt}
\noindent \textbf{Case (ii):} $\sum_{i=1}^{n} \deg(\gamma_{i}) > 2 n$.
\vspace{9pt}

Consider the moduli space 
\[ M_{g,n} \subset \Mbar_{g,n} \]
of nonsingular genus $g\geq 2$ curves with $n$ marked points.
The tautological ring $R^*(M_{g,n})$ is generated by classes pulled-back
from $M_{g}$ via the forgetful map
\[ p : M_{g,n} \to M_g \]
and the cotangent line classes
\[ \psi_1, \dots, \psi_n\in H^2(M_{g,n},\mathbb{Q}) \,. \]
A class $\alpha \in R^*(\Mbar_{g,n})$ can therefore be written as a sum of classes of the 
form:
\begin{enumerate}
 \item[$\bullet$] $\iota_{\ast}(\widetilde{\alpha})$  \  for $\widetilde{\alpha} \in R^*(\partial \Mbar_{g,n})$,
 \item[$\bullet$] $\psi_1^{k_1} \cup \ldots \cup \psi_n^{k_n} \cup p^{\ast}(\zeta)$ \ for $k_1, \dots, k_n \geq 0$ and $\zeta \in R^*(\Mbar_{g})$.
\end{enumerate}
Here, we let $\psi_1, \dots, \psi_n$ denote also the cotangent line classes on $\Mbar_{g,n}$.

A summand of the form $\iota_{\ast}(\widetilde{\alpha})$
is expressed by the splitting formula in lower order terms, see  Case (i).
Hence, we may assume 
\begin{equation} \alpha = \psi_1^{k_1} \cup \ldots \cup \psi_n^{k_n} \cup p^{\ast}(\zeta) \,. \label{gghh} \end{equation}

\vspace{9pt}
\noindent \textbf{Case (ii-a):} There exists an $i$ for which $k_i > 0$.
\vspace{9pt}

We assume $k_1>0$. If $\deg(\gamma_1) \leq 3$, we first apply the vanishing of Lemma \ref{abelvan} for $\gamma_1$ and
\[ \gamma = \gamma_2 \otimes \dots \otimes \gamma_n \,. \]
Using the abelian vanishing relation (see also \eqref{jjjj}), we find, that
$\mathsf{F}^A_g( \alpha ; \gamma_1, \gamma_2, \dots, \gamma_n )$
can be expressed as a sum of series
\[ \pm \mathsf{F}^A_g( \alpha ; \gamma_1 \cup \delta, \gamma'_2, \dots, \gamma'_n )\ \]
for various monomial classes $\delta, \gamma'_2, \dots, \gamma'_n \in 
H^{\ast}(A,\mathbb{Q})$ with $\deg(\delta) \geq 1$.

The above relation {\em increases} the degree of $\gamma_1$.
By induction on $\deg(\gamma_1)$, we may assume $\deg(\gamma_1) = 4$,
or
equivalently, $\gamma_1 = \ppp$.

We use next the degeneration
of $$A = E_1 \times E_2$$
to the normal cone of an elliptic fiber $E = E_2$ of the projection to $E_1$,
\begin{equation*} A \rightsquigarrow A \, \cup_{E} \, (\p^1 \times E) \,.
\end{equation*}
We choose the point class $\gamma_1$ to lie (after degeneration)
on the component $\p^1 \times E$.
The distribution of the remaining classes $\gamma_2, \dots, \gamma_n$ can be chosen freely.

For classes $\widetilde{\alpha} \in R^*(\Mbar_{g,r})$, 
$\xi \in H^{\ast}(E,\mathbb{Q})$,
and $\widetilde{\gamma}_i$ on $A$ and $\p^1\times E$ respectively,  we define
\begin{align*}
\mathsf{F}^{A/E}_{g} (\widetilde{\alpha} ; \widetilde{\gamma}_{{1}},\dots,
\widetilde{\gamma}_{{r}} ; \xi )
&= 
\sum_{d \geq 0}
\Big\langle \ \widetilde{\alpha} \, ; \, \prod_{i=1}^{r} \tau_0( \widetilde{\gamma}_{{i}}) \ \Big|\ \xi\ \Big\rangle^{A/E,\text{red}}_{g, (1,d)} q^d\, ,\\
\mathsf{F}^{(\p^1 \times E)/E}_{g}( \widetilde{\alpha} ; \widetilde{\gamma}_{{1}},\dots,\widetilde{\gamma}_{{r}} ; \xi ) 
&= 
\sum_{d \geq 0}
\Big\langle \ \widetilde{\alpha} \, ; \, \prod_{i=1}^{r} \tau_0(\widetilde{\gamma}_{{i}}) \ \Big|\ \xi\ \Big\rangle^{(\p^1 \times E)/E}_{g,(1,d)} q^d \,.
\end{align*}
where we use the bracket notation defined in Section \ref{P1Eevaluation}.

For $\gamma_1 = \ppp$, the degeneration formula then yields:
\pagebreak
\begin{multline} \label{de9mxSA}
\mathsf{F}^A_g( \alpha ; \gamma_1, \dots, \gamma_n ) = \\
\sum_{(g',n') \leq (g,n-1)}
\mathsf{F}_{g',n'}^{A/E}( \alpha' ; (\,  \dots \, ) ; \xi ) \cdot 
\mathsf{F}_{g'',n''}^{(\p^1 \times E)/E}( \alpha'' ; \gamma_1, ( \, \dots \, ) ; \xi^{\vee} )\, .
\end{multline}
The summation here is over splittings
$$g= g'+g''\, , \quad  n= n'+n''\, ,$$
and distributions 
$(\, \dots \, )$ 
of the insertions $\gamma_2, \dots, \gamma_{n}$.
The marking numbers $n'$ and $n''$ are placed in the subscripts
of the generating series inside the sum for clarity.{\footnote{The
relative points are {\em not} included in the counts $n'$ and $n''$.}}
The class $\alpha$ determines $\alpha'$ and $\alpha''$
by restriction.
Finally, there is also a sum over 
all relative conditions $$\xi \in \{ \mathsf{1}, \aaa_2, \bbb_2, \omega_2 \}\, $$
where $\xi^{\vee}$ denotes the class dual to $\xi$.


\begin{lemma} \label{2uVMV} The primitive potential for $(\p^1 \times E)/E$ satisfies
the following properties 
for all $g''\geq 0$: 
\begin{enumerate}
\item 
$\mathsf{F}_{g''}^{(\p^1 \times E)/E}( \alpha'' ; \ggamma_1, \dots, \ggamma_{n''} ;
\xi^\vee )$
vanishes unless
\[ v_{\aaa_2}( \xi^\vee) + \sum_{k=1}^{n''} v_{\aaa_2}(\ggamma_k) = v_{\bbb_2}( \xi^\vee) + \sum_{k=1}^{n''} v_{\bbb_2}(\ggamma_k)\,. \]
\item $\mathsf{F}_{g''}^{(\p^1 \times E)/E}( \alpha'' ; \ggamma_1, \dots, \ggamma_{n''} ; \xi^\vee ) \in \textup{QMod}_{2g'' + 2 \ell'' - 2}$ for \[ \ell'' = v_{\aaa_2}( \xi^\vee) + \sum_{k=1}^{n''} v_{\aaa_2}(\ggamma_k)\,. \]
\end{enumerate}
\end{lemma}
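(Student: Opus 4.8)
The plan is to prove the balancing condition (1) and the sharp weight statement (2) separately, in both cases transferring the computation from the relative geometry $(\p^1 \times E)/E$ to the generic elliptic curve $E$. For part (1), I would first exploit the translation action of $E$ on $\p^1 \times E$ in the second factor. Since this action fixes the relative divisor $\infty \times E$ setwise, it induces an action on the moduli space of relative stable maps under which the virtual class is invariant and all interior evaluation maps $\ev_k$ together with the relative evaluation map are equivariant, with $E$ acting by translation on the $E$-factor of each target. Projecting every evaluation onto its $E$-component and applying the push--pull formula exactly as in the proof of Proposition \ref{modularity_proposition}(i), the invariant becomes the pairing of $\xi^\vee \otimes \ggamma_1 \otimes \cdots \otimes \ggamma_{n''}$, restricted to the $E$-factors, against an algebraic class on $E^{n''+1}$ that is invariant under the diagonal $E$-translation. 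Because $E$ is generic, the Hodge classes on powers of $E$ are generated by divisors, so such a class pairs nontrivially with a product of monomial classes only when the total $\aaa_2$-exponent equals the total $\bbb_2$-exponent; counting $\xi^\vee$ among the insertions gives precisely the stated balancing.

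For part (2), I would argue by induction on $(g'', n'')$, in parallel with the proof of Proposition \ref{modularity_proposition}(ii). The tautological twist $\alpha''$ is treated as there: by the Getzler--Ionel vanishing of \cite{FPM}, a class $\alpha''$ of high degree is pushed to the boundary, where the relative splitting formula expresses the series through terms of lower $(g',n')$, to which the induction applies; the surviving low-degree $\alpha''$ reduce to descendent insertions. To lower the number of markings I would degenerate the base $\p^1$ into a chain of $\p^1 \times E$ pieces, exactly as in the proof of Theorem \ref{thm_point_insertion} in Section \ref{prim6}, distributing the $\ggamma_k$ so that the splitting formula presents $\mathsf{F}^{(\p^1 \times E)/E}_{g''}$ as a sum of products of the explicit building blocks of Lemma \ref{P1Eeval344} and their higher-insertion analogues. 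These blocks are quasi-modular of the predicted weight: this is where the quasi-modularity theorem of Okounkov--Pandharipande for $E$ \cite{OP1,OP3} enters, in its sharp form, giving on the dimension-constrained support the weight $2g''-2+2n_\omega$, where $n_\omega$ is the number of effective point insertions on $E$. The degree-one condition over $\p^1$ is what makes this reduction clean, since it rigidifies the horizontal component and lets the degree $d$ over $E$ be carried by the vertical maps.

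Finally, the balancing of part (1) lets me identify $\ell''$ with $n_\omega$: genuine point classes $\omega_2$ (each with $v_{\aaa_2} = v_{\bbb_2} = 1$) contribute one effective point, while the odd classes $\aaa_2, \bbb_2$ pair up by balancing and each pair contributes one further effective point, so that $\ell'' = n_\omega$ and the weight is exactly $2g''-2+2\ell''$. The hard part will be this last step, namely the \emph{sharp} weight bookkeeping. An upper bound on the weight follows from quasi-modularity alone; the genuine work is to show the weight is exactly $2g''-2+2\ell''$, which requires the precise Okounkov--Pandharipande weight together with a careful accounting of how the single relative condition $\xi^\vee$, the odd $E$-insertions, and the degree-one $\p^1$-direction each contribute. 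As a consistency check, the second evaluation of Lemma \ref{P1Eeval344} has $\ell'' = 2$, and its genus-$g''$ coefficient is a multiple of $\sum_{d} d\,\sigma_{2g''-1}(d)\,q^d = q\frac{d}{dq}\!\sum_d \sigma_{2g''-1}(d)\,q^d$, a form of weight $2g''+2 = 2g''-2+2\ell''$, confirming the bookkeeping.
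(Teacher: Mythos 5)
Your part (1) is fine and is essentially the paper's own argument: since the $\p^1$-factor carries only algebraic cohomology, the balancing follows from the algebraicity of the pushed-forward virtual class on powers of $E$ together with the fact that Hodge classes on powers of a generic $E$ are generated by divisors. The translation-equivariance you invoke is harmless but not actually needed.

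Part (2), however, has a genuine gap at its central step. Your chain degeneration of the base $\p^1$ does not produce ``explicit building blocks'': each link of the chain is again a \emph{relative} invariant of $\p^1\times E$, now with \emph{two} relative divisors, carrying an arbitrary tautological twist, descendents, and relative conditions summed over at every node. The evaluations of Lemma \ref{P1Eeval344} cover only the very special integrand $\hodge^{\vee}(1)\,\tau_0(\ppp)$, and ``their higher-insertion analogues'' are precisely the unknown quantities --- so the recursion does not terminate, and invoking Okounkov--Pandharipande for $E$ at that point is a non sequitur: one must first pass from the relative geometry to the absolute descendent theory of $\p^1\times E$, and then from $\p^1\times E$ down to $E$. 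This is exactly what the paper's proof supplies and your sketch omits: the relative invariants of $(\p^1\times E)/E$ are expressed in terms of absolute descendent invariants of $\p^1\times E$ through the relations of \cite{MP2} (degeneration of $\p^1\times E$ to $\p^1\times E\cup_E(\p^1\times E)$, $\mathbb{C}^*$-localization on the $\p^1$-factor, and rubber calculus), together with a verification that each such relation preserves the refined weight $2g''+2\ell''-2$; then virtual localization on the $\p^1$-factor (or Behrend's product formula \cite{Beh} combined with the tautologicality of the Gromov--Witten classes of $\p^1$ \cite{GP}) reduces everything to the descendent theory of $E$, where \cite[Proposition 28]{MPT} gives the sharp weight. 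Your final bookkeeping identifying $\ell''$ with the number of effective point insertions, and your consistency check against the second evaluation of Lemma \ref{P1Eeval344}, are correct as far as they go, but without the relative-to-absolute reduction the weight claim for general $\alpha''$, $\ggamma_k$, $\xi^{\vee}$ is unsupported.
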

\begin{proof}
Because there is only algebraic cohomology on $\p^1$,
the vanishing statement (i) follows from the fact that the virtual class is algebraic.

In \cite{MP2}, the relative invariants of $(\p^1\times E)/E$
were effectively expressed in terms of the absolute descendent invariants of $\p^1 \times E$
through relations obtained by the following operations:
\begin{itemize}
\item degeneration of $\p^1 \times E$ to $\p^1 \times E \cup_{E} \p^1 \times E$,
\item localization on the $\p^1$ factor,
\item rubber calculus.
\end{itemize}
A simple verification shows the resulting relations
respect modularity of the weight specified by (ii) 
for each of these operations.
Hence, we are reduced to the case of absolute descendent invariants of $\p^1 \times E$.

Then, virtual localization on the $\p^1$-factor expresses
the descendents invariants of $\p^1\times E$ in terms of the descendent
invariants of $E$.
Finally, \cite[Proposition 28]{MPT} yields the required modularity property (ii). 

For the last step, instead of localization,
the product formula \cite{Beh} may be used to reduce $\p^1 \times E$
to the case of the descendent invariants of the elliptic curve $E$ since
the the Gromov-Witten classes of $\p^1$ are known to be tautological \cite{GP}.
\end{proof}

Lemma \ref{2uVMV} controls the factor on the right inside the
sum of \eqref{de9mxSA}.
Part (ii) of Lemma \ref{2uVMV} is a refinement of \cite[Lemma 30]{MPT}.
However, the proof is the same as given in \cite{MPT}.

The factor on the left inside the sum of \eqref{de9mxSA}
is more difficult to control.
We will consider the terms of the sum corresponding to
$$(g',n') < (g, n-1)\ {\text{ and }}\ (g',n') = (g,n-1)$$ separately.
Lemma \ref{quasi_mod_relative_lemma} 
below shows how to apply the inductive
hypothesis to the terms in the sum \eqref{de9mxSA}
with
$(g',n') < (g,n-1)$.
The case $(g',n') = (g,n-1)$ will be considered afterwards.

\begin{lemma} \label{quasi_mod_relative_lemma}
Let $(g', n') < (g,n-1)$. The primitive potential for $A/E$ satisfies
the following properties:
\begin{enumerate}
\item[(i)] $\mathsf{F}_{g'}^{A/E}( \alpha' ; \ggamma_1, \dots, \ggamma_{n'} ; \xi )$ vanishes unless
\[
v_{\aaa_i}( \xi) + \sum_{k=1}^{n'} v_{\aaa_i}( \ggamma_k )
=
v_{\bbb_i}( \xi) + \sum_{k=1}^{n'} v_{\bbb_i}( \ggamma_k ) \,, \ \ \ \ 
i\in \{1,2\}\, .
\]
\item[(ii)] $\mathsf{F}_{g'}^{A/E}( \alpha' ; \ggamma_1, \dots, \ggamma_{n'} ; \xi ) \in \textup{QMod}_{2 (g' - 2) + 2 \ell'}$
for
\[ \ell' = v_{\aaa_2}( \xi) + \sum_{k=1}^{n'} v_{\aaa_2}( \ggamma_k) \,. \]
\end{enumerate}
\end{lemma}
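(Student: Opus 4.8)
The plan is to reduce the relative potential $\mathsf{F}^{A/E}_{g'}$ to the \emph{absolute} potential $\mathsf{F}^A_{\bullet}$, to which the inductive hypothesis of Proposition \ref{modularity_proposition} already applies, and then to track the weights through the reduction. Part (i) is immediate from the algebraicity argument used for Proposition \ref{modularity_proposition}(i): the reduced relative virtual class on $\Mbar_{g',n'}(A/E,(1,d))$ pushes forward under the (absolute and relative) evaluation maps to an algebraic, hence Hodge, class on $A^{n'}\times E$. Since the relative divisor is $E=E_2$, which carries only the classes $\mathsf{1},\aaa_2,\bbb_2,\omega_2$, the relative insertion $\xi$ enters the balancing only through its $E_2$-exponents, and the same K\"unneth/Hodge computation as before forces vanishing of $\mathsf{F}^{A/E}_{g'}(\alpha';\ggamma_1,\dots,\ggamma_{n'};\xi)$ unless the stated equalities of $\aaa_i$- and $\bbb_i$-exponents, now including those of $\xi$, all hold.

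For part (ii) I would argue by an auxiliary induction on $g'$. Degenerate $A$ once more to the normal cone of $E$, $A\rightsquigarrow A\cup_E(\p^1\times E)$, and apply the degeneration formula to the absolute potential $\mathsf{F}^A_{g'}$ carrying the insertions $\alpha',\ggamma_1,\dots,\ggamma_{n'}$ together with an auxiliary insertion $\mu$ forced onto the $\p^1\times E$ bubble in order to separate the relative conditions. This writes $\mathsf{F}^A_{g'}$ as a sum over splittings $g'=h'+h''$ of products $\mathsf{F}^{A/E}_{h'}(\cdots;\eta)\cdot\mathsf{F}^{(\p^1\times E)/E}_{h''}(\mu;\eta^\vee)$. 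The leading term, with $h'=g'$, $h''=0$ and the genus-$0$ connecting piece on the bubble, is exactly $\mathsf{F}^{A/E}_{g'}(\alpha';\ggamma_1,\dots,\ggamma_{n'};\xi)$ up to an invertible genus-$0$ factor, while every other term has $h'<g'$ and is known by the auxiliary induction. Since the $(\p^1\times E)/E$ factors are controlled by Lemma \ref{2uVMV} and, as functions of the relative cohomology weight, form a triangular and hence invertible system, one solves for the individual relative potentials $\mathsf{F}^{A/E}_{g'}(\cdots;\xi)$ in terms of the absolute potential $\mathsf{F}^A_{g'}$ and strictly lower-genus relative potentials.

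The crucial point is that the absolute potentials appearing on the right-hand side have genus $g'$ and at most $n'+1$ markings, the extra marking absorbing the relative condition $\xi$ via an insertion of the form $\omega_1\cdot\widetilde{\xi}$ supported on the bubble and having the same $\aaa_2$-exponent as $\xi$. By the hypothesis $(g',n')<(g,n-1)$ we get $(g',n'+1)<(g,n)$, so these absolute potentials fall under the inductive hypothesis of Proposition \ref{modularity_proposition} and are quasi-modular of the correct weight; this is precisely the role of the $-1$ in the bound $(g',n')<(g,n-1)$. The lower-genus relative contributions ($h'<g'$) are handled by the auxiliary induction, which also supplies their weights.

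The main obstacle is the weight bookkeeping in the inversion: one must verify that dividing by the leading $(\p^1\times E)/E$ factor and subtracting the lower-genus relative terms preserves the claimed weight $2(g'-2)+2\ell'$ with $\ell'=v_{\aaa_2}(\xi)+\sum_k v_{\aaa_2}(\ggamma_k)$. Combining the weight $2h''+2\ell''-2$ of each bubble factor from Lemma \ref{2uVMV} with the inductive weight $2(g'-2)+2\ell_{\textup{abs}}$ of the absolute potential, and comparing across the leading splitting, the identity reduces to $v_{\aaa_2}(\xi)+v_{\aaa_2}(\xi^\vee)=1$. This last equality holds for every dual pair of monomial classes on $E_2$, since each such pair uses exactly one factor of $\aaa_2$; this clean check confirms that the weights add correctly and that the final potential lies in $\textup{QMod}_{2(g'-2)+2\ell'}$, completing the induction.
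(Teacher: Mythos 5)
Your proposal follows essentially the same route as the paper's proof: degenerate $A$ to the normal cone of $E$, apply the degeneration formula to the absolute series $\mathsf{F}^{A}_{g'}(\alpha';\ggamma_1,\dots,\ggamma_{n'},\omega_1\xi)$ with the auxiliary insertion $\omega_1\xi$ forced onto the $\p^1\times E$ bubble, identify the leading term via the evaluation $\langle\,\widetilde{\xi}^{\vee}\,|\,\tau_0(\omega_1\xi)\,\rangle_{0,(1,0)}^{(\p^1\times E)/E}=\delta_{\xi,\widetilde{\xi}}$ (which the paper obtains from $\overline{M}_{0,0}((\p^1\times E)/E,(1,0))\cong E$), and invert the resulting unipotent triangular system. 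You also correctly isolate the two key points: the hypothesis $(g',n')<(g,n-1)$ gives $(g',n'+1)<(g,n)$, so the outer induction of Proposition \ref{modularity_proposition} applies to the absolute series carrying the extra marking; and the weight additivity across a splitting, combining Lemma \ref{2uVMV} with the inductive weight of the absolute potential, reduces exactly to $v_{\aaa_2}(\widetilde{\xi})+v_{\aaa_2}(\widetilde{\xi}^{\vee})=1$ for each dual pair on $E_2$, which holds. Your direct Hodge-theoretic argument for part (i) is a mild variant of the paper, which instead extracts the vanishing from the entries of the inverted matrix (or the second induction); your version works, since the relative evaluation lands in $E\cong E_2$ and the Hodge ring of powers of the generic $E_1\times E_2$ is generated by the divisor classes already catalogued in the proof of Proposition \ref{modularity_proposition}(i).

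One bookkeeping slip: it is not true that every non-leading term of the degeneration sum has strictly smaller genus on the $A/E$ side. Terms with $g'_\circ=g'$ but $n'_\circ<n'$ occur whenever some of the insertions $\ggamma_k$ migrate to genus-$0$ components of the bubble, since the series $\mathsf{F}^{(\p^1\times E)/E}_{0,n'_\bullet}$ with $n'_\bullet\geq 2$ insertions need not vanish in class $(1,0)$. Consequently an auxiliary induction on $g'$ alone does not close: the same-genus, fewer-marking relative potentials are not covered. The repair is exactly what the paper does — run the second induction lexicographically over pairs $(g'_\circ,n'_\circ)<(g',n')$, with base case $(1,1)$ where no lower-order terms appear in the degeneration relation. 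With this correction, your inversion and weight bookkeeping go through verbatim.
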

\begin{proof}
We apply the degeneration formula to
\begin{equation}\label{cwwc}
 \mathsf{F}_{g'}^A( \alpha' ; \ggamma_1, \dots, \ggamma_{n'}, \omega_1 \xi ) 
 \end{equation}
with the last point degenerating to $\p^1 \times E$
and the specialization by pull-back for the other insertions.
Since $(g',n')<(g,n-1)$, we have 
$$(g',n'+1) < (g,n)$$
so the induction hypothesis applies to the series \eqref{cwwc}.

The degeneration formula yields a relation involving the
relative geometries $A/E$ and $(\p^1 \times E)/E$,
\begin{multline} \label{iiop}
\mathsf{F}_{g'}^A( \alpha'; \ggamma_1, \dots, \ggamma_{n'}, \omega_1 \xi ) = \\
 \sum_{(g'_\circ, n'_\circ) \leq (g',n')} 
\mathsf{F}_{g'_\circ,n'_{\circ}}^{A/E}( \alpha'_\circ ; (\,  \dots \, ) ; \widetilde{\xi} ) \cdot
\mathsf{F}_{g'_\bullet,n'_\bullet}^{(\p^1 \times E)/E}( \alpha'_\bullet ; ( \, \dots \, ), \omega_1 \xi ; \widetilde{\xi}^{\vee} ) \,.
\end{multline}
The summation here is over splittings
$$g'= g'_\circ+g'_\bullet\, , \quad  n'+1 = n'_\circ+n'_\bullet\, ,$$
and distributions 
$(\, \dots \, )$ 
of the insertions $\ggamma_1, \dots, \ggamma_{n'}$.
The class $\alpha'$ determines $\alpha'_\circ$ and $\alpha'_\bullet$
by restriction.
There is also a sum over 
all relative conditions $\widetilde{\xi} \in \{ \mathsf{1}, \aaa_2, \bbb_2, \omega_2 \}.$

We now analyse the $(g'_\circ,n'_\circ)=(g',n')$ term of
the sum in \eqref{iiop},
\begin{equation}\label{fbbf} 
\mathsf{F}_{g'}^{A/E}( \alpha' ; \ggamma_1, \dots, \ggamma_{n'} ; \widetilde{\xi} )
\cdot \mathsf{F}_{0,1}^{(\p^1 \times E)/E}(  \omega_1 \xi ; \widetilde{\xi}^{\vee} )
\, .
\end{equation}
Since
genus 0 stable maps do not represent classes of type $(1,d>0)$ on $\p^1\times E$,
$$\mathsf{F}_{0,1}^{(\p^1 \times E)/E}(  \omega_1 \xi ; \widetilde{\xi}^{\vee} )
=
 \langle\, \widetilde{\xi}^{\vee}\, |\, \tau_0( \omega_1 \xi )\, \rangle_{0, (1,0)}^{(\p^1 \times E) / E} = \delta_{\xi,\widetilde{\xi}} \, .$$
The second equality is obtained by the identification of the
moduli space of maps by the location of the relative point,
\begin{equation}\label{prrp}
 \overline{M}_{0,0}((\p^1\times E)/E,(1,0)) \cong E\, .
 \end{equation}
Taken together, we can rewrite \eqref{fbbf} as simply
$$
\mathsf{F}_{g'}^{A/E}( \alpha' ; \ggamma_1, \dots, \ggamma_{n'} ; {\xi} ) \, .$$

We find the series $\mathsf{F}^A$ for the absolute geometry  
can be expressed in terms of the series $\mathsf{F}^{A/E}$
for the relative geometry
by a transformation matrix $\mathsf{M}$ which
is upper triangular with $1$'s on the diagonal.
By Lemma \ref{2uVMV}, the off-diagonal terms of $\mathsf{M}$ are given by quasi-modular forms.
By inverting the unipotent matrix $\mathsf{M}$
and applying the induction hypothesis to $\mathsf{F}^A$,
we find that the relative invariants $\mathsf{F}^{A/E}$ are
quasi-modular forms.

The weight and vanishing statement can now be deduced from a careful consideration of
the entries of $\mathsf{M}^{-1}$.
Alternatively, we may argue via a (second) induction on $(g',n')$. 
In case $(g',n') = (1,1)$, there are no lower order terms in \eqref{iiop}, and we are done.
If the statement is true for all $(g'_\circ,n'_\circ) < (g',n')$,
then the statement follows directly from \eqref{iiop} and the induction hypothesis.
\end{proof}

We now turn to the $(g',n')=(g,n-1)$ term 
in the sum \eqref{de9mxSA}:
\[ \mathsf{F}_{g,n-1}^{A/E}( \alpha ; \gamma_2, \dots, \gamma_n ; \omega ) \cdot \mathsf{F}_{0,1}^{(\p^1 \times E)/E}( \gamma_1;\mathsf{1} ) \]
As we have seen above,
only curves in class $(1,0)$ contribute to the series
$\mathsf{F}_{0,1}^{(\p^1 \times E)/E}(\gamma_1;\mathsf{1} )$.
By the identification of the moduli space
\eqref{prrp}, 
 \begin{equation*}
 \mathsf{F}_{0,1}^{(\p^1 \times E)/E}( \gamma_1;\mathsf{1}) = \langle\, \mathsf{1}\, |\, \tau_0(\ppp)\, \rangle_{0, (1,0)}^{(\p^1 \times E) / E} = 1\,.
 \end{equation*}
Hence, the $(g',n')=(g,n-1)$ term is 
\begin{equation} \mathsf{F}_{g,n-1}^{A/E}( \alpha ; \gamma_2, \dots, \gamma_n ; \omega ) \, , \label{szszsz} \end{equation}
where the class $\alpha\in R^*(\overline{M}_{g,n})$ is pulled-back to
$\Mbar_{g,n-1}(A / E, (1,d))$
via the map
\begin{equation*}  \pi: \Mbar_{g,n-1}(A / E, (1,d)) \rightarrow \overline{M}_{g,n}
\end{equation*}
which takes the {\em relative} point on the left to the marking 1 on the right.
We must prove the induction hypothesis implies
\begin{enumerate}
\item[(i)] $\mathsf{F}_{g,n-1}^{A/E}( \alpha ; \gamma_2, \dots, \gamma_{n} ; \omega )$ vanishes unless
\[
v_{\aaa_i}( \xi) + \sum_{k=2}^{n} v_{\aaa_i}( \gamma_k )
=
v_{\bbb_i}( \xi) + \sum_{k=2}^{n} v_{\bbb_i}( \gamma_k ) \,, \ \ \ \ 
i\in \{1,2\}\, .
\]
\item[(ii)] $\mathsf{F}_{g,n-1}^{A/E}( \alpha ; \gamma_2, \dots, \gamma_{n} ; \omega ) \in \textup{QMod}_{2 (g - 2) + 2 \ell}$
for
\[ \ell = v_{\aaa_2}( \xi) + \sum_{k=2}^{n} v_{\aaa_2}( \gamma_k) \,. \]
\end{enumerate}

We proceed by studying the cotangent lines.
Let 
$$L_{\text{rel}} \, \rightarrow\,  \Mbar_{g,n-1}(A / E, (1,d))\, , \ \ \ \ 
L_{1} \, \rightarrow\,  \Mbar_{g,n} $$
denote the respective cotangent lines at the relative point and the first
marking.

\begin{lemma} After pull-back via $\pi$, we have an isomorphism 
$$L_{\text{rel}} \cong \pi^*L_1$$
on $\Mbar_{g,n-1}(A / E, (1,d))$. \label{dssxq}
\end{lemma}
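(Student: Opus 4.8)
The plan is to analyze the stabilization morphism $\pi$ directly. By definition $L_{\text{rel}}$ is the cotangent line of the domain curve at the relative marking, while $\pi^{\ast}L_1$ is the pull-back of the cotangent line at the first marking of $\Mbar_{g,n}$. Since $\pi$ stabilizes the domain with the relative point relabelled as the first marking, the two line bundles $L_{\text{rel}}$ and $\pi^{\ast}L_1$ are canonically identified away from the locus in $\Mbar_{g,n-1}(A/E,(1,d))$ over which $\pi$ \emph{contracts} the irreducible component of the domain carrying the relative marking. It therefore suffices to prove that, over every point of the moduli space, this component is stable as an abstract pointed curve and hence is not contracted; the desired isomorphism then holds on all of $\Mbar_{g,n-1}(A/E,(1,d))$.

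The key is that the class $(1,d) = B + dE$ has degree $1$ over the base of the projection $p : A \to E_1$, equivalently $(1,d)\cdot E = 1$, so there is a single relative marking and its contact order with $E$ is $1$. For a relative stable map $[f : C \to A[\ell]]$ to an $\ell$-step expansion
\[
A[\ell] = A \cup_E (\p^1\times E) \cup_E \cdots \cup_E (\p^1\times E)\, ,
\]
composing $f$ with the projection to $E_1$ (which contracts every bubble) shows that the horizontal degree is carried by a unique irreducible ``section'' component in each expansion level, while all remaining components are vertical, i.e. lie in a fibre class $\{\mathrm{pt}\}\times E$. A vertical component in the last level meets the outer relative divisor only if it maps into it, which is excluded by predeformability; vertical components in earlier levels do not meet the outer divisor at all. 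Hence the relative marking, which has contact order $1$ with the outer divisor, must lie on the unique horizontal section component $C_{\mathrm{hor}}$ of the outermost level.

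It remains to show $C_{\mathrm{hor}}$ is never contracted. If no expansion occurs, $C_{\mathrm{hor}}$ is the degree-$1$ component mapping isomorphically onto $E_1$; it has genus $1$ and carries the relative marking, so it is stable. If the target expands, then by the minimality condition on expanded targets in the degeneration formalism \cite{Junli1,Junli2} every bubble must carry essential, non-absorbable map data, and in a bubble $\p^1\times E$ the only such data are the vertical fibre components. Connectedness forces at least one vertical component of the outermost bubble to be attached to its horizontal section $C_{\mathrm{hor}}\cong\p^1$. Consequently $C_{\mathrm{hor}}$ carries at least three special points --- the node to the previous level, the relative marking, and at least one node to a vertical component --- and is therefore stable. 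In either case $\pi$ is a local isomorphism near the relative marking, so $L_{\text{rel}}\cong\pi^{\ast}L_1$.

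The main obstacle is the bookkeeping in the expanded degenerations: one must argue that minimality of the expanded target forces a vertical component onto the horizontal section of the \emph{outermost} bubble, so that the component supporting the relative marking always acquires a third special point. This is precisely where the hypothesis $d_1=1$ is used, since degree $1$ over $E_1$ guarantees a single horizontal section per level and prevents the relative marking from ever sitting on a lone contractible $\p^1$.
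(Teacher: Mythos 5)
Your strategy is the same as the paper's: both arguments show that $\pi$ contracts nothing because the relevant domain components are already stable, using the two key geometric facts that an abelian surface contains no rational curves and that a rational component mapping to a bubble must land in a fiber $\p^1 \times \{\mathrm{pt}\}$, where rubber stability rules out a lone trivial cylinder. The paper proves the stronger, cleaner claim that the \emph{entire} $n$-pointed domain curve (relative point included) is Deligne--Mumford stable, which makes the lemma immediate and also sidesteps a small imprecision in your reduction: a component having three special points does not by itself guarantee survival under the iterative stabilization, since the neighboring components supplying those special points could in principle be swallowed first. In the present geometry they never are (vertical components have genus $\geq 1$; $f$-contracted rational components have $\geq 3$ special points by map stability; markings persist), but your write-up asserts the implication without this check, whereas the paper's global statement renders it moot.

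The one step that fails as literally stated is the claim that in a bubble $\p^1 \times E$ ``the only essential, non-absorbable map data are the vertical fibre components,'' from which you conclude that $C_{\mathrm{hor}}$ must carry a node to a vertical component. This is false: a bubble level can instead be stabilized by an interior marking on the horizontal section itself (one of the $n-1$ standard markings may lie there; an interior point of the fiber is moved by the rubber $\BC^*$-action, so the level is stable with no vertical component at all), or by an $f$-contracted component attached to $C_{\mathrm{hor}}$. Note that the relative marking does \emph{not} help here: it sits at the point over the relative divisor, which is fixed by the $\BC^*$-action, so a trivial cylinder carrying the relative marking at its end is still unstable and a third special point really is needed. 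Fortunately, every possibility you omitted --- interior marking, node to a contracted tree, node to a vertical component --- equally endows $C_{\mathrm{hor}}$ with a third special point, so your conclusion, and hence the lemma, survives once the case analysis is enlarged; the gap is repairable rather than fatal. Your per-level class bookkeeping (horizontal degree $1$ in each expansion level, the relative point forced onto the outermost horizontal section by predeformability) is correct and in fact more detailed than the paper's terse two-case check.
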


\begin{proof}
Let $C$ be the $n$-pointed domain of a map 
$$f: C \rightarrow \widetilde{A}$$
parameterized by the moduli space $\Mbar_{g,n-1}(A / E, (1,d))$.
The $n$ points consist of the relative point together with the $n-1$ standard markings.
The target $\widetilde{A}$ is a possible accordian destabilization of $A$ along $E$.
The Lemma is a consequence of the following claim: {\em the $n$-pointed curve $C$
is Deligne-Mumford stable}.

Since $g\geq 1$ and $n\geq 1$,
to prove the stability of the $n$-pointed curve $C$, we need only consider the
nonsingular rational components $P\subset C$:
\begin{enumerate}
\item[$\bullet$] If $f(P) \subset A$, then $f$ is constant on $P$,
$$f(P) \in A \setminus E\, ,$$
and $P$
must carry at least $3$ special points by the definition of map stability.
\item[$\bullet$] If $f(P)$ is contained in a rubber bubble over $E$, 
$P$ is mapped to a point of $E$ and therefore must map to a fiber of the bubble.
Stability of the bubble then requires the existence of at least
$3$ special points of $P$.
\end{enumerate}

Since the $n$-pointed curve $C$ is stable, there is no contraction of components
associated to the map $\pi$. Hence, the cotangent lines are isomorphic.
\end{proof}

The relative divisor $E \subset A$ is the fiber over a point $0_{E_1}\in E_1$.
Let $$f: C \rightarrow \widetilde{A}$$ be a stable map
parameterized by the moduli space $\Mbar_{g,n-1}(A / E, (1,d))$, and
let $p_{\text{rel}}\in C$ be the relative point.
Composition of the canonical projections 
$$\epsilon: \widetilde{A} \rightarrow A \rightarrow E_1$$
yields a map
$$\epsilon f: C \rightarrow E_1\ \ \ \ \text{with} \ \ \ \ \epsilon f (p_{\text{rel}}) = 0_{E_1}\, .$$
The cotangent line $L_{\text{rel}}$ carries a canonical section via the
differential of $\epsilon f$,
$$s:\mathbb{C} =T_{0_{E_1}}^*(E_1) \rightarrow L_{\text{rel}}\, .$$
The vanishing locus{\footnote{The 
geometry is pulled-back from the Artin stack of degenerations of $A/E$.}} of the section $s$ is the boundary of moduli space  
$\Mbar_{g,n-1}(A / E, (1,d))$
corresponding to the first bubble over $E$.

Since $\alpha$ is of the form \eqref{gghh} with $k_1 > 0$, a factor 
$\psi_1= c_1(L_1)$ can be extracted from $\alpha$,
$$\alpha = \psi_1 \cdot \widetilde{\alpha}\, .$$
After pull-back via $\pi$, we have
$$\pi^*(\psi_1) = c_1(L_{\text{rel}})$$
by Lemma \ref{dssxq}.
Via the vanishing locus of the section $s$, we obtain
 the following equation for the series
\eqref{szszsz}:
\pagebreak
\begin{multline} \label{iiww}
\mathsf{F}_{g,n-1}^{A/E}( \alpha; \gamma_2, \dots, \gamma_{n}; \omega ) = \\
 \sum_{(g', n') < (g,n-1)} 
\mathsf{F}_{g',n'}^{A/E}( \widetilde{\alpha}' ;\, (\,  \dots \, )\, ; {\xi} ) \cdot
\mathsf{F}_{g'',n''}^{\text{Rub}(\p^1 \times E)}( \widetilde{\alpha}'';\, 
( \, \dots \, )\,   ; {\xi}^{\vee},\omega ) \,.
\end{multline}
The summation here is over splittings
$$g= g'+g''\, , \quad  n-1= n'+n''\, ,$$
and distributions 
$(\, \dots \, )$ 
of the insertions $\gamma_2, \dots, \gamma_{n}$. Only insertions $\gamma_i$
satisfying
$$v_{\aaa_1}(\gamma_i)=v_{\bbb_1}(\gamma_i)=0$$
can be distributed to the rubber series 
\begin{multline*} 
\mathsf{F}^{\text{Rub}(\p^1 \times E)}_{g'',n''}( \widetilde{\alpha}'' ;\, 
(\,  \dots \, )\, ; \xi^{\vee},\omega ) =
\sum_{d \geq 0}
\Big\langle \xi^\vee \ \Big| \ \ \widetilde{\alpha}'' \, ;  (\,  \dots \, )
\ \Big|\ \omega\ \Big\rangle^{\text{Rub}(\p^1 \times E)}_{g'',(1,d)} q^d\, .
\end{multline*}
By stability of the rubber, either $g''>0$ or $n''>0$.
The class $\widetilde{\alpha}$ determines $\widetilde{\alpha}'$ and 
$\widetilde{\alpha}''$
by restriction.
Finally, there is also a sum on the right side of \eqref{iiww} over 
relative conditions $\xi$.

In the sum on the right side of \eqref{iiww}, the balancing 
and
 modularity of the
first factor
$$\mathsf{F}_{g',n'}^{A/E}( \widetilde{\alpha}' ;\, (\,  \dots \, )\, ; {\xi} )$$
is obtained from Lemma \ref{quasi_mod_relative_lemma}. The balancing
and modularity of the rubber factor 
$$\mathsf{F}_{g'',n''}^{\text{Rub}(\p^1 \times E)}( \widetilde{\alpha}'';\, 
( \, \dots \, )\,   ; {\xi}^{\vee},\omega )$$
follows from the rubber calculus and an argument parallel to the
proof of Lemma \ref{2uVMV}.{\footnote{We leave the details here to the reader.}}
The results together imply the required balancing and modularity for
the series \eqref{szszsz}.

We now control the balancing and modularity of all terms in the sum
on the right of \eqref{de9mxSA}. As a consequence, Proposition \ref{modularity_proposition} 
holds for
$$\mathsf{F}^A_g( \alpha ; \gamma_1, \dots, \gamma_n )\, .$$
The proof of the induction step for Case (ii-a)
is complete.

\vspace{9pt}
\noindent \textbf{Case (ii-b):} $\alpha = p^{\ast}(\zeta)$ for some $\zeta \in R^*(\Mbar_{g})$.
\vspace{9pt}

We may assume $\gamma_1$ is of minimal degree:
\[ \deg(\gamma_1) \leq \deg(\gamma_i)\ \ \ \ {\text{for all}} \ \ \ \ i \in \{ 2, \dots, n \} \, .\]
Below, we will distinguish several subcases depending upon $\deg(\gamma_1)$.

Consider the map $\Mbar_{g,n}(A,(1,d)) \rightarrow \Mbar_{g,n-1}(A,(1,d))$
forgetting the first marking.
The coefficients of the series
\begin{equation} \mathsf{F}^{A}_{g}( p^{\ast}(\zeta) ; \gamma_1, \dots, \gamma_n ) \label{hhjj} \end{equation}
are integrals where all classes in the integrand, except for $\ev_1^{\ast}(\gamma_1)$,
are pull-backs via the map forgetting the first marking.

\vspace{+9pt}
\noindent \textbf{Case} $\deg(\gamma_1) \leq 1$.\ \ The series \eqref{hhjj} vanishes by the push-pull formula since the fiber of the forgetful map has (complex) dimension $1$.

\vspace{+9pt}
\noindent \textbf{Case} $\deg(\gamma_1) = 2$.\ \ We use the divisor equation for $\gamma_1$ and find
\begin{align*}
\mathsf{F}^{A}_{g}( p^{\ast}(\zeta) ; \gamma_1, \dots, \gamma_n ) = 
\begin{cases}
 \mathsf{F}^{A}_{g}( p^{\ast}(\zeta) ; \gamma_2, \dots, \gamma_n )  & \text{if } \gamma_1 = \aaa_1 \bbb_1 \\
 q \frac{d}{dq} \mathsf{F}^{A}_{g}( p^{\ast}(\zeta) ; \gamma_2, \dots, \gamma_n )  & \text{if } \gamma_1 = \aaa_2 \bbb_2 \\
 0 & \text{otherwise}\,.
\end{cases}
\end{align*}
Because the differential operator 
\[ q \frac{d}{d q} = \frac{1}{2 \pi i} \frac{\partial}{\partial \tau} \]
preserves $\text{QMod}_{\ast}$ and is homogeneous of degree $2$,  Proposition 
\ref{modularity_proposition} holds for \eqref{hhjj} by the induction hypothesis.

\vspace{+9pt}
\noindent \textbf{Case} $\deg(\gamma_1) = 3$.\ \ Since $\sum_i \deg(\gamma_i)$ is even,
we must have $n \geq 2$.
We order the classes
\[ \gamma_2,\, \dots\, , \gamma_n \]
so that $\gamma_2, \dots, \gamma_k$ are point classes and $\gamma_{k+1}, \dots, \gamma_n$ are classes of degree $3$, for some $1 \leq k < n$.

We will use the abelian vanishing of Lemma \ref{abelvan} for $\gamma_1$ and
\[ \gamma = \gamma_2 \otimes \dots \otimes \gamma_n \,. \]
Let $s \in \{\aaa_1,\bbb_1,\aaa_2,\bbb_2 \}$ be the factor with $v_s(\gamma_1) = 0$.
Using the abelian vanishing relation, we find
\begin{multline*}
\sum_{i=1}^{k} \mathsf{F}^A_g( p^{\ast}(\zeta) ; \underbrace{\ppp,\ldots,\ppp}_{i-1}, \gamma_1, \underbrace{\ppp,\ldots,\ppp}_{k-i}, \gamma_{k+1}, \dots, \gamma_n ) \\
= \sum_{\substack{ i = k+1 \\ v_s(\gamma_i) = 1}}^{n}
\pm \, \mathsf{F}^A_g( p^{\ast}(\zeta) ; \ppp, \gamma_2, \dots, \widetilde{\gamma}_i, \dots, \gamma_n ) \,,
\end{multline*}
where $\widetilde{\gamma}_i = \gamma_i/s$ denotes the class $\gamma_i$ with the factor $s$ removed.
The plus signs in the terms on the left hand side require a careful accounting of the signs.
Since the class $p^{\ast}(\zeta)$ is symmetric with respect to interchanging markings,
the above equation simplifies to
\begin{equation*}
k \cdot \mathsf{F}^A_g( p^{\ast}(\zeta) ; \gamma_1, \dots, \gamma_n )
= \sum_{\substack{ i = k+1 \\ v_s(\gamma_i) = 1}}^{n}
\pm \, \mathsf{F}^A_g( p^{\ast}(\zeta) ; \ppp, \gamma_2, \dots, \widetilde{\gamma}_i, \dots, \gamma_n ) \,,
\end{equation*}
Since $\widetilde{\gamma}_i$ is of degree $2$, we may apply the divisor equation to each 
summand on the right side. As result, the
right side is reduce to terms of lower order, see Case $\deg(\gamma_1)=2$ above. By the induction hypothesis, Proposition \ref{modularity_proposition} holds for \eqref{hhjj}.

\vspace{9pt}
\noindent \textbf{Case} $\deg(\gamma_1) = 4$.\ \ 
All the insertions $\gamma_1, \dots, \gamma_n$ must be point classes.
If $n = 1$, the dimension constraint \eqref{dim_constraint} implies $\deg(\alpha) = 2g-2$
and hence
\[ \zeta \in R^{g-1}(\Mbar_{g}) \]
Using the strong form of
Looijenga's vanishing $$R^{\geq g-1}(M_g) = 0$$ proven in \cite[Proposition 2]{FPM},
there exists a class
\[ \widetilde{\zeta} \in R^*( \partial \Mbar_g) \]
such that $\iota_{\ast} \widetilde{\zeta} = \zeta$.
After pulling back via
$$p : \Mbar_{g,n} \to \Mbar_g\, ,$$ 
$p^{\ast}(\zeta)$ can be written as the push forward
of a tautological class on the boundary $\partial\Mbar_{g,n}$.
Proposition \ref{modularity_proposition} 
holds for \eqref{hhjj} 
by the splitting formula and the induction hypothesis, see Case (i).

If $n \geq 2$,
we use the degeneration
\begin{equation*} A \rightsquigarrow A \, \cup_{E} \, (\p^1 \times E) \,, \end{equation*}
which already appeared in Case (ii-a) above.
We choose the point classes $\gamma_1$ and $\gamma_2$ to lie after degeneration
on the component $\p^1 \times E$.

The degeneration formula then yields:
\begin{multline} \label{number_two}
\mathsf{F}^A_g( \alpha ; \gamma_1, \dots, \gamma_n ) = \\
\sum_{(g',n') \leq (g,n)}
\mathsf{F}_{g',n'}^{A/E}( \alpha' ; (\,  \dots \, ) ; \xi ) \cdot 
\mathsf{F}_{g'',n''}^{(\p^1 \times E)/E}( \alpha'' ; \gamma_1, \gamma_2, ( \, \dots \, ) ; \xi^{\vee} )\, ,
\end{multline}
where the sum is as in \eqref{de9mxSA}.

 If $g'= g$ in the sum \eqref{number_two},
 then the second factor is 
 \begin{equation}\label{xxttyz}
 \mathsf{F}_{0,n''}^{(\p^1 \times E)/E}( \alpha'' ; \gamma_1, \gamma_2, ( \, \dots \, ) ; \xi^{\vee} )\, .
 \end{equation}
 Genus 0 stable maps do not represent classes of type $(1,d>0)$ on $\p^1\times E$,
 hence only  the curve class $(1,0)$ need be considered. Since there are
 no curves of type $(1,0)$ through two general points of $\p^1\times E$,
 \eqref{xxttyz} vanishes. 
 As a result, only $g'<g$ terms appear in the sum \eqref{number_two}.
Proposition \ref{modularity_proposition} holds for \eqref{hhjj}
by Lemmas \ref{2uVMV} and \ref{quasi_mod_relative_lemma}. 

The proof of the induction step has now been established in
all cases. The proof of Proposition \ref{modularity_proposition} is
complete. \qed

\subsection{$K3$ surfaces}
Theorem \ref{modularity} of Section \ref{qmfo}
for abelian surfaces is exactly parallel to the
modularity results \cite[Theorem 4 and Proposition 29]{MPT} for 
the primitive descendent potential 
for $K3$ surfaces. Though the argument for abelian
surfaces is more difficult because of the presence of odd cohomology,
several aspects are similar. 

The refined modularity of 
Theorem \ref{modularity_refined} for abelian surfaces
is strictly stronger than the statements of 
\cite{MPT} for $K3$ surfaces. In fact, the proof of \cite{MPT} also yields
the parallel refined statement for $K3$ surfaces. The crucial point is to use 
the refined modularity of Lemma \ref{2uVMV} part (ii) instead of the weaker
modularity of \cite[Lemma 30]{MPT}. We state the refined modularity
for $K3$ surfaces below following the notation of \cite{MPT}.

Let $S$ be a nonsingular, projective, elliptically fibered $K3$ surface, 
$$S \rightarrow \p^1,$$
with a section.
Let $\mathbf{s}, \mathbf{f} \in H_2(S,\mathbb{Z})$
be the section and fiber class.
The primitive descendent potential for the
reduced Gromov-Witten theory of $S$ is defined
by
$$\mathsf{F}^{S}_{g}\big(\tau_{a_1}(\gamma_{1}) \cdots
\tau_{a_n}(\gamma_{n})\big)=
\sum_{d \geq 0} 
\Big\langle \tau_{a_1}(\gamma_{1}) \cdots
\tau_{a_n}(\gamma_{n})\Big\rangle^{S,\text{red}}_{g,\mathbf{s}+ d \mathbf{f} } 
\ q^{d-1}
$$
for $g\geq 0$.

We define a new degree function $\underline{\text{deg}}(\gamma)$
for classes $\gamma\in H^*(S,\mathbb{Q})$ by the following rules:
\begin{enumerate}
\item[$\bullet$] $\gamma \in H^0(S,\mathbb{Q})$ $\mapsto$ $\underline{\text{deg}}(\gamma)=0$,
\item[$\bullet$]  $\gamma \in H^4(S,\mathbb{Q})$ $\mapsto$ $\underline{\text{deg}}(\gamma)=2$.
\end{enumerate}
For classes $\gamma\in H^2(S,\mathbb{Q})$, the degree is
more subtle.

Viewing the section and fiber classes also as elements of cohomology,
we define
$$V = \mathbb{Q} \mathsf{s} \oplus \mathbb{Q} \mathsf{f}\subset H^2(S,\mathbb{Q})\, .$$
We have a direct sum decomposition 
\begin{equation}\label{jj11} \mathbb{Q} \mathsf{f} \, \oplus \, V^\perp \, \oplus \, \mathbb{Q}(\mathsf{s}+\mathsf{f}) \cong
H^2(S,\mathbb{Q})
\end{equation}
where $V^\perp$ is defined with respect to the intersection form.
We consider only classes $\gamma \in H^2(S,\mathbb{Q})$ which are
{pure} with respect to the decomposition \eqref{jj11}. Then,
\begin{enumerate}
\item[$\bullet$]  $\gamma \in \mathbb{Q} \mathsf{f} $ $\mapsto$ $\underline{\text{deg}}(\gamma)=0$,
\item[$\bullet$]  $\gamma \in V^\perp$ $\mapsto$ $\underline{\text{deg}}(\gamma)=1$,
\item[$\bullet$]  $\gamma \in \mathbb{Q} (\mathsf{s}+\mathsf{f}) $ $\mapsto$ $\underline{\text{deg}}(\gamma)=2$.
\end{enumerate}

The modularity of
\cite[Theorem 4 and Proposition 29]{MPT} is refined by the following result.

\begin{thm} \label{qqq} For $\underline{\text{\em deg}}$-homogeneous classes $\gamma_i \in H^*(S,\mathbb{Q})$,
$$\mathsf{F}_{g}^S\big(\tau_{a_1}(\gamma_{1})
\cdots
\tau_{a_n}(\gamma_{n})\big) \in \frac{1}{\Delta(q)} \text{\em QMod}_{\ell}$$
for $\ell= 2g+ \sum_{i=1}^n \underline{\text{\em deg}}(\gamma_i)$. 
\end{thm}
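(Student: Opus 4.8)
The plan is to re-run the proof of \cite[Theorem 4 and Proposition 29]{MPT} in its entirety, organized exactly as the abelian surface argument of Proposition \ref{modularity_proposition} above, but carrying the \emph{exact} weight through every reduction rather than merely an upper bound. As a first step I would recast the descendent potential $\mathsf{F}^S_g(\tau_{a_1}(\gamma_1)\cdots\tau_{a_n}(\gamma_n))$ in terms of $\alpha$-twisted potentials $\mathsf{F}^S_g(\alpha;\gamma_1,\dots,\gamma_n)$ for $\alpha\in R^*(\overline{M}_{g,n})$: the cotangent line classes $\psi_i$ on $\overline{M}_{g,n}(S,\mathbf{s}+d\mathbf{f})$ differ from the pull-backs of the $\psi_i$ on $\overline{M}_{g,n}$ by boundary corrections, and the resulting contributions are governed by the splitting formula for the reduced theory. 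A direct check shows the splitting formula distributes $\underline{\deg}$ additively across each node and shifts $g$ and the weight compatibly with the target value $\ell=2g+\sum_i\underline{\deg}(\gamma_i)$, so it suffices to prove the refined statement for the $\alpha$-twisted potentials.

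I would then argue by lexicographic induction on $(g,n)$. The base cases are in genus $0$, where the relevant potentials are evaluated by the primitive Katz--Klemm--Vafa and Yau--Zaslow computations of \cite{MPT}; these produce exactly the factor $1/\Delta(q)$ times quasi-modular forms of the predicted weight, the quasi-modularity ultimately bottoming out in the descendent theory of the elliptic curve of \cite{OP1,OP3}. For the induction step one writes $\alpha$ as a sum of boundary push-forwards $\iota_*(\widetilde\alpha)$ and monomials $\psi_1^{k_1}\cdots\psi_n^{k_n}\cup p^*(\zeta)$ with $\zeta\in R^*(\overline{M}_g)$. When $\sum_i\deg(\gamma_i)$ is small, the dimension constraint forces $\alpha\in R^{\ge g}(\overline{M}_{g,n})$, and the strong Getzler--Ionel/Looijenga vanishing of \cite[Proposition 2]{FPM} lets me replace $\alpha$ by a boundary class; the splitting formula then reduces to strictly smaller $(g',n')$, where the induction hypothesis and the additivity of $\underline{\deg}$ close the argument.

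The substantive case is when $\psi$-classes or top-degree insertions are present. Here I would degenerate $S$ to the normal cone of an elliptic fiber,
\[ S \rightsquigarrow S \cup_E (\p^1\times E)\,, \]
push the relevant insertions onto the $\p^1\times E$ component, and apply the reduced degeneration formula as in \cite{MPT}. This expresses $\mathsf{F}^S_g$ as a sum of products of a relative potential $\mathsf{F}^{S/E}$ and a relative potential $\mathsf{F}^{(\p^1\times E)/E}$, summed over genus/marking splittings and over relative insertions $\xi,\xi^\vee\in H^*(E,\mathbb{Q})$. The crucial input is the refined control of the $(\p^1\times E)/E$ factor: this is precisely the $K3$ analog of Lemma \ref{2uVMV}(ii), and replacing the weaker \cite[Lemma 30]{MPT} by this refined evaluation is exactly what upgrades the conclusion from ``weight $\le$'' to the exact weight $\ell$. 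The factor $\mathsf{F}^{S/E}$ is handled by the induction hypothesis through the relative analog of Lemma \ref{quasi_mod_relative_lemma}, obtained by degenerating one further marking onto $\p^1\times E$ and inverting a unipotent transition matrix whose off-diagonal entries are quasi-modular by the refined $(\p^1\times E)/E$ evaluation. Summing, each factor now carries an exact weight, the relative insertions $\xi,\xi^\vee$ contribute according to their $\underline{\deg}$, and the $1/\Delta$ normalization (coming from the $q^{d-1}$-shift in the definition of $\mathsf{F}^S_g$ and the underlying genus-$0$ factor) accounts for the constant offset, so the total weight is exactly $2g+\sum_i\underline{\deg}(\gamma_i)$.

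The main obstacle is not the geometry—every geometric ingredient is already present in \cite{MPT}—but the exact weight bookkeeping through the degeneration and rubber calculus, and in particular the correct handling of the subtle degree assignment on $H^2(S,\mathbb{Q})$. Since $\underline{\deg}$ splits $H^2$ according to the decomposition \eqref{jj11} into $\mathbb{Q}\mathsf{f}\oplus V^\perp\oplus\mathbb{Q}(\mathsf{s}+\mathsf{f})$, I must verify that each operation in the reduction shifts the weight by precisely the predicted amount: the divisor equation introduces the operator $q\frac{d}{dq}$, which is homogeneous of weight $2$; the degeneration formula distributes the $\underline{\deg}$ of the insertions between the two components; and the relative divisor class together with its dual contribute their own shifts. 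Unlike the abelian case there is no translation action and hence no direct analog of the abelian vanishing of Lemma \ref{abelvan}; its role is played by the divisor equation and the geometry of the elliptic fibration, so the careful tracking of the section versus fiber directions (encoded in the $\mathbb{Q}(\mathsf{s}+\mathsf{f})$ and $\mathbb{Q}\mathsf{f}$ summands) is where the genuine care is required.
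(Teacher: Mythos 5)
Your proposal is correct and coincides with the paper's own argument: the paper proves Theorem \ref{qqq} precisely by observing that the proof of \cite[Theorem 4 and Proposition 29]{MPT} goes through verbatim once the weaker \cite[Lemma 30]{MPT} is replaced by the refined modularity of Lemma \ref{2uVMV} part (ii), which is exactly the crucial substitution you identify. Your additional bookkeeping remarks (splitting formula additivity of $\underline{\deg}$, the genus-$0$ Yau--Zaslow base supplying the $1/\Delta$ factor, the divisor equation standing in for the abelian vanishing absent on a $K3$) are consistent with how the inductive scheme of \cite{MPT} and of Proposition \ref{modularity_proposition} actually runs.
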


The
discriminant modular form entering in Theorem \ref{qqq} is
$$\Delta(q)= q \prod_{n=1}^\infty (1-q^n)^{24}\,.$$

\section{Hyperelliptic curves}
\subsection{Overview}

The correspondence between hyperelliptic curves on a surface $S$ and
rational curves in $\Hilb^2(S)$
has been used often to enumerative 
hyperelliptic curves on $S$,
see \cite{Gra} for $S = \p^2$ and \cite{Pon07} for $S = \p^1 \times \p^1$.
The main difficulty in applying the correspondence
is the need of a non-degeneracy result concerning curves in $\p^1 \times S$.
For abelian surfaces, the required non-degeneracy, stated as ($\dag$) in Section \ref{hyp_intro},
is expected to hold generically, but is not known in most cases.
The above correspondence  then yields only conditional or virtual results
on the number of hyperelliptic curves on an abelian surface, as pursued,
for example, by S.~Rose in \cite{Ros14}.

We proceed with our study of hyperelliptic curves in three steps.
In Section \ref{hyp_2}, we provide several equivalent descriptions of ($\dag$) and a proof  in genus $2$ for a generic abelian surface.
In Section \ref{hyp_3}, we prove an unconditional formula for the
first non-trivial case of genus $3$ hyperelliptic curves
via explicit Gromov-Witten integrals, a boundary analysis, and the genus $2$ result proven in Section \ref{hyp_2}.

In Section \ref{hyp_4}, we assume the existence of abelian surfaces $A$ and irreducible curve classes $\beta$ satisfying property ($\dag$) in all genera.
Employing the correspondence above, we find
a closed formula for the $\mathsf{h}_{g, \beta}^{A, \textup{FLS}}$.
While a similar strategy has been used in \cite{Ros14} 
assuming the crepant resolution conjecture, our closed formula is new.
Together with the strong modularity result of Theorem \ref{modularity_refined}, we obtain
a formula for the Gromov-Witten numbers $\mathsf{H}_{g,(1,d)}^{\textup{FLS}}$
which agrees with the genus $3$ counts.

\subsection{Non-degeneracy for abelian surfaces} \label{hyp_2}

We briefly recall the correspondences between hyperelliptic curves in $S$, curves in $\p^1 \times S$, and rational curves in $\Hilb^2(S)$. For simplicity, we restrict to the case of abelian surfaces $S = A$, see \cite{FKP,Gra,GO} for the general case.

Let $A$ be an abelian surface, and let
\[ f : C \ra A \]
be a map from a nonsingular hyperelliptic curve. Let 
$$p : C \to \p^1$$ be the double cover. Since $A$ contains no rational curves, $f$ does not factor through $p$. Consider the map
\[ (p, f) : C \ra \p^1 \times A \,.\]
The image $\bar{C} = \text{Im}(C)\subset \p^1 \times A$ is an irreducible curve, (flat) of degree $2$ over $\p^1$, and has normalization $C \to \bar{C}$.

Let $\Hilb^2(A)$ be the Hilbert scheme of $2$ points of $A$, and let 
$$\Delta \subset \Hilb^2(A)$$
denote the subvariety parameterizing non-reduced length $2$ subschemes of $A$. By the universal property of the Hilbert scheme, the curve $\bar{C}$ induces a map
\[ \phi : \p^1 \ra \Hilb^2(A) \]
such that the image is not contained in $\Delta$.

Conversely, let $\phi : \p^1 \to \Hilb^2(A)$ be a map whose image is not contained in $\Delta$. Since $A$ contains no rational curves, by pulling back the universal family $$Z \subset \Hilb^2(A) \times A\, ,$$ we obtain an irreducible curve \mbox{$\bar{C} \subset \p^1 \times A$} of degree $2$ over $\p^1$. The normalization $C \to \bar{C}$ is hyperelliptic and induces a map $f : C \to A$.

Hence, there are bijective correspondences between
\begin{itemize}
\item maps $f : C \to A$ from nonsingular hyperelliptic curves,
\item irreducible curves $\bar{C} \subset \p^1 \times A$ of degree $2$ over $\p^1$,
\item maps $\phi : \p^1 \to \Hilb^2(A)$ with image not contained in $\Delta$.
\end{itemize}
The correspondences allow us to reformulate the non-degeneracy property ($\dag$).

\begin{lemma}[Graber \cite{Gra}] \label{lem_trans}
Under the correspondences above, the followings are equivalent.
\begin{enumerate}
\item[(i)] The differential of $f$ is injective at the Weierstrass points of~$C$, and no conjugate non-Weierstrass points are mapped to the same point on $A$.
\item[(ii)] The curve $\bar{C}$ is nonsingular.
\item[(iii)] The map $\phi$ meets $\Delta$ transversally.
\end{enumerate}
\end{lemma}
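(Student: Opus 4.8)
The plan is to establish the two equivalences (i) $\Leftrightarrow$ (ii) and (ii) $\Leftrightarrow$ (iii) separately, the second being the substantive one. For (i) $\Leftrightarrow$ (ii), recall that $C$ is the normalization of $\bar{C}$, so $\bar{C}$ is nonsingular if and only if the normalization morphism, which is exactly $(p,f) : C \to \p^1 \times A$, is a closed immersion, i.e. an injective immersion. I would analyze the two failure modes separately. The differential $d(p,f) = (dp, df)$ can vanish only where $dp$ vanishes, that is, at the ramification points of the double cover $p$, which are precisely the Weierstrass points; at such a point the differential is nonzero if and only if $df$ is injective. Injectivity, on the other hand, can fail only for pairs $x \neq y$ with $(p,f)(x) = (p,f)(y)$; since $p(x) = p(y)$ forces $y = \iota(x)$ for the hyperelliptic involution $\iota$, and a Weierstrass point is fixed by $\iota$, the only genuine collisions come from conjugate \emph{non}-Weierstrass pairs $x, \iota(x)$ with $f(x) = f(\iota(x))$. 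These two failure modes are exactly the negations of the two clauses in (i).

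For (ii) $\Leftrightarrow$ (iii), the key structural input is that $\bar{C}$ equals the pullback $\phi^{\ast} Z$ of the universal family $Z \subset \Hilb^2(A) \times A$ along $\phi$. This holds by the very construction of $\phi$, once one checks that $\bar{C} \to \p^1$ is flat of degree $2$ (a finite morphism from the Cohen--Macaulay curve $\bar{C}$ to the smooth base $\p^1$ with constant fiber length is flat), so that the family $\bar{C} \to \p^1$ is classified by $\phi$. The forgetful morphism $Z \to \Hilb^2(A)$ is finite flat of degree $2$, with both $Z$ and $\Hilb^2(A)$ smooth in the surface case, and its branch locus is precisely the smooth divisor $\Delta$; the ramification is simple, the ramification divisor $R \subset Z$ mapping isomorphically onto $\Delta$. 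Hence, near any point of $R$, the double cover admits the local analytic normal form $Z = \{ w^2 = s \}$ with $s$ a local equation for $\Delta$, i.e. a coordinate transverse to $\Delta$.

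Pulling this back, $\bar{C}$ near a point lying over $t_0 \in \phi^{-1}(\Delta)$ is given by $\{ w^2 = \phi^{\ast}(s) \}$. This is nonsingular exactly when $\phi^{\ast}(s)$ vanishes to order one at $t_0$, i.e. when $\phi$ meets $\Delta$ transversally at $t_0$; a tangency of order $k \geq 2$ produces an $A_{k-1}$-singularity, the simplest being a node, which corresponds precisely to the conjugate-point collisions appearing in (i). Away from $\phi^{-1}(\Delta)$, the morphism $\bar{C} \to \p^1$ is étale, so $\bar{C}$ is automatically smooth there. Therefore $\bar{C}$ is nonsingular if and only if $\phi$ is transverse to $\Delta$ at every intersection point, giving (ii) $\Leftrightarrow$ (iii).

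I expect the main obstacle to be the local normal form of $Z \to \Hilb^2(A)$ as a double cover simply branched along the smooth divisor $\Delta$, together with the clean translation of the vanishing order of $\phi^{\ast}(s)$ into the intersection multiplicity of $\phi$ with $\Delta$. I would first verify that $Z$ is smooth, that $\Delta$ is smooth, and that the ramification is simple, for instance via the Hilbert--Chow description of $\Hilb^2$ of a surface near the diagonal and the resulting $A_1$-resolution of $\Sym^2(A)$; once the $w^2 = s$ normal form is in place, the equivalence follows immediately. A minor but necessary point is to confirm the identification $\bar{C} = \phi^{\ast} Z$ and the flatness of $\bar{C} \to \p^1$, so that the double cover being pulled back is genuinely the one classified by $\phi$.
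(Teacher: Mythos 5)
The paper gives no proof of this lemma --- it is stated as quoted from Graber \cite{Gra} --- and your argument is a correct reconstruction along the same lines as Graber's original: for (i)$\Leftrightarrow$(ii) the closed-immersion analysis of the normalization $(p,f)$, and for (ii)$\Leftrightarrow$(iii) the identification $\bar{C}=\phi^{\ast}Z$ together with the local normal form $w^2=s$ of the double cover $Z \to \Hilb^2(A)$ branched along the smooth divisor $\Delta$, so that smoothness of $\bar{C}$ is exactly first-order vanishing of $\phi^{\ast}(s)$. One cosmetic remark: your parenthetical identifying the $A_{k-1}$ tangency singularities with conjugate-point collisions is slightly off --- odd-order tangencies yield unibranch cusps $w^2=t^{2m+1}$, which correspond to failure of $df$ at a Weierstrass point rather than to a conjugate pair --- but this aside plays no role in the logic of the proof.
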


Using a recent result of Poonen and Stoll \cite{PS14}, we verify
that property ($\dag$) holds generically in genus $2$:
\begin{lemma} \label{nondeg2}
Let $A$ be a generic abelian surface with a curve class $\beta$ of type $(1,d)$,
and let $f : C \to A$ be a map from a nonsingular genus $2$ curve in class $\beta$.
Then $f:C \to A$ satisfies condition \textup{(i)} of Lemma \ref{lem_trans}.
\end{lemma}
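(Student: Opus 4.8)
The plan is to factor $f$ through the Jacobian and then translate condition (i) into a statement about the position of the theta divisor relative to division points of an isogeny. Since both parts of condition (i) are invariant under post-composing $f$ with a translation of $A$, I may assume $f = \pi \circ \mathsf{aj}$, where $\mathsf{aj} : C \to J = \mathrm{Jac}(C)$ is the Abel--Jacobi embedding based at a Weierstrass point and $\pi : J \to A$ is the isogeny dual to the polarized isogeny $\widehat{\pi}$ of Section~\ref{piso}; here $g = 2 = \dim A$, and $\pi$ is an isogeny once one knows that $\mathrm{Im}(C)$ generates $A$, which holds since $\beta$ is non-degenerate. The first half of condition (i) is then automatic and requires no genericity: the canonical system of a smooth genus~$2$ curve is base-point free, so $\mathsf{aj}$ is a closed immersion, and $\pi$ is \'etale; hence $df = d\pi \circ d(\mathsf{aj})$ is injective everywhere on $C$, in particular at the Weierstrass points.

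Next I would reformulate the second half. Based at a Weierstrass point, the hyperelliptic involution acts as $-1$ on $\Theta := \mathsf{aj}(C)$, so $f(\iota x) = -f(x)$; hence $f(x) = f(\iota x)$ if and only if $f(x) \in A[2]$, i.e. $\mathsf{aj}(x) \in \pi^{-1}(A[2])$. Because $\mathsf{aj}$ is injective and $\iota$ acts as $-1$ on $\Theta$, the points of $\Theta \cap J[2]$ are exactly the six images $\mathsf{aj}(W)$ of the Weierstrass points $W$. Condition (i) for $f$ is therefore equivalent to the avoidance statement
\[ \Theta \cap \big( \pi^{-1}(A[2]) \setminus J[2] \big) = \emptyset, \]
where the excluded set is the finite collection of division points $\{\, j \in J : 2j \in \ker\pi,\ 2j \neq 0 \,\}$.

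The remaining task, and the genuine content of the Lemma, is to establish this avoidance for generic $A$. I would work over the moduli space $\mathcal{M}$ of triples $(A, \beta, f)$ up to translation, a finite cover of the (irreducible, three-dimensional) moduli of type $(1,d)$ polarized surfaces via $C \mapsto \mathrm{Jac}(C)$. Over $\mathcal{M}$ the excluded division points organize into finitely many torsion multisections $\sigma_i$ of the universal Jacobian, and the symmetric theta divisors form a relative divisor $\Theta$. Provided no $\sigma_i$ is contained in $\Theta$, the pullback $\sigma_i^{*}\Theta$ is a Cartier divisor, so each bad locus $\{\sigma_i \in \Theta\}$ has codimension at least one in $\mathcal{M}$; being of dimension less than $\dim \mathcal{M}$, which equals the dimension of the base, its image in the base does not dominate. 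Hence the generic $A$ has its entire finite fibre consisting of good maps, and the Lemma follows.

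Everything thus comes down to the non-containment $\sigma_i \not\subset \Theta$, which is the hard step and precisely where I would invoke the result of Poonen and Stoll \cite{PS14}. Their study of the image of a genus~$2$ curve in its Jacobian --- equivalently, of the non-Weierstrass solutions of $[2x - K_C] \in J_{\mathrm{tors}}$ --- produces a curve and isogeny for which $\Theta$ meets the extra division points only in the forced Weierstrass locus, i.e. exhibits a good point of $\mathcal{M}$ and forces $\sigma_i \not\subset \Theta$. I expect the main difficulty to be extracting this input in exactly the form required here, namely uniformly over all isotropic subgroups $\ker\pi$ arising from type $(1,d)$ rather than for one fixed isogeny, and in checking that the forced intersection $\Theta \cap J[2]$ does not conspire, via the symmetry $\iota = -1$, to drag an extra division point onto $\Theta$.
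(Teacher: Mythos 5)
Your reduction is the same as the paper's: factor $f = \pi \circ \mathsf{aj}$ through the Jacobian based at a Weierstrass point, use that the hyperelliptic involution acts as $-1$ on $J$ and $A$ to convert the second half of condition (i) of Lemma \ref{lem_trans} into the statement that $\Theta = \mathsf{aj}(C)$ avoids $\pi^{-1}(A[2]) \setminus J[2]$, and invoke Poonen--Stoll. One genuine (small) improvement on your side: for the differential condition the paper cites \cite[Proposition 2.2]{LS0}, whereas your direct argument --- $\mathsf{aj}$ is a closed immersion in genus $2$ and $\pi$ is \'etale, so $df$ is injective everywhere --- is correct and more self-contained.

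Where your write-up stalls, however, is on a difficulty that is not actually there. Since $\pi$ has degree $d$, any $j$ with $2j \in \ker\pi$ satisfies $2dj = 0$, so the points you must exclude are $2d$-torsion points of $J$; the avoidance you need is therefore a statement purely about $\Theta \cap J_{\mathrm{tors}}$, with no reference to the isogeny at all. This dissolves your worry about working ``uniformly over all isotropic subgroups $\ker\pi$'': \cite[Theorem 7.1]{PS14} asserts that for a \emph{generic} Weierstrass-pointed genus $2$ curve, $\Theta$ meets \emph{all} of $J_{\mathrm{tors}}$ only in the Weierstrass locus, which is exactly the isogeny-independent form required. It also makes your universal-family, torsion-multisection, and codimension apparatus unnecessary: the only remaining step is the transfer of genericity from $A$ to $C$, and the paper gets this for free from the finiteness of the polarized-isogeny correspondence (Lemma \ref{G2lattice}) --- there are only $\nu(1,d)$ genus $2$ curves in class $\beta$ up to translation, and the moduli spaces on both sides are irreducible of dimension $3$, so a generic $A$ yields only generic curves $C$. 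With that observation in place of your final paragraph, your argument closes exactly as the paper's does: for generic $C$ no non-Weierstrass point of $\Theta$ is torsion, so the conjugate pair $x$, $\iota x$ with $\mathsf{aj}(x) \in \pi^{-1}(A[2]) \setminus J[2]$ cannot exist.
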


\begin{proof}
The condition on the differential of $f$ is automatically satisfied by \cite[Proposition 2.2]{LS0} for genus $2$ curves on  abelian surfaces.

Now suppose there exists a nonsingular genus $2$ curve $C$, two conjugate non-Weierstrass points $x, y \in C$, and a map $f : C \to A$ in class $\beta$ such that $f(x) = f(y)$. Up to translation we may assume that $f$ maps a Weierstrass point $q \in C$ to $0_A \in A$. Then $f$ factors as
\[C \xrightarrow{\mathsf{aj}} J \xrightarrow{\pi} A \,, \]
where $J$ is the Jacobian of $C$ and $\mathsf{aj}$ is the Abel-Jacobi map with respect to $q$. The hyperelliptic involution of $C$ corresponds to the automorphisms $-1$ of $J$ and $A$. For $x, y$ conjugate, this implies that $f(x) = f(y)$ is a $2$-torsion point on $A$. 

Since $C$ is of genus $2$ and $\beta$ is of type $(1, d)$, the map $\pi$ is an isogeny of degree $d$. It follows that both $\mathsf{aj}(x)$ and $\mathsf{aj}(y)$ are $2d$-torsions on $J$.

In genus $2$, the assumption that $A$ is generic implies $C$ is generic. However, by \cite[Theorem 7.1]{PS14}, a generic (Weierstrass-pointed) hyperelliptic curve $C$ meets the torsions of $J$ only at the Weierstrass points. Hence the points $x, y$ do not exist.
\end{proof}

The proof of Lemma \ref{nondeg2} works for any type $(d_1, d_2)$ with $d_1, d_2 > 0$. However,
since multiple covered curves may arise, statement $(\dag)$ is false
for imprimitive classes in higher genus. The most basic counterexample 
is constructed by taking an \'etale double cover 
$$C_3 \rightarrow C_2$$
of a nonsingular genus $2$ curve $C_2\subset A$. 
Then, $C_3$ is nonsingular and hyperelliptic of genus $3$, but $C_2$ contains a Weierstrass point
whose preimage in $C_3$ is a pair of conjugate non-Weierstrass points.

Furthermore, by the proof of \cite[Theorem 1.6]{KLM15}, for generic $A$ and $\beta$ of type $(1, d)$, property $(\dag)$ also holds in the maximal geometric genus $g_d$. The value of $g_d$ is determined by the inequality \eqref{gmax}. It is also shown that for every $g \in \{2, \ldots, g_d\}$, there exists at least one nonsingular genus $g$ curve in $\p^1 \times A$ of class $(2, \beta)$.

\subsection{Genus 3 hyperelliptic counts} \label{hyp_3}
We prove here Proposition \ref{cor_hyp3}. We proceed in two steps.
First, we evaluate $\mathsf{H}^{\text{FLS}}_{3, (1,d)}$. 
Then we identify the contributions from the boundary of the moduli space.

\begin{lemma} For all $d \geq 1$,
\[ \mathsf{H}^{\textup{FLS}}_{3, (1,d)} = d^2 \sum_{m | d} \frac{ 3m^2 -  4 dm }{4} \,. \]
\end{lemma}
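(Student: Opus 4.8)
The plan is to integrate the Harris--Mumford expression for the hyperelliptic class against the reduced FLS class, reducing $\mathsf{H}^{\textup{FLS}}_{3,(1,d)}$ to the $\lambda$-integral already controlled by the primitive theory together with genus $2$ boundary data. By \cite{HM} the class of the closure of the hyperelliptic locus in $\Mbar_3$ is
\[ \mathcal{H}_3 = 9\lambda - \delta_0 - 3\delta_1 \in H^2(\Mbar_3,\BQ), \]
with $\lambda = c_1(\BE)$, $\delta_0$ the irreducible-nodal boundary divisor and $\delta_1$ the genus $1\cup$ genus $2$ divisor. Pulling back by $\pi$ and integrating gives
\[ \mathsf{H}^{\textup{FLS}}_{3,(1,d)} = 9\!\int \lambda_1 \;-\; \int \pi^{\ast}\delta_0 \;-\; 3\!\int \pi^{\ast}\delta_1, \]
all integrals taken over $[\Mbar_3(A,\beta)^{\textup{FLS}}]^{\textup{red}}$. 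The first term is immediate: since $\pi^{\ast}\lambda = c_1(\BE)$ on the space of maps, definition \eqref{123} gives $\int\lambda_1 = -\mathsf{N}^{\textup{FLS}}_{3,(1,d)}$, and the primitive case of Theorem \ref{YZ_intro} (equivalently the $k=0$ case of Theorem \ref{thm_point_insertion}) yields $\mathsf{N}^{\textup{FLS}}_{3,(1,d)} = -\tfrac{d^2}{12}\sum_{m\mid d}m^3$, so that $9\int\lambda_1 = \tfrac{3d^2}{4}\sum_{m\mid d}m^3$.

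Next I would evaluate the two boundary integrals by the splitting formula for the reduced virtual class. For $\delta_0$ I would use the gluing morphism $\Mbar_{2,2}(A,\beta)\to\Mbar_3(A,\beta)$ identifying the two markings; the node forces the diagonal condition, so the contribution is $\tfrac12\int_{[\Mbar_{2,2}(A,\beta)^{\textup{FLS}}]^{\textup{red}}}\ev^{\ast}\Delta_A$ with the small diagonal $\{x=y\}$ removed. By Lemma \ref{G2lattice} and Theorem \ref{YZA} the genus $2$ FLS moduli is a reduced set of $\mathsf{N}^{\textup{FLS}}_{2,(1,d)} = d^2\sigma(d)$ smooth curves, whence $\Mbar_{2,2}(A,\beta)^{\textup{FLS}}$ is a disjoint union of surfaces $C\times C$ with $\int_{C\times C}\ev^{\ast}\Delta_A = \beta^2 = 2d$; subtracting the small-diagonal excess $2g-2 = 2$ leaves the genuine nodes and gives $\int\pi^{\ast}\delta_0 = (d-1)\,d^2\sigma(d)$, which correctly vanishes for $d=1$. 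For $\delta_1$, since a generic $A$ carries no elliptic curve the genus $1$ tail must be contracted and the genus $2$ component carries the whole class $(1,d)$; the splitting formula then presents $\int\pi^{\ast}\delta_1$ as a genus $2$ reduced invariant with one marked node-point, weighted by the $\Mbar_{1,1}$--Hodge contribution of the contracted elliptic component coupled to $T_A$ through the reduced obstruction theory.

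Assembling the three pieces and substituting $\mathsf{N}^{\textup{FLS}}_{2,(1,d)} = d^2\sigma(d)$ throughout should produce, once the cubic terms of the $\lambda$-contribution and of $\delta_1$ cancel, the stated value $\tfrac{3d^2}{4}\sum_{m\mid d}m^2 - d^3\sum_{m\mid d}m = d^2\sum_{m\mid d}\tfrac{3m^2-4dm}{4}$. I expect the main obstacle to be precisely the $\delta_1$ integral: although its stratum is assembled from the genus $2$ count, which is linear in the divisor sum $\sigma$, it is the reduced obstruction theory on the contracted elliptic tail that must generate the $\sum_{m\mid d}m^3$ terms needed to cancel the $9\int\lambda_1$ contribution, together with the residual $\sum_{m\mid d}m$ terms. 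Pinning down this Hodge--integral computation and its exact coefficients --- and checking them against the base values $\mathsf{H}^{\textup{FLS}}_{3,(1,1)} = -\tfrac14$ and $\mathsf{H}^{\textup{FLS}}_{3,(1,2)} = -9$ forced by the formula --- is the technical heart of the argument.
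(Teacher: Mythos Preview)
Your approach---decompose $\mathcal{H}_3 = 9\lambda_1 - \delta_0 - 3\delta_1$ and integrate term by term---is exactly the paper's, and your $\lambda_1$ evaluation is correct. But the two boundary terms are miscomputed. For $\delta_0$, the splitting formula gives directly
\[
\big\langle \delta_0 \big\rangle_{3,(1,d)}^{A,\textup{FLS}} = \tfrac{1}{2}\big\langle \tau_0(\Delta_A)\big\rangle_{2,(1,d)}^{A,\textup{FLS}}
= \tfrac{1}{2}\cdot (\beta\cdot\beta)\cdot\big\langle \mathsf{1}\big\rangle_{2,(1,d)}^{A,\textup{FLS}}
= d\cdot d^2\sigma(d),
\]
using K\"unneth and the divisor equation. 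There is no ``small-diagonal'' subtraction: the locus where the two markings collide is part of the compactified moduli $\Mbar_{2,2}(A,\beta)$ (a rational tail is sprouted) and is already accounted for by the virtual class. Your value $(d-1)d^2\sigma(d)$ is therefore off by $d^2\sigma(d)$.

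For $\delta_1$, the contribution vanishes outright. On a generic $A$ the genus $1$ component must be contracted, and under the splitting formula the reduction goes to the genus $2$ factor while the elliptic tail contributes the \emph{ordinary} virtual class $[\Mbar_{1,1}(A,0)]^{\text{vir}}$. The latter is zero: $\Mbar_{1,1}(A,0)\cong\Mbar_{1,1}\times A$ with obstruction bundle $\BE^\vee\otimes T_A$, and its top Chern class $\lambda_1^2$ vanishes on the one--dimensional $\Mbar_{1,1}$. So there is no mechanism to cancel the cubic term from $9\lambda_1$, and the final answer is $\tfrac{3d^2}{4}\sum_{m\mid d}m^3 - d^3\sum_{m\mid d}m = d^2\sum_{m\mid d}\tfrac{3m^3-4dm}{4}$. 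The ``$m^2$'' in the displayed statement is a typo for $m^3$: you can confirm this by feeding the result into the proof of Proposition~\ref{cor_hyp3}, or simply by testing $d=2$, where the correct value is $3$, not $-9$.
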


\begin{proof}
On $\overline{M}_3$, let $\lambda_1$ be the first Chern class of the Hodge bundle,
$\delta_0$ the class of the curves with a nonseparating node, and $\delta_1$
the class of curves with a separating node.
By \cite{HM},
\[ \mathcal{H}_3 = 9 \lambda_1 - \delta_0 - 3 \delta_1 \,. \]
The Lemma will be proven by the following three evaluations
\[ \Big\langle \lambda_1 \Big\rangle_{3,(1,d)}^{A, \textup{FLS}} = \frac{d^2}{12} \sum_{m | d} m^3 \,, \quad
\Big\langle \delta_0 \Big\rangle_{3,(1,d)}^{A, \textup{FLS}} = d^3 \sum_{m|d} m \,, \quad
\Big\langle \delta_1 \Big\rangle_{3,(1,d)}^{A, \textup{FLS}} = 0  \,.
\]
The first equation follows directly from Theorem \ref{YZ_intro}.
For the second, we have
\[
\Big\langle \delta_0 \Big\rangle_{3,(1,d)}^{A, \text{FLS}} 
= \frac{1}{2} \Big\langle \tau_0( \Delta ) \Big\rangle_{2, (1,d)}^{A, \text{FLS}}
= \frac{1}{2} \cdot 2d \cdot \Big\langle \mathsf{1} \Big\rangle_{2, (1,d)}^{A, \text{FLS}}
= d \cdot d^2 \sum_{m|d} m \,,
\]
where $\Delta$ denotes the class of the diagonal in $A \times A$.
The divisor $\delta_1$ is associated to the locus of curves which split into genus $1$ and 
genus $2$ components.
Since generically $A$ contains no genus $1$ curves, the class on the genus $1$
component must be 0. Since $[\Mbar_{1,1}(A, 0)]^{\text{vir}} = 0$,
we obtain the third evaluation.
\end{proof}

\begin{proof}[Proof of Proposition \ref{cor_hyp3}]
Let $A$ and $\beta$ be generic. By Lemma \ref{nondeg2} the only contribution to $\mathsf{H}^{\text{FLS}}_{g, (1,d)}$
from maps $f : C \to A$ with $C$ nodal
arises from the locus in $\Mbar_3$ with a separating node.
The maps are of the form
\[ f : B \cup C' \ra A \]
where $B$ is a genus $2$ curve and $C'$ is an
elliptic tail glued to $B$ along one of the $6$ Weierstrass
points of $B$.
Under $f$, the curve $B$ maps to a genus $2$ curve in $A$, while $C'$ gets contracted.
By a direct calculation (or examining the case $d=1$),
we find that each genus $2$ curve in the FLS contributes 
\[ 6 \cdot \frac{1}{2} \int_{\Mbar_{1,1}} c_1({\rm Ob}) = - \frac{1}{4} \,, \]
where ${\rm Ob}$ denotes the obstruction sheaf.
Therefore,
\[ \mathsf{h}^{A, \text{FLS}}_{3, \beta} = \mathsf{H}^{\text{FLS}}_{3, (1,d)} + d^2 \sigma(d) \cdot \frac{1}{4} = 
d^2 \sum_{m|d} \frac{m ( 3m^2 + 1 - 4d) }{4} \,. \qedhere
\]
\end{proof}

\subsection{A formula for all genera}\label{hyp_4}
Consider the composition
\begin{equation} \Hilb^2(A) \to \Sym^2(A) \to A \,, \label{addition_map} \end{equation}
of the Hilbert-Chow morphism and the addition map.
The fiber of $0_A \in A$ is the Kummer $K3$ surface of $A$, denoted $\Km(A)$.
Alternatively, $\Km(A)$ can be defined as the blowup of $A / \pm 1$ at the 16 singular points.

In the notation of Section \ref{hyp_2},
a map $$\phi : \p^1 \to \Hilb^2(A)$$ not
contained in $\Delta$ maps to $\Km(A)$
if and only if the corresponding hyperelliptic curve
$f : C \to A$ maps a Weierstrass point of $C$
to a $2$-torsion point of $A$.

By Nakajima's theorem on the cohomology of Hilbert schemes,
we have a natural decomposition
\[ H_2( \Hilb^2(A) ; \BZ) =  H_2(A, \BZ) \oplus \wedge^2 H_1(A, \BZ) \oplus \BZ \cdot X \,, \]
where $X$ is the class of an exceptional curve.

A hyperelliptic curve $f : C \to A$ in class $\beta$
corresponds to a map $\phi : \p^1 \to \Hilb^2(A)$ not contained in $\Delta$, which has class
\[ \beta + \gamma + k X \in H_2( \Hilb^2(A) , \BZ) \]
for some $\gamma \in \wedge^2 H_1(A, \BZ)$ and with
\[ k = \chi( \CO_{\bar{C}} ) - 2 = -1 - g_a( \bar{C} ) \,, \]
where $g_a( \bar{C} )$ is the arithmetic genus of $\bar{C}$,
see \cite[Section 1.3]{GO}.

\begin{prop} \label{hyp_prop} Let $\beta$ be an irreducible curve class of type $(1,d)$ on an abelian surface $A$ satisfying $(\dag)$. Then, after the change of variables
$y= -e^{2 \pi i z}$ and $q = e^{2 \pi i \tau}$, 
\[ \sum_{g \geq 2} \mathsf{h}^{A, \textup{FLS}}_{g, \beta} ( y^{1/2} + y^{-1/2} )^{2g+2}
= \frac{d^2}{16} \, \textup{Coeff}_{q^d}\big[ 4 \, K(z,\tau)^4 \big] \,, \]
where $\textup{Coeff}_{q^d}$ denotes the coefficient of $q^d$.
\end{prop}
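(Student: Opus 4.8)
The plan is to run the hyperelliptic/Hilbert-scheme correspondence set up in Section~\ref{hyp_2} all the way down to a count of rational curves on the Kummer surface $\Km(A)$, and then to evaluate that genus-zero count by a Yau--Zaslow/Katz--Klemm--Vafa-type formula. First I would use property $(\dag)$ and Lemma~\ref{lem_trans}: a genus-$g$ hyperelliptic curve $f:C\to A$ in class $\beta$ yields a \emph{nonsingular} $\bar C\subset\p^1\times A$ and hence a map $\phi:\p^1\to\Hilb^2(A)$ meeting $\Delta$ transversally, of class $\beta+\gamma+kX$ with $k=-1-g_a(\bar C)=-1-g$ since $\bar C\cong C$. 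Because $A$ carries no rational curves, the composition of $\phi$ with the addition map $\Hilb^2(A)\to A$ is constant, so $\phi$ factors through a single fibre of $\Hilb^2(A)\to A$; each such fibre is a Kummer surface, and $\phi$ lands in $\Km(A)$ (the fibre over $0_A$) exactly when the Weierstrass points of $C$ map to $2$-torsion, i.e.\ when $C$ is symmetric with hyperelliptic involution induced by $-1$ on $A$. Thus counting genus-$g$ hyperelliptic curves becomes counting rational curves on the fibres, and $\Km(A)$ records precisely the symmetric ones.

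Next I would pin down the prefactor $\tfrac{d^2}{16}$ by a symmetry count. Every translation class of hyperelliptic curve has a symmetric representative, obtained by translating $f$ by $-c/2$ where $c=f(x)+f(\sigma x)$ is the constant fibre-value under addition; this representative is well defined up to the $16$ translations by $A[2]$, and $A[2]$ acts on $\Km(A)$. Hence the rational curves on $\Km(A)$ of a given genus-class assemble into $A[2]$-orbits, generically of size $16$, one orbit per translation class, so their number equals $16$ times the translation count. On the other hand the FLS condition selects $(d_1d_2)^2=d^2$ representatives in each translation class, giving
\[
\mathsf{h}^{A,\text{FLS}}_{g,\beta}=\frac{d^2}{16}\cdot\#\{\text{rational curves of genus-class }g\text{ on }\Km(A)\}\,.
\]

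The main work, and the principal obstacle, is the evaluation of the genus-weighted generating series of rational curves on $\Km(A)$ in the classes $\beta+\gamma+kX$, summed over the diagonal directions $\gamma\in\wedge^2 H_1(A,\BZ)$ and over $k=-1-g$, weighted by $(y^{1/2}+y^{-1/2})^{2g+2}=(2\sin(u/2))^{2g+2}$ and graded by $q^d$. Under $(\dag)$ these are honest enumerative counts and agree with the genus-$0$ reduced Gromov--Witten/BPS invariants of the K3 surface $\Km(A)$; I would compute them via the KKV formula specialized to the (nongeneric) Kummer lattice, in which $X^2=-2$, $\beta^2=2d$, and the sixteen exceptional classes enter. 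The expected outcome is that the self-intersections of $\beta+\gamma+kX$ together with the Kummer exceptional data assemble into the fourfold theta product, so that the series equals $\textup{Coeff}_{q^d}\!\big[4\,K(z,\tau)^4\big]$; concretely one would choose an elliptic fibration on $\Km(A)$ adapted to the elliptic structure on $A$ and run a Bryan--Leung-style degeneration, with local contributions summing to $K$ and the fourth power reflecting the four directions governed by the Kummer geometry. The delicate points are the bookkeeping of the class translation $\beta\mapsto\beta+\gamma+kX$ and the precise matching of the $(2\sin(u/2))^{2g+2}$ weighting to the theta expansion.

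Once this evaluation is in hand, the statement follows: combining the displayed identity with the K3 count gives $\sum_{g\ge2}\mathsf{h}^{A,\text{FLS}}_{g,\beta}(2\sin(u/2))^{2g+2}=\tfrac{d^2}{16}\,\textup{Coeff}_{q^d}[4K^4]$, and the substitution $y=-e^{2\pi iz}$, $q=e^{2\pi i\tau}$ together with $(y^{1/2}+y^{-1/2})^2=(2\sin(u/2))^2$ yields the asserted formula. As consistency checks I would verify that the $g=2$ coefficient reproduces $\mathsf{h}^{A,\text{FLS}}_{2,\beta}=d^2\sum_{m\mid d}m$ and that the $g=3$ coefficient matches Proposition~\ref{cor_hyp3}; these low-genus comparisons both test the theta evaluation and fix any residual normalization in the constant $4$.
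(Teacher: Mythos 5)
Your proposal is essentially the paper's own proof: the $(\dag)$-correspondence via Lemma~\ref{lem_trans} between genus-$g$ hyperelliptic curves with a Weierstrass point at $2$-torsion and rational curves in $\Km(A)$ of class $\beta+\gamma-(g+1)X$, the $d^2/16$ factor from comparing the $d^2$ FLS representatives with the $16$ symmetric representatives per translation class, Graber's analysis giving the $(y^{1/2}+y^{-1/2})^{2g+2}$ weighting in genus-$0$ invariants of $\Hilb^2(A)$, and a Yau--Zaslow/theta-function evaluation yielding $\textup{Coeff}_{q^d}[4K(z,\tau)^4]$. The only difference is that the final evaluation, which you sketch via a KKV/Bryan--Leung degeneration, is in the paper delegated to the theta identities of \cite{GO}, so your route is the same in substance.
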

\begin{proof}
For every hyperelliptic curve $f : C \to A$ in class $\beta$,
the map $$(p,f) : C \to \bar{C}$$ is an isomorphism 
by ($\dag$). In particular, the arithmetic genus of $\bar{C}$ is
equal to the genus of $C$.

Hence, there is a bijective correspondence between
\begin{itemize}
\item[(i)] maps $f : C \to A$ from nonsingular hyperelliptic curves
of genus~$g$ and class $\beta$ for which
a Weierstrass point of $C$ is mapped to a $2$-torsion point of $A$,
\item[(ii)] maps $\phi : \p^1 \to \Hilb^2(A)$ with image not contained in $\Delta$
of class
\[ \beta + \gamma - (g + 1) X \]
for some $\gamma \in \wedge^2 H_1(A, \BZ)$ 
and with image in $\Km(A)$.
\end{itemize}
Let $\mathsf{h}_{g, \beta}^{A, \Hilb}$ be the finite number of such curves.

In every translation class of a hyperelliptic curve $f : C \to A$ in class $\beta$,
there are $d^2$ members (up to automorphisms) in a given fixed linear system,
and $16$ members (up to automorphisms)
with a Weierstrass point of $C$ mapping to a $2$-torsion point.
Hence
\[ \mathsf{h}_{g, \beta}^{A, \textup{FLS}} = \frac{d^2}{16} \mathsf{h}_{g, \beta}^{A, \Hilb} \,. \]

By assumption ($\dag)$ and Lemma \ref{lem_trans},
every map $\phi : \p^1 \to \Hilb^2(A)$ as in (ii)
meets $\Delta$ transversely and is isolated.
In this situation, Graber in \cite[Sections 2 and 3]{Gra} has explicitly determined the relationship between the genus $0$ Gromov-Witten invariants of $\Hilb^2(A)$
and the number of these rational curves.

Let 
$ p : \Mbar_0(\Hilb^2(A)) \to A $
be the map induced by \eqref{addition_map}.
Then,
\begin{multline*}
\sum_{g \geq 0} \mathsf{h}^{A, \Hilb}_{g, \beta} (y^{1/2} + y^{-1/2})^{2g+2} = \\
 \sum_{k \in \BZ} \,\sum_{\gamma \in \wedge^2 H_1(A, \BZ)} y^k \int_{[ \Mbar_0( \Hilb^2(A) , \beta + \gamma + k A) ]^{\text{red}} } p^{\ast}( 0_A ) \,.
 \end{multline*}

The integral on the right hand side
reduces to the genus $0$ invariants of the Kummer $K3$ surfaces
and is determined by the Yau-Zaslow formula.
Direct calculations and theta function identities, see \cite{GO} for details,
then provide the closed evaluation
\[ \sum_{\substack{k \in \BZ \\ \gamma \in \wedge^2 H_1(A, \BZ)}} y^k \int_{[ \Mbar_0( \Hilb^2(A) , \beta + \gamma + k A) ]^{\text{red}} } p^{\ast}( 0_A )
 = \textup{Coeff}_{q^d} \big[ 4 K(z,\tau)^4 \big] \,. \qedhere
\]
\end{proof}

We are now ready to prove Theorem \ref{thm_hyp}.
\begin{proof}[Proof of Theorem \ref{thm_hyp}]
Let $\beta$ be an irreducible class of type $(1,d)$ on an abelian surface $A$ satisfying ($\dag$).
The only contribution to $\mathsf{H}_{g,(1,d)}^{\textup{FLS}}$
from maps $f : C \to A$ with $C$ nodal are of the form
\begin{equation} f : B \cup C_1 \cup \dots \cup C_{2h+2} \ra A \label{5556} \end{equation}
where:
\begin{itemize}
\item $f : B \to A$ is a map in class $\beta$ from a nonsingular hyperelliptic curve $B$ of some genus $h < g$,
\item the $C_i$ are pairwise disjoint curves, that are glued to the $i$-th Weierstrass point $x_i$ of $B$,
and are contracted under $f$,
\item the genera $g_i$ of $C_i$ satisfy $h + g_1 + \dots + g_{2h+2} = g$,
\item if $g_i \geq 2$, then $C_i$ is a hyperelliptic curve and $x_i$ is a Weierstrass point of $C_i$.
\end{itemize}
By stability, the case $g_i = 0$ does not appear.

For $g \geq 2$, let
\[ \mathcal{H}_{g,1} \in A^{g-1}( \Mbar_{g,1} ) \]
be the stack fundamental class of the closure of nonsingular
hyperelliptic curves with marked point at a Weierstrass point.
By convention, we set 
\[ \mathcal{H}_{1,1} = \frac{1}{2} [ \Mbar_{1,1} ]\,. \]
Then, the contribution of a nonsingular hyperelliptic curve
$f : B \to A$ of genus $h$ in class $\beta$
to $\mathsf{H}_{g,(1,d)}^{\textup{FLS}}$ is
\begin{equation}
\sum_{\substack{ g_1, \dots, g_{2h+2} \geq 0 \\ g_1 + \dots + g_{2h+2} = g-h }} \, 
\prod_{\substack{i = 1 \\ g_i > 0}}^{2h+2} \int_{\Mbar_{g_i,1}} \mathcal{H}_{g_i,1} \cup c( {\rm Ob}) \label{5555}
\end{equation}
where ${\rm Ob}$ denotes the obstruction sheaf.
Analyzing the tangent obstruction sequence, we obtain
\[ c( {\rm Ob}) = \frac{ c( \BE^{\vee} )^2 }{1 - \psi_1} \,. \]

Define the generating series
\[
F(u)
= u + \sum_{g \geq 1} u^{2g+1} \int_{\Mbar_{g,1}}
\frac{ \mathcal{H}_{g,1} \cup c( \BE^{\vee} )^2}{ 1 - \psi_1 } \,,
\]
Then from relation \eqref{5555} and the definition of $\mathsf{h}_{g, \beta}^{A, \text{FLS}}$, we obtain
\[
\sum_{h \geq 2} \mathsf{h}^{A, \textup{FLS}}_{h, \beta} F(u)^{2h+2} = 
\sum_{g \geq 2}\, \mathsf{H}^{\textup{FLS}}_{g,(1,d)} \, u^{2g+2} \,.
\]
The series $F(u)$ has been computed by J.~Wise using orbifold Gromov-Witten theory \cite{Wise}.
The result is
\[ F(u) = 2 \sin(u/2) = u - \frac{1}{24} u^3 + \frac{1}{1920} u^5 \pm \dots \, . \]
Together with Proposition \ref{hyp_prop}, the claim follows.
\end{proof}

The calculation of the invariants $\mathsf{H}_{g,(1,d)}^{\text{FLS}}$
is similar to the
calculations of the orbifold genus $0$ Gromov-Witten theory of the second
symmetric product of a nonsingular surface as pursued in \cite{Ros14, WiseP2}.
We expect a connection can be made to their work.

The main step in the proof of Theorem \ref{thm_hyp} is the evaluation of the generating series $F(u)$.
Below we will give a second proof of Theorem~\ref{thm_hyp} under slightly stronger assumptions.
The main new input here is the refined modularity statement of Theorem \ref{modularity_refined}.
Using the modularity property, the evaluation of $F(u)$ will follow automatically from the theory of modular forms.

For the second proof, we will assume the following holds:
\begin{itemize}
\item[$(\exists\dag)$] {\em For every $d\geq 1$, there exists an abelian surface $A$ and
an irreducible curve class $\beta$ of type $(1,d)$ satisfying property $(\dag)$.}
\end{itemize}

\begin{proof}[Second proof of Theorem \ref{thm_hyp} under assumption $(\exists \dag)$]
For all $d \geq 1$, let $\beta_d$ be an irreducible class of type $(1,d)$
on an abelian surface $A_d$ satisfying ($\dag$).

\vspace{9pt}
\noindent \emph{Step 1.} Define the generating series
\[ \varphi_{g}(q) =  \sum_{d \geq 1} \mathsf{h}^{A_d, \textup{FLS}}_{g, \beta_d} q^d \,.\]
By Proposition \ref{hyp_prop}, we have, after the change of
variables $u = 2\pi z$ and $y = - e^{iu}$,
\begin{equation}
\sum_{g \geq 2} (y^{1/2} + y^{-1/2})^{2g+2} \varphi_g(q) =
\left( q \frac{d}{dq} \right)^2 \frac{K(z,\tau)^4}{4} = \sum_{m \geq 2} u^{2m} f_m(q) \,,
\label{sosos}
\end{equation}
where $f_m(q)$ are quasi-modular forms of weight $2m$,
\[ f_m(q) \in \text{QMod}_{2m} \,. \]
Let $r = - (y^{1/2} + y^{-1/2}) = 2 \sin(u/2)$, and let
\[ u = 2 \arcsin( r/2 ) = r + \frac{1}{24} r^3 + \frac{3}{640} r^5 + \dots  \]
be the inverse transform. After
inserting into \eqref{sosos}, we obtain
\[ \sum_{g \geq 2} \varphi_{g}(q) r^{2g+2}
= \sum_{m \geq 2} \bigg(r + \frac{1}{24} r^3 + \dots\ \bigg)^{2m} f_m(q) \,.
\]
Hence, $\varphi_g(q)$ is a quasi-modular form with highest weight term $f_{g+1}(q)$:
\begin{equation} \varphi_{g}(q) = f_{g+1}(q) + R(q) \label{remainder_eqn} \end{equation}
for $R(q) \in \text{QMod}_{\leq 2g}$.

\vspace{9pt}
\noindent \emph{Step 2.}
By trading of the FLS for insertions as in \eqref{mmbbbnv},
the vanishing of the $d=0$ term and deformation invariance,
\begin{equation} \label{xxxxxxx123}
\mathsf{F}_{g}^{E_1 \times E_2}( \CH_g  ; \aaa_1 \omega_2, \bbb_1 \omega_2, \omega_1 \aaa_2, \omega_1 \bbb_2 )
=
\sum_{d \geq 1} \mathsf{H}^{\textup{FLS}}_{g,(1,d)} q^d,
\end{equation}
where we use the notation of Section \ref{section_modularity}.
Applying Theorem \ref{modularity_refined}, the series \eqref{xxxxxxx123} is hence
a quasi-modular form of pure weight $2g+2$.

\vspace{9pt}
\noindent \emph{Step 3.}
By assumption ($\dag$) and the discussion after \eqref{5556},
the Gromov-Witten invariant $\mathsf{H}_{g,(1,d)}^{\textup{FLS}}$
equals the sum
\begin{equation} \mathsf{H}^{\textup{FLS}}_{g,(1,d)} = \sum_{2 \leq g' \leq g} c_{g',g} \mathsf{h}^{A_d, \textup{FLS}}_{g', \beta_d} \label{hHrelation} \end{equation}
for coefficients $c_{g',g} \in \BQ$.
Summing up \eqref{hHrelation} over all $d$, we obtain
\begin{equation*}
\sum_{d \geq 1} \mathsf{H}^{\textup{FLS}}_{g,(1,d)} q^d
= \sum_{2 \leq g' \leq g} c_{g',g} \varphi_{g'}(q) \,.
\end{equation*}
The left hand side is homogeneous of weight $2g+2$,
hence must equal the weight $2g+2$ part of the right hand side. 
Therefore, by \eqref{remainder_eqn},
\[ \sum_{d \geq 1} \mathsf{H}^{\textup{FLS}}_{g,(1,d)} q^d = f_{g+1}(q) \,. \]
By the definition of the $f_{g+1}(q)$ this shows part (ii) of the Theorem:
\begin{equation} \sum_{g \geq 2} u^{2g+2} \sum_{d \geq 1} \mathsf{H}^{\textup{FLS}}_{g,(1,d)} q^d =
\sum_{g \geq 2} u^{2g} f_g(q) = \left( q \frac{d}{dq} \right)^2 \frac{1}{4} K(z,\tau)^4 \,.
\label{lkjh}
\end{equation}
Comparing \eqref{lkjh} with \eqref{sosos}, also part (i) follows.
\end{proof}

\newpage

{\large{\part{Abelian threefolds}}}
\vspace{8pt}
\section{Donaldson-Thomas theory} \label{secdt}
\subsection{Overview}
Let $A$ be a generic abelian surface
with a curve class $\beta_{\dtilde}$ of type $(1, \dtilde > 0)$, and let
$E$ be a generic elliptic curve.
Throughout Section \ref{secdt}, we will work with the abelian threefold
\[ X = A \times E \,. \]

Here we compute the topological Euler
characteristic of the stack $\Hilb^n(X, (\beta_{\dtilde}, d)) / X$ in
the cases $\dtilde \in \{ 1, 2 \}$ proving Theorem \ref{dtthm}.
Next, we present a conjectural
relationship between the Behrend function weighted Euler
characteristic and the topological Euler characteristic via a simple
sign change, and show how it implies Corollary* \ref{dtcor}.
We discuss the motivation and plausibility for the
conjecture.

Our computation here is parallel to the computation of the reduced
Do\-nald\-son-Thomas invariants for $K3\times E$ in
\cite{Bryan-K3xE}. We will frequently refer to results of
\cite{Bryan-K3xE}.
The technique used was developed by Bryan and Kool in \cite{Bryan-Kool}. 

\subsection{Notation}
Since the translation action of $X$ on $\Hilb^n(X, (\beta_{\dtilde}, d))$
has finite stabilizer, the 
reduced Donaldson-Thomas invariants
\[
\DT_{n, (\beta_{\dtilde}, d)}
=
e \big(\Hilb ^{n} (X, (\beta_{\dtilde}, d))/X,\nu \big)
=
\sum _{k\in \ZZ} k\cdot e \big(\nu ^{-1} (k) \big)
\]
and the topological (unweighted) Euler characteristics
\[ \DThat_{n, (\beta_{\dtilde}, d)} = e \big(\Hilb ^{n}(X, (\beta_{\dtilde}, d) )  /X\big) \,.\]
are well-defined. We have dropped the superscript $X$
in the notation for the Donaldson-Thomas invariants of Section 0.3.1.

We also use the short hand notation
\[ \Hilb^{n, \dtilde,d}(X) = \Hilb^n(X, (\beta_{\dtilde}, d)) \]
and the
following bullet convention:

\vspace{6pt}
\noindent{\bf Convention.} {\em When an index in a space is replaced by a bullet $(\bullet)$, we sum over the
index, multiplying by the appropriate variable. The result is a formal
series with coefficients in the Grothendieck ring.}
\vspace{3pt}

For example, we let
\[
\Hilb ^{\bullet ,\dtilde ,\bullet } (X)/X = \sum_{d \geq 0}\sum_{n \in \BZ}\, [\Hilb ^{n,\dtilde ,d} (X)/X] \, p^{n}q^{d} \,,
\]
which we regard as an element in $K_{0} ({\rm DM}_{\BC }) ((p))[[q]]$, the
ring of formal power series in $q$, Laurent in $p$, with coefficients
in the Grothendieck ring of Deligne-Mumford stacks over $\BC$. 

Define the Donaldson-Thomas partition functions of $X$,
\begin{align*}
\DT _{\dtilde }   &= \sum_{d \geq 0} \sum_{n \in \BZ} \DT _{n, (\beta_{\dtilde} ,d)} \, (-p)^n q^{d} \,, \\
\DThat_{ \dtilde }  &= \sum_{d \geq 0} \sum_{n \in \BZ} \DThat _{n, (\beta_{\dtilde} ,d)} \, p^n q^{d} \,.
\end{align*}
By the bullet convention,
\[ \DThat_{\dtilde} = e\big( \Hilb^{\bullet, \dtilde, \bullet}(X) / X \big) \,, \]
where we extend the Euler characteristic 
$$e : K_0({\rm DM}_{\BC }) \to \BQ$$ termwise
to the ring of formal power series in $p$ and $q$ over $K_0({\rm DM}_{\BC })$.

\subsection{Vertical and diagonal loci}
Let $p_A$ and $p_E$ be the projections of $X = A \times E$ onto the factors
$A$ and $E$ respectively.
We say an irreducible curve $C \subset X$ is
\begin{itemize}
 \item \emph{vertical},\ if $p_E : C \to E$ has degree $0$,
 \item \emph{horizontal},\ if $p_A : C \to A$ has degree $0$,
 \item \emph{diagonal},\ if $p_A, p_E$ have both non-zero degree.
\end{itemize}
The various definitions are illustrated in Figure~\ref{fig: diag, vert, and horiz curves}.

\begin{figure}
\begin{tikzpicture}[
                    z  = {-15},
                    scale = 0.75]
\begin{scope}[yslant=-0.35,xslant=0]
\begin{scope} [canvas is yz plane at x=0]
\draw [black](0,0) rectangle (3,5);
\end{scope}
\begin{scope} [canvas is xz plane at y=0]
\draw [black](0,0) rectangle (4,5);
\end{scope}
\draw [black](0,0) rectangle (4,3);

\draw [ultra thick, tealgreen] (3,1,0)--(3,1,5) (1.0,2.5,0)-- (1.0,2.5,5);
\draw [ultra thick,orange] 
                   (2  ,0   ,5)
to [out=90,in=-90] (2  ,0.6 ,5)
to [out=90,in=-90] (1.5,1.5 ,5)
to [out=90,in=-90] (2  ,2.4 ,5)
to [out=90,in=-90] (2  ,3   ,5);

\node [left] at (0,1.5,5) {$A$};
\node [right] at (4.2,0,3) {$E$};
\node [right] at (4.2,0,5) {$z_{0}$};

\begin{scope} [canvas is yz plane at x=4]
\draw [black](0,0) rectangle (3,5);
\draw [pink, ultra thick, domain=0:3, samples=100] 
plot (\x ,{5*pow(sin(28.8*pi*\x),2)});
\end{scope}
\begin{scope} [canvas is xz plane at y=3]
\draw [black](0,0) rectangle (4,5);
\end{scope}
\draw [black](0,0,5) rectangle (4,3,5);
\draw [black,fill, opacity=0.1](0,0,5) rectangle (4,3,5);
\end{scope}
\end{tikzpicture}
\caption{A vertical curve (orange) contained in the slice $A\times
\{z_{0} \}$ (light grey), a diagonal curve (pink), and two horizontal
curves (green).}\label{fig: diag, vert, and horiz curves}
\end{figure}

Consider a subscheme $C \subset X$  which defines a point
in $\Hilb^{n, \dtilde,d}(X)$.
Since the class $p_{A \ast}[C] = \beta_{\dtilde}$ is irreducible,
there is a unique irreducible component of $C$ of dimension $1$,
which is either vertical or diagonal.
All other irreducible components of $C$ of dimension $1$
are horizontal.

Consider the sublocus
\begin{equation} \Hilb _{\Vert}^{n,\dtilde, d  } (X) \subset \Hilb^{n, \dtilde, d}(X) \,, \label{6_2_vertdef}\end{equation}
parametrizing subschemes $C \subset X$ with
\[ C_0 \times \{ z_{0} \}  \subset C \]
for some $z_{0} \in E$ and for some curve $C_0 \subset A$ of class $\beta_{d'}$.
We endow $\Hilb _{\Vert}^{n,\dtilde, d  } (X)$
with the natural scheme structure. It is a
closed subscheme of $\Hilb^{n, \dtilde, d}(X)$.

Let $\Hilb ^{n,\dtilde, d  }_{\diag} (X)$ be the complement of the inclusion \eqref{6_2_vertdef},
\[ \Hilb ^{n,\dtilde, d  }_{\diag} (X) = \Hilb^{n, \dtilde, d}(X) \setminus \Hilb _{\Vert}^{n,\dtilde, d  } (X) \,. \]
Hence, every point in $\Hilb ^{n,\dtilde, d  }_{\diag} (X)$
corresponds to a subscheme $C \subset X$, which contains a diagonal component.

Since the condition defining the subscheme \eqref{6_2_vertdef} is invariant under the translation action of $X$,
we have an induced action of $X$ on $\Hilb _{\Vert}^{n,\dtilde, d  } (X)$ and its complement.
We exhibit the stack $\Hilb _{\Vert}^{n,\dtilde, d } (X)/X$
as a global quotient stack of a scheme by a finite group of order
$\dtilde ^{2}$ as follows.

Let $L \to A$ be a fixed line bundle on $A$ with $c_1(L) = \beta_{\dtilde}$, and let $z_0 \in E$ be a fixed point. Consider the subscheme
\[
\Hilb _{\Vert ,\fix}^{n,\dtilde, d  } (X) \subset \Hilb_{\Vert}^{n,\dtilde, d  } (X)
\]
parametrizing subschemes $C\subset X$ with $C_0 \times \{ z_0 \} \subset C$
for some
\begin{equation} C_0 \in |L| \,. \label{dt_fix_cdn} \end{equation}
The stabilizer of $\Hilb _{\Vert ,\fix}^{n,\dtilde, d } (X)$ under the
translation action of $X$ is the subgroup
\begin{equation} \label{6_2_subgrp}
\Ker (\phi :A\to \widehat{A} )\subset A \,,
\end{equation}
where $\phi :a\mapsto L\otimes t^{*}_{a}L^{-1}$ and $t_{a}:A\to A$ denotes the translation by $a\in A$.
By \eqref{Kerbeta}, the subgroup \eqref{6_2_subgrp} is
isomorphic to $\ZZ _{\dtilde } \times \ZZ _{\dtilde}$.
Hence, we have the stack equivalence
\begin{equation} \label{6_2_abc}
\Hilb _{\Vert}^{n,\dtilde, d  } (X)/X \cong \Hilb _{\Vert ,\fix}^{n,\dtilde, d }(X)/(\ZZ _{\dtilde } \times \ZZ _{\dtilde}) \,.
\end{equation}

\subsection{Proof of Theorem \ref{eulthm} (i)}
Let $L$ be a line bundle on $A$ with $c_{1}(L)= \beta_{1}$,
and let 
\[ C_0 \in |L| \]
be the unique nonsingular genus $2$ curve in $|L|$.
Since $L$ has type $(1,1)$, the class $c_1(L)$ is a principal polarization of $A$.
In particular, $A$ is isomorphic to the Jacobian $J$ of $C_0$.

\vspace{9pt}
\noindent \emph{Step 1.}
Every irreducible diagonal curve $C \subset X$ in class $(\beta_1, d)$
maps isomorphically to $C_0$ and, therefore, induces a non-constant map
\[ C_0 \to E \,. \]
Dualizing, we obtain a non-constant map
$E \to J(C_0) \cong A$, whose image is an abelian subvariety of $A$ of dimension $1$.
Hence, by the genericity of $A$, {\em no} diagonal curve exists
and $\Hilb ^{n,\dtilde, d  }_{\diag} (X)$ is empty.

Since there are no diagonal curves, we write
\[ \Hilb _{\fix}^{n,\dtilde, d }(X) = \Hilb _{\Vert ,\fix}^{n,\dtilde, d }(X) \,.\]
Then, by the equivalence \eqref{6_2_abc} with $\dtilde = 1$,
\[
e( \Hilb ^{n,\dtilde, d  } (X) / X )
= e(\Hilb _{\fix}^{n,\dtilde, d }(X) ) \,.
\]
Using the bullet convention, we find
\[
\DThat _{1} =e\big(\Hilb ^{\bullet,1,\bullet }_{\fix } (X)
\big) \,.
\]

\vspace{9pt}
\noindent \emph{Step 2.} Let $\Xhat _{C_{0}\times E}$ be the formal completion of $X$ along the closed subvariety
$C_{0}\times E$, and let 
\[ U=X \setminus C_{0}\times E \]
be the open complement.
The subschemes $\{\Xhat _{C_{0}\times E},U \}$ forms a fpqc cover of $X$.
By fpqc descent
, subschemes in $X$ are determined by their restrictions to
$\Xhat _{C_{0}\times E}$ and $U$. Since subschemes parameterized by
$\Hilb _{\fix }^{n,1,d} (X)$ are disjoint unions of components
contained entirely in $\Xhat _{C_{0}\times E}$ or $U$, see Figure~\ref{figure_6_2_11curve},
there is no overlap condition for descent.

\begin{figure}
\begin{tikzpicture}[
                    z  = {-15},
                    scale = 0.75,
embeddedpoint/.style={shade, ball color=#1}
]

\begin{scope}[yslant=-0.35,xslant=0]
\begin{scope} [canvas is yz plane at x=0]
\draw [black](0,0) rectangle (3,5);
\end{scope}
\begin{scope} [canvas is xz plane at y=0]
\draw [black](0,0) rectangle (4,5);
\end{scope}
\draw [black](0,0) rectangle (4,3);
\foreach \x in {2}
\foreach \y in {2.5}
{
\draw [ultra thick,pink] (\x ,\y,0)-- (\x ,\y,5);
\foreach \z in {0,0.1,...,5.1} 
    \draw [purple]
(\x ,\y ,\z ) -- ({\x +0.15*cos(24*pi*\z )},{\y+0.15*sin(24*pi*\z )},\z ) ;
}
\foreach \x in {3.2}
\foreach \y in {0.7}
{
\draw [ultra thick,pink] (\x ,\y,0)-- (\x ,\y,5);
\foreach \z in {0,0.1,...,5.1} 
    \draw [purple]
(\x ,\y ,\z ) -- ({\x +0.15*cos(36*pi*\z )},{\y+0.15*sin(36*pi*\z )},\z ) ;
}

\draw [ultra thick, pink] (2,0.5,0)--(2,0.5,5) (1.0,2.5,0)-- (1.0,2.5,5);
\draw [ultra thick,tealgreen] 
                   (2  ,0   ,5)
to [out=90,in=-90] (2  ,0.6 ,5)
to [out=90,in=-90] (1.5,1.5 ,5)
to [out=90,in=-90] (2  ,2.4 ,5)
to [out=90,in=-90] (2  ,3   ,5);
\foreach \p in {(1.6,1.2,5),(2.5,2,0), (3,1,0),(1,2.5,4.5),(2,2.5,5),(2,2.5,3)}
\shadedraw[ball color = gray] \p   circle (0.1);
\node [left] at (0,1.5,5) {$A$};
\node [right] at (4.2,0,3) {$E$};
\node [right] at (4.2,0,5) {$z_{0}$};
\node [below] at (2,0,5){$C_{0}$};
\draw [black](0,0,5) rectangle (4,3,5);
\begin{scope} [canvas is yz plane at x=4]
\draw [black](0,0) rectangle (3,5);
\end{scope}
\begin{scope} [canvas is xz plane at y=3]
\draw [black](0,0) rectangle (4,5);
\end{scope}
\end{scope}
\end{tikzpicture}
\caption{Subschemes in $A\times E$ up to translation. Horizontal
curves (pink) can have nilpotent thickenings (blue), and there can be
embedded and floating points (gray). The unique vertical curve
$C_{0}\in |L|$ (green) lies in $A\times \{z_{0} \}$ and is generically
reduced.}\label{figure_6_2_11curve}
\end{figure}

Consequently, we can stratify $\Hilb_{\fix }^{n,1,d} (X)$ by locally closed subsets
isomorphic to the product of Hilbert schemes of $\Xhat _{C_{0}\times E}$ and~$U$ respectively.
The result is succinctly expressed as an equality in 
the Grothendieck ring $K_{0} ({\rm Var}_{\BC}) ((p))[[q]]$:
\begin{equation} \label{6_3_aaaa}
\Hilb _{\fix }^{\bullet ,1,\bullet } (X) = \Hilb _{\fix }^{\bullet ,1,\bullet } (\Xhat _{C_{0}\times E}) \cdot \Hilb ^{\bullet ,0,\bullet } (U)\, ,
\end{equation}
where we regard $\Hilb _{\fix }^{n,1,d} (\Xhat _{C_{0}\times E})$ and
$\Hilb ^{n,0,d} (U)$ as subschemes of $\Hilb _{\fix }^{n,1,d} (X)$ and
$\Hilb ^{n,0,d} (X)$ respectively. Taking Euler characteristics in \eqref{6_3_aaaa}, we obtain
\begin{equation}\label{6_3_dtdt}
\DThat _{1} 
= e\big(\Hilb _{\fix }^{\bullet ,1,\bullet } (\Xhat_{C_{0}\times E}) \big)
\cdot e\big(\Hilb ^{\bullet ,0,\bullet } (U) \big) \,.
\end{equation}

\vspace{9pt}
\noindent \emph{Step 3.} We calculate the second factor $e\big(\Hilb ^{\bullet ,0,\bullet } (U) \big)$.

The $E$ action on $U$ induces an action of $E$ on $\Hilb^{n ,0,d } (U)$.
This new $E$ action exists because the fixed condition \eqref{dt_fix_cdn}
only concerns the $\Hilb ^{n ,1,d}_{\fix } (\Xhat _{C_{0}\times E})$ factors
and is independent of $U$ and $\Hilb^{n ,0,d } (U)$.

Since a scheme with a free $E$ action has trivial Euler characteristic, we have
\[
e \big( \Hilb ^{n,0,d} (U)\big) =e \big( \Hilb ^{n,0,d}(U)^{E}\big) \,,
\]
where $\Hilb ^{n,0,d} (U)^{E}$ is the fixed locus of the $E$-action on $\Hilb ^{n,0,d} (U)$.
Every element of $\Hilb ^{n,0,d} (U)^{E}$ corresponds to an $E$-invariant subscheme, or equivalently,
is of the form $Z\times E$
for a zero-dimensional subscheme $Z\subset A \setminus C_{0}$ of length $d$.
Since $\chi (\O _{Z\times E})=0$ for every such $Z$, we find
\begin{equation} \label{UUU_result_123}
\begin{aligned}
e\big(\Hilb ^{\bullet ,0,\bullet } (U)  \big)
&= e\Big(\sum _{d \geq 0} \Hilb ^{d} (A \setminus C_{0})\, q^{d} \Big) \\
&= \Big(\prod _{m \geq 1} (1-q^{m})^{-1} \Big)^{e (A \setminus C_{0})} \\ 
&=\prod _{m \geq 1} (1-q^{m})^{-2} \,.
\end{aligned}
\end{equation}
We have used G\"ottsche's formula for the Euler characteristic of
the
Hilbert scheme of points of a surface \cite{Gottsche}.

\vspace{9pt}
\noindent \emph{Step 4.} We calculate the first factor  $e\big(\Hilb _{\fix }^{\bullet ,1,\bullet } (\Xhat_{C_{0}\times E}) \big)$.

Consider the constructible morphism\footnote{A constructible morphism is a map which is regular on each piece of a decomposition of its domain into locally closed subsets. Because we work with Euler characteristics and the Grothendieck group,
we only need to work with constructible morphisms.}
\begin{equation}
\label{hhhrhomapdef}
\rho _{d} : \Hilb _{\fix }^{n,1,d} (\Xhat _{C_{0}\times E}) \to \Sym^{d} (C_{0}) \,,
\end{equation}
defined as follows. Let $[C] \in \Hilb _{\fix }^{n,1,d} (\Xhat
_{C_{0}\times E})$ be a scheme with curve support $C_{0} \times z_0
\cup _{i} (x_i \times E)$ and multiplicity $a_{i}$ along $x_i \times
E$. Then
\[ \rho_d([C]) = \sum_{i}a_{i}x_{i}\in \Sym ^{d} (C_{0}) \,. \]
Hence, $\rho_d([C])$ records the intersection (with multiplicities) of
$C_0$ with all horizontal components of $C$, see Figure
\ref{fig_6_2_lkl}.

\begin{figure}
\begin{tikzpicture}[scale=0.75]
\begin{scope}[rotate=90]
\draw (0,0,-0.3) rectangle (5,4,-0.3);
\draw (0,0,-.3)-- (0,0,.3)
      (5,0,-.3)-- (5,0,.3)
      (0,4,-.3)-- (0,4,.3)
      (5,4,-.3)-- (5,4,.3);
\draw [thick,->] (2.5,-0.2)-- (2.5,-1.3);
\draw [ultra thick] (0,-1.5)-- (5,-1.5);
\draw (5.2,2)node [above]{$\Xhat _{C_{0}\times E}$};
\draw (5.2,-1.5)node[above]{$C_{0}$};
\draw [ultra thick, pink](3,0,0)-- (3,4,0);
\draw (3,4.0,0) node[left]{$x_{i}\times E$};
\fill (3,-1.5) circle[radius=.1] node[right]{$a_{i}x_{i}$};
\foreach \x in {0.4,1.5,4.2,4.5}
{\draw [ultra thick, pink](\x ,0,0)-- (\x ,4,0);
\fill (\x ,-1.5) circle[radius=.1] ;}

\draw [tealgreen,ultra thick] (0,1)node[below,black]{$ C_{0}\times \{z_{0} \}$}-- (5,1);

\foreach \x in {3}
{
\draw [ultra thick,pink] (\x ,0)-- (\x ,4);
\foreach \y in {0,0.1,...,4.1} 
    \draw [very thick, purple]
(\x ,\y ,0 ) -- ({\x +0.25*cos(64*pi*\y )},\y,{0.25*sin(64*pi*\y )} ) ;
}

\foreach \p in {(1,1),(0.4,2.2),(2,2), (3,3),(3.8,.5)}
\shadedraw[ball color = gray] \p   circle (0.1);

\draw (0,0,.3) rectangle (5,4,.3);
\end{scope}
\end{tikzpicture}
\caption{The map $\rho _{d} : \Hilb _{\fix }^{n,1,d} (\Xhat _{C_{0}\times E}) \to \Sym^{d} (C_{0})$
records the location and multiplicity of the horizontal curve components.}\label{fig_6_2_lkl}
\end{figure}

We determine the Euler characteristic of $\Hilb _{\fix }^{\bullet,1,\bullet } (\Xhat _{C_{0}\times E})$ by computing the Euler
characteristic of $\Sym ^{d} (C_{0})$, weighted by the constructible
function given by the Euler characteristic of the fibers of $\rho_{d}$.
Hence, we write
\begin{align*}
e\big(\Hilb _{\fix }^{n,1,d} (\Xhat _{C_{0}\times E}) \big) &= \int _{\Hilb _{\fix }^{n,1,d} (\Xhat _{C_{0}\times E}) } 1\,\, de\\
&=\int _{\Sym ^{d}C_{0}} (\rho _{d})_{*} (1) \,\,de \,,
\end{align*}
where $de$ is the measure on constructible subsets induced by the Euler characteristic
and $\rho_{d \ast}(1)$ denotes integration along the fiber.
By writing 
\[
\Sym ^{\bullet } C_{0} = \sum _{d \geq 0}\Sym
^{d}C_{0}\,q^{d}
\]
and extending the integration to the $\bullet$ notation termwise, we obtain
\begin{equation}\label{6_3_jfhg}
e\big(\Hilb _{\fix }^{\bullet ,1,\bullet}  (\Xhat _{C_{0}\times E}) \big) =\int _{\Sym ^{\bullet } C_{0}} \rho_{*} (1) \,\,de
\end{equation}
where the measureable function $\rho _{*} (1)$ is given by
\[
\rho _{*} (1) \big( \sum _{i}a_{i}x_{i} \big) = e\Big(\rho ^{-1} \big(\sum_{i}a_{i}x_{i}\big) \Big)  \in \ZZ((p)) \,.
\]

The following result shows that $\rho _{*} (1)$ only depends on
the underlying partition of the point in the symmetric product.

\begin{proposition}\label{DT1_prop1}
We have
\begin{equation*}
\rho _{*} (1) \big(\sum _{i}a_{i}x_{i}\big) = \big(p^{1/2} (1-p)^{-1}
\big)^{e (C_{0})}\prod _{i}F (a_{i})
\end{equation*}
where
\[
\sum _{a \geq 0} F (a)\,q^{a} = \prod _{m \geq 1} \frac{(1-q^{m})}{(1-pq^{m}) (1-p^{-1}q^{m})} \,.
\]
\end{proposition}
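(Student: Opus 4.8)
The plan is to compute the fibrewise Euler characteristic $\rho_*(1)$ by reducing it to a product of purely local contributions: one ``background'' series spread over all of $C_0$, and one genuine local correction at each point $x_i$ carrying a horizontal curve. First I would record the local product structure of the ambient formal scheme. Since $X=A\times E$ and the normal bundle of $C_{0}\times E$ in $X$ is pulled back from $N_{C_0/A}\cong\omega_{C_0}$ (adjunction, using $\omega_A\cong\CO_A$), there is a canonical identification
\[
\Xhat_{C_{0}\times E}\;\cong\;\widehat{A}_{C_0}\times E,
\]
where $\widehat{A}_{C_0}$ is the formal neighbourhood of $C_0$ in $A$. Under this identification the fixed vertical curve is $C_0\times\{z_0\}$, and a point of $\rho^{-1}(\sum_i a_i x_i)$ is a subscheme whose one-dimensional part is $C_0\times\{z_0\}$ together with thickenings of the horizontal curves $x_i\times E$ of $E$-degree $a_i$, decorated by embedded and floating points supported on $C_0\times E$. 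Irreducibility of $\beta_1$ forbids any thickening of the vertical curve in the direction normal to $C_0$ inside $A$, so that curve stays generically reduced.

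Second I would establish the factorization. Stratifying the fibre by the support of its embedded and floating structure and using multiplicativity of $e$ over the strata, the crucial observation is that any component of the configuration free to translate in the $E$-direction sweeps out a family whose Euler characteristic is a multiple of $e(E)=0$. Hence only structure pinned to the curve $C_0\times\{z_0\}$ (sliding only along $C_0$) or pinned to the slice $\{w=z_0\}$ survives. These surviving loci --- structure distributed along $C_0\times\{z_0\}$, and structure attached over each point $(x_i,z_0)$ --- have disjoint support, so the Euler characteristic of the fibre is their product.

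The background factor then comes out formally. Away from the $x_i$, the only local structure on the vertical curve is a length-$k$ fattening in the single available normal ($E$-)direction together with embedded points, contributing $\sum_{k\ge0}p^{k}=(1-p)^{-1}$ at each point, while the reduced curve contributes $\chi(\CO_{C_0})=1-g=\tfrac12 e(C_0)$ to the exponent of $p$. By the power structure on the Grothendieck ring (equivalently, G\"ottsche's formula \cite{Gottsche} applied fibrewise over $\Sym^{\bullet}C_0$, exactly as in the $U$-computation of Step~3) this exponentiates the single-point series to $\big(p^{1/2}(1-p)^{-1}\big)^{e(C_0)}$; since $e(C_0)$ is even, this is a genuine element of $\BZ((p))$. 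Normalising so that $F(0)=1$, a point $x_i$ carrying no horizontal curve receives no correction, so the full formula simply multiplies the background by the local correction $F(a_i)$ at each attached horizontal curve.

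The hard part will be the evaluation of $F(a)$. By the vanishing above, the surviving configurations over $x_i\times E$ consist of a transverse profile (a partition in the plane normal to $x_i\times E$) that is translation-invariant along $E$, corrected over the single fibre $w=z_0$ by a larger transverse subscheme and pinned embedded points, together with the incidence of the vertical curve there. I would evaluate the resulting generating series by $\BC^{*}$-localization in the transverse plane, reducing the correction at $z_0$ to monomial data, together with an operator/transfer-matrix summation over the generic profile along $E$; this is precisely the local elliptic-fibre computation carried out in \cite{Bryan-Kool, Bryan-K3xE}, whose output is
\[
\sum_{a\ge0}F(a)\,q^{a}=\prod_{m\ge1}\frac{1-q^{m}}{(1-pq^{m})(1-p^{-1}q^{m})},
\]
the numerator reflecting the normalisation relating the embedded-point count to the reduced horizontal thickening. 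Combining the background with $\prod_i F(a_i)$ yields the proposition. The factorization and the background factor are formal consequences of multiplicativity and $e(E)=0$; the genuine content, and the main obstacle, is this last local vertex computation over $E$.
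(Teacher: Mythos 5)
Your overall route coincides with the paper's: the paper proves Proposition \ref{DT1_prop1} essentially by citation, observing that the argument is identical to \cite[Proposition 4.1 and Lemma 4.3]{Bryan-K3xE} with $e(C_0)=-2$ in place of $2$, and you likewise reduce the factorization to a support/stratification argument and defer the evaluation of $F(a)$ to the local vertex computations of \cite{Bryan-Kool, Bryan-K3xE}. The formal-neighbourhood identification $\Xhat_{C_0\times E}\cong \widehat{A}_{C_0}\times E$ and the remark that irreducibility of $\beta_1$ keeps the vertical curve generically reduced are both fine.

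However, your justification of the factorization contains a step that would fail: the claim that any configuration free to move in the $E$-direction contributes zero because $e(E)=0$, so that ``only structure pinned to $C_0\times\{z_0\}$ or to the slice $\{w=z_0\}$ survives.'' Once the vertical curve is pinned at $z_0$, there is no residual $E$-action on the fibre of $\rho$, and unpinned corrections are \emph{not} killed; they are integrated over symmetric products of punctured spaces and enter with exponents such as $e(E\setminus\{z_0\})=-1$ and $e(C_0\times E)=0$, only the last of which happens to vanish. Concretely, the entire $q$-dependence of $F(a)$ comes from corrections (larger transverse profiles and embedded points) located at \emph{arbitrary} points of $E\setminus\{z_0\}$ along a horizontal fibre, each stratum weighted via the analogue of Lemma \ref{DT1_lem1} by $e(E\setminus\{z_0\})=-1$; if only the fibre over $z_0$ contributed, $\sum_a F(a)q^a$ could not have the stated infinite-product form, so your own description of the surviving configurations contradicts the formula you are proving. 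Similarly, the per-point factor $(1-p)^{-1}$ along $C_0$ is not literally the series of ``length-$k$ fattenings in the $E$-direction'': MacMahon-type punctual contributions of floating and embedded points are present at every point of the topological support $C_0\times E$ and disappear only because they exponentiate to $e(C_0\times E)=0$, leaving the normalized one-leg vertex ratio, which equals $(1-p)^{-1}$. The conclusion and the two local series are correct, but the mechanism is weighted Euler-characteristic integration over $\Sym^{\bullet}$ of punctured curves, exactly as in Steps 3--4 of the proof of Theorem \ref{eulthm}(i) and in \cite{Bryan-Kool}, not an $E$-translation vanishing; as written, your vanishing argument would also (incorrectly) eliminate the strata that produce the numerator and denominators of $\sum_a F(a)q^a$.
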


The proof of Proposition \ref{DT1_prop1} is identical to the proof of
\cite[Proposition~4.1 and Lemma~4.3]{Bryan-K3xE} 
with $e(C_0) = -2$ here (instead of Euler characteristic $2$ in
\cite{Bryan-K3xE}).

We apply the following result regarding weighted Euler characteristics
of symmetric products.

\begin{lemma}\label{DT1_lem1}
Let $S$ be a scheme, and let $\Sym ^{\bullet } (S)=\sum _{d \geq 0} \Sym ^{d} (S)\, q^{d}$.
Let $G$ be a constructible function on $\Sym^{\bullet}(S)$ such that
\[ G \big( \sum _{i}a_{i}x_{i} \big)=\prod _{i}g(a_{i}) \,, \]
for a function $g$ with $g(0) = 1$. Then
\[
\int _{\Sym ^{\bullet }S} G\,\, de = \Big(\sum _{a \geq 0} g
(a)\, q^{a} \Big)^{e (S)} \,.
\]
\end{lemma}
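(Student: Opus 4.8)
The plan is to establish the identity by stratifying $\Sym^\bullet(S)$ by the partition type of a zero-cycle, exploiting the multiplicativity of $G$ together with the Euler characteristics of configuration spaces. First I would describe a point of $\Sym^d(S)$ as an effective zero-cycle $\sum_i a_i x_i$ with distinct points $x_i$ and multiplicities $a_i>0$, and record for each $k\geq 1$ the number $m_k\geq 0$ of points occurring with multiplicity exactly $k$, so that $\sum_k k\,m_k=d$. The loci $\Sym^{(m_k)}(S)\subset\Sym^d(S)$ of cycles with fixed multiplicity data $(m_k)_{k\geq 1}$ form a finite, locally closed stratification, and by the multiplicativity hypothesis $G$ is constant on each stratum with value $\prod_k g(k)^{m_k}$.

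Next I would identify each stratum with a colored configuration space. Writing $M=\sum_k m_k$ and letting $F(S,M)$ denote the space of $M$ distinct (ordered) points of $S$, the stratum $\Sym^{(m_k)}(S)$ is the quotient of $F(S,M)$ by the \emph{free} action of $\prod_k \Sigma_{m_k}$ permuting points of equal multiplicity. The key input is the product formula
\[
e\big(F(S,M)\big)=\prod_{i=0}^{M-1}\big(e(S)-i\big),
\]
obtained from the additive and multiplicative properties of the (compactly supported) Euler characteristic applied to the tower $F(S,M)\to F(S,M-1)$, whose fibers are copies of $S$ with $M-1$ points removed, hence of constant Euler characteristic $e(S)-(M-1)$. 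Freeness of the action then yields
\[
e\big(\Sym^{(m_k)}(S)\big)=\frac{1}{\prod_k m_k!}\prod_{i=0}^{M-1}\big(e(S)-i\big).
\]

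Then I would assemble the weighted Euler characteristic. Since $G$ is constant on strata and $e$ is additive over the locally closed stratification,
\[
\int_{\Sym^\bullet S} G\, de=\sum_{(m_k)_{k\geq 1}}\Big(\prod_k g(k)^{m_k}\Big)\,\frac{\prod_{i=0}^{M-1}(e(S)-i)}{\prod_k m_k!}\, q^{\sum_k k m_k},
\]
a well-defined element of $\BQ[[q]]$ since only finitely many $(m_k)$ contribute to each power of $q$. Finally I would match this with the right-hand side: setting $N=e(S)$ and $\sum_{a\geq 0}g(a)q^a=1+H(q)$ with $H(q)=\sum_{k\geq 1}g(k)q^k$, the expansion $(1+H)^N=\sum_M \binom{N}{M}H^M$ combined with the multinomial expansion of $H^M$ reproduces exactly the sum above, using $\binom{N}{M}M!=\prod_{i=0}^{M-1}(N-i)$. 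Comparing coefficients of $q^d$ finishes the argument.

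The main obstacle is the justification of the formula for $e(F(S,M))$: since $S$ is permitted to be singular and $e(S)$ may be negative (indeed $e(C_0)=-2$ in the intended application), I would not appeal to a topological fibration but rather to the fact that for a constructible morphism of complex varieties all of whose fibers have the same compactly supported Euler characteristic $\chi$, the total Euler characteristic equals $\chi$ times that of the base. This is precisely the motivic property that makes the computation robust, and all resulting identities are to be read as equalities of formal power series in $q$ whose coefficients are polynomial in the integer $e(S)$.
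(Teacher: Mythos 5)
Your proof is correct. The paper itself does not prove this lemma --- it defers to the ``elementary proof'' in the cited reference of Bryan--Kool --- and your argument (stratify $\Sym^d(S)$ by multiplicity data, evaluate $e$ of each stratum as a free finite quotient of an ordered configuration space via $e(F(S,M))=\prod_{i=0}^{M-1}(e(S)-i)$, and match against the $q$-adically convergent binomial expansion of $\bigl(\sum_{a\geq 0} g(a)q^a\bigr)^{e(S)}$ with $\binom{N}{M}M!=\prod_{i=0}^{M-1}(N-i)$) is exactly that standard stratification argument, including the correct appeal to the constructible/motivic multiplicativity of $e$ rather than a topological fibration, which is what makes the computation valid for singular $S$ and negative $e(S)$.
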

An elementary proof of Lemma \ref{DT1_lem1} is given in \cite{Bryan-Kool}, but see also \cite[Lemma~4.2]{Bryan-K3xE}.

After applying Proposition~\ref{DT1_prop1} and Lemma~\ref{DT1_lem1} to \eqref{6_3_jfhg}, we obtain
\begin{align*}
e\big(\Hilb _{\fix }^{\bullet ,1,\bullet}  (\Xhat _{C_{0}\times E}) \big)
& = p^{-1} (1-p)^{2}\Big(\sum _{a \geq 0} F(a)\,q^{a} \Big)^{-2} \\
& = p^{-1} (1-p)^{2} \prod _{m \geq 1}\frac{(1-pq^{m})^2 (1-p^{-1}q^{m})^2}{(1-q^{m})^2} \,.
\end{align*}
Using \eqref{6_3_dtdt}, \eqref{UUU_result_123}, and the definition of $K(p,q)$, we obtain the evaluation of part (i) of
Theorem \ref{eulthm}. \qed

\subsection{Proof of Theorem \ref{eulthm} (ii)}
Let $A$ be a generic abelian surface with curve class $\beta_2$ of type $(1,2)$,
and let $L \to A$ be a fixed line bundle with $c_1(L) = \beta_2$.
The linear system
\[ |L| = \p^1 \]
is a pencil of irreducible genus $3$ curves. The generic
curve in the pencil is nonsingular, but there are exactly
12 singular curves (each of which has a single nodal), see \cite{BLA}.

By the disjoint union
\[
\Hilb ^{n,\dtilde, d  } (X) = \Hilb ^{n,\dtilde, d  }_{\diag} (X)
 \, \sqcup \,\Hilb _{\Vert}^{n,\dtilde, d  } (X)
\]
and the isomorphism \eqref{6_2_abc}, we have
\[
\Hilb ^{n,2,d} (X)/X
= \Hilb^{n,2,d}_{\Vert, \fix} (X)/(\BZ/2 \times \BZ/2) \sqcup \Hilb^{n,2,d}_{\diag} (X)/X \,.
\]
Using the bullet convention, it follows
\begin{equation}
\DThat _{2} =
\frac{1}{4} e\big(\Hilb ^{\bullet ,2,\bullet}_{\Vert ,\fix } (X) \big)
+
e\big(\Hilb ^{\bullet ,2,\bullet}_{\diag} (X) / X \big) \,.
\label{6_2_dt2sum_start}
\end{equation}

\vspace{9pt}
\noindent \emph{Step 1.} We begin to evaluate $e\big(\Hilb ^{\bullet ,2,\bullet}_{\Vert ,\fix } (X) \big)$.
Consider the map
\[
\tau :\Hilb ^{\bullet ,2,\bullet }_{\Vert ,\fix } (X) \to |L| = \BP^{1} \,,
\]
which maps a subscheme $C$ to the divisor in $|L|$ associated to $p_A(C)$.
The fiber of $\tau$ over a point $C \in |L|$, denoted
\[
\Hilb _{C}^{n,2,d} (X)\subset \Hilb _{\Vert ,\fix }^{n,2,d} (X) \,,
\]
is the sublocus of $\Hilb _{\Vert ,\fix }^{n,2,d} (X)$
which parametrizes curves which contain the curve $C \times \{ z_0 \}$.

As we have done in \eqref{6_3_jfhg}, we may write
\[ e\big(\Hilb ^{\bullet ,2,\bullet}_{\Vert ,\fix } (X) \big) = \int_{|L|} \tau_{\ast}(1) de \]
where $\tau_{\ast}(1)$ denotes the constructible function obtained by integration along the fiber:
\[ \tau_{\ast}(1)( [C] ) = e\big( \Hilb _{C}^{\bullet,2,\bullet} (X) \big) \,. \]

\vspace{9pt}
\noindent \emph{Step 2.} 
Let $C \subset A$ be a curve in $|L|$.
Following a strategy similar to the proof of part (i),
we will compute explicit expressions for $\tau_{\ast}(1)( [C] )$
depending only upon whether $C$ is nodal or not.

Following Step 2 of the proof of part (i), we have
\[ \Hilb ^{\bullet ,2,\bullet }_{C} (X) =\Hilb ^{\bullet ,2,\bullet }_{C} (\Xhat _{C\times E} ) \cdot \Hilb ^{\bullet ,2,\bullet }_{C} ( X \setminus C\times E ) \,. \]
Using the extra $E$ action on the second factors, we obtain 
\begin{equation}
e\big(\Hilb _{C}^{\bullet ,2,\bullet } (X) \big) =e\big(\Hilb _{C}^{\bullet ,2,\bullet } (\Xhat _{C\times E}) \big)\cdot \prod _{m \geq 1} (1-q^{m})^{- e(A \setminus C)} \,.
\label{snd_factor}
\end{equation}

For the first factor, we use the map
\[ \rho : \Hilb ^{\bullet ,2,\bullet }_{C}(\Xhat _{C\times E} ) \to \Sym ^{\bullet } (C) \]
which records the location and multiplicity of the horizontal
components (and has already appeared in \eqref{hhhrhomapdef}).

\vspace{9pt}
\noindent \emph{Step 3.} If $C$ is nonsingular, we apply Proposition~\ref{DT1_prop1}
with $C$ in place of~$C_0$ for the integration along the fiber of $\rho$.
By Lemma~\ref{DT1_lem1}, we have
\begin{align*}
e\big(\Hilb _{C}^{\bullet ,2,\bullet }(\Xhat _{C\times E}) \big)
&=\int _{\Sym ^{\bullet }C} \rho _{*} (1) \,de\\
&= \big(p^{1/2} (1-p)^{-1} \big)^{e (C)}\Big(\sum_{a\geq0} F (a)q^{a} \Big)^{e (C)}\\
&=p^{-2} (1-p)^{4} \prod_{m\geq1} \frac{(1-pq^{m})^{4} (1-p^{-1}q^{m})^{4}}{(1-q^{m})^{4}} \,.
\end{align*}
Using \eqref{snd_factor} with $e(A \setminus C) = 4$, we find 
\begin{equation}
\tau_{\ast}(1)([C]) = e\big(\Hilb _{C}^{\bullet ,2,\bullet } (X) \big) = K (p,q)^{4} \,. \label{tau_CC}
\end{equation}

\begin{figure}
\begin{tikzpicture}[
                    z  = {-15},
                    scale = 0.75]

\begin{scope}[yslant=-0.35,xslant=0]
\begin{scope} [canvas is yz plane at x=0]
\draw [black](0,0) rectangle (3,5);
\end{scope}
\begin{scope} [canvas is xz plane at y=0]
\draw [black](0,0) rectangle (4,5);
\end{scope}
\draw [black](0,0) rectangle (4,3);
\foreach \x in {2.8}
\foreach \y in {1.5}
{
\draw [ultra thick,tealgreen] (\x ,\y,0)-- (\x ,\y,5);
\foreach \z in {0,0.1,...,5.1} 
    \draw [thin, tealgreen]
(\x ,\y ,\z ) -- ({\x +0.15*cos(24*pi*\z )},{\y+0.15*sin(24*pi*\z )},\z ) ;
}
\draw [ultra thick, tealgreen] (3,.5,0)--(3,.5,5) (1.0,2.5,0)-- (1.0,2.5,5) (1.5,1,0)-- (1.5,1,5);
\draw [ultra thick,orange] 
                    (3   ,0   ,5) 
to [out=90,in=0]    (2.3 ,1.8 ,5) 
to [out=180,in=90]  (2   ,1.5 ,5) 
to [out=270,in=180] (2.3 ,1.2 ,5) 
to [out=0,in=270]   (3   ,3   ,5);

\foreach \p in {
(2.8,1.5,5), 
(3,.5,.5), 
(3,.5,2.5),
(2.95,1,5),
(2.8,1.5,2)
}
\shadedraw[ball color = blue] \p   circle (0.1);
\node [above] at (3,3,5) {$N$};

\node [left] at (0,1.5,5) {$A$};
\node [right] at (4.2,0,3) {$E$};
\node [right] at (4.2,0,5) {$z_{0}$};

\begin{scope} [canvas is yz plane at x=4]
\draw [black](0,0) rectangle (3,5);
\end{scope}
\begin{scope} [canvas is xz plane at y=3]
\draw [black](0,0) rectangle (4,5);
\end{scope}
\draw [black](0,0,5) rectangle (4,3,5);
\draw [black,fill, opacity=0.1](0,0,5) rectangle (4,3,5);
\end{scope}
\end{tikzpicture}
\caption{A subscheme parameterized by $\Hilb _{N}^{\bullet ,2,\bullet
} (X)$ which includes a thickened horizontal curve (green) attached to
the node of a nodal vertical curve (orange). For the subscheme to have
a non-zero contribution to the Euler characteristic, embedded points
(blue) can only occur on $N$ or on horizontal curves attached to
$N$.}\label{fig: subschemes parameterized by Hilb_N}
\end{figure}

\vspace{9pt}
\noindent \emph{Step 4.} Let $C = N \in |L|$ be a curve with a nodal point $z \in C$.
The corresponding moduli space $\Hilb _{N}^{n,2,d} (X)$
is depicted in Figure~\ref{fig: subschemes parameterized by Hilb_N}.
We have the following result.

\begin{proposition}\label{DT2_prop2}
Let $x_{1},\dotsc ,x_{l}\in N \setminus \{z \}$, then 
\begin{equation} \label{dt2_prop2_eqn}
\rho _{*} (1)\Big(bz + \sum _{i=1}^{l}a_{i}x_{i}\Big) =p^{-2} (1-p)^{4}N (b)\prod _{i=1}^{l}F (a_{i})
\end{equation}
where
\[
\sum _{b\geq0} N (b) q^{b} =\prod _{m\geq1} (1-q^{m})^{-1}
\cdot \bigg(1+\frac{p}{(1-p)^{2}} + \sum _{d\geq1}\sum _{k|d} k (p^{k}+p^{-k})q^{d}\bigg) \,.
\]
\end{proposition}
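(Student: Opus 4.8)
The plan is to follow the locality principle already behind Proposition~\ref{DT1_prop1}: the Euler characteristic of a fiber of $\rho$ factorizes into a global contribution carried by the reduced vertical curve and purely local contributions at the finitely many points of $N$ over which a horizontal component sits. Concretely, using the stratification and weighted-Euler-characteristic techniques of \cite{Bryan-Kool,Bryan-K3xE}, I would compute $e\big(\rho^{-1}(bz+\sum_i a_i x_i)\big)$ by integrating over the configurations of embedded points and nilpotent thickenings supported on $N\times\{z_0\}$ and on the horizontal curves $x_i\times E$ and $z\times E$. Away from the node and the attachment points the subscheme is a disjoint union of pieces, so this integral splits as a product of a constant prefactor attached to the smooth locus of $N$ and local factors at each $x_i$ and at the node $z$.

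First I would record the two pieces that require no new work. Over the smooth locus $N\setminus\{z\}$ the local structure of $\Xhat_{N\times E}$ is identical to the smooth-curve situation, so embedded points there contribute the local factor $p^{1/2}(1-p)^{-1}$ per unit of Euler characteristic, exactly as in Proposition~\ref{DT1_prop1}. Since $N$ is an irreducible nodal genus~$3$ curve, its normalization has genus~$2$ and $e(N\setminus\{z\})=-4$; by Lemma~\ref{DT1_lem1} this produces the constant prefactor $(p^{1/2}(1-p)^{-1})^{-4}=p^{-2}(1-p)^{4}$. At each smooth attachment point $x_i$ the formal neighborhood of $x_i\times E$ inside $\Xhat_{N\times E}$ is indistinguishable from that of a point on a smooth curve, so a horizontal component of multiplicity $a_i$ contributes precisely $F(a_i)$ by \cite[Proposition~4.1]{Bryan-K3xE}. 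This accounts for every factor in \eqref{dt2_prop2_eqn} except $N(b)$.

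The remaining and genuinely new computation is the local factor $N(b)$ at the node. Near $z\times E$ the threefold is modelled on $\BC^2_{x,y}\times E_t$, with the reduced vertical curve given by the two branches $\{x=0\}\cup\{y=0\}$ in the central slice $\{t=z_0\}$ and the horizontal direction $\{x=y=0\}$ running along $E$. I would compute $\sum_b N(b)\,q^{b}$ as the generating series, graded by the holomorphic Euler characteristic in $p$ and by the horizontal multiplicity $b$ in $q$, of subschemes of this local model that contain both branches in the central slice, carry a horizontal component of multiplicity $b$ along $\{x=y=0\}\times E$, and may have arbitrary embedded points and nilpotent thickenings concentrated at the node. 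The presence of the two crossing branches, rather than a single smooth branch, is exactly what distinguishes this from the factor $F(a_i)$ and is responsible both for the $q$-independent term $\tfrac{p}{(1-p)^{2}}$ and for the node-coupled series $\sum_{d\geq1}\sum_{k|d}k(p^{k}+p^{-k})q^{d}$, while the universal factor $\prod_{m\geq1}(1-q^{m})^{-1}$ records the partition structure of the thickenings.

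The main obstacle is this last step, the explicit evaluation of the node series $N(b)$. Unlike at a smooth point, the node forces an analysis in which the two coordinate branches and the horizontal axis interact, and one must carefully separate the contribution of genuinely nodal configurations from the product of the two smooth-branch contributions. I expect to carry this out by the same local, torus-equivariant bookkeeping of monomial ideals used in \cite{Bryan-K3xE}, combined with the $E$-translation argument already exploited in the proof of part~(i) to reduce to the configurations concentrated over the special point $z_{0}$. This should reduce the problem to a finite combinatorial count at the node, after which I would match the resulting series against the closed form asserted in the Proposition.
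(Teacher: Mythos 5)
Your outline is essentially the paper's own proof: the paper simply cites \cite[Section 5]{Bryan-K3xE}, notes that the only change is the prefactor $\bigl(p^{1/2}(1-p)^{-1}\bigr)^{e(N\setminus\{z\})}=p^{-2}(1-p)^4$ coming from $e(N\setminus\{z\})=-4$ (versus $0$ in the $K3$ case), and identifies the node factor as $N(b)=e\bigl(\Hilb^{\bullet,2,b}(\Xhat_{\{z\}\times E})\bigr)$ computed in the local model you describe. Your factorization, prefactor, and the $F(a_i)$ factors at smooth points all match.

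One caveat on the final step, where your sketch is too optimistic. The node contribution is \emph{not} reduced to "a finite combinatorial count": even for fixed $b$, $N(b)$ is an infinite Laurent series in $p$ (for instance $N(0)=1+\frac{p}{(1-p)^2}=1+\sum_{k\geq 1}kp^k$), because embedded points of arbitrary length can be concentrated at the node. What the torus-equivariant bookkeeping actually produces is a topological vertex expression with legs $(1)$, $(1)$ (the two branches of $N$) and $\lambda$ (the cross-section of the horizontal thickening, $|\lambda|=b$), and the generating series $\sum_{\lambda}q^{|\lambda|}\,w_{\lambda}(p)$ over all partitions must then be evaluated in closed form. This evaluation is a genuinely nontrivial identity obtained from the results of Bloch and Okounkov \cite{Bloch-Okounkov} on characters of the infinite wedge; it is this input, not elementary matching, that yields the product $\prod_{m\geq 1}(1-q^m)^{-1}\cdot\bigl(1+\frac{p}{(1-p)^2}+\sum_{d\geq 1}\sum_{k|d}k(p^k+p^{-k})q^d\bigr)$. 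Since you are in any case following \cite{Bryan-K3xE}, where exactly this is carried out, the omission is one of identifying the key identity rather than a wrong method -- but as written, the last paragraph of your plan would stall at an infinite sum it has not named the tool to evaluate.
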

The proof is identical 
to the proof of the
corresponding statement for contributions of nodal curves in the $K3\times E$ geometry of \cite[Section 5]{Bryan-K3xE}.
The only difference is that in our case $e (N \setminus \{z \})=-4$, whereas in the $K3$ case $e(N \setminus \{z \})=0$.
The different Euler
characteristic results in the different prefactor $p^{-2} (1-p)^{4}$ in \eqref{dt2_prop2_eqn}. The prefactor
in general is
\[ \big(p^{1/2} (1-p)^{-1} \big)^{e (N \setminus \{z\})} \,. \]

The geometry of the term $N(b)$ arises as the contribution
\[
N (b) = e\big(\Hilb ^{\bullet ,2,b}(\Xhat _{\{z \}\times E} ) \big) \,.
\]
In \cite{Bryan-K3xE}, the right hand side is expressed in terms of the
topological vertex. By results of \cite{Bloch-Okounkov}, we obtain 
the closed form of Proposition \ref{DT2_prop2}.

By Proposition~\ref{DT2_prop2} and Lemma~\ref{DT1_lem1}, we obtain
\begin{align*}
& e\big(\Hilb _{N}^{\bullet ,2,\bullet } (\Xhat _{N\times E}) \big)
= \int _{\Sym ^{\bullet }N} \rho _{*} (1) \, de \\
={} & p^{-2 } (1-p)^{4} \int _{\Sym ^{\bullet } (N \setminus \{z \})} \prod _{i} F (a_{i})\, de \int _{\Sym ^{\bullet } (\{z \})} N (b)\, de \\
={} & p^{-2} (1-p)^{4}\Big(\sum_{a \geq 0} F (a) q^{a} \Big)^{e(N \setminus \{z \})}\cdot \Big(\sum _{b \geq 0} N (b)q^{b} \Big)\\
={} &p^{-2} (1-p)^{4} \bigg( \prod _{m \geq 1}\frac{(1-pq^{m})^{4} (1-p^{-1}q^{m})^{4}}{(1-q^{m})^{5}} \bigg) \\
& \quad \quad \quad \quad \quad \quad \quad \quad\cdot 
\bigg(1+\frac{p}{(1-p)^{2}} + \sum _{d \geq 1}\sum _{k|d} k (p^{k}+p^{-k})q^{d}\bigg) \,.
\end{align*}
By \eqref{snd_factor} with $e(A \setminus N) = 3$, we find
\begin{equation}\label{tau_NN}
\begin{aligned}
& \tau_{\ast}(1)([N]) = e\big(\Hilb _{N}^{\bullet ,2,\bullet } (X) \big) \\
={} & K(p,q)^4 \cdot \bigg(1+\frac{p}{(1-p)^{2}} + \sum _{d \geq 1}\sum _{k|d} k (p^{k}+p^{-k})q^{d}\bigg) \,.
\end{aligned}
\end{equation}

\vspace{9pt}
\noindent \emph{Step 5.} We complete the calculation of $e\big(\Hilb ^{\bullet ,2,\bullet}_{\Vert ,\fix } (X) \big)$.

By \eqref{tau_CC} and \eqref{tau_NN}, the function $\tau_{\ast}(1)([C])$ only
depends upon whether $C \in |L|$ is nodal or not. Therefore,
\begin{align*}
& e\big(\Hilb ^{\bullet ,2,\bullet}_{\Vert ,\fix } (X) \big)
= \int_{|L|} \tau_{\ast}(1) de \\
={} & e( \p^1 \setminus 12 \text{ points} ) \cdot K(p,q)^4 + e(12 \text{ points}) \cdot \tau_{\ast}(1)(N) \\
={} & {-10} K(p,q)^4 + 12 K(p,q)^4 \cdot \bigg(1+\frac{p}{(1-p)^{2}} + \sum _{d\geq1}\sum _{k|d} k (p^{k}+p^{-k})q^{d}\bigg) \\
={} &  K(p,q)^4 \cdot \bigg(2+12 \frac{p}{(1-p)^{2}} + 12 \sum _{d\geq 1}\sum _{k|d} k (p^{k}+p^{-k})q^{d}\bigg) \,.
\end{align*}

\vspace{9pt}
\noindent \emph{Step 6.} We compute the contribution
$e\big(\Hilb ^{\bullet ,2,\bullet}_{\diag} (X)/X \big)$
arising from the locus of curves with a diagonal component.

By Lemma \ref{graph_curves} of Section \ref{secg3lc}, there are 
\[ 12\, \sigma \left( \frac{d}{2} \right)\, \delta_{d, \text{even}} \]
isolated translation classes of diagonal curves of class $(\beta_{2},d)$.
Moreover, the translation action of $X$ on each translation class is free.

Choose one representative from each $X$-orbit of the diagonal classes.
Let 
\begin{equation}\label{gg55}
 \Hilb ^{n,2,d}_{\diag ,\fix } (X)  \subset \Hilb ^{n,2,d}_{\diag } (X) 
\end{equation}
be the subscheme parameterizing curves, who contain one of the chosen representatives.
The moduli space \eqref{gg55} 
defines a slice for the action of $X$ on $\Hilb ^{n,2,d}_{\diag } (X)$,
\[
\Hilb ^{n,2,d}_{\diag } (X)/X\cong \Hilb ^{n,2,d}_{\diag,\fix  } (X) \,.
\]

The contribution of such subschemes to the Euler characteristic
is computed precisely as the contribution with a genus $3$ vertical component
in Step 3 above. Taking into account the
number of diagonal curves and their degree in the horizontal
direction, we find
\begin{align*}
e\big(\Hilb ^{\bullet ,2,\bullet }_{\diag ,\fix } (X) \big)
&=e\big(\Hilb _{C}^{\bullet ,2,\bullet } (X) \big) \cdot \Big( 12 \sum_{d \geq 1}\sum_{k|d} k q^{2d} \Big) \\
&= K(p,q)^4 \cdot \Big( 12 \sum_{d \geq 1}\sum_{k|d} k q^{2d} \Big) \,.
\end{align*}

\vspace{9pt}
\noindent \emph{Step 7.} We have calculated all terms in the
sum \eqref{6_2_dt2sum_start} in Steps 5 and 6.
After summing, the proof of part (ii) of Theorem \ref{eulthm} is complete. \qed

%

\subsection{The Behrend function.}\label{subsec: putting in the Behrend function}

In the cases $\dtilde \in \{1, 2 \}$, we conjecture that the Behrend
function weighted Euler characteristic of the Hilbert schemes differs
from the ordinary Euler characteristic by a factor of $\pm (-1)^{n}$.
Here, $n$ is the holomorphic Euler characteristic, and the overall sign
depends upon whether the component of the Hilbert scheme corresponds
to subschemes with diagonal curves or vertical curves.

The Behrend function on the quotient
\[
\nu :\Hilb ^{n,\dtilde ,d} (X)/X \to \BZ 
\]
induces, by our identification of the various components with different slices of the $X$-action,
constructible functions on 
$$\Hilb ^{n,1,d}_{\fix } (X) \,, \quad \Hilb ^{n,2,d}_{\Vert
,\fix } (X)\,, \quad \text{and } \quad \Hilb ^{n,2,d}_{\diag ,\fix } (X) \,.$$
We will denote these functions by $\nu$ as well and write
$e( \cdot, \nu )$ for the topological Euler characteristic weighted by $\nu$.

\begin{conjecture}\label{conj_Behrend}
We have
\begin{align*}
e \big(\Hilb ^{n,1,d}_{\fix } (X)\big)& = - (-1)^{n} e \big(\Hilb
^{n,1,d}_{\fix } (X),\nu \big) \,,\\
e \big(\Hilb ^{n,2,d}_{\Vert ,\fix } (X)\big)& = - (-1)^{n} e \big(\Hilb
^{n,2,d}_{\Vert ,\fix } (X),\nu \big) \,,\\
e \big(\Hilb ^{n,2,d}_{\diag ,\fix } (X)\big)& = + (-1)^{n} e \big(\Hilb
^{n,2,d}_{\diag ,\fix } (X),\nu \big) \,.
\end{align*}
\end{conjecture}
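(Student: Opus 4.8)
The plan is to repeat the computation of Theorem~\ref{eulthm} essentially verbatim, but to carry the Behrend function $\nu$ through every geometric decomposition, thereby reducing the global statement to a finite list of local weighted Euler characteristic computations. The two structural inputs are the \emph{locality} and \emph{multiplicativity} of the Behrend function: the value of $\nu$ at a point $[Z]\in\Hilb^{n,\dtilde,d}(X)/X$ depends only on an analytic neighborhood of $[Z]$, and when the parametrized subscheme splits as a disjoint union the induced constructible function factors as a product over the components. First I would verify that the fpqc decomposition $\{\Xhat_{C_0\times E},\,U\}$ of Step~2 of the proof of Theorem~\ref{eulthm}~(i), the stratification by the map $\rho_d$, and the $E$-action argument of Step~3 are all compatible with $\nu$ in the sense that the induced constructible functions multiply. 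Granting this, $e\big(\Hilb^{\bullet,\dtilde,\bullet}(X)/X,\nu\big)$ is expressed, exactly as in \eqref{6_3_aaaa}--\eqref{6_3_jfhg}, as a product of a fixed-curve factor over $\Xhat_{C_0\times E}$ (resp.\ $\Xhat_{C\times E}$), a point-contribution factor over $U$, and, for $\dtilde=2$, an integral over the pencil $|L|$ together with the diagonal locus.

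The heart of the matter is then the Behrend-weighted version of Propositions~\ref{DT1_prop1} and~\ref{DT2_prop2}. Each local building block is modeled on the formal neighborhood of a curve inside the Calabi--Yau threefold $X$, so the relevant weighted generating functions should be governed by the signed topological vertex and the MacMahon-type series of local Donaldson--Thomas theory \cite{MNOP1, Bryan-K3xE}. Concretely, I would show that the weighted analogue of the series $\sum_a F(a)q^a$ and of the nodal series $\sum_b N(b)q^b$ is obtained from the unweighted one by the substitution $p\mapsto -p$ (equivalently, by inserting the sign $(-1)^n$ on the holomorphic Euler characteristic grading), which is the local manifestation of the conjectured global sign and is precisely what converts $\DThat$ into $\DT$ after the variable change $(-p)^n$ of Corollary*~\ref{dtcor}. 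For the smooth strata — the generically reduced vertical curve $C_0\times\{z_0\}$, a nonsingular $C\in|L|$, or a rigid diagonal representative — one has simply $\nu=(-1)^{\dim}$, and I would check that the parity of this dimension matches that of $n$ as dictated by the $0$-dimensional reduced symmetric obstruction theory on $\Hilb^{n,\dtilde,d}(X)/X$, producing the clean factor $(-1)^n$ in each of the three cases of Conjecture~\ref{conj_Behrend}.

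The sign discrepancy between the vertical and diagonal loci — the change from $-(-1)^n$ to $+(-1)^n$ — should be isolated as a parity statement about the smooth moduli directions transverse to the punctual contributions. The diagonal representatives are rigid isolated curves contributing through an even-dimensional deformation space, whereas the vertical locus acquires an extra odd contribution from the $|L|$-direction and the $E$-translation slice. I would extract this from a direct comparison of the two local models, using that Gulbrandsen's deformation invariance \cite{Gul} of the $\DT$ theory (for $n\neq0$) constrains the overall sign and allows one to pin it down on a single convenient fiber.

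The main obstacle, and the reason this remains a conjecture, is the rigorous computation of $\nu$ on the genuinely singular, non-reduced strata where embedded and thickened points collide with the horizontal and vertical curves. There the Hilbert scheme is neither smooth nor a transverse critical locus in any evident chart, and the identification of the weighted local contribution with the signed topological vertex requires knowing that the symmetric obstruction theory near such configurations is analytically the pullback of the local toric model. Establishing this comparison — effectively a Behrend-function refinement of Propositions~\ref{DT1_prop1} and~\ref{DT2_prop2} that matches the toric computation of \cite{Bryan-K3xE} sign-by-sign — is exactly the step that resists an unconditional argument, and it is where any honest proof would have to concentrate its effort.
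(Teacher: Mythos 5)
This statement is Conjecture~\ref{conj_Behrend}: the paper does \emph{not} prove it, and offers only the closing discussion of Section~\ref{subsec: putting in the Behrend function}, where the authors note the parallel with the toric Calabi--Yau case \cite{MNOP1} and propose exactly the two-step route you describe — (1) that the motivic manipulations in the proof of Theorem~\ref{eulthm} (the fpqc splitting, the $E$-action argument, the stratifications by $\rho_d$ and $\tau$) are compatible with $\nu$, and (2) that at a subscheme formally locally given by monomial ideals one has $\nu = \pm(-1)^n$ with the overall sign determined solely by whether the one-dimensional component is diagonal or vertical. Your proposal is essentially this same program, elaborated with the correct expected local manifestation (the substitution $p \mapsto -p$ in the Behrend-weighted analogues of the series of Propositions~\ref{DT1_prop1} and~\ref{DT2_prop2}, which is what converts $\DThat$ into $\DT$ under the $(-p)^n$ convention of Corollary*~\ref{dtcor}), and — crucially — you correctly refrain from claiming a proof and isolate the same obstruction the authors do: the computation of $\nu$ on the singular, non-reduced strata, where the Hilbert scheme is not presented as a critical locus and no comparison with the toric local model is available. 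So your attempt matches the paper's own position on this statement.

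Two details of your sketch deserve caution. First, your parity explanation of the sign flip between the vertical and diagonal loci via ``the $|L|$-direction and the $E$-translation slice'' cannot be the mechanism: for $\dtilde = 1$ the linear system $|L|$ is a single point and the fixed slice $\Hilb^{n,1,d}_{\fix}(X)$ has already absorbed the translations, yet the vertical sign $-(-1)^n$ is present there (Corollary*~\ref{dtcor}(i)); in the paper's heuristic the sign is attached to the curve component itself, exactly as in the toric computation where it depends on the one-dimensional support and not on the embedded points. Second, Gulbrandsen's deformation invariance \cite{Gul} applies to the full invariants $\DT_{n,\beta}$ with $n \neq 0$, not stratum by stratum: $\DThat$ is expressly \emph{not} deformation invariant, and the vertical/diagonal decomposition is special to the product geometry $X = A \times E$, so deformation invariance cannot be used to pin down the local signs as you suggest. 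Neither remark changes the verdict — your proposal is an honest program whose missing step is precisely the content of the conjecture.
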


\noindent Assuming Conjecture~\ref{conj_Behrend},
we prove Corollary* \ref{dtcor}.

\begin{proof}[Proof of Corollary \ref{dtcor}]
In case $\dtilde = 1$, we have, by Conjecture~\ref{conj_Behrend},
\[ \DT_1= - \DThat_1 \,. \]
Part (i) of Corollary* \ref{dtcor} hence follows from part (i) of Theorem \ref{eulthm}.

In case $\dtilde = 2$, we have, following
\eqref{6_2_dt2sum_start},
\begin{multline*} \DT_2 =\\  
\sum_{d \geq 0}\sum_{n \in \BZ}
\bigg(
\frac{1}{4} e\big(\Hilb ^{\bullet ,2,\bullet}_{\Vert ,\fix } (X) , \nu \big)
+
e\big(\Hilb ^{\bullet ,2,\bullet}_{\diag ,\fix } (X) , \nu \big)
\bigg)
(-p)^n q^d \,.
\end{multline*}
By Conjecture~\ref{conj_Behrend}, the right side equals
$$
\sum_{d \geq 0}\sum_{n \in \BZ}
\bigg(
{-\frac{1}{4}} e\big(\Hilb ^{\bullet ,2,\bullet}_{\Vert ,\fix } (X)  \big)
+
e\big(\Hilb ^{\bullet ,2,\bullet}_{\diag ,\fix } (X)  \big)
\bigg)\,
p^n q^d \,.$$
These terms have been calculated in Steps 5 and 6 of the proof of Theorem \ref{eulthm} (ii).
Summing up, we obtain
\[ K(p,q)^4 \cdot \bigg( {-3} \wp(p,q) - \frac{1}{4} + 6 \sum_{d \geq 1} \sum_{k | d} k (2 q^{2d} - q^d) \bigg) \,, \]
where
\[ \wp(p,q) = \frac{1}{12} + \frac{p}{(1-p)^2} + \sum_{d \geq 1}\sum_{k|d} k (p^k - 2 + p^{-k}) q^d \]
is the Weierstrass elliptic function expanded in $p$ and $q$.
Rewriting
\begin{multline*} {- \frac{1}{4}} + 6 \sum_{d \geq 1}\sum_{ k | d} k (2 q^{2d} - q^d) \\
 = -\frac{1}{4}
 \Big( 1 + 24 \sum_{d \geq 1}\sum_{ k | d} k q^d - 24 \sum_{d \geq 1} \sum_{\substack{k|d \\ k \text{ even}}} k q^{d} \Big) =
 -\frac{1}{4} \vartheta_{D_4}(q)
\end{multline*}
where
\[ \vartheta_{D_4}(q) = 1 + 24 \sum_{d \geq 1} \sum_{\substack{k|d \\ k \textup{ odd}}} k q^d \]
is the theta function of the $D_4$ lattice,
we find
\[ \DT_2 = - K(p,q)^4 \cdot \bigg( 3 \wp(p,q) + \frac{1}{4} \vartheta_{D_4}(q) \bigg) \,.\]
Hence, part (ii) of Corollary* \ref{dtcor}
follows  from Lemma \ref{modular_identity} below.
\end{proof}

\begin{lemma} \label{modular_identity} We have
\[
K(p,q)^4 \cdot \bigg( 3 \wp(p,q) + \frac{1}{4} \vartheta_{D_4}(q) \bigg)
=
\frac{3}{2}K (p,q)^{4}\wp(p,q)
+\frac{3}{8}K (p^{2},q^{2})^{2} \,.
\]
\end{lemma}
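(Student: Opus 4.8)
The plan is to reduce the statement to an elementary identity of elliptic functions and then invoke Liouville's theorem. First I would clear the common factor: subtracting $\tfrac32 K(p,q)^4\wp(p,q)$ from both sides and multiplying by $\tfrac83$, the asserted identity is equivalent to
\[
K(p,q)^4\Big(4\,\wp(p,q) + \tfrac23\,\vartheta_{D_4}(q)\Big) = K(p^2,q^2)^2 .
\]
Writing $p = e^{2\pi i z}$, $q = e^{2\pi i\tau}$ and $u = 2\pi z$ as in \eqref{KKKKKdef}, and noting $K(p^2,q^2) = K(2z,2\tau)$, it then suffices to prove that the two functions
\[
\Phi(z,\tau) = \frac{K(2z,2\tau)^2}{K(z,\tau)^4}, \qquad \Psi(z,\tau) = 4\,\wp(z,\tau) + \tfrac23\,\vartheta_{D_4}(\tau)
\]
coincide for each fixed $\tau$.

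Next I would show that $\Phi$ is an elliptic function of $z$ with periods $1$ and $\tau$. Using $K(z,\tau) = i\vartheta_1(z,\tau)/\eta(\tau)^3$ together with the quasi-periodicities $K(z+1,\tau) = -K(z,\tau)$ and $K(z+\tau,\tau) = -p^{-1}q^{-1/2}K(z,\tau)$, one checks that the index-$\tfrac12$ automorphy factors of the numerator $K(2z,2\tau)^2$ and the denominator $K(z,\tau)^4$ cancel exactly, so that $\Phi(z+1,\tau) = \Phi(z+\tau,\tau) = \Phi(z,\tau)$. The divisor of $\Phi$ on the torus $\mathbb{C}/(\mathbb{Z}+\tau\mathbb{Z})$ is then read off from the zeros of $\vartheta_1$: since $\vartheta_1(z,\tau)$ vanishes simply exactly on $\mathbb{Z}+\tau\mathbb{Z}$ while $\vartheta_1(2z,2\tau)$ vanishes simply exactly on $\tfrac12\mathbb{Z}+\tau\mathbb{Z}$, the function $\Phi$ has a double pole at $z\equiv 0$, a double zero at the half-period $z\equiv\tfrac12$, and no other zeros or poles.

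I would then identify the divisor of $\Psi$. The key input is the evaluation $\wp(\tfrac12,\tau) = -\tfrac16\,\vartheta_{D_4}(\tau)$, which is immediate from the defining $q$-expansions: setting $p=-1$ gives $p^m - 2 + p^{-m} = 0$ for $m$ even and $=-4$ for $m$ odd, whence
\[
\wp(\tfrac12,\tau) = -\tfrac16 - 4\sum_{d\ge 1}\sum_{\substack{m\mid d\\ m\ \mathrm{odd}}} m\,q^d = -\tfrac16\,\vartheta_{D_4}(\tau).
\]
Hence $\Psi = 4\big(\wp(z,\tau) - \wp(\tfrac12,\tau)\big)$, which has a double pole at $z\equiv 0$ and a double zero at the half-period (a ramification point of $\wp$) $z\equiv\tfrac12$ — exactly the same divisor as $\Phi$. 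Therefore $\Phi/\Psi$ is a holomorphic, nowhere-vanishing elliptic function of $z$ and, by Liouville's theorem, constant in $z$. Comparing leading Laurent coefficients at $z=0$, where $K(z,\tau) = iu + O(u^3)$ forces both $\Phi$ and $4\wp$ to be asymptotic to $4\,(2\pi i z)^{-2}$ with $\tau$-independent coefficient, shows this constant equals $1$. This gives $\Phi = \Psi$ and hence the Lemma.

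The main obstacle is the precise bookkeeping of the automorphy factors needed to certify that $\Phi$ is genuinely doubly periodic and that its divisor is $2[\tfrac12]-2[0]$ and nothing more; everything else is a short residue and leading-coefficient computation. As an alternative to the Liouville argument, one could observe that all three terms are weak Jacobi forms of weight $-2$ and index $2$ for $\Gamma_0(2)$ — using $K^2 = \phi_{-2,1}$, the relation $\wp\,\phi_{-2,1} = \tfrac1{12}\phi_{0,1}$ (valid since the double pole of $\wp$ cancels the double zero of $\phi_{-2,1}$), and $\vartheta_{D_4} = 2E_2(2\tau)-E_2(\tau) \in M_2(\Gamma_0(2))$ — and that this space is two-dimensional, spanned by $\phi_{-2,1}\phi_{0,1}$ and $\vartheta_{D_4}\phi_{-2,1}^2$; matching the $q^0$-coefficients, a single Laurent-polynomial identity in $p$, then fixes the combination and proves equality.
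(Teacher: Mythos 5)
Your proposal is correct and is essentially the paper's own proof: both arguments pass to the actual functions of $(z,\tau)$, use the quasi-periodicity of $K$ to produce an elliptic function of $z$, invoke the evaluation $\wp\bigl(\tfrac12,\tau\bigr) = -\tfrac16\,\vartheta_{D_4}(\tau)$, and conclude by Liouville. The only (cosmetic) difference is that the paper shows the difference $K(2z,2\tau)^2/K(z,\tau)^4 - 4\wp(z,\tau)$ is entire and doubly periodic, hence constant in $z$, and evaluates it at $z=\tfrac12$ where $K(1,2\tau)=0$, whereas you match the divisors $2[\tfrac12]-2[0]$ of the two sides and normalize the resulting constant by the leading Laurent coefficient at $z=0$ — equivalent bookkeeping for the same Liouville argument.
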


\begin{proof}
The Lemma is stated as an equality of formal power series.
Since both sides converge for the variables
\[ p = e^{2 \pi i z} \quad \text{ and } \quad q = e^{2 \pi i \tau} \]
with $z \in \BC$ and $\tau \in \BH$,
we may work with the actual functions
$K(z,\tau), \wp(z,\tau)$, and $\vartheta_{D_4}(\tau)$.

The statement of the Lemma is then equivalent to
\begin{equation} \varphi(z,\tau) = \frac{K(2z, 2\tau)^2 }{K(z,\tau)^4} - 4 \wp(z,\tau) = \frac{2}{3} \vartheta_{D_4}(\tau) \,. \label{xuenq} \end{equation}

From the definition of $K(z,\tau)$, we obtain
\[ K(z+ \lambda \tau + \mu, \tau) = (-1)^{\lambda + \mu} q^{-\lambda/2} p^{-\lambda} K(z,\tau) \]
for all $\lambda, \mu \in \BZ$.
Combined with the double-periodicity of the Weierstrass $\wp$-function, this implies
\[ \varphi(z + \lambda \tau + \mu, \tau) = \varphi(z, \tau) \]
for all $\lambda ,\mu \in \BZ$.
Since
\[ K(z,\tau) = 2 \pi i z + O(z^3) \quad \text{ and } \quad \wp(z,\tau) = \frac{1}{(2 \pi i z)^2} + O(1) \,, \]
the function $\varphi(z,\tau)$ has no pole at $z = 0$.
Because the only zero of $K(z,\tau)$ and $\vartheta_1(z,\tau)$
and the only pole of $\wp(z,\tau)$ in the fundamental region
are at $z = 0$, the function $\varphi(z,\tau)$ is entire.
By double-periodicity, $\varphi(z,\tau)$ is hence a constant only depending on $\tau$.

We evaluate $\varphi(z,\tau)$ at $z = 1/2$.
We have
\[ \wp\bigg( \frac{1}{2}, \tau \bigg) = - \frac{1}{6} \vartheta_{D_4}(\tau) \,. \]
Since $K(1/2, \tau) \neq 0$, but $K(1, \tau) = 0$, this shows
\[ \varphi(z,\tau) = \varphi\bigg( \frac{1}{2}, \tau\bigg) = -4 \cdot \bigg( {- \frac{1}{6}} \vartheta_{D_4}(\tau) \bigg) = \frac{2}{3} \vartheta_{D_4}(\tau) \,.\qedhere \]
\end{proof}
\subsection{Discussion of Conjecture~\ref{conj_Behrend}}

The phenomenon proposed by Conjecture~\ref{conj_Behrend} is parallel
to the phenomenon exhibited by the Donaldson-Thomas invariants of
toric Calabi-Yau threefolds. In the case of toric Calabi-Yau
threefolds, the only subschemes which contribute to the DT invariants
are the torus fixed subschemes, namely those which are locally given
by monomial ideals. The value of the Behrend function at such a
subscheme $Z$ is given by $\pm (-1)^{n}$ where $n=\chi (\O _{Z})$ and
the overall sign depends only on the 1-dimensional component of $Z$
(and not on the embedded points) \cite{MNOP1}.

One route to prove Conjecture~\ref{conj_Behrend} would be to show the
following two properties.
\begin{enumerate}
\item The motivic methods of the previous section are compatible with
the Behrend function, specifically that the group actions defined on
the various substrata of $\Hilb (X)/X$ respect the Behrend function.
\item The value of the Behrend function at a subscheme $Z$ which is
formally locally given by monomial ideals is given by $\pm (-1)^{n}$
where the overall sign is positive if $Z$ contains a diagonal curve
and negative if $Z$ contains a vertical curve.
\end{enumerate}


\section{Gromov-Witten theory} \label{GW3}
\subsection{Overview} Let $X$ be an abelian threefold, let $g\geq 2$ be the
genus, and 
let $\beta \in H_2(X , \BZ)$
be a curve class of type $(d_1, d_2, d_3)$ with $d_1, d_2 > 0$.

In Section \ref{GW3_sec1}, we define a virtual fundamental class on the quotient stack
\[ \Mbar_{g}(X, \beta) / X \, .\]
The degree of the virtual class is the \emph{quotient Gromov-Witten invariant} of $X$.

The reduced Gromov-Witten invariants of $X$ are defined
by integration
against the 3-reduced virtual class (defined in Section \ref{cosection})
on the moduli space $\Mbar_{g,n}(X, \beta)$.
In Section \ref{GW3_sec2}, we prove that these invariants are fully determined by the quotient
Gromov-Witten invariants and classical intersections.

In Section~\ref{GW3_sec3}, we relate the quotient invariants in genus $3$
to the lattice counts of Section~\ref{piso}.
We also prove the crucial Lemma~\ref{graph_curves} needed in Section \ref{secdt}.
In Section \ref{GW3_sec4}, we use Jacobi form techniques
to show that Conjectures \ref{conjB} and \ref{conjC} are consistent with Theorem \ref{dtthm}.

Finally, we extend Conjecture \ref{conjC} to all curve classes in Section~\ref{Subsection_imprimitive_classes}.

\subsection{Quotient invariants} \label{GW3_sec1}

Since $g\geq 2$, $X$ acts on $\Mbar_{g}(X, \beta)$ with finite
stabilizers. Let
\begin{equation} q : \Mbar_{g}(X, \beta) \to \Mbar_g(X,\beta)/X \,. \label{GW3_101} \end{equation}
be the quotient map.

Let $0_X \in X$ be the identity element, let $$\ev : \Mbar_{g,1}(X, \beta) \to X$$ be the evaluation map,
let $\psi_1$ be the first Chern class of the cotangent line
$L_1 \to \Mbar_{g,1}(X, \beta)$
and let 
\[ \pi : \Mbar_{g,1}(X, \beta) \to \Mbar_{g}(X, \beta) \]
be the forgetful map.

We define the reduced virtual class on $\Mbar_g(X,\beta)/X$ by
\[ [ \Mbar_g(X,\beta)/X ]^{\text{red}} = \frac{1}{2g-2} \, (q \circ \pi)_{\ast}\Big( \bigl( \ev^{-1}(0_X) \cup \psi_1 \bigr) \cap [ \Mbar_{g,1}(X, \beta) ]^{\text{red}} \Big) \,. \]
The definition is justified by the following Lemma.

\begin{lemma} \label{GW3_pullback_lemma}
Let $p : \Mbar_{g,n}(X, \beta) \to \Mbar_g(X, \beta)/X$ be the composition
of the forgetful with the quotient map. Then,
\[ p^{\ast} [ \Mbar_{g}(X, \beta)/X ]^{\textup{red}} = [ \Mbar_{g,n}(X, \beta) ]^{\textup{red}} \,.\]
\end{lemma}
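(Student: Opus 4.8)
The plan is to reduce the statement to the case $n=0$ and then verify that case by a flat base-change computation which realizes the quotient class through the translation decomposition of the moduli space. First I would record the compatibility of the reduced class with forgetting marked points: if $\pi_n\colon\Mbar_{g,n}(X,\beta)\to\Mbar_g(X,\beta)$ forgets all $n$ markings, then
\[ [\Mbar_{g,n}(X,\beta)]^{\text{red}} = \pi_n^{\ast}[\Mbar_g(X,\beta)]^{\text{red}}. \]
This should be routine from the construction of Section \ref{cosection}: the cosection $\sigma_\theta$ is induced by the holomorphic $2$-forms on $X$ and modifies the obstruction theory only in directions transverse to the relative obstruction theory of the universal curve $\pi_n$, so the reduction commutes with the flat pullback $\pi_n^{\ast}$. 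Granting this, it suffices to treat $n=0$, i.e. to show $q^{\ast}[\Mbar_g(X,\beta)/X]^{\text{red}} = [\Mbar_g(X,\beta)]^{\text{red}}$, since then $p^{\ast}=\pi_n^{\ast}q^{\ast}$ yields the general case.

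For $n=0$ write $M=\Mbar_g(X,\beta)$, $M_1=\Mbar_{g,1}(X,\beta)$, $\bar M = M/X$, and set $\gamma=(\ev^{-1}(0_X)\cup\psi_1)\cap[M_1]^{\text{red}}$. Because $X$ is proper and acts with finite stabilizers, $q\colon M\to\bar M$ is a smooth (hence flat) $X$-torsor and $q\circ\pi\colon M_1\to\bar M$ is proper. I would apply flat base change to the Cartesian square built from $q$ and $q\circ\pi$: the torsor structure identifies the fibre product $M_1\times_{\bar M}M$ with $M_1\times X$, under which the two projections become $\tilde q=\mathrm{pr}_{M_1}$ and $\tilde\pi(a,x)=\pi(a)\cdot x$. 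Base change then gives
\[ q^{\ast}[\bar M]^{\text{red}} = \tfrac{1}{2g-2}\,\tilde\pi_{\ast}\,\tilde q^{\ast}\gamma. \]

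The heart of the argument is to identify $\tilde\pi$ on the support of $\tilde q^{\ast}\gamma$, and this is the step I expect to demand the most care. I would use the translation decomposition $M_1\cong M_1^0\times X$ with $M_1^0=\ev^{-1}(0_X)$, the analogue of \eqref{333}, under which $[M_1]^{\text{red}}=[M_1^0]\times[X]$ and $\ev$ is the projection to $X$. Then $\ev^{-1}(0_X)\cap[M_1]^{\text{red}}$ is supported on $M_1^0\times\{0_X\}$, and on this slice the map $(m_0,x)\mapsto\pi(m_0)\cdot x$ is precisely the forgetful map $\pi\colon M_1\to M$ read through the identification $M_1\cong M_1^0\times X$ (a pointed map whose marking has image $x$ corresponds to the translate by $x$ of its normalization). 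Hence $\tilde q^{\ast}\gamma=\psi_1\cap[M_1]^{\text{red}}$ and $\tilde\pi_{\ast}\tilde q^{\ast}\gamma=\pi_{\ast}(\psi_1\cap[M_1]^{\text{red}})$. The dilaton relation $\pi_{\ast}(\psi_1\cap[M_1]^{\text{red}})=(2g-2)[M]^{\text{red}}$, itself a consequence of $[M_1]^{\text{red}}=\pi^{\ast}[M]^{\text{red}}$ together with $\pi_{\ast}\psi_1=2g-2$, then gives $q^{\ast}[\bar M]^{\text{red}}=[M]^{\text{red}}$. The main obstacle is exactly the torsor identification $M_1\times_{\bar M}M\cong M_1\times X$ and the verification that $\tilde\pi$ restricts to the forgetful map; once these are in place, the flat base change, the product formula for $[M_1]^{\text{red}}$, and the dilaton relation are formal.
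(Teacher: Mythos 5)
Your proposal is correct and matches the paper's own proof in all essentials: reduction to $n=0$ via compatibility of the reduced class with the forgetful map, the translation decomposition $\Mbar_{g,1}(X,\beta)\cong \ev^{-1}(0_X)\times X$ together with $X$-equivariance of the obstruction theory to see that $\psi_1\cap[\Mbar_{g,1}(X,\beta)]^{\textup{red}}$ is pulled back from the slice, flat base change around the quotient, and the dilaton equation. Your torsor square $\Mbar_{g,1}(X,\beta)\times_{\Mbar_{g}(X,\beta)/X}\Mbar_{g}(X,\beta)\cong\Mbar_{g,1}(X,\beta)\times X$ is merely a repackaging of the Cartesian diagram the paper uses, namely $\Mbar_{g,1}(X,\beta)\cong \ev^{-1}(0_X)\times_{\Mbar_{g}(X,\beta)/X}\Mbar_{g}(X,\beta)$, so the two computations coincide step for step.
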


\begin{proof}
The map $p$ factors as
\[ p : \Mbar_{g,n}(X, \beta) \overset{p'}{\longrightarrow} \Mbar_g(X, \beta) \overset{q}{\longrightarrow} \Mbar_g(X,\beta)/X \]
where $p'$ is the forgetful and $q$ is the quotient map \eqref{GW3_101}.
Since we have
\[ p'^{\ast} [ \Mbar_g(X, \beta) ]^{\text{red}} = [ \Mbar_{g,n}(X, \beta) ]^{\text{red}} \,, \]
it is enough to prove
\[ q^{\ast} [ \Mbar_{g}(X, \beta)/X ]^{\text{red}} = [ \Mbar_{g}(X, \beta) ]^{\text{red}} \,. \]

Consider the product decomposition
\begin{equation} \Mbar_{g,1}(X, \beta) = \Mbar_{g,1}^{0}(X, \beta) \times X \label{GW3_102} \,, \end{equation}
where
\[ \Mbar_{g,1}^{0}(X, \beta) = \ev^{-1}(0_X) \,. \]
Under the decomposition \eqref{GW3_102}, write 
$$q':\Mbar_{g,1}(X,\beta) \rightarrow \Mbar_{g,1}^0(X,\beta)$$
for the projection to the first factor. Since the obstruction theory of $\Mbar_{g,1}(X,\beta)$ is
$X$-equivariant, we have
\[ \psi_1 \cap [ \Mbar_{g,1}(X, \beta) ]^{\text{red}} = q^{\prime \ast} \alpha \]
for some class $\alpha$ on $\Mbar_{g,1}^{0}(X, \beta)$.

Consider the inclusion 
\[ \iota : \Mbar_{g,1}^{0}(X, \beta) \to \Mbar_{g,1}(X,\beta) \]
defined by $\Mbar_{g,1}^{0}(X, \beta) \times 0_X$ under \eqref{GW3_102}
and the fiber diagram
\[
\xymatrix{
\Mbar_{g,1}(X, \beta) \ar@<+.5ex>[r]^{q'} \ar[d]^{\pi} & \Mbar_{g,1}^{0}(X, \beta) \ar@<+.5ex>[l]^{\iota} \ar[d]^{\pi'} \\
\Mbar_{g}(X, \beta) \ar[r]^q & \Mbar_{g}(X, \beta) / X \,,
}
\]
where $\pi'$ is the map induced by $\pi$. Then
\begin{align*}
& (2g-2) q^{\ast} [ \Mbar_{g}(X, \beta)/X ]^{\text{red}} \\
={} & q^{\ast} \pi'_{\ast} q'_{\ast}  \Big( \bigl(\ev^{-1}(0_X) \cup \psi_1 \bigr) \cap [ \Mbar_{g,1}(X, \beta) ]^{\text{red}} \Big) \\
={} & \pi_{\ast} q^{\prime \ast} q'_{\ast} \Big( \bigl( \ev^{-1}(0_X) \cup \psi_1\bigr) \cap [ \Mbar_{g,1}(X, \beta) ]^{\text{red}} \Big) \\
={} & \pi_{\ast} q^{\prime \ast} \iota^{\ast} \bigl( \psi_1 \cap [ \Mbar_{g,1}(X, \beta) ]^{\text{red}} \bigr) \\
={} & \pi_{\ast} q^{\prime \ast} \alpha \\
={} & \pi_{\ast} \bigl( \psi_1 \cap [ \Mbar_{g,1}(X, \beta) ]^{\text{red}} \bigr) \,.
\end{align*}
The Lemma now follows directly from the dilaton equation.
\end{proof}

We define the {\em quotient Gromov-Witten invariants} of $X$ by
\begin{equation}\label{ccc3}
\mathsf{N}_{g, \beta} = \int_{ [ \Mbar_{g}(X, \beta) / X ]^{\text{red}} } \mathsf{1} \,. 
\end{equation}

\subsection{Reduced Gromov-Witten invariants} \label{GW3_sec2}
Let $g \geq 2$ and let $\beta$ be a curve class of type $(d_1, d_2, d_3)$ with $d_1, d_2 > 0$.
Let 
\[ [ \Mbar_{g,n}(X, \beta) ]^{\text{red}} \]
be the 3-reduced virtual class on the moduli space $\Mbar_{g,n}(X,\beta)$ constructed in Section \ref{cosection}.
The \emph{reduced Gromov-Witten invariants of} $X$ are defined by
\begin{equation}
\Big\langle \tau_{a_1}(\gamma_1) \dots \tau_{a_n}(\gamma_n) \Big\rangle_{g,\beta}^{X, \text{red}} =
 \int_{ [ \Mbar_{g,n}(X,\beta) ]^{\text{red}} } \prod_{i=1}^n \ev_i^{\ast}(\gamma_i) \cup \psi_i^{a_i} \,,
\label{X_GW_def}
\end{equation}
for $\gamma_1, \dots, \gamma_n \in H^{\ast}(X, \BQ)$ and $a_1, \dots, a_n \geq 0$.

By the definition of the virtual class  $[ \Mbar_{g}(X, \beta) / X ]^{\text{red}}$
and the quotient Gromov-Witten invariants \eqref{ccc3}, we have
\begin{equation*}
\mathsf{N}_{g, \beta} = \frac{1}{2g-2} \cdot \big\langle \tau_1( \ppp ) \big\rangle_{g,\beta}^{X, \text{red}} \,, 
\end{equation*}
where $\ppp \in H^6(X, \BZ)$ is the class of a point.
The invariants $\mathsf{N}_{g, \beta}$ will be shown to determine
all reduced Gromov-Witten invariants \eqref{X_GW_def} of $X$.

We first determine all primary Gromov-Witten invariants
in terms of $\mathsf{N}_{g,\beta}$.
Consider the translation action
\begin{equation} t : X^{n+1} \to X, \ \ \ \ (a, x_1, \dots, x_n) \mapsto (x_1 - a, \dots, x_n - a) \,. \label{GW3_translation_map}\end{equation}

\begin{lemma} \label{GW3_primary} For $\gamma_1, \dots, \gamma_n \in H^{\ast}(X, \BQ)$,
\begin{equation}
\Big\langle \tau_{0}(\gamma_1) \dots \tau_{0}(\gamma_n) \Big\rangle_{g,\beta}^{X, \textup{red}}
=
\mathsf{N}_{g, \beta} \cdot
 \int_{t_{\ast}([X] \otimes \beta^{\otimes n})} \gamma_1 \otimes \ldots \otimes \gamma_n  \,.
\label{GW3_xxgjg}
\end{equation}
\end{lemma}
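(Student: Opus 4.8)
The plan is to reduce the reduced invariant on $\Mbar_{g,n}(X,\beta)$ to the quotient invariant $\mathsf{N}_{g,\beta}$ by exploiting the product decomposition coming from the translation action of $X$, exactly as in the abelian vanishing setup of Section \ref{ababab}. First I would use the product decomposition
\[
\Mbar_{g,n}(X,\beta) \cong \Mbar_{g,n}^0(X,\beta) \times X,
\]
where $\Mbar_{g,n}^0(X,\beta) = \ev_1^{-1}(0_X)$, together with the fact (Lemma \ref{GW3_pullback_lemma} and its proof) that the reduced virtual class is pulled back from $\Mbar_{g,n}^0(X,\beta)$. Under this decomposition the composite evaluation map $\ev = (\ev_1, \dots, \ev_n)$ factors through the translation map $t$ of \eqref{GW3_translation_map}: writing the $X$-coordinate as $a$ and the relative positions of the markings as elements of $X$, the $n$-tuple of image points is $(x_1-a,\dots,x_n-a)$ for the appropriate relative data. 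Thus $\ev_* [\Mbar_{g,n}(X,\beta)]^{\mathrm{red}}$ is the push-forward under $t$ of the $X$-factor times the image of $[\Mbar_{g,n}^0(X,\beta)]^{\mathrm{red}}$ under the relative evaluation.

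The key step is then to identify the relative evaluation data. Since $\beta$ is the class of the image curve, each marking sweeps out (virtually) the class $\beta \in H_2(X,\BZ)$ as it moves along the universal curve, while the overall translation contributes the fundamental class $[X]$. This is precisely the content of the factor $[X] \otimes \beta^{\otimes n}$ appearing on the right-hand side of \eqref{GW3_xxgjg}. By the push-pull formula,
\[
\Big\langle \tau_0(\gamma_1)\cdots\tau_0(\gamma_n)\Big\rangle^{X,\mathrm{red}}_{g,\beta}
= \int_{\ev_* [\Mbar_{g,n}(X,\beta)]^{\mathrm{red}}} \gamma_1\otimes\cdots\otimes\gamma_n,
\]
and the degree-zero part of $\ev_* [\Mbar_{g,n}(X,\beta)]^{\mathrm{red}}$ over the quotient is governed by $\mathsf{N}_{g,\beta}$, the degree of $[\Mbar_g(X,\beta)/X]^{\mathrm{red}}$. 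I would verify that the remaining transverse factor matches $t_*([X]\otimes\beta^{\otimes n})$ by a dimension and equivariance count, using that the reduced class has dimension $0$ on the quotient so that the only surviving contribution is the multiple $\mathsf{N}_{g,\beta}$ of the geometric cycle class obtained by moving the $n$ evaluation points freely along curves in class $\beta$ modulo overall translation.

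The main obstacle I anticipate is making the identification of the $n$-point evaluation cycle with $t_*([X]\otimes\beta^{\otimes n})$ fully rigorous at the level of virtual classes, rather than merely for generic geometric curves. Concretely, one must check that the map from $\Mbar_{g,n}^0(X,\beta)$ recording the $n$ relative marked-point positions, composed with $\ev$, pushes the reduced class to $\mathsf{N}_{g,\beta}$ copies of $\beta^{\otimes n}$ — i.e. that the markings contribute independently and each contributes exactly the curve class $\beta$. This should follow from the factorization of the reduced obstruction theory and the fact that the universal curve has class $\beta$ in each $X$-factor, but the bookkeeping of the virtual class under the iterated evaluation, and the compatibility with the normalization by $(2g-2)$ in the definition of $[\Mbar_g(X,\beta)/X]^{\mathrm{red}}$, is the delicate point. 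Once this geometric identification is in place, \eqref{GW3_xxgjg} follows immediately by the projection formula.
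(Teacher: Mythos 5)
Your scaffolding is correct and matches the paper's ingredients --- the pullback property of the reduced class (Lemma \ref{GW3_pullback_lemma}) and the factorization of evaluation through the translation map $t$ --- but your argument stops exactly at the statement to be proved. The identity $\ev_{\ast}[\Mbar_{g,n}(X,\beta)]^{\mathrm{red}} = \mathsf{N}_{g,\beta}\cdot t_{\ast}([X]\otimes\beta^{\otimes n})$ is essentially a restatement of the lemma, and you defer its proof to ``a dimension and equivariance count'' and ``the factorization of the reduced obstruction theory,'' which you yourself flag as the delicate point. Neither is an argument: the claim that each marking contributes the class $\beta$ independently is precisely what must be established, and no factorization of the reduced obstruction theory along the universal curve is constructed (nor, it turns out, is one needed). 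A dimension count shows only that the two sides live in the same degree, not that they agree.

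The missing idea, which is how the paper closes the gap, is to push forward along $\pi_n : \Mbar_{g,n}(X,\beta) \to \Mbar_g(X,\beta)/X$ rather than along $\ev$. After reducing to homogeneous classes satisfying the dimension constraint $\sum_i \deg(\gamma_i) = 2(3+n)$, Lemma \ref{GW3_pullback_lemma} and push-pull give $\langle \tau_0(\gamma_1)\cdots\tau_0(\gamma_n)\rangle^{X,\mathrm{red}}_{g,\beta} = \int_{[\Mbar_g(X,\beta)/X]^{\mathrm{red}}} \pi_{n\ast}\bigl(\prod_i \ev_i^{\ast}(\gamma_i)\bigr)$, and since $\pi_n$ has relative dimension $3+n$, the pushed-forward class has cohomological degree $0$. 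It therefore suffices to integrate $\prod_i \ev_i^{\ast}(\gamma_i)$ over a single honest fiber $F$ of $\pi_n$ over a point $[f : C \to X]$. The fiber carries a map $b = (b_0, \dots, b_n) : F \to X \times C^n$ which is birational on components, and $\ev|_F$ factors as $t \circ (\id, f, \dots, f) \circ b$, so $\ev_{\ast}[F] = t_{\ast}([X]\otimes\beta^{\otimes n})$ because $f_{\ast}[C] = \beta$. This is a purely classical computation with fundamental classes of actual fibers; no virtual bookkeeping of the kind you worry about ever arises, because the reduced class has already been traded for the zero-dimensional quotient class before any evaluation cycle is touched. Your concern about compatibility with the $(2g-2)$ normalization is likewise moot: it is absorbed once and for all into Lemma \ref{GW3_pullback_lemma} via the dilaton equation, so quoting that lemma requires no further check.
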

\begin{proof}
Let $\gamma_1, \dots, \gamma_n \in H^{\ast}(X, \BQ)$ be homogeneous classes. We may assume the dimension constraint
\[ \sum_{i=1}^{n} \deg(\gamma_i) = 2( 3 + n) \, \]
holds{\footnote{Here, $\deg( \cdot )$ denotes the real degree of a class in $X$.}}
-- otherwise both sides of \eqref{GW3_xxgjg} vanish.

For every $k$, let
\[ \pi_k : \Mbar_{g,k}(X,\beta) \to \Mbar_g(X) / X \]
be the composition of the map that forgets all markings with the quotient map.
By Lemma \ref{GW3_pullback_lemma},
\[ [ \Mbar_{g,n}(X,\beta) ]^{\text{red}} = \pi_n^{\ast} [ \Mbar_{g}(X, \beta) / X ]^{\text{red}} \,, \]
hence by the push-pull formula
\[ 
\Big\langle \tau_{0}(\gamma_1) \dots \tau_{0}(\gamma_n) \Big\rangle_{g,\beta}^{X, \text{red}}
=
\int_{ [ \Mbar_{g}(X, \beta) / X ]^{\text{red}} } \pi_{n \ast}\Big( \prod_{i} \ev_i^{\ast}(\gamma_i) \Big) \,.
\]
Since the map $\pi_n$ is of relative dimension $3+n$, the cohomology class 
\[ \pi_{n \ast}\Big( \prod_{i} \ev_i^{\ast}(\gamma_i) \Big) \]
has degree $0$. To proceed, we evaluate $\prod_{i} \ev_i^{\ast}(\gamma_i)$ on the fibers of $\pi_n$.

Let $f : C \to X$ be a stable map of genus $g$ and class $\beta$,
let 
\[ [ f] \in \Mbar_{g}(X, \beta) / X \]
be the associated point,
and let $F$ be the (stack) fiber of $\pi_n$ over $[f]$.

By the definition of $\Mbar_{g}(X,\beta)/X$ as a quotient stack \cite{R05},
we may identify
\begin{equation} X = \pi_0^{-1}( [f ] ), \label{GW3_iden} \end{equation}
where the induced map $X \to \Mbar_{g}(X,\beta)$ is $x \mapsto (f-x)$.

Under \eqref{GW3_iden}, let 
$$b_0 : F \to X$$ be the map which forgets all markings.
For $i \in \{ 1, \dots, n \}$, let
$$b_i : F \to C$$
be the map which forgets the map and all except the $i$-th marking.
The induced map
\[ b = (b_0, \dots, b_n) : F \to X \times C^n \]
is birational on components.

The evaluation map $\ev : F \to X^n$ factors as
\[ F \xrightarrow{b} X \times C^n \xrightarrow{(\id, f, \dots, f)} X^{n+1} \xrightarrow{t} X^{n} \,, \]
where $t$ is the translation map \eqref{GW3_translation_map}.
We find
\begin{align*}
\int_{F} \ev_1^{\ast}(\gamma_1) \cdots \ev_n^{\ast}(\gamma_n)
& = \int_{ \ev_{\ast} [F] } \gamma_1 \otimes \ldots \otimes \gamma_n  \\
& = \int_{ t_{\ast} (\id, f^n)_{\ast} ([X] \otimes [C]^{\otimes n}) } \gamma_1 \otimes \ldots \otimes \gamma_n \\
& = \int_{t_{\ast}( [X] \otimes \beta^{\otimes n} ) }  \gamma_1 \otimes \ldots \otimes \gamma_n \,.
\end{align*}
Since this only depends on $\beta$ and the $\gamma_i$, we conclude
\[ \pi_{n \ast}\Big( \prod_{i} \ev_i^{\ast}(\gamma_i) \Big) = \Big( \int_{t_{\ast}( [X] \otimes \beta^n ) }  \gamma_1 \otimes \ldots \otimes \gamma_n \Big) \cdot \mathsf{1} \,. \]
The claim of the Lemma follows.
\end{proof}

We state the abelian vanishing relation for abelian threefolds.
Let 
\[ p : X^n \to X^{n-1} \,, \quad (x_1, \dots, x_n) \mapsto (x_2 - x_1, \dots, x_n - x_1) \,. \]
\begin{lemma} \label{GW3_abelvan} Let $\gamma\in H^{\ast}(X^{n-1},\BQ)$ and let $a_1, \dots, a_n \geq 0$.
For any $\gamma_1 \in H^{\ast}(X, \BQ)$ of degree $\deg(\gamma_1) \leq 5$,
\begin{equation*} \int_{ [ \Mbar_{g,n}(X, \beta) ]^{\textup{red}} } 
 \ev_1^{\ast}(\gamma_1) \cup \ev^{\ast} p^{\ast}(\gamma) \cup \prod_i \psi_i^{a_i}  = 0 \,.  \end{equation*}
\end{lemma}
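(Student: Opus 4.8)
The final statement to prove is Lemma \ref{GW3_abelvan}, the abelian vanishing relation for abelian threefolds. This is the exact threefold analogue of Lemma \ref{abelvan} for abelian surfaces, and I expect the proof to be a direct adaptation of that argument. The plan is to exploit the translation action of $X$ on the moduli space to produce a product decomposition analogous to \eqref{333}, and then apply a dimension/push-pull argument.

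\medskip

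First, I would mimic the setup of Section \ref{ababab}. For $n \geq 1$, let $\ev_1 : \Mbar_{g,n}(X,\beta) \to X$ be the first evaluation map, and set $\Mbar^0_{g,n}(X,\beta) = \ev_1^{-1}(0_X)$. Since $g \geq 2$, the translation action of $X$ has finite stabilizers, and as in the surface case the moduli space decomposes (as a stack, up to the finite stabilizers) so that the reduced virtual class is pulled back from a class on $\Mbar^0_{g,n}(X,\beta)$ via the projection $\mathrm{pr}$ onto the first factor. I would form the commutative square relating $\ev : \Mbar_{g,n}(X,\beta) \to X^n$, the projection $p : X^n \to X^{n-1}$, $\mathrm{pr}$, and the induced map $\Mbar^0_{g,n}(X,\beta) \to X^{n-1}$, exactly parallel to the diagram preceding Lemma \ref{abelvan}.

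\medskip

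Next, the key computation. The class
\[ \Big( \ev^{\ast} p^{\ast}(\gamma) \cup \prod_i \psi_i^{a_i} \Big) \cap [\Mbar_{g,n}(X,\beta)]^{\text{red}} \]
is the pullback via $\mathrm{pr}$ of some class $\theta$ on $\Mbar^0_{g,n}(X,\beta)$; here I must check that the $\psi_i$ are also pulled back under the product decomposition, which holds because the cotangent lines are insensitive to translation (the obstruction theory is $X$-equivariant, as used in Lemma \ref{GW3_pullback_lemma}). Then by the push-pull formula,
\[ \mathrm{pr}_\ast\Big( \ev_1^{\ast}(\gamma_1) \cap \mathrm{pr}^{\ast}(\theta) \Big) = \mathrm{pr}_\ast \ev_1^{\ast}(\gamma_1) \cap \theta = 0, \]
where the last equality is forced for dimension reasons: the fibers of $\mathrm{pr}$ are copies of $X$ (complex dimension $3$, real dimension $6$), so $\mathrm{pr}_\ast \ev_1^{\ast}(\gamma_1) = 0$ whenever $\deg(\gamma_1) \leq 5$, since a class of real degree at most $5$ pushes forward to zero along a fibration with $6$-dimensional fibers. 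This is precisely why the degree bound is $\deg(\gamma_1) \leq 5$ here rather than $\leq 3$ as in the surface case.

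\medskip

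The main obstacle, such as it is, is purely bookkeeping rather than conceptual: I must confirm that the $\psi$-class factors $\prod_i \psi_i^{a_i}$ genuinely descend through the product decomposition \eqref{333} (equivalently, are pulled back along $\mathrm{pr}$) so that they can be absorbed into $\theta$, and that the reduced virtual class behaves $X$-equivariantly as claimed. Both facts follow from the $X$-equivariance of the obstruction theory already invoked in the construction of the reduced class (Section \ref{cosection}) and in Lemma \ref{GW3_pullback_lemma}. Once these are in hand, the proof is a verbatim transcription of the proof of Lemma \ref{abelvan} with the single numerical change that the fiber dimension is $3$ rather than $2$, which shifts the degree threshold from $3$ to $5$.
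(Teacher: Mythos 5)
Your proposal is correct and matches the paper's proof, which is simply declared ``identical to the proof of Lemma \ref{abelvan}'': the same product decomposition $\Mbar_{g,n}(X,\beta) \cong \ev_1^{-1}(0_X) \times X$, push-pull along $\mathrm{pr}$, and vanishing of $\mathrm{pr}_\ast \ev_1^\ast(\gamma_1)$ for $\deg(\gamma_1) \leq 5$ since the fibers are now $X$ of real dimension $6$. Your bookkeeping points (that the $\psi_i$ and the reduced class descend through the decomposition by translation-invariance of the obstruction theory and cotangent lines) are exactly the implicit checks the paper leaves to the reader; note only that the decomposition is an honest product, since translation acts freely once $\ev_1$ is fixed, so no stabilizer caveat is needed.
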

\noindent The proof is identical to the proof of Lemma \ref{abelvan}.

\begin{prop}
The full reduced descendent Gromov-Witten theory of $X$ in genus $g$ and class $\beta$ is
determined from $\mathsf{N}_{g,\beta}$ by the following operations:
\begin{itemize}
 \item[(i)] the string, dilaton, and divisor equations,
 \item[(ii)] the abelian vanishing relation of Lemma \ref{GW3_abelvan},
 \item[(iii)] the evaluation by Lemma \ref{GW3_primary} of primary invariants,
 \item[(iv)] the evaluation $\big\langle \tau_1( \ppp ) \big\rangle_{g,\beta}^{X, \textup{red}}=
 (2g-2) \cdot \mathsf{N}_{g,\beta}$.
\end{itemize}
\end{prop}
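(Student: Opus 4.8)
The plan is to reduce an arbitrary reduced descendent invariant $\langle \prod_{i=1}^n \tau_{a_i}(\gamma_i)\rangle_{g,\beta}^{X,\textup{red}}$ to the primary invariants, which are evaluated in terms of $\mathsf{N}_{g,\beta}$ by Lemma \ref{GW3_primary} (operation (iii)), together with the single distinguished invariant $\langle\tau_1(\ppp)\rangle_{g,\beta}^{X,\textup{red}} = (2g-2)\mathsf{N}_{g,\beta}$ of (iv). Since (iii) already disposes of every primary invariant (all $a_i = 0$), the entire content of the statement is the elimination of the cotangent line classes $\psi_i$. I would organize the argument as an induction, ordering invariants by the complexity $(n, \sum_i a_i)$, and exhibit each non-primary invariant as a combination of strictly simpler ones using only (i) and (ii).

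First I would settle the base case $n = 1$ directly from abelian vanishing. Here the map $p$ of Lemma \ref{GW3_abelvan} targets the point $X^0$, so $p^*(\gamma)$ is a multiple of the fundamental class, and the Lemma gives
\[ \big\langle \tau_{a_1}(\gamma_1)\big\rangle_{g,\beta}^{X,\textup{red}} = 0 \quad \text{whenever}\quad \deg(\gamma_1) \leq 5. \]
Since $H^6(X,\BQ) = \BQ\,\ppp$, the only surviving one-point invariant has $\gamma_1 = \ppp$, and the dimension constraint $\tfrac{1}{2}\deg(\gamma_1) + a_1 = 3 + 1$ then forces $a_1 = 1$. Thus every one-point descendent invariant either vanishes or equals $\langle\tau_1(\ppp)\rangle_{g,\beta}^{X,\textup{red}}$, which is covered by (iv).

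For $n \geq 2$ the engine is the abelian vanishing relation in the degree-raising form already used on abelian surfaces in Lemmas \ref{Lemma_even_odd_1} and \ref{Lemma_even_odd_2}. Applying Lemma \ref{GW3_abelvan} to a marking carrying an insertion of degree $\leq 5$ and expanding $\ev^*p^*(\cdots)$ by the coproduct of the group law on $X$ expresses the invariant as a signed sum of invariants in which the distinguished insertion has strictly larger cohomological degree, the complementary degree being subtracted from the remaining insertions; crucially, the descendent data $(a_i)$ and the total cohomological degree are preserved. Iterating — the degree of the distinguished insertion is bounded by $6$, so the procedure terminates — concentrates the cohomological degree and produces terms in which some marking carries the fundamental class $\mathsf{1}$ or a divisor class. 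A marking of the form $\tau_0(\mathsf{1})$, $\tau_1(\mathsf{1})$, or $\tau_0(D)$ is then removed by the string, dilaton, or divisor equation of (i), strictly lowering $n$ (or $\sum_i a_i$) and closing the induction, with any primary terms produced along the way evaluated by (iii).

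The main obstacle will be the bookkeeping needed to guarantee termination of this scheme: one must control the pure cotangent insertions $\tau_a(\mathsf{1})$ with $a \geq 2$, which the string and dilaton equations do not immediately eliminate, and verify that the dimension constraint always leaves room to apply one of (i)--(ii) so that no invariant is ``stuck.'' A secondary but pervasive difficulty — absent in the $K3$ analysis of \cite{MPT} but present throughout the abelian theory — is the sign accounting for the abundant odd cohomology of $X$ in the K\"unneth and coproduct expansions; I would handle this exactly as in the surface computations of Section \ref{prim3}.
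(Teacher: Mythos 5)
Your plan coincides with the paper's proof in all structural respects: induction on $n$, with the base case $n=1$ handled exactly as you do (abelian vanishing kills every one-point invariant with $\deg(\gamma_1)\leq 5$, and the dimension constraint forces the only survivor to be $\tau_1(\ppp)$, covered by (iv)); for $n\geq 2$, the abelian vanishing relation is used in the same degree-raising form, with the descendent exponents $a_i$ preserved, to push the class at a marking with $a_1>0$ up to the point class. The one place where your proposal stops short --- the termination worry you flag about pure cotangent insertions $\tau_a(\mathsf{1})$ with $a\geq 2$ and about invariants getting ``stuck'' --- is a genuine gap in what you wrote, but it is closed by a two-line pigeonhole which the paper makes explicit. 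Once $\deg(\gamma_1)=6$ and $a_1\geq 1$, the dimension constraint gives
\[ \sum_{i=2}^{n}\bigl(\deg(\gamma_i)+2a_i\bigr) \,=\, 2(n-a_1)\,\leq\, 2(n-1)\,, \]
so some marking $k\in\{2,\dots,n\}$ satisfies $\deg(\gamma_k)+2a_k\leq 2$, and the three possibilities are exactly the cases of (i): $a_k=1$, $\deg(\gamma_k)=0$ (dilaton); $a_k=0$, $\deg(\gamma_k)\leq 1$ (string, i.e.\ push-forward along the forgetful map); $a_k=0$, $\deg(\gamma_k)=2$ (divisor). Each application lowers $n$ by one, so induction on $n$ alone suffices, with an inner induction on $\deg(\gamma_1)$ for the degree-raising; no bookkeeping on $\sum_i a_i$ is needed.

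In particular the insertions $\tau_a(\mathsf{1})$ with $a\geq 2$ that you single out cause no trouble: they have $\deg+2a\geq 4$, so they are never the marking selected by the pigeonhole, and they need not be eliminated directly --- when such a marking is itself taken as the distinguished one, abelian vanishing raises its class all the way to $\ppp$, and at $n=1$ an invariant $\langle\tau_a(\ppp)\rangle_{g,\beta}^{X,\textup{red}}$ with $a\geq 2$ violates the dimension constraint and vanishes. With this pigeonhole inserted, your scheme terminates and is, step for step, the paper's proof. (The sign accounting for odd cohomology that you defer to the surface computations is indeed handled there and does not affect the structure of the induction.)
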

\begin{proof} 
Let $\gamma_1, \dots, \gamma_n \in H^{\ast}(X, \BQ)$ be homogeneous classes.
We must determine the Gromov-Witten invariant
\begin{equation} \Big\langle \tau_{a_1}(\gamma_1) \dots \tau_{a_n}(\gamma_n) \Big\rangle_{g,\beta}^{X, \text{red}} \,\label{GW3_gwdesc} \end{equation}
for $a_1, \dots, a_n \geq 0$. 
We may assume the dimension constraint
\begin{equation} \sum_{i=1}^{n} \deg(\gamma_i) + 2 a_i = 2(3+n) \,, \label{GW3_dim_constraint} \end{equation}
where $\deg( \cdot )$ is the real degree of a class in $X$. In particular, $n \geq 1$.

We proceed by induction on $n$. In case $n=1$, the insertion must be
\[ \tau_1(\ppp)\, , \ \tau_2( \gamma)\, , \ \tau_3( \gamma') \, , \ \text{or }\  \tau_4( \gamma'' ) \]
for classes $\gamma, \gamma', \gamma''$ of degrees $4,2,0$ respectively.
The case $\tau_1(\ppp)$ follows from (iv). The cases $\tau_2(\gamma)$, $\tau_3(\gamma')$, and $\tau_4(\gamma'')$
all vanish by the abelian vanishing relation (ii).

Suppose $n > 1$ and assume the Proposition is true for all $n' < n$. If $a_i = 0$ for all $i$, the statement follows from the evaluation (iii).
Hence we may assume $a_1 > 0$.
If $\deg(\gamma_1) < 6$, we first apply the vanishing of Lemma \ref{GW3_abelvan} for $\gamma_1$ and
\[ \gamma = \gamma_2 \otimes \dots \otimes \gamma_n \,. \]
We find, that \eqref{GW3_gwdesc} can
can be expressed as a sum of series
\[ \pm \Big\langle \tau_{a_1}(\gamma_1 \cup \delta) \tau_{a_2}(\gamma'_2) \cdots \tau_{a_n}(\gamma'_n) \Big\rangle_{g,\beta}^{X, \text{red}} \]
for homogeneous classes $\delta, \gamma'_2, \dots, \gamma'_n \in 
H^{\ast}(X,\mathbb{Q})$ with $\deg(\delta) \geq 1$.
The above relation {\em increases} the degree of $\gamma_1$.
By induction on $\deg(\gamma_1)$, we may assume $\deg(\gamma_1) = 6$.

By the dimension constraint \eqref{GW3_dim_constraint}, we have
\[ \sum_{i=2}^{n} \deg(\gamma_i) + 2 a_i = 2(n-a_1) \,, \]
hence there exists a $k \in \{ 2, \dots, n \}$, such that
$\deg(\gamma_k) + 2 a_k \leq 2$.

If $a_k = 1$, then $\deg(\gamma_k) = 0$ and we use the dilaton equation. If $a_k = 0$ and $\deg(\gamma_i) \in \{ 0, 1 \}$,
we use the string equation. If $a_k = 0$ and $\deg(\gamma_i) = 2$, we use the divisor equation.
In each case, we reduce to Gromov-Witten invariants with less then $n$ marked points.
The proof of the Proposition now follows from the induction hypothesis.
\end{proof}

In \eqref{defqqq}, we defined quotient invariants
$\mathsf{N}^{\text{Q}}_{g,(d_1,d_2)}$
counting genus $g$ curves on an abelian surface $A$ in class of type $(d_1, d_2)$, with $g \geq 2$ and $d_1, d_2 > 0$. By trading the FLS condition for insertions,
moving the calculation to the threefold $A \times E$ via the $k=2$ case of Section \ref{dcosection},
and by the evaluation of Lemma \ref{GW3_primary},
one obtains
\[ \mathsf{N}^{\text{Q}}_{g,(d_1,d_2)} = \mathsf{N}_{g, (d_1,d_2,0)} \,. \]
Hence, the quotient invariants of abelian surfaces agree
with the degenerate case of the quotient invariants of abelian threefolds.

\subsection{Genus 3 counts} \label{secg3lc} \label{GW3_sec3}
We determine the genus $3$ invariants of $X$ using the lattice method of Section \ref{piso}.
The strategy is similar to the proof of Lemma \ref{G2lattice}.

\begin{lemma} \label{G3lcc}
For all $d_1, d_2, d_3 > 0$,
\begin{equation*} \mathsf{N}_{3, (d_1, d_2, d_3)} = 2 \nu(d_1, d_2, d_3)\,. 
\end{equation*}
\end{lemma}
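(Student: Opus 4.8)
The plan is to follow the strategy of Lemma \ref{G2lattice}, replacing genus $2$ curves and their Jacobians by genus $3$ curves, while tracking the crucial difference that a generic genus $3$ curve is \emph{not} hyperelliptic — this is precisely what produces the extra factor of $2$.

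First I would reduce to a generic $X$. Since $\mathsf{N}_{3,\beta}$ is deformation invariant and depends only on the type $(d_1,d_2,d_3)$, I may assume $\End_{\BQ}(X) = \BQ$, so that $X$ is simple and $\Aut(X) = \{\pm 1\}$. As in the genus $2$ argument, genericity and simplicity force every genus $3$ stable map $f : C \to X$ in class $\beta$ to have a nonsingular, irreducible, genus $3$ domain: any nonconstant component would generate a positive-dimensional abelian subvariety of the simple threefold $X$, hence all of $X$, and so have geometric genus $\geq 3$, whence the arithmetic genus constraint leaves only a single smooth genus $3$ curve. Such an $f$ then factors as $C \xrightarrow{\mathsf{aj}} J \xrightarrow{\pi} X$ with $J = JC$ its Jacobian, and by Section \ref{piso} the dual $\widehat{\pi} : \widehat{X} \to J$ is a polarized isogeny with $\widehat{\beta} = \widehat{\pi}^{\ast}\theta$, where $\theta$ is the principal polarization of $J$. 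By \cite[Corollary 6.3.5]{CAV}, such polarized isogenies, counted up to isomorphism of the target, are in bijection with maximal totally isotropic subgroups of $\Ker(\phi_{\widehat{\beta}})$, of which there are exactly $\nu(d_1,d_2,d_3)$.

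For the converse I would produce curves from the isogeny data. A maximal isotropic subgroup yields via \cite[Corollary 6.3.5]{CAV} a polarized isogeny $(\widehat{X},\widehat{\beta}) \to (B,\theta)$ onto a principally polarized abelian threefold. Since $B$ is isogenous to the simple threefold $X$, it is itself simple; in particular $(B,\theta)$ is indecomposable, and an indecomposable principally polarized threefold is the Jacobian of a nonsingular genus $3$ curve $C$, uniquely determined by the Torelli theorem. Moreover, for generic $X$ the curve $C$ is \emph{non}-hyperelliptic: the hyperelliptic Jacobian locus is of codimension $1$ in the moduli of principally polarized abelian threefolds, and the finite isogeny correspondence sending $X$ to its finitely many isogenous $(B,\theta)$ carries this proper closed locus to a proper closed locus in the moduli of $X$. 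Thus for generic $X$ each of the $\nu(d_1,d_2,d_3)$ isogenies produces a non-hyperelliptic genus $3$ curve with $\Aut(C) = \{1\}$ and $\Aut(B,\theta) = \{\pm 1\}$.

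The heart of the argument — and the step I expect to be the main obstacle — is the emergence of the factor $2$, which I would extract from the $-1$ automorphism. Dualizing gives $\pi : B \to X$, well defined only up to $\Aut(B,\theta) = \{\pm 1\}$, so a single isogeny class produces the two maps $f = \pi \circ \mathsf{aj}$ and $(-1)_X \circ f$. In the hyperelliptic (genus $2$) situation of Lemma \ref{G2lattice} the involution $-1$ on $J$ is induced by the hyperelliptic involution of $C$, whence $(-1)_X \circ f \cong f$ and one gets a single map per isogeny. For non-hyperelliptic $C$ no automorphism of $C$ induces $-1$ on $JC$, so $f$ and $(-1)_X \circ f$ are non-isomorphic stable maps in distinct translation classes; as $\Ker(\widehat{\pi} \circ (-1)) = \Ker(\widehat{\pi})$, they correspond to the same isotropic subgroup, so each of the $\nu(d_1,d_2,d_3)$ isogenies yields exactly $2$ translation classes. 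Finally, since $\Aut(C) = \{1\}$, the translation action of $X$ is free on the non-hyperelliptic locus and the maps are rigid modulo translation, so $\Mbar_3(X,\beta)/X$ is a set of $2\,\nu(d_1,d_2,d_3)$ isolated reduced points on which the reduced virtual class restricts to the naive count, giving $\mathsf{N}_{3,(d_1,d_2,d_3)} = 2\,\nu(d_1,d_2,d_3)$. The technical care will be concentrated in checking that a generic $X$ simultaneously avoids the hyperelliptic and decomposable loci for all $\nu$ isogenies, and that the reduced obstruction vanishes so that the count is genuinely unweighted.
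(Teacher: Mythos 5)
Your proposal is correct and follows essentially the same route as the paper's proof: reduction to a generic simple $X$, smooth genus $3$ domains, the bijection with the $\nu(d_1,d_2,d_3)$ polarized isogenies $(\widehat{X},\widehat{\beta})\to (J,\theta)$, and the factor of $2$ arising because $(-1)_X\circ f$ is not translation equivalent to $f$ — the paper attributes this to $\Aut(C)=\{1\}$ and $\Aut(X)=\{\pm 1\}$ for generic $X$ (noting the doubling was overlooked in Lange--Sernesi), which matches your Torelli-based hyperelliptic/non-hyperelliptic dichotomy. The only minor divergence is the freeness of the translation action: you deduce it directly from $\Aut(C)=\{1\}$ (which suffices, since $t_a\circ f=f$ forces $a=0$), while the paper proves the automorphism-independent statement that only $0_J$ fixes $\mathsf{aj}(C)$ setwise, using that $\Sym^2(\mathsf{aj})(\Sym^2(C))$ is a theta divisor in $J$.
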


\begin{proof}
Let $\beta$ be a curve class of type $(d_1, d_2, d_3)$
on a generic abelian threefold $X$.
Since $X$ is simple,
every genus $3$ stable map $$f : C \to X$$ in class $\beta$
has a nonsingular domain $C$, and induces a polarized isogeny
$(\widehat{X}, \widehat{\beta}) \to (J, \theta)$.

Conversely, every simple principally polarized abelian
threefold $(B, \theta)$ is the Jacobian
of a unique nonsingular genus $3$ curve $C$. Hence, each polarized isogeny $(\widehat{X}, \widehat{\beta}) \to (B, \theta)$ induces a map
$$ f : C \xrightarrow{\mathsf{aj}} B \to X \,. $$
However, for a generic abelian threefold $X$ we have
$\Aut(C) = \{1\}$ and $\Aut(X) = \{\pm 1\}$. The composition
\[ f^- = (-1) \circ f : C \to X \]
is {\em not} translation equivalent to $f$ and the given polarized isogeny corresponds to \emph{two} genus $3$ stable maps up to translation.\footnote{This fact was overlooked in \cite{LS}.}

The argument in the proof of Lemma~\ref{G2lattice} also shows that $X$ acts freely on $\Mbar_3(X, \beta)$. The only point to verify is that given a nonsingular genus~$3$ curve $C$ and the Abel-Jacobi map $\mathsf{aj} : C \to J$, the only element in $J$ fixing $\mathsf{aj}(C)$ is $0_J$. For this we consider the map
\[ \Sym^2(\mathsf{aj}) : \Sym^2(C) \ra J \,. \]
The image of $\Sym^2(C)$ is a theta divisor, and is only fixed by $0_J \in J$. Then, if a point $a \in J$ fixes $\mathsf{aj}(C)$, it must also fix the image of $\Sym^2(C)$ under $\Sym^2(\mathsf{aj})$. Hence $a = 0_J$. 

It follows that $\Mbar_3(X, \beta) / X$ is a set of $2\nu(d_1, d_2, d_3)$ isolated reduced points.
\end{proof}

By Lemma \ref{G2lattice}, Theorem \ref{YZA}, and Lemma \ref{G3lcc},
\begin{equation} \mathsf{N}_{3,(1, d, d')} = 2 \mathsf{N}^{\text Q}_{2, (d, d')} = 2\sum_{k | \gcd(d,d')} \sum_{ m | \frac{ d d' }{k^2}} k^{3} m \,. \label{YZ'} \end{equation}
The right hand side of \eqref{YZ'} matches precisely the genus $3$ predictions of Conjectures \ref{conjB} and \ref{conjC}.

Further, the lattice method can be adjusted to count diagonal curves in the $X = A \times E$ setting. Let $A$, $E$, and $(\beta_{\dtilde}, d)$ be as in Section \ref{secdt}. Recall that an irreducible curve $C \subset X$ is diagonal if both projections $p_A : C \to A$ and $p_E : C \to E$ are of non-zero degree.

\begin{lemma} \label{graph_curves}
For even $d$, there are
\[  12 \sigma\left( \frac{d}{2} \right) = 12 \sum_{k | \frac{d}{2}} k \]
isolated diagonal curves in class $(\beta_2, d)$ up to translation. All diagonal curves are nonsingular of genus $3$. The translation action of $A \times E$ on the diagonal curves is free.
\end{lemma}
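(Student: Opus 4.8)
The plan is to reduce the count of diagonal curves to a lattice computation in the spirit of Lemma \ref{G3lcc}, adapted to the product (hence non-simple) threefold $X = A \times E$.

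First I would fix the geometric structure of a diagonal curve $C$. Since $\beta_2$ is primitive, $p_A$ maps $C$ birationally onto its image $C_0 \subset A$, a curve of class $\beta_2$; as $C_0$ has arithmetic genus $3$, the geometric genus of $C$ is at most $3$. On the other hand, because $A$ and $E$ are generic and independent, so $\Hom(A,E) = \Hom(E,A) = 0$, the Albanese $J(C) \to A \times E$ is surjective: otherwise its image would be a proper abelian subvariety surjecting onto both factors, which would force a nonzero homomorphism between $A$ and $E$. Surjectivity forces $g(C) \geq 3$, so $g(C) = 3$, $C_0$ is smooth, and $C \cong C_0$. The Albanese is then an isogeny $J(C) \to A \times E$, and dualizing gives a polarized isogeny $\widehat{\pi} : \widehat{A \times E} \to J(C)$ with $\widehat{(\beta_2,d)} = \widehat{\pi}^*\theta$, of type $(1,2,d)$, exactly as in Section \ref{piso}.

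By \cite[Corollary 6.3.5]{CAV}, such polarized isogenies correspond bijectively to maximal totally isotropic subgroups of
\[ \Ker(\phi_{\widehat{(\beta_2,d)}}) \cong (\BZ/2 \times \BZ/d)^2, \]
whose total number is $\nu(1,2,d) = \nu(2,d)$. The necessary adjustment of the lattice method is that on the product $X$ only the \emph{indecomposable} principal polarizations $(J,\theta)$ yield smooth diagonal curves: a decomposable $J \cong (B_A,\theta_A) \times (B_E,\theta_E)$ is the Jacobian of a reducible (genus $2$ curve $\cup$ elliptic curve) configuration and belongs to the vertical/horizontal loci treated separately. The decomposable isotropic subgroups are precisely the products $G_A \oplus G_E$, numbering $\nu(1,2)\cdot\nu(d) = 3\sigma(d)$ by \eqref{Primlat}. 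It then remains to evaluate $\nu(2,d)$ via Debarre's formula \eqref{Lattice}, summing $\#\Hom^{\rm sym}(K, \widehat{K})$ over subgroups $K < \BZ/2 \times \BZ/d$ organized prime-by-prime as in the proof of Theorem \ref{YZA}, and to establish the identity
\[ \nu(2,d) = 3\,\sigma(d) + 6\,\sigma(d/2)\,\delta_{d,\,\text{even}}. \]
For $d$ odd this reads $\nu(2,d) = \sigma(2d) = \sigma(2)\sigma(d) = 3\sigma(d)$ by coprimality, so every isotropic subgroup decomposes and there are no diagonal curves; for $d$ even the excess $6\sigma(d/2)$ counts exactly the indecomposable ``mixing'' subgroups, which exist because the shared $2$-torsion of the $A$- and $E$-parts can be glued. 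Multiplying the indecomposable count by the factor $2$ arising from $f \mapsto (-1)\circ f$ (as in Lemma \ref{G3lcc}, since $\Aut(A\times E) \ni -1$ and the two maps are not translation-equivalent) yields $12\,\sigma(d/2)$.

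The genus and smoothness claims are then part of Step~1, and the freeness of the $X$-action is proved exactly as in Lemma \ref{G3lcc}: applying $\Sym^2(\mathsf{aj})$ shows that the only element of $J(C)$ fixing $\mathsf{aj}(C)$ is $0_J$, so no nonzero translation fixes $C$. Genericity of $A$ and $E$ also guarantees that each indecomposable $(J,\theta)$ is the Jacobian of a smooth genus $3$ curve whose induced map to $A\times E$ is an embedding with diagonal image, so the correspondence is exact. The main obstacle is the lattice identity for $\nu(2,d)$ together with the clean separation of decomposable from indecomposable subgroups; this is the only genuinely computational point, and it is where the parity hypothesis on $d$ enters. (As a consistency check, $\nu(2,2)=15 = 9+6$ and $\nu(2,4)=39 = 21+18$ reproduce $12\sigma(1)=12$ and $12\sigma(2)=36$.)
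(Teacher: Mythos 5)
Your overall architecture matches the paper's: characterize diagonal curves as nonsingular genus $3$ curves inducing polarized isogenies $(\widehat{A \times E}, \widehat{(\beta_2,d)}) \to (J,\theta)$, count these via maximal totally isotropic subgroups of $(\BZ/2 \times \BZ/d)^2$, and multiply by an automorphism factor. Your Step 1 (Albanese surjectivity from $\Hom(A,E)=0$ forcing $g(C) \geq 3$) is a clean variant of the paper's argument, which instead rules out a nodal genus $2$ image using simplicity of $A$; both work. Your lattice count is organized differently and, I would say, more cleanly: the paper directly enumerates the non-product maximal isotropic subgroups (cyclic $K = \BZ/2k$ and $K = \BZ/2 \times \BZ/2k$ types, giving $\sum_{k | d/2} 2k + \sum_{k|d/2} 4k = 6\sigma(d/2)$), whereas your inclusion--exclusion identity $\nu(2,d) = 3\sigma(d) + 6\sigma(d/2)\,\delta_{d,\text{even}}$ is correct and follows at once from the recursion \eqref{Rec} proven in Theorem \ref{YZA} together with multiplicativity over primes (for $d = 2^a m$, both sides equal $(12 \cdot 2^a - 9)\sigma(m)$). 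Your identification of product subgroups with decomposable targets, via genericity of $A$ and $E$, is asserted at the same level of detail as in the paper and is fine.

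The genuine gap is the factor of $2$. Your justification --- ``as in Lemma \ref{G3lcc}, since $\Aut(A \times E) \ni -1$ and the two maps are not translation-equivalent'' --- does not transfer, for two reasons that happen to cancel numerically. First, generically $\Aut(A \times E) = \BZ/2 \times \BZ/2$ has order $4$, so the reasoning of Lemma \ref{G3lcc} (which used $\Aut(C) = \{1\}$ and $\Aut(X) = \{\pm 1\}$) would here produce \emph{four} translation classes per isogeny, not two. Second, every diagonal curve in this geometry is bielliptic: by \cite[Section 10.8 (1)]{CAV}, any nonsingular genus $3$ curve in class $\beta_2$ on $A$ admits a double cover of an elliptic curve, so $\BZ/2 \subset \Aut(C)$, and the correct count per isogeny is $|\Aut(X)|/|\Aut(C)|$. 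Obtaining $2$ therefore requires showing $\Aut(C) = \BZ/2$ exactly, i.e., that $C$ is \emph{not} hyperelliptic: if $C$ were hyperelliptic, the strong Torelli theorem would give $\Aut(C) = \Aut(J,\theta) = \BZ/2 \times \BZ/2$, each isogeny would yield a single translation class, and your claim that $f$ and $(-1)\circ f$ are not translation-equivalent would fail (exactly as in genus $2$, where $-1$ is absorbed by the hyperelliptic involution in Lemma \ref{G2lattice}). The paper closes this with a dedicated argument you omit: up to translation there are exactly three hyperelliptic genus $3$ curves $C_1', C_2', C_3' \subset A$ in class $\beta_2$ (Section \ref{hyp_3}), their Jacobians are isogenous to $A \times E_i'$ for specific elliptic curves $E_i'$, and choosing $E$ non-isogenous to $E_1', E_2', E_3'$ rules out hyperellipticity of every diagonal curve; Torelli then gives $\Aut(C) = \Aut(J,\theta)/\{\pm 1\} = \BZ/2$ and hence exactly two curves per isogeny. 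Your consistency checks at $d = 2, 4$ cannot detect this, since the lattice half is correct and the two oversights compensate; as written, the multiplicity step is unproven.
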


\begin{proof}
Let $C$ be a diagonal curve in class $(\beta_2, d)$. Since $\beta_2$ is irreducible, the projection $p_A : C \to A$ is generically injective. The image $$C_0 = p_A(C) \subset A$$ is either a nonsingular genus $3$ curve or a nodal genus $2$ curve. We claim that the latter does not happen.

Suppose it does, and let $q : \widetilde{C} \to C$ be the normalization map. Then $p_A \circ q : \widetilde{C} \to A$ factors through an isogeny $J(\widetilde{C}) \to A$, where $J(\widetilde{C})$ is the Jacobian of the genus $2$ curve $\widetilde{C}$. We also know that $p_E \circ q : \widetilde{C} \to E$ factors through $J(\widetilde{C}) \to E$, which is surjective since $d > 0$. This contradicts the assumption that $A$ is simple.

Hence, $C_0$ is nonsingular of genus $3$ and so is $C$. As before, every such $C$ induces a polarized isogeny
\[ \big(\widehat{A \times E}, \widehat{(\beta_2, d)}\big) \ra (J, \theta) \,. \]
Conversely, every principally polarized abelian threefold $(B, \theta)$ is either 
\begin{itemize}
\item the Jacobian of a unique nonsingular genus $3$ curve, or
\item the product of a principally polarized abelian surface and an elliptic curve, with the product polarization.
\end{itemize}

Given $(\beta_2, d)$ of type $(1, 2, d)$, we know exactly which maximal totally isotropic subgroups of $\Ker(\phi_{\widehat{(\beta_2, d)}}) \cong (\BZ/2 \times \BZ/d)^2$ correspond to polarized isogenies
\begin{equation} \big(\widehat{A \times E}, \widehat{(\beta_2, d)}\big) \ra (B, \theta) \label{PolisoSE} \end{equation}
to Jacobians $(B, \theta)$. They are precisely the subgroups {\em not} of the form 
\[ G_1 \times G_2 \quad \text{ with } \quad G_1 < (\BZ/2)^2 \,, \ G_2 < (\BZ/d)^2 \,. \]
In particular, $d$ must be even for these subgroups to exist. In terms of \eqref{Lattice}, there are the following two possibilities:

\begin{enumerate}
\item[(i)] $K = \BZ/2k$ for some $k | \frac{d}{2}$, generated by
$$\bigg(1, \frac{d}{2k}\bigg) \in \BZ/2 \times \BZ/d \,,$$
together with an arbitrary element in $\Hom^{\rm sym}(K, \widehat{K})$,
\item[(ii)] $K = \BZ/2 \times \BZ/2k$ for some $k | \frac{d}{2}$, generated by
$$(1, 0),\, \bigg(0, \frac{d}{2k}\bigg) \in \BZ/2 \times \BZ/d \,,$$
together with a non-diagonal element in $\Hom^{\rm sym}(K, \widehat{K})$.
\end{enumerate}

Summing up (i) and (ii), we find
\[ \sum_{k | \frac{d}{2}} 2k + \sum_{k | \frac{d}{2}} 2 \cdot 2k = 6 \sum_{k | \frac{d}{2}} k \]
polarized isogenies to Jacobians. We claim that each of the isogenies corresponds to {\em two} diagonal curves up to translation.

We have seen that a diagonal curve $C \subset A \times E$ is isomorphic to its image $C_0 \subset A$. By \cite[Section 10.8 (1)]{CAV}, every nonsingular genus $3$ curve $C' \subset A$ in class $\beta_2$ admits a double cover to an elliptic curve $E'$. In particular, the Jacobian $J(C')$ is isogenous to $A \times E'$. Hence, 
\[ \BZ/2 \subset \Aut(C_0) = \Aut(C) \,. \]
On the other hand, we have generically $\Aut(A \times E) = \BZ/2 \times \BZ/2$. Since the Jacobian $J$ of $C$ is isogenous to $A \times E$, we also have
\[ \Aut(J) \subset \BZ/2 \times \BZ/2 \,. \]

A strong form of the Torelli theorem (see \cite[Section 11.12 (19)]{CAV}) says
\[ \Aut(C) = \begin{cases} \Aut(J, \theta) & \text{ if $C$ is hyperelliptic} \\ \Aut(J, \theta) / \{\pm 1\} & \text{ if $C$ is not hyperelliptic} \,. \end{cases} \]
In our case this means
\[ \Aut(C) = \begin{cases} \BZ/2 \times \BZ/2 & \text{ if $C$ is hyperelliptic} \\ \BZ/2 & \text{ if $C$ is not hyperelliptic} \,. \end{cases} \]
To see that $C$ is generically not hyperelliptic, recall from Section \ref{hyp_3} that up to translation there are three\footnote{Each corresponds to a degree $2$ polarized isogeny $(A, \beta_2) \to (B, \theta)$, with $B$ the Jacobian of a genus $2$ curve. The hyperelliptic curve is obtained by taking the preimage of the genus $2$ curve.} hyperelliptic genus $3$ curves
\[ C'_1, \, C'_2, \, C'_3 \subset A \]
in class $\beta_2$ with $\BZ/2$-stabilizers. For $i = 1, 2, 3$, the Jacobian $J(C'_i)$ is isogenous to $A \times E'_i$ for some $E'_i$. Hence, by taking $E$ non-isogenous to $E'_1, E'_2, E'_3$, we find that the diagonal curve $C \subset A \times E$ is not isomorphic to $C'_1, C'_2, C'_3$.

To conclude, we have $\Aut(C) = \BZ/2$ and $\Aut(J) = \Aut(A \times E) = \BZ/2 \times \BZ/2$. Therefore, each polarized isogeny \eqref{PolisoSE} gives two diagonal curves up to translation. We find in total
\[ 2 \cdot 6 \sum_{k | \frac{d}{2}} k = 12 \sum_{k | \frac{d}{2}} k \]
diagonal curves up to translation. The proof that $A \times E$ acts freely is identical to the one given in the proof of Lemma \ref{G3lcc}.
\end{proof}

\subsection{Consistency check of Conjectures \ref{conjB} and \ref{conjC}} \label{GW3_sec4}
Conjecture \ref{conjC} expresses the invariants $\mathsf{N}_{g, (1,\dtilde,d)}$
in terms of the invariants $\mathsf{N}_{g, (1,1,d)}$.
By Conjecture \ref{conjB}, we obtain a prediction for the Donaldson-Thomas
invariants of type $(1,2,d)$ in terms of those of $(1,1,d)$.
We show here that these predictions match the calculations of Theorem \ref{dtthm}.

For $d \geq 0$, let $f_d(p), g_d(p) \in \BQ((p))$ be the unique Laurent series with
\begin{equation}
\label{GW3_Kfns}
\begin{aligned}
 \sum_{d \geq 0} f_d(p) q^d & = K(z,\tau)^2 \\
 \sum_{d \geq 0} g_d(p) q^d & = \frac{3}{2} K(z,\tau)^4 \wp(z,\tau) + \frac{3}{8} K(2 z, 2 \tau)^2
\end{aligned}
\end{equation}
under the variable change
\[ p = e^{2 \pi i z} \quad \text{and} \quad q = e^{2 \pi i \tau} \,. \]
The functions on the right hand side of \eqref{GW3_Kfns} are exactly
the negative of the functions appearing in Corollary* \ref{dtcor}.
The following Lemma shows that Corollary* \ref{dtcor} is consistent
with Conjectures \ref{conjB} and \ref{conjC}.
\begin{lemma} We have
\[
g_d(p) = 
\begin{cases}
 f_{2d}(p) & \text{ if } d \text{ is odd } \\
 f_{2d}(p) + \frac{1}{2} f_{d/2}(p^2) & \text{ if } d \text{ is even} \,.
\end{cases}
\]
\end{lemma}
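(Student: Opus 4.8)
The plan is to collapse the two cases into a single identity of two–variable generating series and then reduce everything to the modular identity already proved in Lemma~\ref{modular_identity}. Multiplying the asserted formula by $q^d$ and summing over $d\geq 0$, the contributions of the terms $\tfrac12 f_{d/2}(p^2)$ (present only for even $d$) sum, directly from the definition $\sum_d f_d(p)q^d = K(z,\tau)^2$, to $\tfrac12\sum_{e\geq 0} f_e(p^2)q^{2e} = \tfrac12 K(2z,2\tau)^2$. Hence the Lemma is equivalent to the single statement
\[
\Psi(z,\tau) := \sum_{d\geq 0} f_{2d}(p)\, q^d = \tfrac32 K(z,\tau)^4\wp(z,\tau) - \tfrac18 K(2z,2\tau)^2 .
\]
Substituting the relation $K(2z,2\tau)^2 = K(z,\tau)^4\big(4\wp(z,\tau)+\tfrac23\vartheta_{D_4}(\tau)\big)$ from \eqref{xuenq} and simplifying, the right-hand side becomes $K(z,\tau)^4\big(\wp(z,\tau)-\tfrac1{12}\vartheta_{D_4}(\tau)\big)$, so it remains to prove
\[
\Psi(z,\tau) = K(z,\tau)^4\Big(\wp(z,\tau)-\tfrac1{12}\vartheta_{D_4}(\tau)\Big)\qquad(\heartsuit).
\]

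Next I would pin down the automorphic nature of both sides of $(\heartsuit)$. Since $K^2$ is a weak Jacobi form of weight $-2$ and index $1$, the series $\Psi$ is the image of $K^2$ under the operator $U_2$ extracting the even $q$-coefficients, which may be written as $\Psi(z,\tau)=\tfrac12\big[K(z,\tfrac{\tau}{2})^2+K(z,\tfrac{\tau+1}{2})^2\big]$. Iterating the quasi-periodicity of $K$ in $z$, a direct computation gives $\Psi(z+1,\tau)=\Psi(z,\tau)$ and $\Psi(z+\tau,\tau)=q^{-2}p^{-4}\Psi(z,\tau)$, so $\Psi$ has index $2$; by the standard behaviour of the $U_2$ operator it has weight $-2$ and transforms under $\Gamma_0(2)$, and it is holomorphic in $z$ (being a polynomial in holomorphic functions $K(z,\cdot)^2$). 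The right-hand side of $(\heartsuit)$ is visibly a weight-$(-2)$, index-$2$ form for $\Gamma_0(2)$ (the factor $\vartheta_{D_4}$ accounting for the level), and it is holomorphic in $z$ as well, because the fourth-order zero of $K^4$ at the lattice points cancels the double pole of $\wp$.

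To finish I would run the elliptic-function argument used for Lemma~\ref{modular_identity}. Form $R(z,\tau)=\Psi/K^4-\big(\wp-\tfrac1{12}\vartheta_{D_4}\big)$, which has index $0$ and so is elliptic in $z$ with a possible pole only at $z=0$. Comparing the $q^0$-terms shows $f_0(p)=p-2+p^{-1}$ equals $\big(K^4(\wp-\tfrac1{12}\vartheta_{D_4})\big)\big|_{q^0}=p^{-2}(1-p)^4\cdot\tfrac{p}{(1-p)^2}=p-2+p^{-1}$, so the principal parts at $z=0$ agree and $R$ is holomorphic in $z$, hence constant in $z$: $R=c(\tau)$. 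Then $c(\tau)$ is a holomorphic modular form of weight $2$ on $\Gamma_0(2)$ with vanishing constant term; since $M_2(\Gamma_0(2))$ is one-dimensional and spanned by a form with nonzero constant term, $c(\tau)=0$. This proves $(\heartsuit)$, and with it the Lemma.

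The step I expect to be the main obstacle is establishing the modular transformation of $\Psi$ rigorously: one must justify that the even-coefficient ($U_2$) operator carries the level-one weak Jacobi form $K^2$ to a genuine weight-$(-2)$, index-$2$ weak Jacobi form for $\Gamma_0(2)$ with the correct behaviour at both cusps, as this is precisely what legitimizes treating $R$ as an element of the one-dimensional space $M_2(\Gamma_0(2))$. Once this is in place, the remaining pole-matching at $z=0$ and the constant-term comparison are routine given the explicit expansion \eqref{KKKKKdef} of $K$.
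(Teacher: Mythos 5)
Your reduction to the single identity $(\heartsuit)$ is correct (and is in fact literally equivalent to the identity the paper proves), and your endgame — ellipticity in $z$, pole analysis at $z=0$, and $\dim M_2(\Gamma_0(2))=1$ — is a valid scheme. But the proof as written has a genuine gap exactly where you flag it, and it is not a formality: your final step treats $R$ as an element of $M_2(\Gamma_0(2))$, which requires both the $\Gamma_0(2)$-transformation of $\Psi(z,\tau)=\tfrac12\bigl[K(z,\tfrac{\tau}{2})^2+K(z,\tfrac{\tau+1}{2})^2\bigr]$ in weight $-2$ and index $2$, and holomorphy of $c(\tau)$ at the cusp $0$, not only at $\infty$; a weight-two form on $\Gamma_0(2)$ with vanishing constant term at $\infty$ but uncontrolled growth at the other cusp need not vanish, so the dimension count is inconclusive without the cusp-$0$ analysis, which you do not carry out. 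There is also a smaller logical slip: you deduce ``the principal parts at $z=0$ agree'' from a comparison of $q^0$-coefficients, but that only shows the $z$-Laurent tail of $R$ vanishes to order $q^0$ and says nothing about higher powers of $q$. The repair is easy: by \eqref{KKKKKdef}, $K(z,\sigma)^2=(2\pi i z)^2+O(z^4)$ with leading coefficient independent of $\sigma$, so $\Psi=(2\pi iz)^2+O(z^4)$ and $\Psi/K^4=\tfrac{1}{(2\pi i z)^2}+O(1)$, matching the double pole of $\wp$ identically in $\tau$; since $\Psi/K^4$ and $\wp$ are even in $z$, there is no residue term, and $R$ is holomorphic in $z$ for every $\tau$. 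Your $q^0$-computation is then correctly used only to show $c(\tau)=O(q)$.

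The cleanest way to close the modularity gap also shows how your route compares with the paper's, which is genuinely different. The paper applies the Hecke operator $V_2$ of \cite[Section 4]{EZ} to $\varphi_{-2,1}=K^2$, producing $\sum_d\bigl(f_{2d}(p)+\tfrac18 f_{d/2}(p^2)\bigr)q^d$, a weak Jacobi form of weight $-2$ and index $2$ for the \emph{full} modular group; since that space is one-dimensional by \cite[Theorem 9.3]{EZ}, a single coefficient comparison gives $\varphi_{-2,1}|V_2=\tfrac32K^4\wp$, and the lemma follows by adding $\tfrac38K(2z,2\tau)^2$ — no level-$2$ forms, no cusp analysis, no elliptic-function argument. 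Note that by the explicit formula for $V_2$ one has $\varphi_{-2,1}|V_2=\Psi+\tfrac18K(2z,2\tau)^2$, so your $(\heartsuit)$ is the same identity in disguise; and since $K(2z,2\tau)^2$ is manifestly a weight $-2$, index $2$ weak Jacobi form for $\Gamma_0(2)$, the decomposition $\Psi=\varphi_{-2,1}|V_2-\tfrac18K(2z,2\tau)^2$ delivers the $\Gamma_0(2)$-modularity and cusp control you need for free. But once you invoke $V_2$, the level-one structure theorem finishes in one line, so your level-$2$ detour, while workable once the $U_2$ step is justified, is strictly longer than the paper's argument, and its missing step is most efficiently filled by the very operator the paper uses.
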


\begin{proof}
We use basic results from the theory of Jacobi forms \cite{EZ}.
We will work with the actual variables $p = e^{2 \pi i z}$ and $q = e^{2 \pi i \tau}$,
where $z \in \BC$ and $\tau \in \BH$.

Let $\varphi_{-2,1}(z,\tau)$ be the weight $-2$, index $1$ generator
of the ring of weak Jacobi forms defined in \cite[Section 9]{EZ}.
We have the basic identity
\[ \varphi_{-2,1}(z,\tau) = K(z,\tau)^2 \,, \]
see, for example, \cite[Equation 4.29]{DMZ}.
Applying the Hecke operator $\big{|}_{-2,1} V_{2}$ defined in \cite[Section 4]{EZ},
we obtain
the weak weight $-2$, index $2$ Jacobi form
\begin{equation*} (\varphi_{-2,1} |_{-2,1} V_{2} )(z,\tau) = \sum_{d \geq 0} \left( f_{2d}(p) + \frac{f_{d/2}(p^2)}{2^3} \right) q^d \,, \end{equation*}
where $f_a(p) = 0$ whenever $a$ is fractional.
Using \cite[Theorem 9.3]{EZ} and comparing the first coefficients,
we find
\[ \sum_{d \geq 0} \left(f_{2d}(p) + \frac{f_{d/2}(p^2)}{2^3}\right) q^d
= \frac{3}{2} K(z,\tau)^4 \wp(z,\tau) \,. \]
We conclude
\begin{align*}
\sum_{d \geq 0} \left( f_{2d}(p) + \frac{f_{d/2}(p^2)}{2} \right) q^d
& = \frac{3}{2} K(z,\tau)^4 \wp(z,\tau) + \sum_{d \geq 0} \frac{3}{8} f_{d/2}(p^2) q^d \\
& = \frac{3}{2} K(z,\tau)^4 \wp(z,\tau) + \frac{3}{8} K(2z, 2 \tau)^2 \\
& = \sum_{d \geq 0} g_d(p) q^d \,. \qedhere
\end{align*}
\end{proof}

\subsection{A formula for imprimitive classes}
\label{Subsection_imprimitive_classes}
We conjecture a multiple cover formula
in all classes for the
quotient invariants $\mathsf{N}_{g, (d_1, d_2, d_3)}$.
The shape of the formula already appeared in
the physics approach of \cite{MMS}. However,
\cite{MMS} does not match the invariants
$\mathsf{N}_{g, \beta}$ and our formula below is different.


Define the function
\[ \mathsf{n}(d_1,d_2,d_3,k) = \sum_{\delta} \delta^2 \]
where $\delta$ runs over all divisors of 
\[ \gcd\left( k, d_1, d_2, d_3, \frac{d_1 d_2}{k}, \frac{d_1 d_3}{k}, \frac{d_2 d_3}{k}, \frac{d_1 d_2 d_3}{k^2} \right) \,. \]

\begin{conjecture} \label{ConjMC} For all $g \geq 2$, $d_1, d_2 > 0$, and $d_3 \geq 0$,
\[ \mathsf{N}_{g,(d_1,d_2,d_3)}
= \sum_{k} \mathsf{n}(d_1,d_2,d_3,k) k^{2g - 3}
\mathsf{N}_{g,\left( 1,1,\frac{d_1d_2d_3}{k^2} \right)} \]
where $k$ runs over all divisors of
$\gcd( d_1 d_2, d_1 d_3, d_2 d_3 )$ such that $k^2$ divides $d_1d_2d_3$.
\end{conjecture}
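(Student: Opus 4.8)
The statement of Conjecture \ref{ConjMC} is a multiple cover rule, so a complete unconditional proof in all genera is not expected with present techniques; the realistic target is to prove the one genus in which both sides are known in closed form, namely $g=3$, and to verify that the formula is internally consistent with the conjectures and theorems already established. My plan is therefore in three parts: a rigorous genus $3$ proof reducing to arithmetic, a set of specialization checks, and a discussion of what the general case would require.

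For $g=3$, Lemma \ref{G3lcc} gives $\mathsf{N}_{3,(d_1,d_2,d_3)} = 2\nu(d_1,d_2,d_3)$, while by \eqref{Primlat} the primitive invariants appearing on the right are $\mathsf{N}_{3,(1,1,m)} = 2\nu(1,1,m) = 2\sigma(m)$. Thus the $g=3$ case of Conjecture \ref{ConjMC} is equivalent to the purely number-theoretic identity
\[
\nu(d_1,d_2,d_3) = \sum_{k} \mathsf{n}(d_1,d_2,d_3,k)\, k^{3}\, \sigma\!\left(\frac{d_1d_2d_3}{k^2}\right),
\]
with $k$ ranging as in the conjecture. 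The approach I would take mirrors the proof of Theorem \ref{YZA}. First, using the prime decomposition of $\Ker(\phi_{\widehat\beta}) \cong (\BZ/d_1 \times \BZ/d_2 \times \BZ/d_3)^2$ and Debarre's formula \eqref{Lattice}, one checks that $\nu$ is multiplicative over primes; since $\mathsf{n}$, $k^{3}$ and $\sigma$ are likewise multiplicative, the identity reduces to the prime-power case $\nu(p^a,p^b,p^c)$ with $a \le b \le c$ (legitimate since $\nu$ depends only on the abstract group $\prod_i \BZ/d_i$). Second, I would set up a recursion reducing $(a,b,c)$ to $(a-1,b-1,c-1)$ together with explicit boundary terms, exactly analogous to \eqref{Rec}, by classifying the subgroups $K < \BZ/p^a \times \BZ/p^b \times \BZ/p^c$ and the counts $\#\Hom^{\mathrm{sym}}(K,\widehat K)$. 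Finally, one verifies that the closed-form right-hand side satisfies the same recursion and base cases. This last verification is elementary, but the three-index subgroup enumeration is the main computational obstacle.

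Several specialization checks should accompany the proof and provide independent evidence in all genera. Setting $d_1 = 1$, the gcd in the definition of $\mathsf{n}$ contains $1$, so $\mathsf{n}(1,\tilde d,d,k) = 1$, and Conjecture \ref{ConjMC} collapses to
\[
\mathsf{N}_{g,(1,\tilde d,d)} = \sum_{k \mid \gcd(\tilde d,d)} k^{2g-3}\,\mathsf{N}_{g,\left(1,1,\tfrac{\tilde d d}{k^2}\right)},
\]
which is precisely the coefficientwise form of Conjecture \ref{conjC}; thus \ref{ConjMC} refines \ref{conjC}. Setting $d_3 = 0$ and using the identity $\mathsf{N}_{g,(d_1,d_2,0)} = \mathsf{N}^{\mathrm{Q}}_{g,(d_1,d_2)}$ from Section \ref{GW3_sec2}, one should recover Conjecture $\mathrm{A}'$ for abelian surfaces; here the interaction of the vanishing entry with the gcd defining $\mathsf{n}$ must be tracked carefully. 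Finally, combining Conjectures \ref{conjB} and \ref{ConjMC} with the $\tilde d = 2$ Donaldson-Thomas computation of Theorem \ref{dtthm} should reproduce the consistency already checked in Section \ref{GW3_sec4}.

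The decisive obstacle to a full proof is that the invariants $\mathsf{N}_{g,\beta}$ for $g \ge 4$ are themselves only conjecturally determined, so the higher-genus content of \ref{ConjMC} cannot be attacked directly. An unconditional proof in all genera would require either a geometric multiple-cover structure theorem for the reduced theory of abelian threefolds, parallel to the $K3$ result of \cite{PT2}, or the full reduced GW/DT correspondence of Conjecture \ref{conjB} together with Donaldson-Thomas computations for all $\tilde d$. Absent these, the genus $3$ lattice identity above is the strongest rigorous statement one can hope to establish, and its prime-power recursion is where I expect the real work to lie.
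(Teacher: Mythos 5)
This statement is Conjecture \ref{ConjMC}: the paper supplies \emph{no proof} of it, only supporting evidence --- the genus $3$ match \eqref{YZ'} in type $(1,\dtilde,d)$, the Jacobi-form consistency check of Section \ref{GW3_sec4}, and the degenerate Donaldson--Thomas cases of Conjecture $\mathrm{E}'$ verified against \cite{Shen, OSred}. There is therefore no proof in the paper to compare against, and you are right to frame your attempt as a verification program rather than a proof. Your reductions are correct. For $d_1 = 1$ the gcd defining $\mathsf{n}$ contains $d_1$, so $\mathsf{n}=1$ and the formula is exactly the coefficientwise form of Conjecture \ref{conjC}. In genus $3$, Lemma \ref{G3lcc} together with \eqref{Primlat} converts the conjecture (for $d_1,d_2,d_3>0$) into the lattice identity
\[
\nu(d_1,d_2,d_3) \;=\; \sum_{k} \mathsf{n}(d_1,d_2,d_3,k)\, k^{3}\, \sigma\!\left(\frac{d_1d_2d_3}{k^2}\right),
\]
and the reduction to prime powers is legitimate: by \eqref{Lattice}, $\nu$ depends only on the group $\bigoplus_i \BZ/d_i$, and the set of admissible $k$, the function $\mathsf{n}$, and $\sigma$ all factor over primes. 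The identity does check out in examples: $\nu(1,2,2)=1+3\cdot 2+8=15=\sigma(4)+2^3\sigma(1)$, and $\nu(2,2,2)=1+7\cdot 2+7\cdot 8+64=135=\sigma(8)+\mathsf{n}(2,2,2,2)\cdot 2^3\,\sigma(2)$ with $\mathsf{n}(2,2,2,2)=1+4=5$. Note the paper itself records the genus-$3$ match only for type $(1,\dtilde,d)$, so your full three-index check would be genuinely new evidence.

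The gap is that the two computations you yourself designate as the real content are left unexecuted. For the three-index analogue of \eqref{Rec}: the quotient-map argument of Theorem \ref{YZA} handles only the full-rank subgroups, which contribute $p^{6}\,\nu(p^{a-1},p^{b-1},p^{c-1})$ since $\#\Hom^{\rm sym}(K,\widehat{K})=p^{3r+2s+t}$ for $K=\BZ/p^r\times\BZ/p^s\times\BZ/p^t$ with $r\le s\le t$; the boundary terms now require enumerating subgroups of $\BZ/p^a\times\BZ/p^b\times\BZ/p^c$ of each type $\BZ/p^r\times\BZ/p^s$ weighted by $p^{2r+s}$, a substantially harder count than the cyclic one in Theorem \ref{YZA} (Hall-polynomial formulas as in \cite{SUBG} apply, but assembling them into the closed form involving $\sum_\delta \delta^2$ is exactly where the work lies), and you exhibit none of it. Similarly, the $d_3=0$ compatibility with Conjecture $\mathrm{A}'$ is flagged but never checked; it does hold, but only because the $\delta^2$ in $\mathsf{n}$ precisely absorbs the composition of the surface rule (weight $k^{2g-1}$, not $k^{2g-3}$) with the conjecture applied to $(1,m,0)$ via $\mathsf{N}^{\text{Q}}_{g,(d_1,d_2)}=\mathsf{N}_{g,(d_1,d_2,0)}$ --- e.g.\ for $(d_1,d_2)=(2,2)$ one needs $\mathsf{n}(2,2,0,2)=5$ to match $\sum_{k|2}k^{2g-1}\sum_{l \mid 4/k^2} l^{2g-3} = 1+5\cdot 2^{2g-3}+4^{2g-3}$ --- and this nontrivial consistency is precisely the kind of evidence your write-up should contain explicitly. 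As it stands, your proposal frames the problem correctly and asserts nothing false, but it establishes nothing beyond what the paper already records, and the genus-$3$ identity you name as the rigorous target remains open in your text.
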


Recall the quotient Donaldson-Thomas invariants
$\DT_{n, \beta}$. Assuming deformation invariance,
we write
\[ \DT_{n, \beta} = \DT_{n, (d_1,d_2,d_3)} \]
if $\beta$ is of type $(d_1, d_2, d_3)$.
The invariants $\DT_{n, (d_1,d_2,d_3)}$
are defined whenever $n \neq 0$ or
if at least two of the $d_i$ are positive.

Translating the multiple cover rule of Conjecture~\ref{ConjMC}
via the conjectural GW/DT correspondence
yields the following.

\vspace{6pt}
\noindent{\bf Conjecture} ${\mathbf{E'}.}$ 
{\it Assume $n > 0$ or
at least two of the integers $d_1, d_2, d_3$ are positive. Then}
\[ \DT_{n,(d_1,d_2,d_3)}
= \sum_{k} \frac{1}{k}\,
\mathsf{n}(d_1,d_2,d_3,k) (-1)^{n-\frac{n}{k}}
\DT_{\frac{n}{k}, \left(1,1,\frac{d_1d_2d_3}{k^2} \right)} \]
{\it where $k$ runs over all divisors of
$\gcd( n, d_1 d_2, d_1 d_3, d_2 d_3 )$ such that $k^2$ divides $d_1d_2d_3$.}

\vspace{6pt}
While Conjecture~\ref{ConjMC} only applies
for $d_1, d_2 > 0$, we have stated Conjecture~$\mathrm{E'}$
also for the degenerate cases $(0,0,d)$.
Unraveling the definition yields
\[
\DT_{n,(0,0,d)}
= \frac{(-1)^{n-1}}{n} \sum_{k|\mathrm{gcd}(n,d)} k^2 \,,
\]
which for $d=0$ is in perfect agreement with \cite{Shen},
and for $d > 0$ is proven in \cite{OSred}.



\begin{thebibliography}{99}

\bibitem{Beau} A.~Beauville, {\em Counting rational curves on $K3$ surfaces}, Duke Math. J. {\bf 97} (1999), no. 1, 99--108. 

\bibitem{Beh} K.~Behrend, {\em The product formula for Gromov-Witten invariants}, J. Algebraic Geom. {\bf 8} (1999), no. 3, 529--541.

\bibitem{CAV}
C.~Birkenhake and H.~Lange, {\em Complex abelian varieties}, Second edition, Grundlehren Math. Wiss., {\bf 302}, Springer-Verlag, Berlin, 2004, xii+635 pp.

\bibitem{Bloch-Okounkov}
S.~Bloch and A.~Okounkov, {\em The character of the infinite wedge representation},
Adv. Math. {\bf 149} (2000), no. 1, 1--60.

\bibitem{BS}
P.~Bor\'owka and G.~K.~Sankaran,
{\em Hyperelliptic genus $4$ curves on abelian surfaces},
\href{http://arxiv.org/abs/1606.09301}{arXiv:1606.09301}.



\bibitem{Bryan-K3xE} J.~Bryan, {\em The Donaldson-Thomas theory of $K3 \times E$ via the topological vertex}, \href{http://arxiv.org/abs/1504.02920}{arXiv:1504.02920}.

\bibitem{CRC}
J.~Bryan and T.~Graber, {\em The crepant resolution conjecture}, in {\em Algebraic geometry---Seattle 2005}, Part 1, 23--42, Proc. Sympos. Pure Math., {\bf 80}, Part~1, Amer. Math. Soc., Providence, RI, 2009.

\bibitem{Bryan-Kool}
J.~Bryan and M.~Kool, {\em Donaldson-Thomas invariants of local elliptic surfaces via the topological vertex},
\href{http://arxiv.org/abs/1608.07369}{arXiv:1608.07369}.

\bibitem{BLA}
J.~Bryan and N.~C.~Leung, {\em Generating functions for the number of curves on abelian surfaces},
\newblock Duke Math. J. {\bf 99} (1999), no. 2, 311--328.

\bibitem{BLA2} J.~Bryan and N.~C.~Leung, {\em The enumerative geometry of $K3$ surfaces
and modular forms}, J. Amer. Math. Soc. {\bf 13} (2000), no. 2, 371--410.

\bibitem{SUBG}
L.~M.~Butler, {\em Subgroup lattices and symmetric functions}, Mem. Amer. Math. Soc. {\bf 112} (1994), no. 539, vi+160 pp.

\bibitem{C} K.~Chandrasekharan, {\em Elliptic functions}, Grundlehren Math. Wiss., {\bf 281}. Springer-Verlag, Berlin, 1985, xi+189 pp.

\bibitem{DMZ} A.~Dabholkar, S.~Murthy, and D.~Zagier, {\em Quantum black holes, wall crossing, and mock modular forms}, \href{http://arxiv.org/abs/1208.4074}{arXiv:1208.4074}.

\bibitem{D}
O.~Debarre, {\em On the Euler characteristic of generalized Kummer varieties}, Amer. J. Math. {\bf 121} (1999), no. 3, 577--586.

\bibitem{Debarre}
O.~Debarre, {\em Complex tori and abelian varieties}, SMF/AMS Texts and Monographs, {\bf 11}, Amer. Math. Soc., Providence, RI; Soc. Math. France, Paris, 2005, x+109 pp.

\bibitem{EZ} 
M.~Eichler and D.~Zagier,
\newblock {\em The theory of {J}acobi forms}, Progress in Mathematics, {\bf 55}, Birkh\"auser Boston, Inc., Boston, MA, 1985, v+148 pp.

\bibitem{FP1} C.~Faber and R.~Pandharipande,
{\em Hodge integrals and Gromov-Witten theory}, 
Invent. Math. {\bf 139} (2000), no. 1, 173--199.

\bibitem{FPM} C.~Faber and R.~Pandharipande,
{\em Relative maps and tautological classes}, J. Eur. Math. Soc. (JEMS) {\bf 7} (2005), no. 1, 13--49.

\bibitem{FP13}
C.~Faber and R.~Pandharipande,
\newblock {\em Tautological and non-tautological cohomology of the moduli space of curves},
\newblock in {\em Handbook of moduli}, Vol. I, 293--330, Adv. Lect. Math. (ALM), {\bf 24}, Int. Press, Somerville, MA, 2013.

\bibitem{FKP}
F.~Flamini, A.~L.~Knutsen, G.~Pacienza, {\em On families of rational curves in the Hilbert square of a surface}, with an appendix by E.~Sernesi, Michigan Math. J. {\bf 58} (2009), no. 3, 639--682.

\bibitem{Gottsche}
L.~G\"ottsche, {\em The Betti numbers of the Hilbert scheme of points on a smooth projective surface}, Math. Ann. {\bf 286} (1990), no. 1-3, 193--207.

\bibitem{G}
L.~G\"ottsche, {\em A conjectural generating function for numbers of curves on surfaces}, Comm. Math. Phys. {\bf 196} (1998), no. 3, 523--533.

\bibitem{GS}
L.~G\"ottsche and V.~Shende, {\em The chi-y genera of relative Hilbert schemes for linear systems on Abelian and $K3$ surfaces}, Algebr. Geom. {\bf 2} (2015), no. 4, 405--421.

\bibitem{Gra}
T.~Graber, {\em Enumerative geometry of hyperelliptic plane curves}, J. Algebraic Geom. {\bf 10} (2001), no. 4, 725--755.

\bibitem{GP}
T.~Graber and R.~Pandharipande, {\em Localization of virtual classes}, Invent. Math. {\bf 135} (1999), no. 2, 487--518. 

\bibitem{Gul}
M.~G.~Gulbrandsen, {\em Donaldson-{T}homas invariants for complexes on abelian threefolds},
\newblock Math. Z. {\bf 273} (2013), no. 1-2, 219--236.

\bibitem{HM}
J.~Harris and D.~Mumford, {\em On the Kodaira dimension of the moduli space of curves}, with an appendix by W.~Fulton, Invent. Math. {\bf 67} (1982), no. 1, 23--88.


\bibitem{KY}
T.~Kawai and K.~Yoshioka, {\em String partition functions and infinite products}, Adv. Theor. Math. Phys. {\bf 4} (2000), no. 2, 397--485.

\bibitem{KL}
Y.-H.~Kiem and J.~Li, {\em Localizing virtual cycles by cosections},
\newblock J. Amer. Math. Soc. {\bf 26} (2013), no. 4, 1025--1050.

\bibitem{KMPS}
A.~Klemm, D.~Maulik, R.~Pandharipande, and E.~Scheidegger, {\em Noether-Lefschetz theory and the Yau-Zaslow conjecture}, J. Amer. Math. Soc. {\bf 23} (2010), no. 4, 1013--1040.

\bibitem{KLM15}
A.~L.~Knutsen, M.~Lelli-Chiesa, and G.~Mongardi, {\em Severi varieties and Brill-Noether theory of curves on abelian surfaces}, J. Reine Angew. Math. (to appear), \href{http://arxiv.org/abs/1503.04465}{arXiv:1503.04465}.

\bibitem{KT}
M.~Kool and R.~P.~Thomas, {\em Reduced classes and curve counting on surfaces I: theory}, Algebr. Geom. {\bf 1} (2014), no. 3, 334--383.

\bibitem{LS0}
H.~Lange and E.~Sernesi, {\em Severi varieties and branch curves of abelian surfaces of type $(1,3)$}, Internat. J. Math. {\bf 13} (2002), no. 3, 227--244.

\bibitem{LS}
H.~Lange and E.~Sernesi, {\em Curves of genus $g$ on an abelian variety of dimension $g$}, Indag. Math. (N.S.) {\bf 13} (2002), no. 4, 523--535.

\bibitem{Junli1} J.~Li, {\em Stable morphisms to singular schemes and relative
stable morphisms}, J. Differential Geom. {\bf 57} (2001), no. 3, 509--578.

\bibitem{Junli2} J.~Li, {\em
A degeneration formula for Gromov-Witten invariants}, J. Differential Geom. {\bf 60} (2002), no. 2, 199--293.

\bibitem{Loo95}
E.~Looijenga, {\em On the tautological ring of $M_g$}, Invent. Math. {\bf 121} (1995), no. 2, 411--419.

\bibitem{MMS}
J.~Maldacena, G.~Moore, and A.~Strominger,
{\em Counting BPS blackholes in toroidal type II string theory}, \href{http://arxiv.org/abs/hep-th/9903163}{arXiv:hep-th/9903163}.

\bibitem{MNOP1}
D.~Maulik, N.~Nekrasov, A.~Okounkov, and R.~Pandharipande,
\newblock {\em 
Gromov-{W}itten theory and {D}onaldson-{T}homas theory. {I}}, {Compos. Math.}, {\bf 142} (2006), no. 5, 1263--1285.

\bibitem{MP2} D.~Maulik and R.~Pandharipande, {\em A topological view of Gromov-Witten theory}, Topology {\bf 45} (2006), no. 5, 887--918.

\bibitem{MP}
D.~Maulik and R.~Pandharipande, {\em Gromov-Witten theory and Noether-Lefschetz theory}, in {\em A celebration of algebraic geometry}, 469--507, Clay Math. Proc., {\bf 18}, Amer. Math. Soc., Providence, RI, 2013.

\bibitem{MPT}
D.~Maulik, R.~Pandharipande, and R.~P.~Thomas, {\em Curves on $K3$ surfaces and modular forms}, with an appendix by A.~Pixton, J. Topol. {\bf 3} (2010), no. 4, 937--996.

\bibitem{GO} G.~Oberdieck, {\em Gromov-Witten invariants
of the Hilbert scheme of points of a $K3$ surface}, \href{http://arxiv.org/abs/1406.1139}{arXiv:1406.1139}.

\bibitem{ObP} G.~Oberdieck and R.~Pandharipande, {\em Curve
counting on $K3 \times E$, the Igusa cusp form $\chi_{10}$, and descendent integration}, in {\em $K3$ surfaces and their moduli}, 245--278, Progr. Math., {\bf 315}, Birkh\"auser/Springer, 2016.

\bibitem{OS} G.~Oberdieck and J.~Shen,
{\em Curve counting on elliptic Calabi-Yau threefolds via derived categories}, \href{http://arxiv.org/abs/1608.07073}{arXiv:1608.07073}.

\bibitem{OSred} G.~Oberdieck and J.~Shen,
{\em Reduced Donaldson-Thomas invariants
and the ring of dual numbers}, \href{http://arxiv.org/abs/1612.03102}{arXiv:1612.03102}.

\bibitem{OP1}
A.~Okounkov and R.~Pandharipande, {\em Gromov-{W}itten theory, {H}urwitz theory, and completed cycles},
\newblock Ann. of Math. (2) {\bf 163} (2006), no. 2, 517--560.

\bibitem{OP3}
A.~Okounkov and R.~Pandharipande, {\em Virasoro constraints for target curves}, Invent. Math. {\bf 163} (2006), no. 1, 47--108.

\bibitem{PaPix2} R.~Pandharipande and A.~Pixton, {\em Gromov-Witten/Pairs
correspondence for the quintic 3-fold}, J. Amer. Math. Soc. (to appear), \href{http://arxiv.org/abs/1206.5490}{arXiv:1206.5490}.

\bibitem{PT1}
R.~Pandharipande and R.~P.~Thomas,
\newblock {\em Curve counting via stable pairs in the derived category},
Invent. Math. {\bf 178} (2009), no. 2, 407--447.

\bibitem{PT2}R. Pandharipande and R. Thomas, {\em The Katz-Klemm-Vafa conjecture for $K3$ surfaces}, Forum Math. Pi {\bf 4} (2016), e4, 111 pp.

\bibitem{PT3}
R.~Pandharipande and R.~P.~Thomas,
\newblock {\em 13/2 ways of counting curves},
\newblock in {\em Moduli spaces}, 282--333, London Math. Soc. Lecture Note Ser., {\bf 411}, Cambridge Univ. Press, Cambridge, 2014.

\bibitem{Pi}
G.~P.~Pirola, {\em Curves on generic Kummer varieties}, Duke Math. J. {\bf 59} (1989), no. 3, 701--708.

\bibitem{Pon07}
D.~Pontoni, {\em Quantum cohomology of $\Hilb^2(\p^1 \times \p^1)$ and enumerative applications}, Trans. Amer. Math. Soc. {\bf 359} (2007), no. 11, 5419--5448.

\bibitem{PS14}
B.~Poonen and M.~Stoll, {\em Most odd degree hyperelliptic curves have only one rational point}, Ann. of Math. (2) {\bf 180} (2014), no. 3, 1137--1166.

\bibitem{R05}
M.~Romagny, {\em Group actions on stacks and applications}, Michigan Math. J. {\bf 53} (2005), no. 1, 209--236. 

\bibitem{Ros14}
S.~C.~F.~Rose, {\em Counting hyperelliptic curves on an {A}belian surface with quasi-modular forms},
\newblock Commun. Number Theory Phys. {\bf 8} (2014), no. 2, 243--293.

\bibitem{Ruan}
Y.~Ruan, {\em The cohomology ring of crepant resolutions of orbifolds}, in {\em Gromov-Witten theory of spin curves and orbifolds}, 117--126, Contemp. Math., {\bf 403}, Amer. Math. Soc., Providence, RI, 2006.
 
\bibitem{STV}
T.~Sch\"urg, B.~To\"en, and G.~Vezzosi, {\em Derived algebraic geometry, determinants of perfect complexes, and applications to obstruction theories for maps and complexes},
J. Reine Angew. Math. {\bf 702} (2015), 1--40. 

\bibitem{Shen}
J.~Shen, {\em The Euler characteristics of generalized
Kummer schemes}, Math. Z. {\bf 281} (2015), no. 3-4, 1183--1189.

\bibitem{Tat}
J.~Tate, {\em Algebraic cycles and poles of zeta functions}, in {\em Arithmetical algebraic geometry}, (Proc. Conf. Purdue Univ., 1963), 93--110, Harper \& Row, New York, 1965.

\bibitem{Wise} J.~Wise, {\em A hyperelliptic Hodge integral}, Port. Math. {\bf 73} (2016), no. 3, 207--218.

\bibitem{WiseP2} J.~Wise, {\em The genus zero Gromov-Witten invariants of the symmetric square of the plane}, Comm. Anal. Geom. {\bf 19} (2011), no. 5, 923--974.

\bibitem{YZ} S.-T.~Yau and E.~Zaslow, {\em BPS states, string duality, and nodal curves on $K3$}, Nuclear Phys. B {\bf 471} (1996), no. 3, 503--512.

\end{thebibliography}
\end{document}